\undefined \usepackage[hypertex]{hyperref} \else \usepackage[pdftex,pdfstartview=FitH]{hyperref} \fi
\numberwithin{equation}{section}
\newcommand{\cov}{\mathrm{cov}}
\newcommand{\SO}{\mathrm{SO}}
\newcommand{\Trace}{\mathrm{Trace}}
\newcommand{\compactumone}{F}
\newcommand{\Sym}{\mathrm{Sym}}
\newcommand{\Sball}{B_S}
\renewcommand{\clubsuit}{F}
\newcommand{\compactumfortyone}{K}
\newcommand{\Xgood}{X_{\mathrm{good}}}
\newcommand{\isogeny}{\Phi}
\newcommand{\wk}{\mathrm{wk}}
\newcommand{\uglytempered}{q}
\newcommand{\Omegaone}{\Omega}
\newcommand{\Omegatwo}{\Omega'}
\newcommand{\Omegathree}{\Omega''}
\newcommand{\tempered}{p_G}
\newcommand{\Xcompact}{X_{\mathrm{cpct}}}
\newcommand{\supp}{\mathrm{supp}}
\newcommand{\Omicron}{\Upsilon}
\newcommand\xappa\kappa
\newcommand\yota\iota
\newcounter{consta}
\renewcommand{\theconsta}{{\xappa_{\arabic{consta}}}}
\newcounter{constb}
\renewcommand{\theconstb}{{\yota_{\arabic{constb}}}}
\newcounter{constc}[section]
\renewcommand{\theconstc}{{c_{\arabic{constc}}}}
\newcommand{\consta}{\refstepcounter{consta}\theconsta}
\newcommand{\constb}{\refstepcounter{constb}\theconstb}
\newcommand{\constc}{\refstepcounter{constc}\theconstc}
\newcommand{\Tr}{\mathrm{Tr}}
\newcommand{\mysymbol}{\zeta}
\newcommand{\Proj}{\operatorname{Pr}}
\newcommand{\Scomp}{\mathfrak{r}}
\newcommand{\Sob}{\mathcal{S}}
\newcommand{\Ad}{\mathrm{Ad}}
\newcommand{\disc}{\mathrm{disc}}
\newcommand{\h}{\mathfrak{h}}
\newcommand{\Lie}{\mathrm{Lie}}
\newcommand{\g}{\mathfrak{g}}
\newcommand{\C}{\mathbb{C}}
\newcommand{\Siegel}{\Sieg}
\newcommand{\vol}{\operatorname{vol}}
\newcommand{\G}{\mathbf{G}}
\newcommand{\Sieg}{\mathfrak{S}}
\newcommand{\height}{\mathrm{ht}}
\newcommand{\GL}{\mathrm{GL}}
\newcommand{\PGL}{\mathrm{PGL}}
\newcommand{\SL}{\mathrm{SL}}
\newcommand{\R}{\mathbb{R}}
\newcommand{\Z}{\mathbb{Z}}
\newcommand{\Q}{\mathbb{Q}}
\newcommand{\adele}{\mathbb{A}}
\newcommand{\dist}{\mathrm{dist}}
\newcommand{\propGname}{Proposition C1 }
\newcommand{\propHname}{Proposition C2 }
\DeclareFontFamily{OT1}{rsfs}{}
\DeclareFontShape{OT1}{rsfs}{n}{it}{<-> rsfs10}{}
\DeclareMathAlphabet{\mathscr}{OT1}{rsfs}{n}{it}
\newtheorem{thm}[subsection]{Theorem}
\newtheorem{lem}[subsubsection]{Lemma}
\newtheorem*{sublem}{Sublemma}
\newtheorem{prop}[subsection]{Proposition}
\newtheorem*{lem*}{Lemma}
\newtheorem*{prop*}{Proposition}
\newtheorem*{propA}{Proposition A}
\newtheorem*{propB}{Proposition B}
\newtheorem*{propC}{Proposition C}
\newtheorem*{propD}{Proposition D}
\newtheorem*{propE}{Proposition E}
\newtheorem*{propF}{Proposition F}
\newtheorem*{propG}{Proposition C1}
\newtheorem*{propH}{Proposition C2}
\newcommand{\myparagraph} {\refstepcounter{subsection}\subsection*{\textnormal{\thesubsection}}}
\newtheorem{cor}{Corollary}[subsection]
\begin{document}

\title[Effective equidistribution for closed orbits]{Effective equidistribution
for closed orbits of semisimple groups on homogeneous spaces.}
\author{M. Einsiedler, G. Margulis, A. Venkatesh}

\begin{abstract} We prove effective equidistribution, with polynomial rate, for large closed orbits
of semisimple groups on homogeneous spaces, under certain technical restrictions (notably,
the acting group should have finite centralizer in the ambient group). 
The proofs make extensive use
of spectral gaps, and also of a closing lemma for such actions.  \end{abstract}

\maketitle

\tableofcontents
 
 \newpage

\section{Introduction.}

\subsection{General introduction.} \label{gi}
Let $G$ be a real Lie group, and let $\Gamma$ be a
lattice in G. Let $U$ be a connected unipotent subgroup of $G$. A theorem of
Ratner \cite{Ratner-measure-rigidity}, which proved a conjecture of S.~G.~Dani,  classifies the
$U$-invariant ergodic probability measures $\mu$ on $X=\Gamma\backslash G$:
namely, they are $S$-invariant probability measures
supported on closed $S$-orbits, where $S$ is a closed subgroup containing $U$.

This theorem is a fundamental result with numerous applications. In particular,
in combination with results on nondivergence of unipotent orbits, and with
the linearization technique, Mozes and Shah \cite{Mozes-Shah} proved that the nonzero weak$^*$-limits
of $U$-invariant ergodic probability measures are again ergodic, and
therefore are Haar measures on closed orbits.

Let $H \subset G$ be a semisimple subgroup generated by unipotent elements.  A special case of the results of \cite{Mozes-Shah}
asserts that:
\begin{multline}\label{MS} \mbox{The only nonzero weak$^*$-limits of normalized Haar measures on
closed }
\\ \mbox{ $H$-orbits are Haar measures on 
closed orbits of a closed subgroup $S \supset H$}.\end{multline}

In the present paper we present a {\em polynomially effective} version of 
\eqref{MS}
under the assumptions that $G$ is semisimple, $\Gamma$ is an
arithmetic lattice and $\Lie(H)$ is centralizer-free inside
$\Lie(G)$. In explicit terms, we
 show that the Haar measure on a closed $H$-orbit is
close to the Haar measure on a closed $S$-orbit
for a closed subgroup $S \supset H$.  Moreover, the notions of ``close''
improves as a power of the volume of the relevant orbits.

We observe that, often, ``direct'' effectivizations of ergodic proofs
only give logarithmically poor (or worse) rates of convergence. 
The content of our result is, therefore, in the
polynomial rate. 

A key input is used from the theory of automorphic forms: namely, the $H$-action
on $L^2(\mu)$ has a {\em spectral gap} (if $\Gamma$ is as in (1) below,
and $\mu$ is the $H$-invariant probability measure on a closed $H$-orbit). 
This implies an effective version of the ergodic theorem for $H$-actions.

We say that a unitary representation of a 
(connected Lie) group $G$
possesses a spectral gap if there exists a compactly supported probability measure on $G$,
so that convolution with this measure has operator norm strictly less than $1$. 
For a detailed discussion of this concept for semisimple real Lie groups, 
see \S \ref{spectralgap}.

\subsection{Some technical assumptions.} \label{technical}
We shall assume that:
 \begin{enumerate}
 \item There is a semisimple $\Q$-group $\G$ so that $G=\G(\R)$ and $\Gamma$ is a congruence subgroup of $\G(\Q)$;
 \item $H^{+} = H$, i.e. $H$ has no compact factors and is connected;
 \item  The centralizer of $\mathfrak{h} = \mathrm{Lie}(H)$ in $\mathfrak{g}
= \mathrm{Lie}(G)$ is trivial.   \end{enumerate}

The third assumption means that closed $H$-orbits are ``rigid,'' i.e.,
do not come in continuous families. Moreover, it implies (Lemma \ref{assumption-2}) that
there are only finitely many intermediate subgroups $H \subseteq S \subseteq G$,
each of which is semisimple (i.e. the Lie algebra $\mathfrak{s}$ is a semisimple real Lie
algebra).

See \S \ref{generalization} for a discussion concerning lifting these assumptions. In particular, ``congruence'' may be weakened to ``arithmetic'',
at the cost of allowing the exponents in the theorem to depend on the lattice. 
The  assumption of finite centralizer is primarily to ``rigidify''
the situation; while it does not seem essential to the method, it
is clear that many new complications arise when it is removed. 

Examples:
\begin{enumerate}
\item $G= \mathrm{SL}_n(\R)$, $H=\mathrm{SL}_2(\R)$ embedded via the irreducible representation.
If $n \geq 4$, there exists intermediate subgroups
$H \subsetneq S \subsetneq G$: for instance, if $n$ is even,
the $H$-action on $\R^n$ preserves a symplectic form, and one may take for $S$
its stabilizer.

\item $G = \mathrm{SL}_n(\R)$, $H = \mathrm{SO}(q)^{(0)}$ for some indefinite
quadratic form $q$ in $n$ variables.  In this case, there do not exist
intermediate subgroups besides those with connected component $H$. (See \S \ref{arithappl} for an arithmetic application of the theorem in this setting). 

 \item  $G = \mathrm{SL}_2(\R)^n$, $H = \mathrm{SL}_2(\R)$ embedded diagonally.
Here there are many intermediate subgroups as long as $n \geq 3$.
\item $G= \SL_7(\R)$, $H= G_2(\R)$ the group of $\R$-points of the $\R$-split form of $G_2$ embedded into $G$. Here $\SO_7$ is an intermediate subgroup. (See \S \ref{arithappl} for an arithmetic application of the theorem in this setting). 

\end{enumerate}


\begin{thm} \label{thm:main} \hypertarget{thm:main}{}
Let $\Gamma,H\subset G$ be as above satisfying the assumptions in \S \ref{technical}.
Let $\mu$ be the $H$-invariant probability measure on a closed $H$-orbit
$x_0H$ inside $X=\Gamma\backslash G$.

There exists $\delta, d > 0$ depending only on $G,H$
and $V_0 > 0$ depending only on $\Gamma, G, H$ with the following properties. 
For any $V \geq V_0$ there exists
an intermediate subgroup $H \subseteq S
\subseteq G$ for which
$ x_0 S$ is a closed $S$-orbit with volume $\leq V$
and such that $\mu$ is $V^{-\delta}$-close to $\mu_{x_0 S}$, i.e.\ for any $f\in
C^{\infty}_c(X)$ we have
\begin{equation} \label{thmequation}\biggl|\int f
\operatorname{d}\!\mu-\int f \operatorname{d}\!\mu_{x_0 S}\biggr|< V^{-\delta} \Sob_d(f),
\end{equation}
where $\Sob_d(f)$ denotes an $L^2$-Sobolev norm of degree $d$
(see \eqref{sobnormdef}).
\end{thm}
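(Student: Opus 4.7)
The plan is to proceed by a bootstrap through the (finite) chain of intermediate subgroups $H \subseteq S \subseteq G$, whose semisimplicity and finiteness are promised by Lemma~\ref{assumption-2}. Starting from the candidate $S=H$, at each stage I would attempt to certify the current $S$: either $\mu$ is already $V^{-\delta}$-close to $\mu_{x_0 S}$ with the right volume bound (and we are done), or the failure of closeness is quantitative and yields an element $g \in G \setminus S$ that approximately preserves $\mu$, forcing a pass to a strictly larger intermediate subgroup $S' \supsetneq S$. Because the chain is finite (with length bounded in terms of $G$ alone), the bootstrap terminates in a controlled number of steps.

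The technical engine of each step is a quantitative closing lemma married to the spectral gap of $H$ acting on $L^2(\mu)$. Suppose the closed orbit $x_0 S$ has volume exceeding the target $V$ (otherwise we are already done with this $S$). An effective pigeonhole inside a compact piece of $x_0 S$ then produces two points $y_1, y_2 \in \supp(\mu)$ with $y_2 = y_1 \cdot g$, where $g \in G \setminus S$ is polynomially small in~$V$. The effective mean ergodic theorem furnished by the spectral gap shows that $H$-averages of a test function $f$ centered at $y_1$ and at $y_1 g$ both approximate $\int f \, d\mu$ with polynomial error; comparing the two approximations yields that $\mu$ is $V^{-\delta'}$-close to its translate by $g$, quantified in the Sobolev norm $\Sob_d$.

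The next step is to upgrade approximate invariance by the single element $g$ to approximate invariance by a genuinely larger intermediate subgroup. Because $\mu$ is \emph{exactly} $H$-invariant, it is also approximately invariant under every conjugate $h g h^{-1}$ as $h$ ranges over $H$. Varying $h$ sweeps out the $\Ad(H)$-orbit of $\log(g) \in \mathfrak{g}$; together with $\mathfrak{h}$, this spans an $\Ad(H)$-invariant Lie subalgebra $\mathfrak{s}'$, which, by the finite-centralizer hypothesis and Lemma~\ref{assumption-2}, is the Lie algebra of one of the finitely many intermediate subgroups $S'$. Baker--Campbell--Hausdorff and an implicit-function argument (valid because $g$ is small) then promote this approximate $S'$-invariance to the existence of a genuine closed $S'$-orbit $x_0 S'$ whose volume is still polynomial in $V$, feeding the inductive step.

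The principal obstacle I anticipate lies in the pigeonhole / closing-lemma step: the orbit $x_0 H$ may spend a large proportion of its mass in the cusps of $X$, where the injectivity radius collapses and nearby-return statements degenerate. Handling this requires a genuinely effective nondivergence statement for $H$-orbits (a semisimple analogue of Dani--Margulis nondivergence for unipotent flows) that guarantees a polynomially large fraction of the mass of $\mu$ lives in a fixed compact set where the closing-lemma constants are uniform. A secondary bookkeeping obstacle is the propagation of polynomial exponents through the bootstrap: each of the boundedly many passes from $S$ to $S'$ degrades $\delta$ by a fixed factor and inflates the effective volume by a polynomial in $V$, so the final exponents $\delta, d$ must be chosen conservatively, using that the length of the intermediate chain depends only on $G$ and $H$.
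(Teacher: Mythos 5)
Your skeleton --- iterate through the finitely many intermediate subgroups $H \subseteq S \subseteq G$, use the spectral gap for an effective ergodic theorem, extract extra invariance from near-returns, and control escape of mass by effective nondivergence --- is indeed the paper's strategy. But two of your steps would fail as described. First, the quantitative heart of ``nearby points give extra invariance'': you propose to show $\mu$ is $V^{-\delta'}$-close to $\mu^g$ by comparing averages at $y_1$ and at $y_1 g$, and then to amplify by conjugating $g$ by elements of $H$. The comparison forces the averaging time $T$ to satisfy $T^{\dim G}\|g\| \ll 1$ so that the two trajectories remain close, while the ergodic-theorem error is only a fixed negative power of $T$; optimizing gives $\delta' < \xi$ when $\|g\| \asymp V^{-\xi}$. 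Amplifying invariance from scale $\|g\|$ up to unit scale (whether by iterating $g$ or by conjugation, which degrades the error by $\|h\|^{\star}$) then costs at least a factor $\|g\|^{-1}$, and the total error exceeds $1$. The paper's Proposition \ref{MaL} avoids this trap: using polynomial divergence of the unipotent flow, it works on a time window where the relative displacement $\mathrm{Ad}(u(-t))r$ has already grown to size $\asymp 1$ but is nearly constant there, and directly compares $\mu_2(f)$ with $\mu_1(\exp(q(s)).f)$ for a \emph{unit-size} transverse element $\exp(q(s))$. This is where the polynomial rate comes from, and it is not recoverable from closeness of $\mu$ to $\mu^g$ alone.

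Second, the dichotomy. Your pigeonhole produces two nearby points of $\supp(\mu)$, but nothing forces the displacement to lie outside $S$: a priori all near-returns could be along the $S$-direction. Ruling this out is the technically hardest part of the paper. If a large number of $\frac{1}{10}$-separated returns cluster along $S$, the effective closing lemma (Proposition \ref{CaseIIlemma}) --- proved by a Diophantine argument placing the relevant elements of $\Gamma$ \emph{exactly} inside a conjugate of $\tilde{S}$, followed by lattice-point counting and property $\tau$ to show the resulting intersection with $\Gamma$ is a lattice of small covolume --- produces a genuine closed $S$-orbit of small volume near $x$; the linearization estimate (Proposition \ref{lem:Quantiso}) then shows $\mu$ cannot concentrate near such orbits, so a transverse near-return must exist. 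Relatedly, your premise ``suppose the closed orbit $x_0 S$ has volume exceeding $V$'' presupposes that $x_0 S$ is closed at intermediate stages, which is not given; closedness and the volume bound at the terminal stage are themselves outputs of the closing lemma together with Proposition \ref{nearness}.
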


Let us make certain comments on this result.
\begin{itemize}
\item The Sobolev norms $\Sob_d(f)$ measure the $L^2$-norm of
the derivatives of $f$ up to the $d$-th order w.r.t.\ a smooth measure on $X$. If $X$ is compact,
this defines them up to bounded multiples.

An equivalent way of formulating \eqref{thmequation}, without the use of Sobolev norms, would be in terms of the measure of small balls. For simplicity, consider the case when $X$ is compact
and fix a Riemannian metric on $X$. There exists $\delta_1, V_1>  0$ so that,
for any $V \geq V_1$ and
for any Riemannian ball $B(r)$ of radius $r \leq 1$,
we have
$$\left| \mu(B(r)) - \mu_{x_0 S}(B(r))  \right| \leq V^{-\delta_1}$$
Such an assertion is equivalent to \eqref{thmequation}, up to modification of constants.

\item In interpreting the theorem -- which at first sight is rather confusing
owing to the parameter $V$ --  it may be helpful to understand how it implies the
{\em non-effective} variant,  viz. theorem of Mozes and Shah \eqref{MS}.

We take for granted the following fact, for any intermediate subgroup $H\subset S \subset G$:
\begin{equation} \label{finiteness}
\mbox{There are finitely many closed $S$-orbits with volume $\leq V$.}
\end{equation}
 An effective version of \eqref{finiteness}, with polynomial bounds,
 is given in \S \ref{sec:quantiso}, but the qualitative statement is simple to establish.

Let $X=\Gamma\backslash G$ be as in Theorem \ref{thm:main}. Suppose we are given a sequence $x_i H$ of closed $H$-orbits
so that the sequence $\mu_{x_i H}$ converges to a weak$^*$-limit $\nu_\infty$.  

Fix, for a moment,
an intermediate subgroup $S \supset H$. 
Let us agree to set $\vol(x_i S) = \infty$ if $x_i S$ is not closed. 
Passing to a subsequence, we may assume that either there
exists some $M > 0$ such that the volume of $x_i S$ is less than $M$ for all $i$, or
the volume of $x_i S$ approaches infinity as $i\to\infty$. 

Since there are finitely many intermediate subgroups, we may assume, passing
to a further subsequence, that one or the other option holds for all such $S$.  That is to say: there is
a subset $\mathscr{S}$ of the set of intermediate subgroups $H \subset S \subset G$
so that $x_i S$ is closed and of volume $\leq M$ when $S \in \mathscr{S}$; 
and of volume approaching $\infty$, when $S \notin \mathscr{S}$.

Passing to a further subsequence, and applying \eqref{finiteness}, we may suppose
that, for any $S\in\mathscr{S}$, there exists $y_S$ so that $x_i S = y_S S$ for all $i$.
Apply the theorem with fixed $V >M$. It follows that $\nu_{\infty}$
is $V^{-\delta}$-close to a measure of the form $\mu_{y_S S}$ for some $S \in \mathscr{S}$.
Since $V$ was arbitrary, our conclusion follows.

\item Our proof gives the same result for any $H$-invariant measure $\mu$
so that the $H$-action on $L^2(\mu)$ possesses a spectral gap. In other terms,
the true content of the theorem is the ``effective classification of {\em strongly} ergodic $H$-invariant measures.'' The only naturally arising
context, however, seems to be the case described in the theorem.

\item {\em In principle}, the exponent $\delta$ is computable; however, it would be rather painful to compute and extremely small in general. 

 In the case when $H$ is maximal, it is likely
not too difficult to compute and not too small. 
However, our goal has been only to achieve polynomial dependence. 
This is of qualitative significance, for ``abstract'' effectivizations
of results in ergodic theory yield, in general, far poorer bounds. 

\item The theorem also has a purely topological version that we now enunciate.
\end{itemize}

\begin{thm}\label{mcor1}
Notations as in Theorem \ref{thm:main}, let $\compactumone \subset X$ be compact,
and fix a Riemannian metric on $X$. There exists $\rho > 0$, depending on $G$, $H$,
and $V_1$ depending on $\Gamma$, $G$, $H$, $\compactumone$, and the choice of Riemannian metric,
with the following property:

For any $V \geq V_1$, there exists $S$ so that $x_0 S$ is a closed orbit of volume $\leq V$,
and any point in $x_0 S \cap \compactumone$ is at distance $\leq V^{-\rho}$ from $x_0 H$, w.r.t.\
 the Riemannian metric induced on $x_0 S$
from $X$.
\end{thm}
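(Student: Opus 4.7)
The plan is to deduce the topological Theorem \ref{mcor1} from the measure-theoretic Theorem \ref{thm:main} by a localised contradiction argument, using a test function on $X$ adapted to the local structure of $x_0 S$ near a putative violating point.

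First, apply Theorem \ref{thm:main} with the given $V$ to produce the intermediate subgroup $S$, the closed orbit $x_0 S$ of volume $V^{\ast} := \vol(x_0 S) \le V$, and the Sobolev-dual bound $|\mu(f) - \mu_{x_0 S}(f)| \le V^{-\delta}\Sob_d(f)$. Suppose for contradiction that some $y \in x_0 S \cap \compactumone$ satisfies $d_{x_0 S}(y, x_0 H) > V^{-\rho}$, with $\rho > 0$ to be chosen. Because $\compactumone$ is compact, the injectivity radius of $X$ at points of $\compactumone$ is bounded below by some $r_F > 0$ depending only on $\compactumone$ and the metric; on $B_X(y, r_F)$ we may work in exponential coordinates, in which $x_0 S$ decomposes into finitely many local sheets, one of which passes through $y$.

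Next, construct a product test function $f(s, w) = g(s)\phi(w)$ in these coordinates, where $g$ is a smooth bump of intrinsic width $r := V^{-\rho}/2$ along the local sheet through $y$ (parameterised via $S$), and $\phi$ is a transverse bump of width $\epsilon$ (to be chosen as a power of $V$). The hypothesis ensures that the restriction of $f$ to the sheet through $y$ vanishes on $x_0 H$, giving the lower bound $\mu_{x_0 S}(f) \gtrsim r^{\dim S}/V^{\ast}$ from that sheet. One estimates $|\mu(f)|$ by bounding the mass of $x_0 H$ inside the transverse tube $\supp(f)$, and the Sobolev norm scales as $\Sob_d(f) \sim r^{\dim S/2}\epsilon^{(\dim X - \dim S)/2 - d}$. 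Inserting these into the Sobolev estimate of Theorem \ref{thm:main}, using $V^{\ast} \le V$, and balancing $(r, \epsilon)$ as appropriate powers of $V$, should yield a contradiction that fixes $\rho$ as a positive constant depending only on $\delta$, $d$, and the dimensions of $G, H, S$.

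The main technical obstacle is the ``sheet-accounting'' in the bound on $\mu(f)$: other sheets of $x_0 S$, and in particular those portions of the subvariety $x_0 H \subset x_0 S$ lying on them, may intersect the transverse tube $\supp(f)$ at extrinsic distance $\le \epsilon$ from the sheet through $y$ while being intrinsically far from $y$ in $x_0 S$; the hypothesis on $d_{x_0 S}$ does not directly discard them. Controlling this contribution requires a quantitative estimate on the ``folding'' of $x_0 S$ (and of $x_0 H$) along $\compactumone$, most naturally extracted from the closing-lemma ingredients already developed for the proof of Theorem \ref{thm:main}, combined with the volume bound on $x_0 H$ and the lower bound $r_F$ on the injectivity radius. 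It is through this step that the dependence of $V_1$ on $\compactumone$ and the Riemannian metric is absorbed, while $\rho$ itself depends only on the algebraic data of $G$ and $H$.
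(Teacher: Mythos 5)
Your overall strategy (localize with a bump function and play the Sobolev-dual bound of Theorem \ref{thm:main} against a lower bound for $\mu_{x_0 S}$ of the bump) is the right one, and is essentially what the paper does. But there is a fatal quantitative gap in the step ``balancing $(r,\epsilon)$ as appropriate powers of $V$ should yield a contradiction.'' Applying Theorem \ref{thm:main} once, at scale $V$, gives an error term $V^{-\delta}\Sob_d(f)$ where $\delta$ is the (in general extremely small, certainly not assumed $\geq 1$) exponent of the main theorem, while your lower bound for the main term is $\mu_{x_0 S}(f)\gtrsim r^{\dim S}/\vol(x_0 S)$ and $\vol(x_0 S)$ is only controlled by $\vol(x_0 S)\leq V$. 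Writing $r=V^{-\rho}$, $\epsilon=V^{-\sigma}$, the main term has $V$-exponent $-\rho\dim S-1$ while the error term has exponent at least $-\delta-\rho\dim S/2-\sigma\bigl(d-(\dim X-\dim S)/2\bigr)$ with $d-(\dim X-\dim S)/2>0$; since $\delta<1$ the error exponent always exceeds the main-term exponent, for every choice of $\rho,\sigma>0$. In other words, the single ``$-1$'' coming from the orbit volume already defeats the ``$-\delta$'' of the theorem, and no choice of bump widths rescues this.

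The paper's proof closes exactly this gap by an amplification/pigeonhole step (this is also the content of Corollary \ref{mcor2}). The set $\mathcal{V}$ of volumes of closed orbits $x_0 S$ through the given point has cardinality at most the number $\gimel$ of intermediate subgroups, so among the nested intervals $(V^{1/N^{j+1}},V^{1/N^{j}}]$, $0\le j\le\gimel$, one interval $(X,X^N]$ contains no element of $\mathcal{V}$. Applying Theorem \ref{thm:main} with parameter $X^N$ then produces an orbit whose volume is forced to be $\leq X$ while the error is $X^{-N\delta}$; taking $N$ large (roughly $N\delta>1+\rho(\dim S+d)$) makes the bump-function inequality $\eta^{\dim S+d}\gg X^{1-N\delta}\height(x)^d$ solvable with $\eta$ a fixed negative power of $X\geq V^{1/N^{\gimel+1}}$. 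You need some version of this step; without it the argument cannot produce any positive $\rho$. A secondary remark: the paper's bump argument shows directly that $\mu(f_x)>0$ for a small ball $f_x$ around an arbitrary $x\in x_0 S$ of radius below the injectivity radius, which immediately forces $\supp(\mu)=x_0 H$ to meet that ball; this sidesteps entirely your ``sheet-accounting'' difficulty of constructing a test function vanishing on $x_0 H$ and controlling the contribution of other sheets. The passage from ambient distance to the induced metric on $x_0 S$ is then handled via Lemma \ref{iso}.
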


By the {\em Riemannian metric induced on $x_0 S$}, 
we mean: endow $x_0 S$ with the Riemannian structure induced
as a submanifold of $X$, and compute the corresponding metric.

Let us note in particular the import of Theorem \ref{mcor1} in the
case when $H$ is a {\em maximal} subgroup in $G$. Then
 there exists $\ell \geq 1$ so that a closed $H$-orbit
of volume $\geq \varepsilon^{-\ell}$ has to intersect any $\varepsilon$-ball in $X$. 
In particular, we also prove that the number of closed $H$-orbits 
that fail to intersect some $\varepsilon$-ball is bounded by a polynomial in $\varepsilon^{-1}$.

\subsection{Outline of the proof.  } \label{outline}

First of all, let us consider the {\em non-effective} version of Theorem \ref{thm:main}, i.e. \eqref{MS}. 
This special case of measure rigidity (when the acting group is semisimple)
admits a simple proof.
We present it in \S \ref{semisimple-measures}. One may then
see the proof of Theorem \ref{thm:main} as an effective form of that proof.\footnote{In this case, the process of {\em effectivization} causes proofs to increase
in length approximately twenty-fold!} We suggest the reader
begin by reading this non-effective proof.

In \S \ref{proof}, we present a detailed outline of the proof of the main Theorem, comparing
each portion of the proof to the corresponding non-effective statement in \S \ref{semisimple-measures}.
The reader may wish to read \S \ref{proof} after \S \ref{semisimple-measures}. 

For now we present a very brief high-level outline of the proof strategy:
We show for increasingly large intermediate subgroups $H \subseteq S \subseteq G$, 
 {\em either} $\mu$ is supported on an $S$-orbit of small volume, {\em or} $\mu$ is almost invariant by a strictly larger subgroup $S' \supset S$.

Indeed, take a one-parameter
unipotent subgroup $U \subset H$ and say a point $x \in X$ is
generic if the measure along long segments
of $x.U$ approximates well the measure $\mu$. The proof of the dichotomy
uses the following principles:
\begin{enumerate}
\item (Ergodic theorem). Most points $x \in X$ are generic. 
\item (Nearby generic points give additional invariance).
If $x,x' \in X$ are two generic points which are very close, i.e.
$x' = x g$ for some $g$ near the identity, and $g \notin S$,
then $\mu$ is almost invariant by a larger subgroup $S' \subset S$. 
\item (Dichotomy). If we {\em cannot} find $x,x'$
as above, then necessarily $\mu$ was supported on a 
closed $S$-orbit of small volume. 
\end{enumerate}
Step (2) is easily effectivizable, relying only on 
polynomial behavior of unipotent trajectories (see \S \ref{existing-work:HD}). 

 ``Good'' effective forms of both (1) and (3), however, make use of spectral gaps. 
Specifically, we use the fact that, for any intermediate subgroup $S \supset H$,
the action of $S$ on $L_0^2(x S)$, where $x S$ varies through all
closed $S$-orbits, has a uniform spectral gap. (Here $L_0^2$ denotes the orthogonal complement of locally constant functions.)
The crucial ingredient in providing the spectral gaps we need is the work of Clozel on ``property $\tau$.''
In effect, this guarantees\footnote{The terminology ``property $\tau$'' is perhaps confusing. In general, it asserts that group $Q$ has a uniform spectral gap in its action on some natural family of unitary representation. Therefore, unlike property $T$,  which is an intrinsic property of a group $G$, 
property $\tau$ only makes sense by reference to an implicit choice of such a family.}
  a uniform spectral gap for $Q$ acting on $L^2(\Lambda \backslash Q)$,
when $Q$ is a semisimple group and $\Lambda$ varies through {\em congruence} lattices within $Q$. 
(See also \S \ref{sarithmetic}.)

We also make use of several techniques that have become, to some extent,
standard in the theory of homogeneous dynamics. Most notably we use (as noted) polynomial
divergence for unipotent actions and the related linearization argument, which we will
discuss in \S \ref{existing-work:HD}.  These ideas will enter both in steps (2) and (3). 

Finally, the process of ``effectivizing'' results concerning invariant measures, inevitably,
leads to questions about efficient generation of Lie groups; in handling these
(technical) issues, we shall make use of some simple arguments in algebraic
 geometry and Diophantine geometry, as well as of
the Lojasiewicz inequality.

\subsection{Organization of the paper.}
The paper is organized as follows:

We start in \S \ref{new-sec-3} with the non effective version of Theorem \ref{thm:main}
and the simplified proof of Ratner's theorem on measure rigidity in the case of semisimple acting groups.

In \S \ref{notation}, we set up notation. Some of this may be of independent interest;
our use of Sobolev norms and relative traces, inspired by a paper of Bernstein and Reznikov,
seems a natural way of handling certain analytic questions on homogeneous spaces.

In \S \ref{proof}, we give a detailed sketch of the proof of the Theorem.  We give precise statements of
the main constituent results that go into the Theorem, and references to their
proofs in the text. 
We do not give proofs, but we indicate the main idea in each case.

In \S \ref{sobnormproofs}, \ref{spectralgap}, \ref{effectivegen}, \ref{Appendix-A} respectively,
we establish or recall some basic properties pertaining
to, respectively, Sobolev norms, the spectral gap, generation of Lie algebras,
and almost invariance of measures. The reader might wish to skip these sections
and refer to them as necessary while reading the rest of the text.

In the sections \S \ref{firststep} -- \S \ref{closing}, we prove the 
central statements required for the Theorem. 

\S \ref{finalproof} gives the proof of Theorem \ref{thm:main} and its corollaries. 

\S \ref{arithappl} gives a simple application of the Theorem to a number-theory problem. 

The Appendices recall and prove certain basic results that we need.

\subsection{Connection to existing work}\label{existing-work}
\subsubsection{Dynamics of unipotent flows}\label{existing-work:HD}

Given a one parameter subgroup $g_t \in G$,
and two ``nearby'' points $x_1, x_2 \in X$,
satisfying $x_2 = x_1 \exp(v)$, for $v \in \mathrm{Lie}(G)$,
we have $$x_2 g_t = x_1 g_t \exp(\Ad(g_{-t}) v).$$
The behavior of $t \mapsto \Ad(g_{-t} )v$ controls the relative
behavior of the orbits of $x_1$ and $x_2$ under $g_t$.
In general, $t \mapsto \Ad(g_{-t}) v$ may grow exponentially. However,
if $g_t$ happens to be a {\em unipotent} subgroup, then
 $t \mapsto \Ad(g_{-t}) v$ is a polynomial.  This feature is the
{\em polynomial behavior} or {\em polynomial divergence} of unipotent flows.

The polynomial divergence of orbits of unipotent flows was one of the
main motivations for M.S. Raghunathan when he formulated, in the mid-seventies,
his conjecture about ``orbit closure rigidity.''
He hoped that unipotent flows are likely to have ``manageable behavior''
because of the slow divergence of orbits of unipotent one-parameter
subgroups (in contrast to the exponential divergence of orbits
of diagonalizable subgroups).

The polynomial behavior of unipotent orbits is very important in the present
 paper, and we have used many techniques developed over the past four decades.
We make particular note of the following three ideas we use;
this is not intended as either a survey or a history of the field.

1. Nondivergence of unipotent flows. One of the first times
the polynomial behavior was used was in the proof of non-divergence
of unipotent flows by G.M. \cite{M-non-divergence}, where
it was indeed one of the basic ingredients.
 This phenomenon of non-divergence
was quantified and further refined by Dani, Kleinbock, and G.M.
\cite{Dani-unipotent,DM-1,Kleinbock-Margulis-Ann}. We actually make use of
\cite{Kleinbock-Margulis-Ann} to control how much mass of a closed
 $H$-orbit can be close to infinity, see Lemma \ref{measureoutsidecompact}.

2. "Nearby generic points give additional invariance."
A topological incarnation of this principle was utilized in the work of G.M.
 and G.M.-Dani \cite{DM-Opp-1, DM-Opp-2, DM, M-formes, M-Selberg, M-indefinite}, and a measure-theoretic incarnation was
utilized in the work of M. Ratner in \cite{Ratner-Acta, Ratner-Inventiones, Ratner-Duke, Ratner-measure-rigidity}; the former uses Chevalley's theorem in order to control relative behavior of orbits of unipotent flows, and the arguments of the latter papers utilize the``R-principle,''
generalizing the ``H-principle'' used earlier by Ratner \cite{ Ratner-rigidity,Ratner-factors,Ratner-joinings,
time-horocycle} 
and by Witte \cite{WitteBAMS, WitteInventiones}. 


To be a little more precise,
the arguments of \cite{DM-Opp-1, DM-Opp-2, DM, M-formes, M-Selberg, M-indefinite}
 show that, if $U$ is a unipotent subgroup of $G$,
and $Y$ is a minimal closed $U$-invariant subset of $\Gamma \backslash G$,
 then either:
\begin{enumerate}
\item  $Y$ is an orbit of $U$, or
\item  $Y$ is
invariant under a bigger connected subgroup contained in the normalizer of
$U$ (additional invariance). \end{enumerate}
Arguments contained in \cite{Ratner-Acta, Ratner-Inventiones, Ratner-Duke, Ratner-measure-rigidity}
show that, if $\mu$ is an ergodic $U$-invariant measure,  then either:
\begin{enumerate}
\item $\mu$ is the Haar measure on a
closed $U$-orbit, or
\item  $\mu$ is invariant under a bigger connected subgroup
contained in the normalizer of $U$ (additional invariance).
\end{enumerate}

The proof of these results may be understood as topological 
and measure-theoretic versions of the principle enunciated 
above;  the notion of {\em minimal set} is the topological
 analog of the notion of {\em ergodic measure}.  Indeed,
 the proof of the topological statement is based on studying
 the orbits of a sequence of points that converge to a
 minimal closed invariant set;
the proof of the measure-theoretic statement is based on studying the orbits
of nearby points that are generic in the sense of Birkhoff's ergodic theorem.

Let us draw attention to one important difference
between the topological and the measure-theoretic setting: an
arbitrary $U$-invariant measure may be decomposed
into ergodic components; the analog of this statement
for a $U$-invariant closed set is not true.
 
  Our use of an effective ergodic theorem, together with  polynomial
divergence, is motivated by this principle; many of our arguments are closely related to
(ineffective) arguments from the papers mentioned
above. These ideas enter, in particular, into steps (1) and (2) of the outline of the proof presented in \S \ref{outline}.

3. The ``linearization" technique. 
By this, we shall mean a collection of
methods which allow one to control the time an orbit of a unipotent subgroup U
spends near closed orbits of certain subgroups $S\subset G$ containing
$U$. One method to do this is by realizing the action of $U$ on $G/S$ as the linear action of $U$ on a
certain subset of a linear space (which also makes use of Chevalley's
theorem). The origins of this method can be traced back to the work of
S.G.Dani and Smillie \cite{Dani-Smillie} in the context of Fuchsian groups.
General results and techniques are developed in \cite{DM-Opp-1, DM-Opp-2, DM, M-formes, M-Selberg, M-indefinite, 
GMTS,
Ratner-Acta, Ratner-Inventiones, Ratner-Duke, Ratner-measure-rigidity}. 

We make use of a version of this technique to control the time spent near
a closed orbit of a semisimple subgroup, see Proposition~\ref{lem:Quantiso};
this enters into step (3) of the outline of \S \ref{outline}.

 \subsubsection{Effective equidistribution.}\label{effeqdist}
There are two general cases where it is known one may establish
effective results about the distribution of $H$-orbits on $\Gamma \backslash G$:
\begin{enumerate}
\item  When $H$ is a {\em horospherical} subgroup, i.e. the Lie algebra of $\mathfrak{h}$
consists of all contracted directions for the adjoint action of a semisimple element $s \in G$. In that case,
one may use the mixing properties of the $s$-action on $\Gamma \backslash G$.
\item When $H$ is ``large'' inside $G$,
 one may sometimes analyze effectively the distribution
of closed $H$-orbits inside $G$ via representation theory or automorphic forms.
``Large'' usually means, {\em at the very least}, that $H$ should act with an open orbit on the flag variety $G/P_0$,
 where $P_0$ is a minimal parabolic of $G$.
 \end{enumerate}

In the case (1), the approach using mixing properties
of the $s$-action on $\Gamma \backslash G$ can be traced back to the thesis
of G.M. \cite{Margulis-thesis}, where it was used in the context
of Anosov flows. (This thesis was written in 1970, but published more than thirty years later). Another approach to
effective equidistribution for long horocycles on $\operatorname{SL}_2(\R)$ is implicit
in the work of Ratner \cite{time-horocycle, Ratner-sl2}, where the effective ergodicity of the horocycle flow is used.
Both of these ideas would suffice to prove effective equidistribution of orbits of horospherical subgroups in any rank,
as would the work of Burger \cite{Burger}.
More detailed analysis of the quantitative equidistribution of the horocycle flow for $\operatorname{SL}_2(\R)$
may be found in \cite{flaminio-forni,Strombergsson}; quantitative mixing rates are discussed in
\cite{M-K-log}, and analysis of equidistribution
of closed horospheres and its relevance to the theory of automorphic forms may be found in \cite{sparse-equi}.

A typical instance of the second type of result is the equidistribution of
Hecke points, \cite{EO2} or \cite{COU}; a ``twisted'' version
is the work of Li, Jiang and Zhang, \cite{LJZ}.  These both
correspond to the special case of the main result when $H$ is diagonally
embedded in $G = H \times H$; in the ``Hecke point'' cases,
one restricts the possibilities for the $\Q$-form underlying the
 closed $H$-orbit, whereas the latter case restricts to $H = \PGL_2$ over a 
totally real number field.  

An illuminating context where the two cases overlap 
 is the question of {\em equidistribution of translates}
for a closed orbit of a symmetric subgroup: see \cite{Duke-Rudnick-Sarnak}
 for an effective
treatment by the second method, and \cite{Benoist-Oh}
 for an effective treatment in a more general setting, following the strategy of \cite{EM}, by a method very closely related to the first. 
 (The ``wavefront lemma" assures that, in the limit, such translates
 acquire local invariance by a horospherical subgroup).

In both cases above, the analysis 
usually makes use of spectral gaps; for instance,
the first method makes use of quantitative mixing, which is substantially equivalent to a spectral gap. Spectral gaps are also used in our argument, but
in an essentially different way; most importantly,
we do not use a spectral gap for the ambient group $G$ acting
on the ambient space, but rather for the acting group $H \subset G$
acting on the invariant measure.  The idea that this could
be used to give a simple proof -- in certain cases -- of 
the fact that ``limits of ergodic measures remain ergodic'' was used in a paper
by J. Ellenberg and A.V. \cite{EV}, based on prior discussions
with M. E. and Elon Lindenstrauss.

In any case, the methods in both cases are fundamentally limited
even if $H$ is a maximal subgroup of $G$. Moreover,
these methods do not
detect closed orbits of
intermediate subgroups $H \subset S \subset G$, and so there appears
to be little hope of generalizing them significantly. Indeed, the result
of this paper appears to be the first that produces quantitative results
when $G$ is semisimple and $H$ is not one of the subgroups
mentioned in (1) or (2) above,
and, in particular, in the case when $H$ is ``far from maximal.''

\subsubsection{A history of this paper.} \label{history}
A topological version of Theorem \ref{thm:main} was proven by G.M., in an unpublished manuscript,  in the case
 when $H = \mathrm{SO}(2,1)$ and $G = \mathrm{SL}_3(\R)$ (and implicitly
in the case when $H$ is a maximal semisimple subgroup of $G$). 

The present paper uses a different technique than the one used in that proof. 
In the case of $H = \mathrm{SO}(2,1)$, $G = \mathrm{SL}_3(\R)$, that proof uses mixing properties of
   the action of a torus within $H$, whereas the proof presented in this paper uses ergodic properties of
   a one-parameter unipotent flow within $H$. A discussion of the relationship of these techniques is presented in \S \ref{semisimple-measures}. 

   The idea of using effective ergodic theorems for the unipotent flow was noted
   independently by M.E. and A.V., and by G.M.,
motivated by various methods in the theory of unipotent flows. 

We note that the general case is very considerably more involved than the 
case of $\mathrm{SO}(2,1) \subset \mathrm{SL}_3(\R)$, owing to the possibility of intermediate subgroups.  In fact, the technically most complicated arguments,
e.g.\ the effective closing lemma, are necessary only if $H$ is not maximal.

\subsection{Remarks on generalizations.} \label{generalization}
\subsubsection{The congruence subgroup assumption.}\index{congruence}

One may replace the assumption that $\Gamma$ is congruence
by the assumption that $\Gamma$ is arithmetic. We indicate here how this is done. 

Suppose that $\Gamma$ is an arithmetic, but
not necessarily congruence, subgroup of $\G(\Q)$
and $x H$ is a closed $H$-orbit on $X = \Gamma
 \backslash G$. Let $\Lambda$ be a congruence subgroup of $\G(\Q)$.
Replacing $\Gamma$ with $\Gamma \cap \Lambda$ and
$x H$ by the orbit $x'H$ of a preimage $x'$ of $x$ in
$\Gamma \cap \Lambda\backslash G$, we may
assume that $\Gamma \subset \Lambda$. Let $\bar{X} = \Lambda \backslash G$,
so we have a natural projection $\pi: X \rightarrow \bar{X}$.

Let $\mu$ resp. $\overline{\mu}$ be the $H$-invariant measures
on $x H$ resp. their projections to $\bar{X}$. Our main theorem
gives an effective result about the distribution of $\overline{\mu}$.
Suppose, for simplicity, we are in the situation where $\overline{\mu}$
is close to the $G$-invariant probability measure $\mu_{\bar{X}}$ on $\bar{X}$.
Here the error of approximation is quantified by Theorem \ref{thm:main}.

Let $f \in C^{\infty}_c(X)$ be so that $\mu_X(f) = 0$. Let $u(t)$ be a one-parameter subgroup
in $H$, and let $f_T := \frac{1}{T}\int_{t=0}^{T} u(t)f$.

For a function $f$ on $X$, we define $\pi_*f$ on $\bar{X}$
via $\pi_*f(\bar{x}) = \sum_{\pi(x) = \bar{x}} f(x)$.

Then
\begin{multline*} 
|\mu(f)|^2 = |\mu(f_T)|^2 \leq  \mu(|f_T|^2) \leq \overline{\mu}(\pi_* |f_T|^2)
\sim \\ \mu_{\bar{X}} (\pi_* |f_T|^2)  = [\Lambda:\Gamma]\mu_X(|f_T|^2)
\end{multline*}
where the rate of approximation depends on the quality of approximation of $\mu_{\bar{X}}$ by $\overline{\mu}$
and on the Sobolev norm of $\pi_* |f_T|^2$ and so also on $T$.
The right-hand quantity is bounded by quantitative mixing of the $G$-action
on $X$. Optimizing for $T$ shows that $\mu_X(f) = 0$ implies that $|\mu(f)|$
is ``small'', which is the same as $\mu \sim \mu_X$.

 The above argument is entirely quantitative.  The quality of the bound depends on the spectral gap for $G$ acting on $L^2(X)$. 
 
 Observe an important difference with Theorem \ref{thm:main}:
 in the case when $\Gamma$ is assumed only to be arithmetic, the quality of the bound
 depends on the lattice $\Gamma$, and not only on $G,H$. Presumably, such 
 a dependence is not avoidable, as one may see by considering the example
 of the horocycle flow on arithmetic quotients of $\SL_2(\R)$.

\subsubsection{$S$-arithmetic generalization. Translates of a fixed orbit.} \label{sarithmetic}
One may envisage a version of the theorem that
concerns closed orbits of a semisimple $S$-arithmetic group $H$
acting on an $S$-arithmetic quotient $\Gamma \backslash G$.

This extension has more than simple formal significance. We indicate
three applications which do not on their face involve $S$-arithmetic groups.
\begin{enumerate}
\item It is possible to give an independent proof of property $(\tau)$ (first established
by Clozel \cite{LC}) using the $S$-arithmetic extension of our theorem.\footnote{Note that, in the present paper, we use property $(\tau)$ as an input. However, it is possible to avoid it by more elaborate arguments, and indeed to {\em derive} it in general by our methods, at least for groups of absolute rank $\geq 2$.
The groups of absolute rank $1$ require an alternative treatment; this is also true of Clozel's approach.}

\item Notations as in our main theorem, let $x_0 H$ be a closed $H$-orbit.
One may ask about the distribution of {\em varying translates} of $H$,
i.e. $x_0 H g$ when $g \rightarrow \infty$. If $H$ is a symmetric subgroup,
effective distribution results may be given using the wavefront lemma, cf. \cite{EM}.
We anticipate that the $S$-arithmetic extension of our result
will allow us to treat this question in the more general setting of
\S \ref{technical}. 

\item In an arithmetic direction,
 \cite{EV} uses
the case when $H$, $G$ are $p$-adic orthogonal groups to prove local-global principles for representations of quadratic forms. To establish an effective version of that result would imply new bounds for the Fourier coefficients of Siegel modular forms.  Results in this direction would require removal of the centralizer assumption from \S \ref{technical}.
\end{enumerate}
As is usual in such matters, the existence of small additive subgroups of $\Q_p$ will cause further complications.
\subsubsection{The centralizer assumption}
There exist many natural settings where the centralizer assumption
of \S \ref{technical}, assumed in Theorem \ref{thm:main}, is too restrictive. Indeed, it does not seem to be truly essential to our method; the key part 
of our method is the existence and exploitation of spectral gaps. 

 However, many technical complications seem to arise when it is removed. We hope to discuss this elsewhere. 

\subsection{Acknowledgements.}
This work was initiated at the Institute of Advanced
Study during the academic year 2005-2006; in fact, all authors were
visiting there for various parts of that year. We would like to
thank the IAS for providing excellent working conditions.\footnote{The first and the third named authors would like to make
particular note of the excellence of the cookies.}

This research has been supported by the Clay Mathematics Institute
(M.E. by a Clay Research Scholarship, A.V. by a fellowship);
 and by the NSF (DMS grants 0622397 (M.E.), 02045606 (A.V.), 
(0244406) (D.M.), and an FRG collaborative grant 0554373 (M.E. \& A.V.))

We would like to express our appreciation of conversations with
E.\ Lapid (concerning the proof of \eqref{temp})
and H.\ Oh and Y.\ Shalom (concerning \S \ref{arithappl} and \S \ref{GWsec}). 

We would also like to express our gratitude to Elon Lindenstrauss,
who both contributed greatly to early discussions about this paper,
and encouraged its completion.
It is a great pleasure to thank him for his generosity with ideas and time.


\section{A proof of measure classification for semisimple groups.} \label{new-sec-3} \label{semisimple-measures}
\subsection{Introduction.}In the present section, we give a short proof
of the non-effective version of Theorem \ref{thm:main}, 
the result \eqref{MS} of Mozes and Shah. 
Notation as in the theorem, this proof has three (independent)
constituents:
\begin{enumerate}
\item Proposition \ref{MC} classifies
$H$-ergodic measures on $\Gamma \backslash G$;
\item
Proposition \ref{PT} shows that, under a certain spectral
gap assumption, any limit of $H$-ergodic probability measures on $\Gamma \backslash G$
is itself ergodic.
\item  It is established in Proposition \ref{automorphic}
that this spectral gap assumption is valid in the setting of \eqref{MS}. 
\end{enumerate}

Therefore, taken in combination, these results establish \eqref{MS}. 
\footnote{To be precise, in the case where $\Gamma\backslash G$ is noncompact,
there may be ``escape of mass,'' i.e., it is not a priori obvious that
the limit of such a sequence is a probability measure.  However, in the case considered in Theorem \ref{thm:main} -- $\mathfrak{h}$ 
has trivial centralizer -- this does not occur: Lemma \ref{measureoutsidecompact}. }

While the proofs of Proposition \ref{MC} and Proposition \ref{PT}
are elementary, Proposition \ref{automorphic} is a deep result
from the theory of automorphic forms. 
The key virtue of this proof, however,
is amenable to effectivization.  Indeed, the contents
of this section
may be viewed as the non-effective counterpart of the proof of Theorem \ref{thm:main}.
\S \ref{proof} gives an outline of the effective proof of Theorem~\ref{thm:main}, together
with comparisons to the non-effective proof presented here.

\begin{prop}  \label{MC}
Let $G$ be a semisimple real Lie group, let $\Gamma$ be a discrete subgroup in $G$, and let $H$ be a
connected semisimple subgroup of $G$ without compact factors.  Let $\mu$ be an
$H$-invariant and ergodic probability measure on $X=\Gamma\backslash G$. Then $\mu$ is the
$S$-invariant probability measure on a closed $S$-orbit of an intermediate subgroup $H \subseteq S \subseteq G$.
\end{prop}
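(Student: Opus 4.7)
I take $S := \{g \in G : g_*\mu = \mu\}$, the full stabilizer of $\mu$ in $G$, which is a closed subgroup containing $H$. The goal is then to show that $\mu$ is concentrated on a single $S$-orbit $x_0 S$: such an orbit is automatically closed (being the support of an $S$-invariant probability measure), and by $H$-ergodicity $\mu = \mu_{x_0 S}$. So $S$ is the intermediate subgroup required by the proposition, and the problem reduces to proving concentration on one $S$-orbit.

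\textbf{Key steps.} Pick a nontrivial one-parameter unipotent subgroup $U=\{u(t)\}_{t\in\R}\subset H$; this exists because $H$ is semisimple with no compact factors. Applying the Howe--Moore vanishing-of-matrix-coefficients theorem to the $H$-representation $L^2(\mu)$ (ergodic since $\mu$ is $H$-ergodic), every unbounded closed subgroup of $H$ acts ergodically on $\mu$; in particular $\mu$ is $U$-ergodic, and Birkhoff's theorem produces a $\mu$-conull set $X_0$ of $U$-generic points. Assume for contradiction that $\mu$ is not concentrated on a single $S$-orbit. Disintegrating $\mu$ along a measurable transversal to the $S$-action at a density point yields, for arbitrarily small $\varepsilon>0$, two generic points $x, x' \in X_0$ with $x' = x\exp(v)$, $\|v\|<\varepsilon$, and $v \notin \mathfrak{s} := \Lie(S)$. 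Now invoke polynomial divergence: $x' u(t) = (x u(t))\, \exp(\Ad(u(-t))v)$, and $t \mapsto \Ad(u(-t))v$ is a polynomial curve in $\g$. Fix an $\Ad(U)$-stable complement to $\mathfrak{s}$ in $\g$ and decompose $v = v_{\mathfrak{s}} + v_\perp$; choose times $t_n \to \infty$ rescaled by the dominant power so that $\Ad(u(-t_n))v_\perp$ converges to some nonzero $w_\infty \in \g \setminus \mathfrak{s}$. Passing to a subsequence along which $x u(t_n) \to y$ (possible since generic points recur to a compact set of positive measure), we obtain $x' u(t_n) \to y\exp(w_\infty)$; both limits are approximable by generic points, and comparing Birkhoff averages against test functions $f \in C_c(X)$ forces $\exp(w_\infty)_* \mu = \mu$. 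This contradicts $w_\infty \notin \mathfrak{s}$, completing the proof.

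\textbf{Main obstacle.} The hardest single step is producing the transverse displacement $v \notin \mathfrak{s}$: one must verify that, if $\mu$ is not concentrated on a single $S$-orbit, then the conditional measures of $\mu$ on small transversals to $S$-orbits are non-atomic at some density point. This is routine when $S$-orbits are locally closed, but in the general measurable setting requires a Lusin / covering argument exploiting the $S$-invariance of $\mu$ together with $\sigma$-compactness of $G$. A second subtlety is the precise rescaling of $t_n$: one must simultaneously guarantee recurrence of $x u(t_n)$ to a fixed compact set (which holds for $\mu$-a.e.\ generic $x$) and that the leading nonzero coefficient of $t\mapsto \Ad(u(-t))v_\perp \pmod{\mathfrak{s}}$ survives the normalization to give a nonzero $w_\infty$. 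The latter is automatic unless $\Ad(U)$ acts trivially on the image of $v_\perp$ in $\g/\mathfrak{s}$; in that exceptional case $\exp(\R v_\perp)$ already commutes with $U$ and a direct variant of the argument produces the extra invariance without any limit.
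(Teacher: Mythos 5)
Your overall strategy is the one the paper uses (the stabilizer subgroup $S$, unipotent ergodicity via Mautner/Howe--Moore, two nearby generic points transverse to $S$, polynomial divergence, extra invariance contradicting the maximality of $S$), but the limiting step at the heart of the argument does not work as you have written it. For a \emph{fixed} pair $x'=x\exp(v)$ with $v_\perp$ not centralized by $U$, the curve $t\mapsto \mathrm{Ad}(u(-t))v_\perp$ is a nonconstant polynomial and hence unbounded, so there are no times $t_n\to\infty$ along which it converges to a finite nonzero $w_\infty$; and if you rescale by the dominant power of $t$ to force convergence (as your ``Main obstacle'' paragraph suggests), the actual displacement between $xu(t_n)$ and $x'u(t_n)$ blows up, so $x'u(t_n)$ does \emph{not} converge to $y\exp(w_\infty)$. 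The correct procedure (\S \ref{subsec:B}--\ref{subsec:C} of the paper) uses a \emph{sequence} of pairs $(x_n, x_n\exp(v_n))$ with $v_n\to 0$, and for each $n$ the exit time $T_n\to\infty$ at which the displacement first reaches the boundary of a fixed ball $B$; one then compares the time averages of $f$ along the two orbits over the tail window $[(1-\delta)T_n,T_n]$, where the displacement is approximately the constant element $g_n^*$ on the boundary of $B$. Comparing limits of single orbit points $xu(t_n)$, or full-window Birkhoff averages (which both converge to $\int f\,d\mu$ and carry no information), does not yield invariance.

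This forces a second repair: since the base points $x_n$ and $x_n\exp(v_n)$ vary with $n$ while the averaging window has length tending to infinity, pointwise Birkhoff genericity of each individual point is not enough --- the rate of convergence could degenerate as fast as $T_n$ grows. The paper therefore fixes, via Egoroff, a set $E$ of \emph{uniform} convergence with $\mu(E)>9/10$ and runs the Fubini/density argument inside $E$; your conull set $X_0$ of merely generic points will not do. Two smaller points: $U$ must be chosen to project nontrivially to each simple factor of $H$ (otherwise the Mautner/Howe--Moore argument does not give $U$-ergodicity of $\mu$ when $L_0^2(\mu)$ has vectors fixed by a factor); and the exceptional case where $v_\perp$ is centralized by $U$, which you correctly flag, is handled exactly as in part (a) of Lemma \ref{easy=proof}. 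With these repairs your argument coincides with the paper's proof.
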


This is due to M. Ratner  \cite{Ratner-measure-rigidity}. In the setting considered here, where $H$ is semisimple, the proof simplifies significantly.
This has been noted already in \cite{Einsiedler} due to M.E. 
We shall present a simplified proof along
these lines, differing somewhat from \cite{Einsiedler}. See also \cite[p244]{Ratner-Duke}
where similar arguments are presented. 

Just as the outline presented in \S \ref{outline},  the proof of Proposition \ref{MC}
has three distinct steps:
\begin{enumerate} \item {\em Ergodic theorem:}
\S \ref{subsec:A}.
\item {\em Nearby generic points give additional invariance:} 
\S \ref{subsec:B}. 
\item {\em Dichotomy:} Either one can find nearby generic points,
or $\mu$ is supported on a closed orbit. \S \ref{subsec:C}. 
\end{enumerate}

\subsection{Setup for the proof.} \label{subsec:Setup}

We now indicate the proof of Proposition \ref{MC}. 
Let $G$, $H$, $\Gamma$, $X=\Gamma\backslash G$, and $\mu$ be as in Proposition \ref{MC}.

Let $S=\{g \in G : g$ preserves $\mu\}$. The subgroup $S$ is closed and
contains $H$. $H$ being semisimple, the restriction to $H$ of the adjoint
representation of $G$ is completely reducible. Hence there exists an
$\mathrm{Ad}(H)$-invariant complement $\mathfrak r$ to the Lie algebra $\mathfrak s$ of $S$,
within $\mathfrak{g}$. 
 It is the precisely the existence of this $\mathrm{Ad}(H)$-invariant complement
which simplifies the proof of the measure classification result.

\subsection{Ergodic theorem} \label{subsec:A}
Let $U=\{u(t):t\in\R\}$ be a one-parameter unipotent subgroup of $H$ which projects nontrivially on each
simple factor of $H$. Then, according to a theorem of Moore  (Mautner phenomenon) the measure
$\mu$ is $U$-ergodic.

According to Birkhoff's individual ergodic theorem, for $\mu$-almost $x\in X$
and any continuous compactly supported function $f$ on $X$,
\begin{equation}\label{compact-Birkhoff}
  \frac{1}{T} \int_0^T f(xu_{-t})\operatorname{d}\! t\rightarrow\int f \operatorname{d}\!\mu.
\end{equation}
A point $x\in X$ satisfying \eqref{compact-Birkhoff} for all $f\in C_c(X)$ is called {\em generic}.

We say that $E\subset X$ is a set of uniform convergence if for any
compactly supported function $f$ the above convergence is uniform with respect to $x\in E$.
In view of Egoroff's theorem, there exist sets of uniform convergence of measure arbitrarily close to $1$.  
Fix such a set $E$ of uniform  convergence of $\mu$-measure $>9/10$.

\subsection{Nearby generic points give additional invariance.} \label{subsec:B}

First let us make the following remark, which is a quantification of ``polynomial behavior''
discussed in \S \ref{existing-work:HD}:
\begin{enumerate}
 \item[(i)] For any element $g$ of $G$, the matrix coefficients of $\mathrm{Ad}(u_t gu_{-t})$
are polynomials in $t$ of degree not greater than $\dim(G)$.
\end{enumerate}

Let $B$ be an open bounded subset of $G$ containing the identity. For $g\in B$, we set
\begin{eqnarray*}
 T_g &=& \sup\bigl\{ T : u_t g u_{-t} \in B \mbox{
whenever } t \in (0,T)\bigr\}, \\
 g^* &=& u_{T_g} g u_{-T_g}.
\end{eqnarray*}
It follows from (i) that $T_g$ is finite if $g$ does not belong
to the centralizer $C_G(U)$ of $U$ in $G$. Moreover, in that case
$g^*$ is well defined and belongs to the boundary of $B$.

 For $g\in B\setminus C(U)$, we define a map
$q_g:[0,1]\to G$ by
\[
            q_g(s/T_g)=u_s gu_{-s}, s \in [0,T_g].
\]
Since $B$ is bounded, the family $\{q_g\}$ is uniformly bounded. Therefore, the adjoint
actions of these elements form an equicontinuous family of polynomial maps on the Lie algebra.
This implies the following statement, for a metric $d$ on $X$ that is obtained
from a left-invariant metric on $G$:
\begin{enumerate}
 \item[(ii)] For every $\epsilon>0$ one can find $\delta>0$ such that if
 $x\in X$, $g\in B\setminus C_G(U)$, and $t\in [(1-\delta)T_g,T_g]$,
then the distance
\begin{equation}\label{slow-divergence}
        d\bigl(xg u_{-t} ,xu_{-t} g^* \bigr) \leq d( u_t g u_{-t}, g^*) < \epsilon.
\end{equation}
\end{enumerate}

\begin{lem} \label{easy=proof}
(a) Let $g$ be an element of $C_G(U)$. Suppose that we can find $x,y\in E$ such that $y=xg$.
Then the measure $\mu$ is invariant under $g$.

(b) Suppose that we can find a sequence $g_n\in G$ and a sequence $x_n \in E$ such that
\begin{enumerate}
    \item $g_n \rightarrow 1$;
    \item $g_n \notin C_G(U)$ for all $n$,
    \item $x_ng_n\in E$ for all $n$.
\end{enumerate}
Then $\mu$ is invariant under any limit point of the $g_n^*$.
\end{lem}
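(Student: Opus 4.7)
The plan is to apply Birkhoff's ergodic theorem uniformly over $E$ twice---once from each of the two generic points---and exploit the resulting equality of limits, combined in case (b) with the polynomial slow-divergence estimate (ii).

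For (a), since $g\in C_G(U)$ commutes with $U$, one has $xg\cdot u_{-t}=xu_{-t}\cdot g$, so
\[
\frac{1}{T}\int_0^T f(xu_{-t}g)\,dt=\frac{1}{T}\int_0^T f(yu_{-t})\,dt,
\]
where $y=xg$. The right-hand side tends to $\int f\,d\mu$ since $y\in E$ is generic. The left-hand side, rewritten as $\frac{1}{T}\int_0^T(R_gf)(xu_{-t})\,dt$ with $R_gf(z):=f(zg)\in C_c(X)$, tends to $\int R_gf\,d\mu=\int f(\cdot g)\,d\mu$ since $x\in E$ is generic. Equating the two limits yields $g$-invariance of $\mu$.

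For (b), by relative compactness of $\{g_n^*\}\subset\partial B$, pass to a subsequence along which $g_n^*\to g^*$; we show $\mu$ is $g^*$-invariant. By (i), the matrix coefficients of $u_tg_nu_{-t}$ are polynomials in $t$ of degree $\leq\dim G$ whose non-constant coefficients tend to $0$ as $g_n\to 1$, so these polynomials take arbitrarily long to exit $B$, i.e.\ $T_n:=T_{g_n}\to\infty$. Fix $f\in C_c(X)$ and a small $\delta>0$, and consider the terminal-window average
\[
A_n(f):=\frac{1}{\delta T_n}\int_{(1-\delta)T_n}^{T_n}f(x_ng_nu_{-t})\,dt.
\]
Writing $A_n(f)=\frac{1}{\delta}B_n(f)-\frac{1-\delta}{\delta}C_n(f)$, where $B_n(f)$ and $C_n(f)$ are the full Birkhoff averages of $t\mapsto f(y_nu_{-t})$ over $[0,T_n]$ and $[0,(1-\delta)T_n]$ respectively with $y_n:=x_ng_n\in E$, uniform convergence on $E$ forces $B_n(f),C_n(f)\to\int f\,d\mu$, and hence $A_n(f)\to\int f\,d\mu$. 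On the other hand, by (ii) together with left-invariance of $d$, $g_n^*\to g^*$, and uniform continuity of $f$, one has $|f(x_ng_nu_{-t})-f(x_nu_{-t}g^*)|\to 0$ uniformly in $t\in[(1-\delta)T_n,T_n]$; applying the same window decomposition to $x_n\in E$ with test function $R_{g^*}f\in C_c(X)$ yields $A_n(f)\to\int f(\cdot g^*)\,d\mu$. Equating the two limits gives $\int f\,d\mu=\int f(\cdot g^*)\,d\mu$ for every $f\in C_c(X)$, i.e.\ $g^*$-invariance of $\mu$.

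The main obstacle is the passage from uniform Birkhoff convergence on $E$ for the full averages $\frac{1}{T}\int_0^T$ to convergence on the short terminal windows $[(1-\delta)T_n,T_n]$; this is handled by the elementary linear decomposition above together with $T_n\to\infty$. Once this is available, (ii) is exactly what implements the slogan ``nearby generic points give additional invariance'': the common terminal drift $g^*$ of the two trajectories $t\mapsto x_nu_{-t}$ and $t\mapsto x_ng_nu_{-t}$ is transferred into an invariance of the ergodic limit $\mu$.
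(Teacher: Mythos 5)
Your proof is correct and follows essentially the same route as the paper's: part (a) by equating the two Birkhoff limits after commuting $g$ past $u_{-t}$, and part (b) by decomposing the terminal-window average $\frac{1}{\delta T_n}\int_{(1-\delta)T_n}^{T_n}$ into full Birkhoff averages over $[0,T_n]$ and $[0,(1-\delta)T_n]$ (this is exactly what the paper means by applying the definition of $E$ for the two values of $T$), then using the slow-divergence estimate (ii) and $T_{g_n}\to\infty$ to identify the two limits. The only cosmetic caveat is that for fixed $\delta$ the comparison $|f(x_ng_nu_{-t})-f(x_nu_{-t}g^*)|$ is only small of order $\omega_f(\varepsilon)$ rather than tending to $0$, so one should let $\varepsilon\to 0$ (re-choosing $\delta$) at the very end, as your closing sentence implicitly does.
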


\begin{proof} The first part follows from the
genericity of the points.

Consider the second assertion.  Let $g^*$ be a limit point of $g_n^*$, which exists by compactness of the closure of $B$. Let $f \in C_c(X)$.
Observe that $T_{g_n} \rightarrow \infty$. 

Given $\varepsilon > 0$ and $\delta>0$  we may apply the definition of $E$ (for
$T=T_{g_n}$ and $T=\delta T_{g_n}$), that $x_n g_n \in E$, and that $x_n \in E$
to obtain for all sufficiently large $n$ that
\begin{equation*}\begin{aligned}
\left|\frac{1}{\delta T_{g_n}} \int_{(1-\delta)T_{g_n}}^{T_{g_n}} f(x_ng_n u_{-t})\operatorname{d}\! t
-  \int f\operatorname{d}\!\mu \right| < \varepsilon', \\
\left|\frac{1}{\delta T_{g_n}} \int_{(1-\delta)T_{g_n}}^{T_{g_n}} f(x_n u_{-t} g^*)\operatorname{d}\! t
-  \int f(x g^*) \operatorname{d}\!\mu(x) \right| < \varepsilon'
\end{aligned} 
\end{equation*}
Choosing $\delta>0$ as in (ii), depending on $\varepsilon>0$ and the uniform
continuity of $f$, we conclude in the limit 
 that the $\mu$-integral of $x \mapsto f(x) $ and $x \mapsto f(x g^*)$ coincide. \end{proof}

\subsection{Dichotomy} \label{subsec:C}

\begin{lem} \label{step=two}
 Assume that $\mu(xS)=0$ for all $x\in X$.
 Then there exists a sequence $x_n\in E$ and a sequence $g_n\in\exp (\mathfrak r\setminus\{0\})$
 converging to $1$ such that $x_n g_n\in E$.
\end{lem}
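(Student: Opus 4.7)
The plan is to use the local product structure $G=\exp(\mathfrak{r})\cdot S$ near the identity, together with the non-atomicity of the transverse measure forced by the hypothesis $\mu(xS)=0$, in a Fubini-type argument. The main subtlety is that $\mathfrak{r}$ is only an $\mathrm{Ad}(H)$-invariant complement, and is neither $\mathrm{Ad}(S)$-invariant nor a Lie subalgebra, so two points of $E$ sharing the same $S$-coordinate in a naive local chart will \emph{not} differ by an element of $\exp(\mathfrak{r})$ in general.

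First I would fix a $\mu$-density point $x_0\in E$ and set up a chart $\phi:V\times W\to X$, $(v,w)\mapsto x_0vw$, where $V\subset\exp(\mathfrak{r})$ and $W\subset S$ are small neighborhoods of the identity (valid because $\mathfrak{g}=\mathfrak{s}\oplus\mathfrak{r}$). Right multiplication by $w'\in W$ acts in this chart as translation in $w$, so the $S$-invariance of $\mu$ implies $\phi^*\mu=d\nu(v)\otimes dh_S(w)$, where $h_S$ denotes Haar on $S$. Any atom of $\nu$ at some $v_0$ would yield positive $\mu$-mass on $x_0v_0S$, contradicting the hypothesis; hence $\nu$ is non-atomic.

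Next, applying Fubini and Chebyshev to $\phi^{-1}(E)$ --- whose $\phi^*\mu$-measure is nearly full in a sufficiently small box because $x_0$ is a $\mu$-density point --- I would extract a set $V'\subset V$ of nearly full $\nu$-measure on which each fiber $\tilde E^v=\{w:(v,w)\in\phi^{-1}(E)\}$ satisfies $h_S(\tilde E^v)\geq \tfrac{9}{10}\,h_S(W)$. Since $\nu$ is non-atomic, $V'$ accumulates at any of its $\nu$-density points $v_\infty$, so I may select $v_n\in V'\setminus\{v_\infty\}$ with $v_n\to v_\infty$.

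The main obstacle, and my final step, is to upgrade this pair $(v_n,v_\infty)$ into a pair of points of $E$ differing by an element of $\exp(\mathfrak{r})$ rather than a generic small element of $G$. For this I would solve the equation $(vw)^{-1}(v'w')\in\exp(\mathfrak{r})$ for $w'=\psi_{v,v'}(w)\in S$ by the implicit function theorem: since $\psi_{v,v}=\mathrm{id}_W$, for $v_n$ close to $v_\infty$ the map $\psi_{v_n,v_\infty}$ is a diffeomorphism onto its image close to the identity, with Jacobian uniformly close to $1$. Consequently $\tilde E^{v_n}\cap \psi_{v_n,v_\infty}^{-1}(\tilde E^{v_\infty})$ has positive $h_S$-measure for large $n$, and any $w_n$ in it yields $x_n=x_0v_nw_n\in E$ together with $x_ng_n=x_0v_\infty\psi_{v_n,v_\infty}(w_n)\in E$, where $g_n\in\exp(\mathfrak{r})$ by construction, $g_n\neq 1$ because $v_n\neq v_\infty$ (using $\mathfrak{r}\cap\mathfrak{s}=0$), and $g_n\to 1$ as $v_n\to v_\infty$. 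The one point requiring care is the uniform Jacobian control on $\psi_{v_n,v_\infty}$, which follows from smoothness of $(v,v')\mapsto\psi_{v,v'}$ and its triviality on the diagonal.
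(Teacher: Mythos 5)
Your argument is correct and follows essentially the same route as the paper's sketch: Fubini to isolate points whose local $S$-orbits lie mostly in $E$, the hypothesis $\mu(xS)=0$ to produce two such points that are close but transverse to $S$ (you phrase this as non-atomicity of the transverse conditional measure $\nu$ in a local product chart; the paper phrases it as the impossibility of covering the positive-measure set $E_1$ by local $S$-orbits), and a smooth adjustment along $S$ so that the difference lands exactly in $\exp(\mathfrak{r}\setminus\{0\})$. The implicit-function-theorem step you spell out is precisely the paper's ``density computation.''
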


\begin{proof}[Sketch of the proof]
 Let $O$ be a bounded open neighborhood of the identity in $S$, and
let $E_1$ be the set of all points $x$ in $E$ such that the relative measure in
$O$ of the set $\{s \in O: xs \in E\}$ is greater than $8/10$. It follows
from the Fubini theorem that $\mu(E_1)>1/2$.

For each $n > 1$
 take two points
$y_n,z_n \in E_1$ that satisfy $d(y_n, z_n) < 1/n$, but do not satisfy $y_n= z_n s$ for any $s \in S$ near the identity. A density computation shows that there exists $s_n, s_n' \in O$
so that $y_n s_n \in E, z_n s_n' \in E$, and $z_n s_n' = y_n s_n \exp(r_n)$
for some $r_n \in \mathfrak{r}$. 
This is quite standard, see  e.g.\ \cite[Lemma 4.4, 4.5]{Einsiedler} for details. 

Take $x_n = y_n s_n$, and take $g_n =\exp(r_n)$.  The resulting sequences have the desired properties. 
\end{proof}

\subsection{Conclusion of the proof} \label{subsec:D}
Suppose that $\mu(x S) = 0$ for all $x$. By using  Lemma~\ref{easy=proof} and~Lemma \ref{step=two} we show that $\mu$
is $g$-invariant, for some $g \in \overline{B} \cap \exp(\mathfrak r\setminus\{0\})$.
By choosing $B$ sufficiently small, we may assume that $ \overline{B} \cap \exp(\mathfrak r\setminus\{0\})$ is disjoint from $S$.
Contradiction.

Thus $\mu(xS)>0$ for some $x\in X$. The stabilizer
$S_x=\{s\in S: xs=x\}$ of $x$ is a lattice in $S$. Consequently, $S$ is unimodular and $x S$ is closed.
We are done, for $\mu$ is ergodic.

\qed

\subsection{A remark about the two approaches in \S \ref{history}}
We may phrase the proof of Proposition \ref{MC} in qualitative forms as follows:
\begin{multline}\label{approx} \mbox{The measure along the trajectory
$\{x u(t): 0 \leq t \leq T\}$}\\
\mbox{approximates $\mu$ for ``most'' $x$.}\end{multline} One then finds two
such points $x_1$, $x_2$ and studies the relation between their trajectories in order
to obtain additional invariance. 

\eqref{approx} is a special case
of a more general fact, not specific to unipotent trajectories:
if $\nu$ is a measure on $H$ with ``spread-out support,'' and we let $\nu_x$ be the push-forward of $\nu$ by the map $h\to xh$,
then $\nu_x$ will approximate $\mu$ for ``most'' $x$; the previous remark  is the special
case when $\nu$ is localized along a long trajectory of $u(t)$.

In \S \ref{history}, we remarked on another method of treating the case 
of $H \subset G$ maximal, using mixing properties of semisimple elements.
We are now in a position to briefly describe it.

In that approach, one considers instead the measures $\nu$ that are obtained by translating a fixed,
compactly supported density on $H$ by a large semisimple element.
In particular, these measures $\nu$ resemble large pieces of a coset of a horospherical subgroup of $H$.
As in the proof with unipotent flows given here, one studies the relation between $\nu_{x_1}$, $\nu_{x_2}$ for nearby $x_1$ and $x_2$
to conclude additional invariance.

\subsection{Ergodicity of limit measures in presence of a spectral gap} \label{GWsec}

 \begin{prop} \label{PT}
Let a $\sigma$-compact metric group $H$ act continuously on a
$\sigma$-compact metric space $X$. Let $\mu_n$ be a sequence of
$H$-invariant and ergodic measures so that the $H$-action on
$$
\bigl\{ f \in L^2(\mu_n): \int f \operatorname{d}\!\mu_n= 0\bigr\}
$$
possesses a spectral gap which is uniform in $n$. Then any weak$^*$-limit
of the $\mu_n$ is ergodic under $H$.
\end{prop}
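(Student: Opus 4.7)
The plan is to show that the uniform spectral gap for the $\mu_n$ transfers to any weak-$*$ limit $\mu$, after which ergodicity follows by a one-line argument. Let $\nu$ be a compactly supported probability measure on $H$ and $\delta > 0$ such that the convolution operator $\pi_n(\nu) f := \int f(\cdot\, h)\, d\nu(h)$ satisfies $\|\pi_n(\nu)\| \leq 1 - \delta$ on $L^2_0(\mu_n) := \{f \in L^2(\mu_n) : \mu_n(f) = 0\}$, for every $n$. The goal is the same bound for $\pi_\mu(\nu)$ on $L^2_0(\mu)$. Once established, any $H$-invariant $F \in L^2(\mu)$ satisfies $\pi_\mu(\nu)(F - \mu(F)) = F - \mu(F) \in L^2_0(\mu)$, and the operator norm bound forces $F = \mu(F)$ $\mu$-a.e., so $\mu$ is ergodic.

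The key observation is that $\pi(\nu)$ preserves $C_c(X)$: for $f \in C_c(X)$, the function $(\pi(\nu) f)(x) = \int f(xh)\, d\nu(h)$ is continuous by dominated convergence and supported in $\supp(f) \cdot \supp(\nu)^{-1}$, which is compact. Consequently $|\pi(\nu) f|^2$ and $\overline{f}\cdot \pi(\nu) f$ also lie in $C_c(X)$. First I would note that $\mu$ is $H$-invariant, which is immediate from weak-$*$ convergence together with continuity of the action on $C_c(X)$; in particular $\mu(\pi(\nu) f) = \mu(f)$. Expanding the spectral gap inequality
\[
\|\pi_n(\nu)(f - \mu_n(f))\|_{L^2(\mu_n)}^2 \leq (1-\delta)^2\, \|f - \mu_n(f)\|_{L^2(\mu_n)}^2
\]
gives a linear combination of integrals against $\mu_n$ of functions in $C_c(X)$, together with powers of $\mu_n(f)$. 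Passing to the limit via $\mu_n \to \mu$ weakly (and $\mu_n(f) \to \mu(f)$) yields the same inequality with $\mu_n$ replaced by $\mu$, valid for all $f \in C_c(X)$.

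To conclude, a routine cutoff argument shows that $C_c(X) \cap L^2_0(\mu)$ is dense in $L^2_0(\mu)$, so the bound extends by continuity to a full spectral gap for $\pi_\mu(\nu)$. I do not anticipate a serious obstacle: the only thing to keep an eye on is that every function appearing when one expands the squared $L^2$-norms is continuous with compact support, so weak-$*$ convergence genuinely applies to each term; this is the sole role of the compact-support assumption on $\nu$. If $\mu$ happens to have total mass $< 1$ (escape of mass), one either restricts to the probability case of the statement or normalizes $\mu$ before running the same argument; the footnote after the proposition indicates that escape of mass does not occur in the applications of interest.
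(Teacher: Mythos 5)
Your argument is correct and is essentially the paper's own proof: fix a compactly supported probability measure $\nu$ realizing the uniform gap, note that the gap inequality can be tested against functions in $C_c(X)$ and hence passes to the weak$^*$ limit, and conclude ergodicity from the resulting operator-norm bound on $L^2_0(\mu)$. The only difference is that you spell out the expansion of the squared norms and the density/normalization steps, which the paper compresses into ``passing to the limit, one easily verifies.''
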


This result is due to E.~Glasner and B.~Weiss, in the case when $H$ has property (T).  More along the lines of the current paper: it is used to give a ``cheap'' proof of a special case
of a Mozes-Shah type result, in a $p$-adic setting, in \cite{EV}. 

Note that we do not say whether the weak$^*$ limit is still a probability measure.

 For our purposes, it will be important that, in a suitable
sense, the proof of Proposition \ref{PT} is effective. Indeed, fixing a compact generating
set $K \subset H$, there exists $\delta > 0$ so that:
$$\sup_{k \in K} \mu_{\infty}(k Z - Z) \geq \delta \mu_{\infty}(Z)$$ for any Borel set $Z \subset X$ with $\mu(Z) \leq 1/2$, where $\mu_\infty$ is any weak$^*$ limit of the sequence $\mu_n$. 

In other terms, there are no {\em almost} invariant subsets. 

\proof
Let $\nu$ be a compactly supported probability measure on $H$, so that for every $n$:
\begin{equation} \label{SG}
 \bigl\| \nu \star f - \int f \operatorname{d}\!\mu_n \bigr\|_{L^2(\mu_n)} \leq \frac{1}{2} \|f\|_{L^2(\mu_n)}
\end{equation}
for all $f\in L^2(\mu_n)$ (or equivalently for all $f\in C_c(X)$).
Here $\nu\star f$ denotes convolution of $f$ under $\nu$ w.r.t.\ the
action. The measure $\nu$ exists by virtue of the assumption on the
uniform spectral gap. Passing to the limit, one easily verifies that
\eqref{SG} remains valid with $\mu_n$ replaced by any weak$^*$-limit
thereof. This implies that such a weak$^*$-limit must be
$H$-ergodic.\qed

\section{Notation and first facts.}
  \label{notation}

 Fix, first of all,
a semisimple $\Q$-group $\G$ and a Euclidean norm $\|\cdot\|$ on the Lie algebra $\mathfrak{g}$
 of $G=\G(\R)$  in such a way that $\|[u,v]\| \leq \|u\| \|v\|$ for all $u,v\in\mathfrak{g}$. We fix also an orthonormal basis for $\mathfrak{g}$ w.r.t.\ $\|\cdot\|$.

  We fix a semisimple subgroup $H \subset G$ that
satisfies the conditions indicated in \S \ref{technical}.

Fix also an embedding $\rho: \G \rightarrow \GL_N$. We shall assume that
the adjoint representation of $\G$ occurs as an irreducible subrepresentation
of that defined by $\rho$.

We take a {\em congruence lattice} $\Gamma \leqslant G$.\index{congruence} That
$\Gamma$ is congruence means that $\rho(\Gamma)$ contains a congruence subgroup
of $\mathrm{GL}_N(\Z)\cap\rho(G)$.

Set $X = \Gamma \backslash G$. 

The Lie algebra $\g$ possesses a natural $\Q$-structure. We choose a rational
$\Gamma$-stable lattice
$\g_{\Z} \subset \g$ satisfying $[\g_\Z,\g_\Z]\subset\g_\Z$. 
This is always possible, assuming only that $\Gamma$ is arithmetic.

We fix throughout this section, and, indeed, throughout the entire paper,
a homomorphism $\phi: \SL_2(\R) \rightarrow H$ that
projects nontrivially to each simple factor of $H^+$.
This determines a unipotent one-parameter subgroup:
\begin{equation}\label{udef} u(t):= \phi\left( \begin{array}{cc} 1 & t \\ 0 & 1
\end{array}\right).\end{equation}

 The adjoint representation of this $\SL_2(\R)$,
i.e.\ $\mathrm{Ad} \circ \phi$, decomposes $\mathfrak{g}$ into a
direct sum of irreducible representations. Let $\mathfrak{g}_0$ be
the sum of all the highest weight spaces in all these irreducible
representations, with respect to the diagonal torus of $\SL_2(\R)$
(equivalently: the $\{u(t)\}$-fixed subspace) and let
$\mathfrak{g}_1$ be the sum of all remaining weight spaces. Thus
\begin{equation}\label{gdecomp}\mathfrak{g} = \mathfrak{g}_0 \oplus \mathfrak{g}_1.\end{equation}
For $r \in \mathfrak{g}$, we shall write $r = r_0 + r_1$ according to \eqref{gdecomp}.

\subsection{Concerning constants and their implicit dependencies}
The notation $\xappa,\consta\label{k-sample2}, \dots$
\index{xappa1@$\xappa$, $\ref{k-sample2}$, sample constants}will always denote positive constants that depend
{\em only on the isomorphism class of $(H,G)$.}  
Because, in the circumstance of the main Theorem, there exist only finitely many possibilities
for this isomorphism class if one fixes $\dim(G)$ (cf. Lemma \ref{finite-embeddings}), it is equivalent to say that these constants depend only on $\dim(G)$.
Moreover, these constants
are all indexed, and they
come with hyperlinks to the point where they are defined. We hope the latter feature will be useful for readers using suitable viewing software.

 An important note
is that, because there are only finitely many intermediate subgroups $S$ between $H$ and $G$, any constant that depends only
on $S$ or just on its isomorphism class also can be chosen so that it depends only on $H$, and $G$.
We shall use this observation several times without explicit mention.

The notation $\yota, \constb\label{i-sample2}, \ldots$ will denote positive constants
that depend only\footnote{In particular, constants of the form $\iota_*$ are allowed to depend on the $\Q$-structure on $\G$, by contrast with $\kappa_*$.} on $\G, H, \rho$, the norm $\|\cdot\|$, and on the given orthogonal basis
of $\mathfrak g$.\index{yota1@$\yota$, $\ref{i-sample2}$, sample constants}

The notation $\constc, \constc, \ldots$ will denote positive constants
that may depend on $\G, H, \rho, \|\cdot\|$, the lattice $\Gamma$, and on the lattice $\mathfrak g_\Z\subset\mathfrak g$
discussed in \S \ref{ss-noncompact}.
The numbering of these constants resets by subsection.

If either of these constants depends additionally on other
parameters we make this explicit by writing them in parenthesis,
e.g.\ $\constb(d)\label{bbb-d}$\index{yota11@$\ref{bbb-d}(d)$, sample depending also on $d$} is a constant
depending on $\G, H, \rho$, the
norm $\|\cdot\|$, the orthogonal basis of $\mathfrak g$, and the
parameter $d$.

As a rule of thumb we shall strive to ensure that the {\em exponents} in our results
depend only on $G,H$; on the other hand, we shall not strive for such minimal dependency
in other constants. For instance, in the statement of the Theorem, the exponent $\delta$
depends only $G, H$; whereas the constant $V_0$ is permitted to depend on $\Gamma$. 

\subsection{The $\ll$-notation and the $\star$-notation.}
So that the notation of this paper does not become overwhelming, we shall introduce certain notational conventions.

We shall use the expression ``$A \ll B$,'' for two positive quantities $A,B$, to mean that $A \leq \constc(d) B$,
notations as above. (It will happen in a large fraction of this paper that
a parameter $d$ will be present, measuring the index of a suitable Sobolev norm. Therefore,
we make the convention that implicit constants are allowed to depend on the symbol ``$d$'',
or whatever its value in the present context is. If no symbol $d$ is present, $A \ll B$
simply means $A \leq \constc B$. In \S \ref{sobolevnormsubsec} we give an example to clarify this notation.)

We shall use $A\asymp B$ to mean that both $A \ll B$ and $B \ll A$.
\index{$\ll$-notation}\index{$\asymp$-notation}

Suppose $A$ is a quantity taking values in $(0,\infty)$.  We shall
write $A_{\uparrow}$ for any quantity $f(A)$ that is defined for
sufficiently large $A$ in a fashion that depends only on $ G,
H$, and so that $ f(A)\rightarrow \infty$ as $A \rightarrow \infty$.
We write $A_{\downarrow}$ for any quantity $g(A)$ defined for
sufficiently small $A$ in a fashion that depends only on $ G,
H$ and so that $g(A) \rightarrow 0$ as $A \rightarrow 0$.

For instance, the function $ A^{\dim (H)} - \dim(G) A^{\dim(H)-1} $ could be abbreviated
as $A_{\uparrow}$.

We write $B = A^\star$ if $B = \constc A^{\consta\label{star-not}}$.
We write $B= A^{-\star}$ if $B = \constc A^{-\ref{star-not}}$.
In a similar fashion, we define $B \leq A^{\star}$, $B \geq A^{\star}$, etc.
\index{xappa11@$\ref{star-not}$, sample for star notation}\index{$A^\star$-notation}

For instance, the function $\vol(\Gamma \backslash G) A^{\dim G}$
could be abbreviated as $A^\star$.

 \subsection{Metrics, measures, Lie algebra}

The Euclidean norm on $\mathfrak{g}$ defines a left-invariant Riemannian metric on $G$, which
descends to a metric on $X=\Gamma\backslash G$.

For $g_1, g_2 \in
G$, we write $g_1 \stackrel{\epsilon}{\sim} g_2$ if the distance
between
 $g_1$ and $g_2$ is $\leq  \epsilon$.
Similarly, we use the same notation $x_1 \stackrel{\epsilon}{\sim} x_2$ for $x_1, x_2 \in X$.
We say a finite subset of a metric space is $\delta$-separated if the points all are at mutual distance
$\geq \delta$.

 The Riemannian metric on $G$ also gives a Haar measure on any subgroup of $G$, in particular,
all the intermediate subgroups between $H$ and $G$. We shall
denote these measures by $\operatorname{d}\!\vol$.   If $Q$ is a
subgroup of $G$, we may use this Haar measure to speak of the {\em
volume} $\vol(x_0 Q)$ \index{volume of orbits}of any closed $Q$-orbit on $X$.
By contrast, we shall use either the letters $\mu$ or $\nu$ to denote the $Q$-invariant probability measure on $x_0 Q$.

We set $\|g\| := \max_{ij} \bigl(|\rho(g)|_{ij}, |\rho(g^{-1})|_{ij} \bigr)$.

We note that, for some constant $\constb \label{distortconstant}$ \index{yota111@$\ref{distortconstant}$, operatornorm
for adjoint action}
\begin{equation} \label{distortion}
\|g^{-1}\| = \|g\|,\ \|g_1 g_2\|\leq N \|g_1\| \|g_2\|,\
\| \mathrm{Ad}(g) \|_{\operatorname{op}} \leq \ref{distortconstant}\|g\| 
\end{equation}
 Here $\| \cdot \|_{\operatorname{op}}$ denotes the
operator norm w.r.t.\ the chosen Euclidean norm on $\mathfrak{g}$ and the last inequality
follows since we assume that the adjoint representation occurs in $\rho$. A
consequence of \eqref{distortion} is
\begin{equation} \label{distortion2}  d(xg, yg) \leq \ref{distortconstant}\|g\| d(x,y)  \end{equation}
either for $x,y \in X$ or $x,y \in G$.

\subsection{Intermediate subgroups.}\label{sec: intermediate}
For each intermediate subgroup $ H \subset S \subset G$, we let
$S^0$ be the connected component of the identity in $S$, and let
$\tilde{S}$ be the normalizer of the Lie algebra $\mathfrak{s}$;
thus $S^{0} \subset S \subset \tilde{S}$; moreover, the index
$[\tilde{S}: S^{0}]$ is finite by virtue of the assumption that $\mathfrak{h}$ has
trivial centralizer.    Also, $\tilde{S}$ is ``a real algebraic group'',
i.e.\ consists of the real points of an algebraic subgroup of
$\mathbf{G}$ defined over $\mathbb{R}$. The following will be proved in \S \ref{sec-ass-2}.

\begin{lem}\label{assumption-2}
 Suppose $H\subset G$ is semisimple and that $\h$ has trivial centralizer in
 $\g$. Then $H$ is not contained in any proper parabolic subgroup of
 $G$. Moreover, there are only finitely many intermediate subgroups
 $H\subset S\subset G$. Each such $S$ is semisimple without compact factors. 
\end{lem}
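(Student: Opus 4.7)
I would handle the three assertions in sequence: first that $H$ lies in no proper parabolic, then that every intermediate $\mathfrak{s}$ is semisimple, then the finiteness of intermediate $S$ together with the absence of compact factors. The second reduces to the first via Morozov's theorem, and the third rests on the Dynkin--Richardson finiteness of $G$-conjugacy classes of semisimple subalgebras of $\g$.

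For the parabolic assertion, suppose $H \subset P$ with $P$ a proper parabolic having Levi decomposition $P = LN$. By Malcev's theorem, after a $P$-conjugation I may assume $H \subset L$. Since $P$ is proper, the connected centre $Z(L)^0$ is a non-trivial torus, and its Lie algebra centralises $L$, hence centralises $\h$, contradicting $C_\g(\h) = 0$.

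For the semisimplicity of an intermediate $\mathfrak{s}$, let $\mathfrak{r}$ be its solvable radical. The adjoint action of $\h$ on $\mathfrak{r}$ is completely reducible, and the $\h$-invariants sit inside $C_\g(\h) = 0$, so $\mathfrak{r} = [\h, \mathfrak{r}]$. Now apply Lie's theorem (over $\C$) to the action $\mathrm{ad}\colon \mathfrak{r} \to \mathrm{End}(\g)$: there is a complete $\mathrm{ad}(\mathfrak{r})$-stable flag on whose successive quotients $\mathfrak{r}$ acts through characters $\chi_i \in \mathfrak{r}^*$. The connected group $H$ permutes this finite set of characters, so each $\chi_i$ is $\h$-invariant; but any $\h$-invariant linear functional on $\mathfrak{r}$ vanishes on $[\h,\mathfrak{r}] = \mathfrak{r}$, so $\chi_i = 0$ for all $i$. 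Thus $\mathrm{ad}(\mathfrak{r})$ acts nilpotently on $\g$, and since $\g$ is semisimple this forces every element of $\mathfrak{r}$ to be nilpotent in $\g$. By Morozov's theorem, the normaliser in $\g$ of a non-zero Lie subalgebra consisting of nilpotent elements is contained in a proper parabolic subalgebra; since $\mathfrak{r}$ is an ideal of $\mathfrak{s} \supset \h$, this would place $\h$ inside a proper parabolic, contradicting the first assertion. Hence $\mathfrak{r} = 0$ and $\mathfrak{s}$ is semisimple.

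For finiteness and the absence of compact factors: by Dynkin--Richardson, $\g$ has only finitely many $G$-conjugacy classes of semisimple subalgebras, and a similar finiteness applies to pairs $(\h,\mathfrak{s})$. Within each class, the $\mathfrak{s}$ containing $\h$ form a union of $N_G(\h)$-orbits, and $[N_G(\h):H]$ is finite because $C_G(H)$ is an algebraic subgroup with zero Lie algebra (hence finite) while $N_G(H)/(H \cdot C_G(H))$ embeds in the finite group $\mathrm{Out}(\h)$. This yields finitely many $\mathfrak{s}$, and since for each such $\mathfrak{s}$ any intermediate $S$ satisfies $S^0 \subset S \subset \tilde S$ with $\tilde S/S^0$ finite, finitely many $S$ in all. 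Finally, if $\mathfrak{s}$ had a compact semisimple factor $\mathfrak{k}$, so $\mathfrak{s} = \mathfrak{s}_1 \oplus \mathfrak{k}$, then the projection $\h \to \mathfrak{k}$ would vanish (no non-compact simple real Lie algebra maps non-trivially to a compact one, since their simple components are non-isomorphic), so $\mathfrak{k}$ would centralise $\h$, forcing $\mathfrak{k} \subset C_\g(\h) = 0$. The main obstacle is the semisimplicity step, where the appeal to Morozov's structure theorem is the one non-elementary input; everything else follows from standard complete-reducibility and conjugacy-class arguments.
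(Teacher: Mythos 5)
Your proof is correct, and its overall architecture (exclusion of parabolics $\Rightarrow$ semisimplicity of intermediate subalgebras $\Rightarrow$ Richardson-type finiteness $\Rightarrow$ no compact factors) matches the paper's, but the two substantive steps are carried out by genuinely different arguments. For the parabolic step, the paper keeps $H$ inside $P$ and uses complete reducibility of the $\h$-action on $\mathfrak{p}$ to lift an invariant vector from $\mathfrak{p}/\mathfrak{n}$ (where the Levi quotient has nontrivial centre) back into $\mathfrak{p}$, producing a nonzero element of $C_\g(\h)$; you instead conjugate $H$ into a Levi $L$ and exhibit $\mathrm{Lie}(Z(L)^0)\neq 0$ directly inside $C_\g(\h)$. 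Both work; the paper's version avoids appealing to the conjugacy theorem for reductive subgroups of a parabolic. For semisimplicity, the paper works at the group level with the normalizer of $\mathfrak{s}$: it rules out a nontrivial unipotent radical via the Borel--Tits iterated-normalizer lemma, and then separately kills the residual central torus using the finiteness of $C_G(H)$. You work at the Lie-algebra level: $\mathfrak{r}=[\h,\mathfrak{r}]$ by Weyl complete reducibility together with $C_\g(\h)=0$, then Lie's theorem plus the $\mathrm{Ad}(H)$-invariance of the finite set of weights forces all characters to vanish, so $\mathfrak{r}$ is a nil subalgebra, and Morozov's normalizer theorem --- the Lie-algebra counterpart of the Borel--Tits input --- places $\h\subseteq\mathfrak{s}\subseteq N_\g(\mathfrak{r})$ in a proper parabolic. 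This kills the entire radical in one stroke, whereas the paper needs the extra torus step; the price is the slightly heavier invariance-of-weights argument. Two small points of precision: the Dynkin--Richardson finiteness you invoke must be finiteness of $G(\R)$-conjugacy classes of (pairs of) semisimple subalgebras, not merely of classes over the algebraic closure --- this is exactly what the paper's Lemma \ref{finite-embeddings} supplies via finiteness of Galois cohomology over local fields; and in the compact-factor step the reason the projection $\h\to\mathfrak{k}$ vanishes is that every subalgebra of a compact Lie algebra has negative semidefinite Killing form, combined with the standing assumption that $H$ has no compact factors (which both you and the paper use tacitly).
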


Although we shall not explicitly introduce notation for it, it is convenient
to regard as fixed a choice of maximal compact subgroup of $S$, for every
intermediate subgroup $H \subset S \subset G$. 

\subsection{Balls in intermediate subgroups} \label{intermediateballs}
For each connected intermediate subgroup $S \supset H$,
we shall define a certain increasing family of balls $\Sball(T)$
(sometimes denoted $\Sball^T$). 
These balls have the following properties:

\begin{enumerate}
\item $B_S(T) \subset B_S(T')$ if $T \leq T'$:
\item $B_S(T) \subset \{s \in S: \|s\| \leq T\}$. 
\item \begin{enumerate}\item  If $\Omega \subset S$ is compact, there
exists $c = \constc(\Omega)$ so that
$\Omega.  B_S(T) . \Omega \subset B_S(c T)$;
\item As $\Omega \downarrow 1$, we may take $c \rightarrow 1$. 
\end{enumerate}
\item The volume $B_S(T)$ grows as a positive power of $T$ (up to logarithms),
see \eqref{volest} for the precise statement. 
\end{enumerate}

The construction is slightly elaborate; taking
simply $B_S(T) =\{ s \in S: \|s\| \leq T\}$ is fine when $S$ is almost simple,
but gives rise to ``hyperbola-like'' balls in the semisimple case,
so a slight modification is needed.  The reader need not pay to much attention to the precise details of the construction. 

Fix for each such $S$ an isogeny
\begin{equation}\label{isogeny}\isogeny: \prod_{i=1}^{I} S_i \rightarrow S\end{equation}
from a product of connected, almost-simple groups onto $S$. 
We put, for $S$ connected,
$$B_S(T) := \isogeny \bigl\{ (s_1, \dots, s_I):  \|\isogeny(s_i)\| \leq N^{-1} T^{1/I}
\mbox{ for }i=1,\ldots,I\bigr\}.$$

It would be possible to define corresponding sets in the disconnected case,
but we do not need this. 

\subsection{Noncompactness} \label{ss-noncompact}

For $x \in X$ we set:
\begin{eqnarray*}
 \height(x) &:= &\sup \bigl\{\| \mathrm{Ad}(g^{-1})v \|^{-1} :  \ \Gamma g = x, v \in \g_{\Z}\setminus\{0\} \bigr\} \\
 \Siegel(R) &:= &\{x \in X: \height(x) \leq R\}.
\end{eqnarray*}
In words $\height(x)^{-1}$ is the size of the smallest nonzero vector
in the lattice $\mathrm{Ad}(g^{-1})\g_\Z\subset \g$ corresponding to
$x=\Gamma g$. Note that we have $\height(x)\gg 1$ for all $x\in X$.\index{$\Siegel(R)$, compact subsets of $X$}

As follows from reduction theory the set $\Siegel(R)$ is a compact subset of $X$.
Indeed,
every $x \in \Siegel(R)$ may be expressed as $x = \Gamma g$, where $\mathrm{Ad}(g)$
has operator norm $\|\mathrm{Ad}(g)\|_{\operatorname{op}}\ll R$ w.r.t.\ the norm on $\g$.

We note that
\begin{equation} \label{heightdistort}
\height(x g) \leq  \ref{distortconstant} \|g\| \height(x).
\end{equation}

Moreover, there are constants $\constb \label{hc1}$ and $\consta \label{hc2}$ so that
\index{xappa1112@$\ref{hc2}$, injectivity radius}\index{yota12@$\ref{hc1}$, injectivity radius}
\begin{equation} \label{injfact} \mbox{$g \mapsto x g$ from $G$ to $X$
is injective for $d(g,1) \leq \ref{hc1}
 \height(x)^{-\ref{hc2}}$.}
\end{equation}
That statement follows by considering a basis for the lattice
$\mathrm{Ad}(g^{-1})\g_\Z$ corresponding to $x=\Gamma g$. 
We shall refer to $\ref{hc1} \height(x)^{-\ref{hc2}}$ as the {\em injectivity radius} at $x$. 

We shall require the following lemma, which relies on the mentioned linearization technique \cite{Kleinbock-Margulis-Ann} and
on our technical assumption in \S \ref{technical} that the
centralizer of $\mathfrak h$ is trivial (in the form of Lemma \ref{assumption-2}).
The proof is given in Appendix \ref{manfredsec}.

\begin{lem} \label{measureoutsidecompact}
\begin{enumerate}
\item There exists $R_1 \geq 1$ so that
  any $H$-orbit on $X$ intersects $\Siegel(R_1)$;
\item There are constants $\constc\label{m1constant}$ and $\consta \label{m2constant}$ so that, for any $H$-invariant measure $\mu$,
$$\mu(\{x \notin \Siegel(R)\}) \leq \ref{m1constant} R^{-\ref{m2constant}}$$
\index{xappa1113@$\ref{m2constant}$, decay of $\mu(X\setminus\Siegel(R))$}
\end{enumerate}
\end{lem}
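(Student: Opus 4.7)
The plan is to deduce both parts from the Kleinbock--Margulis quantitative non-divergence theorem, applied to the one-parameter unipotent subgroup $\{u(t)\} \subset H$ of \eqref{udef}, combined with the absence of parabolic overgroups of $H$ (Lemma \ref{assumption-2}). The input I will use takes the following schematic form: there exist constants $\alpha, C, R_0 > 0$ depending only on $G, \Gamma$ such that for every $x \in X$ and $T > 0$, either
\begin{equation*}
\tfrac{1}{T}\bigl|\{t \in [0,T] : \height(xu(t)) > R\}\bigr| \leq C (R_0/R)^{\alpha} \quad \text{for all } R \geq R_0,
\end{equation*}
or there exists a nonzero rational vector $v$ in one of a fixed finite family of $G$-representations such that $\|\mathrm{Ad}(g(t)^{-1})v\|$ remains very small throughout $t \in [0,T]$, where $x = \Gamma g(0)$. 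The second (``escape'') alternative is exactly the output of the linearization step.

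For Part (1), I suppose for contradiction that some orbit $xH$ avoids $\Siegel(R_1)$ for $R_1$ arbitrarily large. Applying the dichotomy along the $u(t)$-trajectory of $x$ with $T \to \infty$, the non-escape alternative would place $xu(t)$ inside $\Siegel(R_0) \subset \Siegel(R_1)$, contradicting the hypothesis. Hence the escape alternative must hold, producing a nonzero rational vector $v$ which, in the limit, spans a $\{u(t)\}$-stable line in some $G$-representation. Since $u(t)$ projects nontrivially onto each simple factor of $H$, Zariski-density of the $H$-conjugates of $\{u(t)\}$ together with the semisimplicity of $H$ force the $H$-span of such a line to be a proper nonzero $H$-invariant rational subspace; its stabilizer in $G$ is a proper parabolic subgroup containing $H$, contradicting Lemma \ref{assumption-2}.

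For Part (2), let $\mu$ be $H$-invariant. Using $u(t)$-invariance, for any $T > 0$,
\begin{equation*}
\mu(X \setminus \Siegel(R)) \;=\; \int_X \tfrac{1}{T}\bigl|\{t \in [0,T] : \height(xu(t)) > R\}\bigr| \, d\mu(x).
\end{equation*}
I apply the Kleinbock--Margulis dichotomy pointwise. On the non-escape set the integrand is $\leq C(R_0/R)^{\alpha}$. The remaining ``escape set'' $E_T$ is contained in a polynomially thin tubular neighborhood of a finite union of proper algebraic subvarieties $Y_V \subset X$ indexed by $\{u(t)\}$-stable rational subspaces $V$. By the argument of Part (1), no such $V$ is $H$-invariant, so each $Y_V$ is a proper subvariety of any $H$-orbit closure in $X$. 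A standard linearization/polynomial-recurrence estimate (using polynomial behavior of $t \mapsto \mathrm{Ad}(u(-t))v$ near the locus defined by $V$, as in \cite{Kleinbock-Margulis-Ann}) then bounds $\mu(E_T)$ by a negative power of $T$. Choosing $T$ as a suitable positive power of $R$ and invoking dominated convergence gives the desired inequality with $\ref{m2constant}$ equal to a fixed positive fraction of $\alpha$.

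The principal obstacle is the second step of Part (2): quantifying $\mu(E_T)$ in $T$. This is where the centralizer-free hypothesis (through Lemma \ref{assumption-2}) is essential, since it rules out $H$-invariant proper rational subspaces in every relevant $G$-representation and thereby forces the escape locus to be a proper subvariety relative to any $H$-orbit. Combining this with the polynomial divergence of $u(t)$ and the linearization technique yields the polynomial decay of $\mu(E_T)$, closing the argument; the dependence on $\Gamma$ and on the $\Z$-structure $\g_{\Z}$ is absorbed into the constants $\ref{m1constant}$ and into the choice of starting compact set $\Siegel(R_1)$ from Part (1).
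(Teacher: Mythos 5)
There is a genuine gap at the heart of Part (1): you deduce a contradiction from the mere existence of a proper nonzero $H$-invariant rational subspace, claiming its stabilizer is ``a proper parabolic subgroup containing $H$.'' Neither half of this works. The stabilizer of an $H$-invariant subspace of $\mathfrak{g}$ (or of $\wedge^k\mathfrak{g}$) under the adjoint action is in general not parabolic; and, more fundamentally, $H$-invariant $g$-rational subspaces genuinely exist --- for a closed orbit $\Gamma g H$ the subspace $\mathfrak{h}$ itself is $g$-rational and $H$-invariant --- so their existence cannot be the contradiction. What is actually impossible, and what the non-divergence machinery delivers, is an $H$-invariant $g$-rational subspace $V$ of \emph{very small covolume}. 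The missing idea is how smallness produces the contradiction: the paper applies Kleinbock--Margulis \emph{within} $V$ (after conjugating $U$ inside $H$, by induction on $\dim V$) to find a basis of $V\cap\mathrm{Ad}_g^{-1}\mathfrak{g}_\Z$ consisting of very short vectors; since $[\mathfrak{g}_\Z,\mathfrak{g}_\Z]\subset\mathfrak{g}_\Z$ and $\|[u,v]\|\le\|u\|\,\|v\|$, iterated brackets of these generators stay in the lattice and shrink, hence vanish, so the subalgebra $\mathfrak{l}$ generated by $V$ is nilpotent; being $H$-invariant and meeting $\mathfrak{h}$ trivially, it yields a non-semisimple intermediate subalgebra $\mathfrak{h}\oplus\mathfrak{l}$, contradicting Lemma \ref{assumption-2}. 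Without some version of this step your contradiction does not close.

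Part (2) has a second, structural problem. Your decomposition leaves an ``escape set'' $E_T$ whose $\mu$-measure you propose to bound by ``a standard linearization/polynomial-recurrence estimate'' near the subvarieties $Y_V$. No such estimate is available at this stage: controlling the $\mu$-mass of thin neighborhoods of such loci is essentially the content of Proposition \ref{lem:Quantiso}, which appears much later and whose proof itself invokes Lemma \ref{measureoutsidecompact}, so your route is circular. The paper sidesteps this entirely: the uniform statement ``for every $g$ there is \emph{no} $H$-invariant (or suitably $H$-conjugated $U$-invariant) $g$-rational subspace of small covolume'' means the escape alternative of Kleinbock--Margulis never occurs, so the bound \eqref{DKM-nondivergence} holds for \emph{all} starting points and all large $r$ with no exceptional set; Mautner plus the pointwise ergodic theorem for a single generic point then converts it directly into $\mu(X\setminus\Siegel(n))\ll n^{-\alpha}$. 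You should restructure Part (2) along these lines rather than attempting to estimate $\mu(E_T)$ separately.
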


In particular, we may choose $R_0$ so large that, for any
$H$-invariant measure $\mu$, we have $\mu(X\setminus\Siegel(R_0/2)) \leq {10}^{-11}$. The set $\Siegel(R_0)$ will occur
throughout our paper as a convenient choice of compact. We shall
refer to it as $\Xcompact$. \index{$\Xcompact$, a large compact set}

\subsection{Sobolev norms}\label{sobolevnormsubsec}
In dealing with analytic questions on $X = \Gamma \backslash G$, 
we shall make systematic use of a certain family of Sobolev norms. 
While there is overhead in developing their basic properties, 
they offer a clean formalism for analysis on homogeneous spaces. 

Recall that we have chosen an orthonormal basis for $\mathfrak{g}$.
This defines for every $d\geq 0$ a system of $L^{2}$-Sobolev norms
$\Sob_d$ on $C^{\infty}_c(X)$:
\begin{equation} \label{sobnormdef} \Sob_d(f)^2 :=
\sum_{\mathcal{D}}  \|  \height(x)^{d} \mathcal{ D}
f\|_{L^{2}}^2,\end{equation} where the sum is taken over
$\mathcal{D} \in U(\mathfrak{g})$, the universal enveloping algebra
of $\mathfrak{g}$, which are monomials in the chosen basis of degree
$\leq d$.

Although defined {\bf for all $d$}, we use only for $d$ sufficiently large
that $\Sob_d$ majorizes pointwise values of derivatives; see important comment below.

We note that $\Sob_d$ defines a {\em Hermitian} norm on
$C^{\infty}_c(X)$.
 It has the following basic properties: there exists some
 $\consta\label{linftyconst}$, which we may assume to be larger than $\dim(G)$ for technical convenience later,  so that
 \index{xappa112@$\ref{linftyconst}$, $\Sob_d$ dominates $L^\infty$}
\begin{equation} \label{linftyfact}
 \|f\|_{L^{\infty}} \ll_d \Sob_d(f),\mbox{ for } d \geq \ref{linftyconst}\mbox{ and }
 \ \ f \in C^{\infty}_c(X)
 \end{equation}
and for every $d$ there exists\footnote{In this discussion of traces, we draw heavily on the work \cite{BR}
of Bernstein and Reznikov.} an integer $d' > d$
\begin{equation} \label{relativetrace} \mathrm{Trace}(\Sob_{d}^2 |
 \Sob_{d'}^2) < \infty\end{equation}
This means: if we complete $C^{\infty}_c(X)$ in the norm defined by
$\Sob_{d'}$ to obtain a Hilbert space
$V_{d'}$, the form $\Sob_d(f)^2=\langle A_df,f\rangle$ is
defined by a positive, compact operator $A_d: V_{d'}
\rightarrow V_{d'}$ with finite trace. In alternate
terms, there exists an orthonormal basis $\{e_i\}$ w.r.t.\
$\Sob_{d'}$ so that $\sum_{i} \Sob_{d} (e_i)^2 <
\infty$.

To establish  \eqref{linftyfact} and \eqref{relativetrace}, we use \eqref{injfact}
and reduce to $\R^{\dim G}$ via coordinate patches. We give details in
\S \ref{sobnormproofs}.

 We note that there is $\consta \label{sdconstant}$\index{xappa121@$\ref{sdconstant}$, distortion of Sobolev norm}
and $\constb \label{sd2constant}(d)$\index{yota121@$\ref{sd2constant}$, distortion of Sobolev norm}
(where the latter depends also on $d$) so that
\begin{equation} \label{sobolevdistort}
\Sob_d(g. f) \leq \ref{sd2constant}(d) \|g\|^{{\ref{sdconstant}} d }  \Sob_d(f) \mbox{ for any }
  f \in C^{\infty}_c(X)\mbox{ and } g \in G
\end{equation}
Here $g\in G$ acts on a function $f$ defined on $X$ by $(g.f)
(x)=f(xg)$.  To see this, combine \eqref{distortion}, \eqref{heightdistort},  \eqref{sobnormdef}.  
%

If $f_1, f_2 \in C^{\infty}_c(X)$, then, 
\begin{equation} \label{product}
\Sob_d(f_1 f_2) \ll_d \Sob_{d+\ref{linftyconst}}(f_1) \Sob_{d+
\ref{linftyconst}}(f_2),
\end{equation}
as may be deduced from the definition and \eqref{linftyfact}. 

Once $d\geq \ref{linftyconst}+1$ we have
\begin{equation}\label{simple-Sobolev-estimate}
 \| f - g.f\|_{L^{\infty}} \ll_d d(e,g) \Sob_d(f)
\end{equation}
This follows from \eqref{linftyfact}
applied to the partial derivatives of $f$
and by integrating the directional derivative of $f$ along a
geodesic curve connecting $x$ and $xg$.

\subsection{Notational conventions concerning Sobolev norms.} We observe two very important notational conventions concerning
Sobolev norms.

Throughout this paper, when we write $\Sob_d$,
we {\em always} assume that $d \geq \ref{linftyconst}+1$.

Moreover, in any statement that makes reference to a Sobolev
norm $\Sob_d$, the implicit constants in the symbol $\ll$ are permitted to depend on $d$.

As an example of these conventions: \eqref{product} could be
 legitimately abbreviated by
$\Sob_d(f_1 f_2) \ll \Sob_{d+\ref{linftyconst}}(f_1) \Sob_{d+
\ref{linftyconst}}(f_2)$.

\subsection{$L^2$-spaces}
If $\nu$ is a measure, we denote by  $L^2(\nu)$ the associated
$L^2$-Hilbert space and by $L_0^2(\nu)$ the orthogonal complement of
the constant function.

In some cases, it will be more natural to use $L_0^2$ to denote the orthogonal complement of {\em locally} constant functions: for instance, 
if the support of $\nu$ is disconnected.  We will always indicate clearly when this is the case.

\subsection{Almost invariant measures} \label{def:almostinvariant} \hypertarget{def:almostinvariant}{}
For any $g \in G$ and any measure $\nu$ on $X$, we denote by
$\nu^{g}$ the translated measure that is defined via $\nu^{g}(f) =
\int f(xg) d\nu(x)$ for $f\in C_c(X)$ or equivalently
$\nu^{g}(B)=\nu(Bg^{-1})$ for any measurable $B\subset X$.

In what follows, we shall define several notions of a measure being {\em almost invariant}. More precisely, these notions will be {\em almost
invariant w.r.t.\ to a Sobolev norm $\mathcal{S}=\mathcal{S}_d$.} We will sometimes omit reference to the Sobolev norm if it is clear from context.

We say that $\mu$ is $\epsilon$-almost invariant under $g \in G$ if
$$\left| \mu^g (f) - \mu(f) \right| \leq \epsilon \Sob(f), \ \ \ \ \ f \in C^{\infty}_c(X)$$
We say that it is $\epsilon$-almost invariant under a subgroup $Q$,
if it is $\epsilon$-almost invariant under every $q \in Q$ with $\|q\| \leq 2$. 
We say that it is $\epsilon$-almost invariant under $Z \in \mathfrak{g}$
if it is $\epsilon$-almost invariant under every $\exp(t Z)$ with $|t| \leq 2$.

%

%

These notions satisfy all the expected properties: e.g. if a measure is $\epsilon$-almost invariant under
$Z_1, Z_2\in\g$ with $\|Z_1\|,\|Z_2\|\leq 2$,
it is $\epsilon^\star$-almost invariant under $Z_1 + Z_2$.

A list of such properties is given, with proof, in \S \ref{Appendix-A}.

\subsection{Spectral gap.}
There exists an integer $\tempered$, depending only on $G$, with the following property. 

Let $H \subset S \subset G$ be as before, and let $x_0 \in X$ be so that $x_0 S$ is a closed 
connected orbit.
Let $\nu$ be the $S$-invariant probability measure on $x_0 S$, and $L_0^2(\nu)$ the orthogonal complement of the constant function.
Then $L_0^2(\nu)^{\otimes (\tempered-1))}$ is {\em tempered} as an $S$-representation; for a review of this notion, see \S \ref{tempered}.
This applies, in particular, to the measure $\mu$ as in the statement of Theorem \ref{thm:main}.
We discuss in \S \ref{spectralgap} the origins of that statement and its consequences that we will need.


\section{Outline of the proof of Theorem \ref{thm:main}.} \label{proof}
This section is intended as a summary of the paper.
We give a detailed outline of the proof of Theorem \ref{thm:main}, 
giving references and comparisons to the non-effective proof in \S \ref{semisimple-measures}. 
The actual proof of the Theorem is given, along the lines indicated here, in \S \ref{finalproof}. 

While this outline is intended to be a reasonable r{\'e}sum{\'e} of the proof,
we nonetheless have not indicated all the technicalities that are involved
in the complete proof.

We remark that the presentation of this section is designed so as to match \S \ref{semisimple-measures}.
The order in which results are discussed here, therefore, does not always match
the order in the body of the paper. \footnote{The primary reason for this
is that a certain group of results, which are used at different points in the argument,
have their roots in the polynomial divergence properties of unipotent flows.  Therefore,
we treat them together in the body of the paper. }

\subsection{Setup} 
Let, for the whole section, $G$, $H$, $\Gamma$, $X=\Gamma\backslash
G$, and $\mu$ be as in the statement of Theorem \ref{thm:main}. Let $S \supset H$ be a closed
subgroup.  As in the proof in \S \ref{semisimple-measures} we let
$\mathfrak{r}$ be an $\Ad(H)$-invariant complement of the Lie
algebra $\mathfrak s$ of $S$ in $\g$.
Recall that we have fixed a one-parameter unipotent
subgroup $u(t)$ of $H$ that projects nontrivially to each simple factor.

We shall
assume that $\mu$ is almost invariant under $S$, and will demonstrate
that {\em either} it is almost invariant under a larger subgroup; or 
$\mu$ is supported on a closed $S$-orbit of small volume. 

\subsection{Outline of the steps.} \label{outline2}
As discussed in \S \ref{outline} one can regard the proof in three steps:

\begin{enumerate}
\item {\em Ergodic theorem.}  The assertion that ``most points are generic''
is used in \S \ref{subsec:A} in the ineffective proof, and appears 
as Proposition A (\S \ref{subsec:AE}). 

\item {\em Nearby generic points give additional invariance.}
A form of this statement appears in \S \ref{subsec:B} in the ineffective proof;
an effective form will be given here in Proposition B (\S \ref{subsec:BE}). 

\item {\em Dichotomy:} 
If we cannot find two nearby generic points, 
as above, then necessarily $\mu$ was supported on a 
closed $S$-orbit of small volume.  This appears in \S \ref{subsec:C}
of the ineffective proof, and an effective form will appear here as Proposition C (\S \ref{subsec:CE}). 
 The proof of this is the most involved argument of the paper, involving
a closing result for actions of semisimple groups: an ``almost-closed'' orbit is near a closed orbit. 
\end{enumerate}

The conclusion of the non-effective proof, \S \ref{subsec:D}, makes use of some seemingly trivial principles: If $\mu$ is invariant under $S$ and under $g^{*} \notin S$, then it is invariant
by a strictly larger subgroup $S_* \supset S$; and, if $\mu(xS) > 0$,
then $\mu$ is the $S$-invariant measure on the closed orbit $xS$. 
To get the main Theorem from Propositions A, B and C requires that these facts be
made effective. 
We discuss these issues in \S \ref{almostsubalgebracopy}
and conclude the sketch of proof of the theorem in \S \ref{ABC}.

\subsection{Effective ergodic theorem (\S \ref{subsec:A}).}  \label{subsec:AE}

Fixing a suitable big integer $M$ -- the choice depends only on
$G$ --  we say that a point $x \in X$ is $[T_0, T_1]$-generic
w.r.t.\ $\mu$ if the natural probability measure on $\{u(t) x\}_{T^M
\leq t \leq (T+1)^M}$ approximates $\mu$ to within an error of about
$T^{-1}$, whenever $T \in [T_0, T_1]$ is an integer. The choice of
$M$ and a precise formulation of this is to be found in the
discussion around \eqref{fTdef}.

\begin{propA} (Proposition \ref{lemma3})\label{lemma3-copy}
Let $H \subset S \subset G$, $S$ connected. Suppose that $\mu$ is $\epsilon$-almost invariant
under $S$ w.r.t.\ $\Sob_{d}$, for $d \geq \ref{linftyconst}+1$.
Then there exists $\beta \in (0,1/2)$, $d' > d$, depending only on $G$, $H$, and $d$,
so that:

Whenever $R \leq \epsilon^{-\beta}$ and $T_0>0$, the fraction of points 
$(x,s) \in X \times \Sball(R)$ (w.r.t.\ $\mu$ resp.\ the Haar measure on $S$)
for which $x.s$ is not $[T_0,\epsilon^{-\beta}]$-generic w.r.t.\ $\Sob_{d'}$ 
is $\ll_d T_0^{-1}$.
\end{propA}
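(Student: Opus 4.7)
The plan is to establish Proposition A in four stages: (i) an effective mean ergodic theorem for the one-parameter unipotent flow $\{u(t)\}$ acting on $L^2(\mu)$, drawn from the spectral gap for $H$; (ii) upgrade to a bound uniform in the smooth test function via the trace-class relation \eqref{relativetrace} between Sobolev norms; (iii) Chebyshev and a union bound over integer scales $T \in [T_0,\epsilon^{-\beta}]$; (iv) transfer of the resulting $\mu$-estimate to the joint measure $\mu \otimes \mathrm{Haar}|_{\Sball(R)}$ on $X \times \Sball(R)$, exploiting the $\epsilon$-almost $S$-invariance of $\mu$.

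For (i), since $L^2_0(\mu)^{\otimes(\tempered-1)}$ is tempered as an $H$-representation, standard matrix-coefficient estimates combined with Sobolev smoothing yield, for some $\alpha>0$ and $d_1$ depending only on $G,H,d$, and for $f,g$ with $\mu(f)=\mu(g)=0$,
\[
|\langle u(t).f,g\rangle_{L^2(\mu)}| \ll (1+|t|)^{-\alpha}\,\Sob_{d_1}(f)\,\Sob_{d_1}(g).
\]
Setting $L := (T+1)^M - T^M \asymp T^{M-1}$ and $A_T f(y) := L^{-1}\int_{T^M}^{(T+1)^M} f(y u(t))\,dt - \mu(f)$, squaring and integrating the decay in $t_1, t_2$ gives $\|A_T f\|_{L^2(\mu)}^2 \ll T^{-(M-1)\min(\alpha,1)} \Sob_{d_1}(f)^2$. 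For (ii), by \eqref{relativetrace} pick $d' > d_1$ so that a $\Sob_{d'}$-orthonormal basis $\{e_i\}$ satisfies $\sum_i \Sob_{d_1}(e_i)^2 < \infty$. Cauchy--Schwarz on $f = \sum a_i e_i$ gives the pointwise bound $\sup_{\Sob_{d'}(f)\leq 1} |A_T f(y)|^2 \leq \sum_i |A_T e_i(y)|^2$, so summing the $L^2$-estimates from (i) yields $\int_X \sup_f |A_T f|^2\, d\mu \ll T^{-(M-1)\min(\alpha,1)}$.

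Stage (iii) applies Chebyshev to see that the $\mu$-measure of points failing the scale-$T$ genericity criterion is $\ll T^{2-(M-1)\min(\alpha,1)}$. Fix $M$ large enough (depending only on $G,H$) that the exponent is $\leq -3$; summing over integer $T \geq T_0$ then bounds the $\mu$-measure of non-generic points by $\ll T_0^{-2}$. Stage (iv) is the crux. Set $F := \sum_i |A_T e_i|^2 \geq \sup_f |A_T f|^2 / \Sob_{d'}(f)^2$; then $\int_X F(x.s)\,d\mu(x) = \mu^s(F)$. Decompose $s \in \Sball(R)$ as $s_1 \cdots s_k$ with $\|s_i\| \leq 2$ and $k \ll \log R$, possible since the operator norm grows exponentially in geodesic distance on the connected group $S$. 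Telescoping with the $\epsilon$-almost-invariance of $\mu$ under unit-norm elements and the distortion bound \eqref{sobolevdistort} yields $|\mu^s(F) - \mu(F)| \ll (\log R)\,\epsilon\, R^\star\, \Sob_{d_2}(F)$ for some $d_2$ depending only on $G,H,d$. By \eqref{product}, \eqref{sobolevdistort}, and the polynomial bound $\|u(t)\| \ll |t|^{O(1)}$ on the averaging range $t \in [T^M, (T+1)^M]$, one controls $\Sob_{d_2}(F) \ll T^{O(M)}$, with $\sum_i \Sob_{d_3}(e_i)^2 < \infty$ absorbed into the constant for a suitable $d_3$ and for $d'$ chosen large enough.

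Substituting $R \leq \epsilon^{-\beta}$, dividing by $\vol(\Sball(R))$ to pass from Haar measure to probability, and summing the Chebyshev bound from (iii) over integer $T \in [T_0, \epsilon^{-\beta}]$, the proportion of bad pairs is bounded by $\ll T_0^{-1} + \epsilon^{1-\beta\kappa}\cdot\epsilon^{-3\beta}\log(1/\epsilon)$ for a $\kappa$ depending only on $G,H,d$. Choosing $\beta \in (0,1/2)$ sufficiently small---depending only on $G,H,d$---renders the second term dominated by $T_0^{-1}$. The principal obstacle throughout is stage (iv): one must balance the ergodic-theoretic gain $T^{-(M-1)\alpha}$ against the cost $\epsilon \cdot R^\star \cdot T^{O(M)}$ of swapping $\mu$-averages for $\mu^s$-averages. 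The restriction $R \leq \epsilon^{-\beta}$ with $\beta$ small is precisely what makes these two estimates compatible, and this compatibility is what fixes the permissible value of $\beta$ in the statement.
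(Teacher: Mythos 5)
Your proposal is correct and follows essentially the same route as the paper: an effective mean ergodic theorem for $u(t)$ from the spectral gap, the relative-trace estimate \eqref{relativetrace} to pass from a fixed test function to a uniform bound, Chebyshev plus a union bound over integer scales, and the transfer to the translated measures $\mu^s$ via the logarithmic word-length decomposition of $s\in\Sball(R)$ (this is exactly Lemma \ref{one}), with $\beta$ chosen small so that the almost-invariance error $\epsilon R^{\star}T^{\star}$ is dominated by the ergodic gain $T^{-4}$. The only cosmetic difference is that the paper absorbs the almost-invariance error into the $L^2$ bound before applying Chebyshev, whereas you carry it as a separate additive term and bound it at the end; both work.
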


This assertion is the effective replacement of the discussion of \S \ref{subsec:A}. 

Let us contrast more carefully the above statement with that of \S \ref{subsec:A}. 
The Birkhoff ergodic theorem says, roughly speaking, that
the measure of $[T_0, \infty)$-generic points approaches $1$, as $T_0 \rightarrow \infty$ (but
doesn't give an error either in the rate of genericity nor in the measure of the set). 
\S \ref{subsec:A} isolates a large subset $E$ of such points. 
In the context
of \S \ref{subsec:A}, because the measure $\mu$ was {\em exactly $S$-invariant},
 the set of pairs $(x \in X ,s \in S)$
so that $xs \in E$ has large measure. 

In the almost-invariant context, 
 one obtains only that the set of pairs $(x \in X, s \in S)$ so that $xs$ is {\em very close to} $E$
 has large measure. In particular, for such pairs $(x,s)$, the point $xs$ is
 not $[T_0,\infty)$-generic, but only $[T_0, T_1]$-generic,
 where the size of $T_1$ depends on the strength of the notion of {\em very close}. 
 
 This discussion accounts for the difference in formulation between
 Proposition A and \S \ref{subsec:A}. 
 
 Proposition A is proved using the quantitative information about decay
of correlations provided by the spectral gap. It also makes use of the trace estimate
in \eqref{relativetrace}.

\subsection{Nearby generic points give additional invariance (\S \ref{subsec:B}).}  \label{subsec:BE}
We refer to \eqref{gdecomp} for the notation $r_1$, for $r \in \mathfrak{r}$. 

 \begin{propB} (Proposition
 \ref{MaL})\label{MaL-copy}
 Let $d\geq\ref{linftyconst}+1$.
    There exists constants $\consta\label{eipconstant}>0$ and $\consta\label{gammaconstant}>\consta \label{deltaconstant}>0$ so that:
 \index{xappa19@$\ref{eipconstant}$, exponent in additional invariance}
 \index{xappa2@$\ref{gammaconstant}$, genericity needed for additional invariance}
 \index{xappa21@$\ref{deltaconstant}$, genericity needed for additional invariance}

    Suppose $\mu_1, \mu_2$ are $H$-invariant measures,
 that $x_1, x_2 \in X$ satisfy $x_2 = x_1 \exp(r)$ for some nonzero $r \in \Scomp$, and that 
$x_i$ is $[\|r\|^{-\ref{gammaconstant}}, \|r_1\|^{-\ref{deltaconstant}}]$-generic w.r.t.\ $\mu_i$ 
and a Sobolev norm $\Sob_d$ (for $i=1,2$). 
    Then there is a polynomial $q: \R \rightarrow \mathfrak{r}$ of degree $\leq
\dim(\mathfrak{g})$,
so that:     
     $$\left| \mu_2^{\exp{q(s)}}(f) - \mu_1(f)\right| \ll_d \|r_1\|^{\ref{eipconstant}} \Sob_d(f), 1 \leq s \leq 2^{1/M}$$
If $r_1 \neq 0$, then $\max_{s \in [0,2]}\|q(s)\|$ can be set to be one.\footnote{However, we will need to set this maximum equal to some constant that depends only on $G$ and $H$.}
Moreover, if $\mu_1=\mu_2$ is $\epsilon$-almost invariant under $S$, then $\mu_1$ is $\ll_d \max(\epsilon,\|r\|^{\ref{eipconstant}})^{1/2}$-almost invariant under some $Z\in\mathfrak{r}$ with $\|Z\|=1$.
 \end{propB}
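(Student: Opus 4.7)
The plan is to carry out a quantitative, measure-theoretic version of the qualitative argument in Lemma~\ref{easy=proof}(b). The key identity, coming from $u(t)\exp(r)u(-t) = \exp\Ad(u(t))r$, is
\[
f(x_2 u(-t)) = f(x_1 u(-t)\exp P(t)), \qquad P(t) := \Ad(u(t))r.
\]
Since $\mathfrak{r}$ is $\Ad(H)$-invariant and $\Ad\circ u(t)$ is unipotent, $P$ is a polynomial map $\R\to\mathfrak{r}$ of bounded degree with constant term $r$. Decomposing $r=r_0+r_1$ as in \eqref{gdecomp}, one has $P(t)=r_0+\Ad(u(t))r_1$, whose second summand is a polynomial in $t$ of positive degree $d_0$ (depending only on $H\subset G$) when $r_1\neq 0$.

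The first major step is to choose an integer $T$ with $T^M\asymp \|r_1\|^{-1/d_0}$, so that $\|P(T^M)\|\asymp 1$. Taking $\ref{gammaconstant}>1/(Md_0)>\ref{deltaconstant}>0$ and using $\|r_1\|\leq\|r\|\leq 1$ places $T$ in the genericity range $[\|r\|^{-\ref{gammaconstant}},\|r_1\|^{-\ref{deltaconstant}}]$. On the interval $I_T:=[T^M,(T+1)^M]$, Taylor expansion gives $\sup_{t\in I_T}\|P(t)-P(T^M)\|\ll T^{-1}$, since $\|P'(T^M)\|\asymp T^{-M}$ and $|I_T|\asymp T^{M-1}$ at this choice of $T$.

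Next, $[T_0,T_1]$-genericity of $x_2$ w.r.t.\ $\mu_2$ gives $|I_T|^{-1}\int_{I_T}f(x_2 u(-t))\,dt=\mu_2(f)+O(T^{-1}\Sob_d(f))$. Rewriting the integrand via the key identity, using \eqref{simple-Sobolev-estimate} to replace $P(t)$ by $P(T^M)$ (at cost $O(T^{-1}\Sob_d(f))$), and applying genericity of $x_1$ w.r.t.\ $\mu_1$ to the translated test function $(\exp P(T^M)).f$---whose Sobolev norm is controlled via \eqref{sobolevdistort}, since $\|\exp P(T^M)\|\asymp 1$---produce $\mu_1^{\exp P(T^M)}(f)+O(T^{-1}\Sob_{d'}(f))$ for some $d'>d$. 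Comparing the two averages gives $|\mu_2(f)-\mu_1^{\exp P(T^M)}(f)|\ll T^{-1}\Sob_{d'}(f)\ll \|r_1\|^{\ref{eipconstant}}\Sob_{d'}(f)$ with $\ref{eipconstant}:=1/(Md_0)$. Running the same argument for each integer $T^\ast\in[T,2^{1/M}T]$ covers, via the tiling of $[T^M,2T^M]$ by the intervals $I_{T^\ast}$, the full range $s\in[1,2^{1/M}]$; the resulting discrete family assembles into a single polynomial $q(s)$ of degree $\leq\dim\mathfrak{g}$ (as $P$ itself is polynomial in $t$), and the normalization $\max_{s\in[0,2]}\|q(s)\|=1$ is arranged by the precise choice of scale in $T$.

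For the moreover statement, the assumption $\mu_1=\mu_2$ together with $\epsilon$-almost $S$-invariance yields $|\mu_1^{\exp q(s)}(f)-\mu_1(f)|\ll \tau\Sob_d(f)$ for $s\in[1,2^{1/M}]$, where $\tau:=\max(\epsilon,\|r\|^{\ref{eipconstant}})$. To extract a unit-norm Lie-algebra direction $Z\in\mathfrak{r}$, pick $s_1,s_2\in[1,2^{1/M}]$ with $\|q(s_1)-q(s_2)\|\asymp 1$ (possible under the normalization of $q$), compose the almost-invariances via the formalism of \S\ref{Appendix-A} to produce almost-invariance under $\exp q(s_1)\exp(-q(s_2))$, and use Baker--Campbell--Hausdorff together with the $\Ad(H)$-invariance of $\mathfrak{r}$ to recognize this element as $\exp Z'$ for some $Z'\in\mathfrak{r}$ with $\|Z'\|\asymp 1$. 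The square-root loss in the final bound reflects a Cauchy--Schwarz-type trade-off inherent in converting additive almost-invariance at a fixed element into infinitesimal almost-invariance at unit scale. The main obstacle I anticipate is bookkeeping of constants: keeping $\ref{eipconstant}$ dependent only on $G,H$ (via $M$ and $d_0$), keeping the degree of $q$ bounded by $\dim\mathfrak{g}$ through the interpolation, and ensuring the extracted direction $Z$ lies genuinely in $\mathfrak{r}$ rather than being contaminated by BCH corrections transverse to it.
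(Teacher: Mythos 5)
Your treatment of the main estimate \eqref{cylon} is essentially the paper's argument: normalize the polynomial $t\mapsto\Ad(u(\mp t))r$ by choosing the time scale $T$ so that its sup over the relevant window is a fixed constant, compare the two Birkhoff averages over $[n^M,(n+1)^M]$ using genericity of $x_1,x_2$, and absorb the variation of the polynomial over that window. (One small imprecision: $T^M\asymp\|r_1\|^{-1/d_0}$ for a \emph{fixed} degree $d_0$ is not quite right, since which coefficient of $\Ad(u(t))r_1$ dominates depends on $r_1$; what one actually gets, and all that is needed, is $\|r\|^{-\star}<T<\|r_1\|^{-\star}$, which still places $T$ in the genericity window. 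Also note the normalization constant is $\ref{expconst0}$ rather than $1$, and the case $r_1=0$ should be treated separately, where $q$ is constant and the identity is exact.)

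The ``moreover'' part has a genuine gap. You propose to take $s_1,s_2\in[1,2^{1/M}]$ with $\|q(s_1)-q(s_2)\|\asymp 1$ and to read off almost invariance under a unit vector $Z\in\mathfrak{r}$ from almost invariance under the single group element $\exp(q(s_1))\exp(-q(s_2))$. Two problems. First, by \eqref{expconstdef} that product is $\exp(w^*)\sigma$ with $w^*\in\mathfrak{r}$ and $\sigma\in S$ \emph{not} equal to the identity; $\mathfrak{r}$ is not a subalgebra, so BCH does not put the product in $\exp(\mathfrak{r})$. The $S$-component $\sigma$ must be absorbed using the hypothesis of $\epsilon$-almost $S$-invariance --- this is precisely where $\epsilon$ enters the final bound, which your sketch leaves unexplained. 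Second, and more seriously, almost invariance under the single element $\exp(w^*)$ with $\|w^*\|\asymp 1$ does \emph{not} yield almost invariance under $Z=w^*/\|w^*\|$ in the sense of \S\ref{def:almostinvariant}, which requires control of $\mu^{\exp(tZ)}$ for \emph{all} $|t|\leq 2$; one cannot ``divide'' invariance under a group element. The correct argument takes the two parameters at the \emph{small} separation $E=\max(\epsilon,\|r\|^{\ref{eipconstant}})^{1/2}$, chosen so that $\|q(s)-q(s+E)\|\gg E$ (possible since the coefficients of $q'$ are $\gg1$), producing $\|w^*\|\asymp E$ with invariance quality $\ll E^2$; one then iterates $\exp(w^*)$ about $E^{-1}$ times and interpolates via \eqref{simple-Sobolev-estimate} to cover all of $\{\exp(tZ):|t|\leq2\}$ at quality $\ll E^{-1}\cdot E^2=E$. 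The square root in the conclusion is exactly the optimization between the iteration count and the error per step, not a Cauchy--Schwarz phenomenon; with your unit-scale choice of $s_1,s_2$ this conversion is unavailable.
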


This Proposition, based on polynomial divergence, constitutes an effective version of the discussion from
\S \ref{subsec:B}. More specifically, when applied with $\mu_1 = \mu_2$, it yields an effective version of Lemma \ref{easy=proof}. \footnote{\label{f-pol}See \S \ref{existing-work} for a
discussion of the origins of that argument.}  The generalization to two distinct measures $\mu_1\neq \mu_2$ requires no effort, and 
is technically convenient for certain other applications (e.g., when studying
two distinct closed $S$-orbits). 

 The proof of this Proposition is an obvious quantification
 of \S \ref{subsec:B}.  The Proposition and related ideas have several corollaries
that will be used later in the proof.  In particular, it implies a quantitative version of the isolation
of distinct closed $S$-orbits.


\subsection{Dichotomy (\S \ref{subsec:C}).} \label{subsec:CE}

 \begin{propC}\label{prop:dichotomy-copy} (Proposition \ref{prop:dichotomy})
Given a connected intermediate subgroup $H\subseteq S\subseteq G$,
$\mysymbol \in(0,1)$, and $d_S \geq 1$, there exists $\xi$ and $d'$ 
depending only on $d_S$, $\mysymbol$, $G$, $H$, and $\epsilon_0
\ll_{d_S} 1$, so that:

 Suppose for some $\epsilon \leq \epsilon_0$ that
\begin{enumerate}
\item  $\mu$ is $\epsilon$-almost invariant under $S$, with respect to the Sobolev norm $\Sob_{d_S}$, and that 
\item  $\mu(x S) = 0$ for all closed $S$-orbits of volume $\leq \epsilon^{-\mysymbol}$.
\end{enumerate}
Then there exists $x_1, x_2$ so that $x_2 = x_1 \exp(r)$,
$r \in \mathfrak{r}$, $\|r\| \leq \epsilon^{\xi}$, and
 $x_1,x_2$ are both
$[\|r\|^{-\ref{gammaconstant}},
\|r_1\|^{-\ref{deltaconstant}}]$-generic w.r.t.\ $\Sob_{d'}$. 
\end{propC}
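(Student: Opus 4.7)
The plan is to argue by contrapositive: assuming no pair $x_1, x_2$ with the stated properties exists, show that $\mu$ must carry positive mass on a closed $S$-orbit of volume at most $\epsilon^{-\mysymbol}$, contradicting hypothesis (2).

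First, I would apply Proposition A to hypothesis (1) to produce a large set of generic points. Specifically, with $R = \epsilon^{-\beta}$ (for the $\beta$ of Proposition A), a positive fraction of pairs $(x,s) \in X \times \Sball(R)$ yield points $xs$ that are $[T_0, \epsilon^{-\beta}]$-generic w.r.t.\ some Sobolev norm $\Sob_{d'_0}$. Choosing $T_0$ appropriately (and shrinking $\beta$ as needed so that $T_0 = \|r\|^{-\ref{gammaconstant}}$ and $\epsilon^{-\beta} \geq \|r_1\|^{-\ref{deltaconstant}}$ for all $r \in \mathfrak r$ with $\|r\| \leq \epsilon^\xi$), I obtain a set $E \subset X$ of $\mu$-measure bounded below, every point of which is generic in precisely the sense required by the conclusion of Proposition C.

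The next step is the transverse-rigidity argument. Near the identity, $G$ factors as $S \cdot \exp(\mathfrak r)$, so any two nearby points in $X$ differ by some $s \exp(r)$ with $s \in S$ small and $r \in \mathfrak r$ small. By our contrapositive assumption, whenever two points of $E$ are within $\epsilon^\xi$ of each other, the $\mathfrak r$-component must vanish, i.e.\ they lie on the same $S$-orbit piece. Combined with the abundance from Proposition A (many $S$-translates of points in $E$ stay in $E$), this shows that, locally in any small transverse slice, $E$ consists of a single ``sheet'' along the $S$-direction. Using Fubini, together with the $\epsilon$-almost $S$-invariance (so that $\mu$ restricted to an $S$-orbit is approximately proportional to Haar), I can convert this local one-sheet structure into a quantitative statement: for a typical $x \in E$, the measure $\mu$ restricted to a small neighborhood of $x$ is approximately the pushforward of a piece of Haar measure on $x S$ (in an ``almost-closed'' sense), with a density bounded below in terms of $\epsilon^\xi$.

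Finally, I would invoke an effective closing lemma (of the type developed in \S\ref{closing} of the paper) to convert this ``almost-closed $S$-orbit carrying nontrivial $\mu$-mass'' into an actual closed $S$-orbit $x_0 S$ on which $\mu$ puts positive mass, with volume polynomially bounded in $\epsilon^{-1}$. The volume bound here is the technical heart: using the arithmeticity of $\Gamma$, Diophantine/algebraic-geometric control over generators of $S$, and the height bounds from Lemma~\ref{measureoutsidecompact} (to stay inside $\Xcompact$), one derives $\vol(x_0 S) \leq \epsilon^{-\mysymbol}$, provided $\xi$ is chosen sufficiently small relative to $d_S, \mysymbol, G, H$. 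This contradicts hypothesis (2) and completes the proof. The main obstacle, and what forces the delicate tuning of $\xi$, is precisely this closing-lemma step: extracting an honest closed orbit with polynomially controlled volume from the soft ``transverse rigidity'' conclusion requires careful interplay between the spectral gap, effective generation, and arithmeticity, and is the most technically involved piece of the argument.
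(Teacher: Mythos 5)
Your overall architecture (effective ergodic theorem to produce many generic points, a dichotomy between transverse spreading and concentration along an almost-closed $S$-orbit, and the effective closing lemma to handle the concentrated case) is the same as the paper's. But there is a genuine gap in how you close the contradiction. Your final step claims that the closing lemma converts an ``almost-closed $S$-orbit carrying nontrivial $\mu$-mass'' into an \emph{actual closed orbit $x_0S$ on which $\mu$ puts positive mass}, contradicting hypothesis (2). The closing lemma (Proposition \ref{CaseIIlemma}) does no such thing: it produces a genuinely closed orbit $x'S$ of controlled volume \emph{near} the point where concentration occurs, but $\mu$ can perfectly well assign zero mass to that closed orbit while being concentrated in a small neighbourhood of it. Hypothesis (2) only asserts $\mu(xS)=0$ \emph{on} closed orbits, so no contradiction is reached.

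The missing ingredient is Proposition \ref{lem:Quantiso} (Proposition C1), the linearization/polynomial-divergence argument, which upgrades ``$\mu(Y)=0$ for every closed $S$-orbit $Y$ of volume $\leq V$'' to ``the $\mu$-measure of a $V^{-\star}$-\emph{neighbourhood} of all such orbits is at most $1/2$.'' With this in hand the paper runs the dichotomy in the opposite logical direction from yours: it first discards the bad set $E_1$ of points near closed orbits of volume $\leq v$ (using \ref{lem:Quantiso}), and then, for a surviving point $x$, the closing lemma \emph{rules out} the concentration case (since concentration would force a nearby closed orbit of small volume, contradicting $x\notin E_1$); the spreading case then yields the two generic points by a pigeonhole over $T^{-N}$-balls in the transverse direction $\mathfrak{r}$, followed by the Sublemma's perturbation along $S$ to guarantee $\|r_1\|\gg\|r\|$. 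Your ``transverse rigidity'' step also elides this last point: the conclusion of Proposition C requires not just $r\neq 0$ but a lower bound $\|r\|\gg T^{-N}$ (so the genericity window is nonempty) and the non-degeneracy $\|r_1\|\gg\|r\|$, which is obtained only after adjusting the two points along $S$ using a density/Fubini argument of the type in \eqref{iotaclaim}.
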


This Proposition is an effective analog of \S \ref{subsec:C}, in particular, an effective analog of Lemma \ref{step=two}. 
However, the proof is considerably more involved.   It necessitates, in particular, an ``effective closing result'': the assertion that if an orbit $x S$ is ``almost closed'', there exists $x'$ near $x$ 
that is closed.

We devote most of the present subsection to explaining the proof of Proposition C. 
We begin by enunciating two results which are, in spirit, quite close to Proposition B (= Proposition \ref{MaL}); for, like that Proposition, 
they are based in concept on the polynomial divergence properties of unipotent flows. 
For that reason, in the text, they are proved immediately after Proposition~\ref{MaL}. 

\begin{lem} (Lemma \ref{iso}) \label{iso-copy} (Quantitative isolation of periodic orbits for semisimple groups)
There are constants $\consta \label{separationheightconstant}$
and $\consta \label{separationvolumeconstant}$ with the following property.
\index{xappa212@$\ref{separationheightconstant}$, exponent of height in isolation}
\index{xappa2121@$\ref{separationvolumeconstant}$, exponent of volume in isolation}

Suppose $H \subset S \subset G$. 
Let $x_1, x_2\in X$ be so that $x_i S$ are closed orbits with volume $\leq V$.
Then either:
\begin{enumerate}
\item  $x_1$ and $x_2$ are on the local $S$-orbit, i.e.\ there exists some $s\in S$ with $d(s,1)\leq 1$
and $x_2=x_1 s$, or
\item $d(x_1, x_2) \gg \min(\height(x_1),\height(x_2))^{-\ref{separationheightconstant}} V^{-\ref{separationvolumeconstant}}$.
\end{enumerate}
\end{lem}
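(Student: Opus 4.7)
The plan is to derive a contradiction via Proposition~B applied to the two $S$-invariant probability measures $\mu_i$ on $x_iS$, should $x_1,x_2$ fail to be on the same local $S$-orbit while satisfying $d(x_1,x_2)$ smaller than an appropriate $R^{-\star}V^{-\star}$, with $R=\min(\height(x_1),\height(x_2))$. Write $x_i=\Gamma g_i$ and $g_2=g_1\exp(r)$; by Lemma~\ref{assumption-2}, $S$ is semisimple, so complete reducibility of $\Ad(S)$ lets me take $\mathfrak{r}$ to be $\Ad(S)$-invariant. Absorbing the $\mathfrak{s}$-component of $r$ into a local redefinition of $g_1$ (via BCH within the injectivity radius) I arrange $r\in\mathfrak{r}$ with $\|r\|\asymp d(x_1,x_2)$; the failure of (1) forces $r\neq 0$. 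Using Lemma~\ref{measureoutsidecompact} I may further translate by a bounded element of $H$ to put $x_1\in\Xcompact$. Both $\mu_i$ are exactly $S$-invariant with uniform spectral gap (inherited from the $H$-spectral gap), so the effective ergodic theorem for exact invariance lets me pick $s\in S$ of small norm making $x_1s$ and $x_2s$ simultaneously generic for $\mu_1,\mu_2$; the identity $x_2s=(x_1s)\exp(\Ad(s^{-1})r)$ together with $\Ad(S)$-invariance of $\mathfrak{r}$ preserves the structure.

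Next I would apply Proposition~B (\ref{MaL-copy}) to these generic points. If the $u(t)$-fixed component $r_1$ in the decomposition~\eqref{gdecomp} vanishes, I first replace $u(t)$ by a conjugate $\Ad(h)u(t)\Ad(h^{-1})$ ($h\in H$) chosen so that $r$ is no longer fixed; such $h$ exists since the common fixed space of all such conjugates equals the centralizer of $H$, which is trivial by assumption. Proposition~B then yields a polynomial $q:\R\to\mathfrak{r}$ of degree $\leq\dim\g$ with $\max_{s\in[0,2]}\|q(s)\|=1$ such that, for $s\in[1,2^{1/M}]$,
\[
\bigl|\mu_2^{\exp q(s)}(f)-\mu_1(f)\bigr|\ll\|r_1\|^{\ref{eipconstant}}\Sob_d(f),\qquad f\in C_c^\infty(X).
\]

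The contradiction is extracted from a test function: let $f$ be a smooth bump of mass one supported in $B(x_1,\delta)$ with $\delta$ just below the injectivity radius at $x_1$, so $\delta\asymp R^{-\star}$, $\Sob_d(f)\ll R^\star$ and $\mu_1(f)\gg R^{-\star}/V$. The support $x_2S\cdot\exp(q(s))$ of $\mu_2^{\exp q(s)}$ meets $B(x_1,\delta)$ only if $r+q(s)\in\mathfrak{s}$ up to an error of size $\delta$; since $r,q(s)\in\mathfrak{r}$ and $\mathfrak{r}\cap\mathfrak{s}=0$, this forces $\|q(s)+r\|\leq\delta$, and a Remez/Lojasiewicz bound applied to the polynomial $q$ of sup-norm $\asymp 1$ and degree $\leq\dim\g$ shows the set of such $s$ has measure $\ll\delta^{1/\dim\g}$, a negligible subset of $[1,2^{1/M}]$. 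For any $s$ outside this set $\mu_2^{\exp q(s)}(f)=0$, so
\[
R^{-\star}/V\ll\mu_1(f)\ll\|r_1\|^{\ref{eipconstant}}R^\star,
\]
yielding $\|r\|\geq\|r_1\|\gg R^{-\star}V^{-\star}$, as required. The main obstacle will be coordinating the many parameters—the generic interval produced by the ergodic theorem, the test-function scale $\delta$, and the Remez exponent in the last step—so that the final exponents $\ref{separationheightconstant},\ref{separationvolumeconstant}$ depend only on $G,H$; a secondary technicality is cleanly handling the case $r_1=0$ through the conjugate-unipotent trick without spoiling the preceding constructions.
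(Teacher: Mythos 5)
Your setup---reducing to $x_2=x_1\exp(r)$ with $r\in\mathfrak r$ nonzero, making both points generic by translating along $S$, arranging $r_1\neq0$, and feeding the pair into Proposition \ref{MaL} against a bump function at $x_1$---is essentially the paper's strategy. But the endgame has the logic reversed, and the reversal is a genuine gap. You claim that for most $s\in[1,2^{1/M}]$ one has $\mu_2^{\exp q(s)}(f)=0$, on the grounds that $x_2S\exp(q(s))$ can meet $B(x_1,\delta)$ only through the local picture $x_1\exp(r)\,\sigma\exp(q(s))$ with $\sigma$ near the identity, which is obstructed by $\mathfrak r\cap\mathfrak s=0$. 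That ignores global recurrence: $x_2S$ is a closed orbit of volume up to $V$ in $\Gamma\backslash G$, and it may enter any prescribed $\delta$-ball through points of the orbit far from $x_2$ (equivalently, via nontrivial elements of $\Gamma$). Nothing rules this out; on the contrary, Proposition \ref{MaL} together with $\mu_1(f)\gg V^{-1}\eta^{\dim S}$ shows that when $\|r\|$ is small one has $\mu_2^{\exp q(s)}(f)>0$ for \emph{every} $s$ in the interval---the orbit really does hit all these balls. So the vanishing your contradiction rests on is exactly what cannot happen.

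The paper runs the implication the other way: from $\mu_2^{\exp q(s)}(f_{x_1})>0$ for all $s\in(1,2^{1/M})$ it extracts, for a family of parameters $s_i$ whose translates $q(s_i)$ are $\gg\eta$-separated (possible since $q$ is a nonconstant polynomial of bounded degree and sup-norm $\asymp1$, giving $\gg\eta^{-1}$ such parameters), pairwise disjoint caps of radius $\asymp\height(x_1)^{-\ref{hc2}}$ on the single orbit $x_2S$, whence $\vol(x_2S)\gg\height(x_1)^{-\star}\eta^{-1}$; choosing $\eta$ so that this exceeds $2V$ contradicts $\vol(x_2S)\leq V$ and yields the lower bound on $\|r\|$. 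Two secondary points: translating $x_1$ into $\Xcompact$ by an element of $H$ is not harmless, since that element need not be bounded and distorts $d(x_1,x_2)$ by a factor $\ll\|h\|$ (the paper keeps the height factor in the conclusion precisely to avoid this); and conjugating $u(t)$ to force $r_1\neq0$ changes both the genericity notion and the decomposition \eqref{gdecomp}, so one should instead translate the points by a bounded $s\in S$ chosen via \eqref{iotaclaim}, simultaneously with the genericity requirement.
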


The Lemma in combination with Lemma \ref{measureoutsidecompact} implies, in particular, that the {\em total} number of closed $S$-orbits with volume $\leq V$
is bounded by a polynomial in $V$.  

\begin{propG} (Proposition \ref{lem:Quantiso})\label{lem:Quantiso-copy}
Let $S \supset H$. 
There exists some $V_0$ depending on $\Gamma$, $G$, and $H$ and some
$\consta>0\label{expQuantiso}$ with the following
property.\index{xappa22@$\ref{expQuantiso}$, exponent in isolation}
Let $V\geq V_0$ and suppose $\mu(Y) = 0$ if $Y$ is any closed
$S$-orbit of volume $\leq V$. Then:
\begin{equation}\label{freddy-copy}
\mu\bigl(\bigl\{x  \in X: \mbox { there exists }x'
\stackrel{V^{-\ref{expQuantiso}}}{\sim} x \mbox{ with }\vol(x' S)
\leq V\bigr\}\bigr) \leq1/2\end{equation}
\end{propG}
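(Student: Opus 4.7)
The plan is to use the linearization technique---embedding $G/\tilde S$ via Chevalley into a linear $\G$-representation $W$---so that the geometric condition ``$x$ is within $V^{-\kappa}$ of a closed $S$-orbit of volume $\leq V$'' translates to an algebraic condition on the $\Gamma$-orbit of $gw_0\in W$, where $x=\Gamma g$ and $w_0\in W$ is a chosen vector with $G$-stabilizer (up to finite index) equal to $\tilde S$. Combined with the polynomial divergence of $t\mapsto\rho(u(t))$ and the $u(t)$-invariance of $\mu$, this will force any $x$ whose $u(t)$-orbit spends most of its time near a small-volume closed $S$-orbit to actually lie on one of those orbits.

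First, I fix a $\Q$-rational representation $\rho\colon\G\to\GL(W)$ and a $\Q$-rational $w_0\in W$ whose $G$-stabilizer is (up to finite index) $\tilde S$. Closed $S$-orbits $\Gamma gS\subset X$ correspond to discrete $\Gamma$-orbits of $gw_0$, with $\vol(\Gamma gS)$ polynomially comparable to a norm-type invariant of that orbit; in particular the set $\mathcal F(V)\subset W$ of vectors parametrizing closed $S$-orbits of volume $\leq V$ is contained in a ball of radius $V^{\star}$. A short computation using \eqref{distortion2} shows that the set $B(V,\kappa)$ in \eqref{freddy-copy} is contained in the preimage, under $\Gamma g\mapsto\Gamma gw_0$, of a $V^{-\kappa'}$-neighborhood of $\mathcal F(V)$, for some $\kappa'\asymp\kappa$.

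Second, I exploit that $t\mapsto\rho(u(t))w$ is polynomial of degree at most $\dim W$ for each $w$. If, for some $v\in\mathcal F(V)$, the function $t\mapsto\|\rho(u(t))\gamma gw_0-v\|$ stays $\leq V^{-\kappa'}$ on a subset of $[0,T]$ of measure $\geq T/2$, then a Remez-type inequality for polynomials forces this bound, up to a constant depending only on $\dim W$, to hold on all of $[0,T]$. Letting $T\to\infty$ and using that $\rho(u(t))$ is unipotent, $\gamma gw_0-v$ must be $u(t)$-fixed; since $v\in\mathcal F(V)$ is itself $u(t)$-fixed (it parametrizes a closed orbit of $S\supset\{u(t)\}$), $\gamma gw_0$ lies in a small neighborhood of the $u(t)$-fixed subspace around $v$. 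The effective isolation provided by Lemma~\ref{iso-copy} (the spacing between distinct closed $S$-orbits of volume $\leq V$) then shows that, provided $\kappa$ is chosen smaller than the exponents appearing there, $\gamma gw_0=v$; thus $x=\Gamma g$ lies on the closed $S$-orbit associated to $v$.

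Third, I close the argument via the $u(t)$-invariance of $\mu$. If $\mu(B(V,\kappa))>1/2$, then by invariance and Fubini, $\int_X\frac{1}{T}\int_0^T\mathbf 1_{B(V,\kappa)}(xu(t))\,dt\,d\mu(x)>1/2$ for every $T$, so for a positive-$\mu$-measure set of $x$ the $u(t)$-orbit spends time $\geq T/2$ in $B(V,\kappa)$ for arbitrarily large $T$. By step two, each such $x$ lies on one of the (at most $V^{\star}$-many, by Lemma~\ref{iso-copy}) closed $S$-orbits of volume $\leq V$; but these orbits have $\mu$-measure zero by hypothesis, a contradiction. Hence $\mu(B(V,\kappa))\leq 1/2$ once $V\geq V_0$.

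The principal obstacle is the uniform quantification in step two: one must obtain the polynomial/Remez estimate uniformly over all $v\in\mathcal F(V)$, with polynomial dependence on $V$, and carefully truncate near infinity in $X$ (where the linearization and the bound $\|v\|\leq V^{\star}$ degenerate). The non-compactness is handled by combining the Kleinbock--Margulis non-divergence estimate with Lemma~\ref{measureoutsidecompact} to first restrict attention to a large compactum such as $\Xcompact$; this truncation, and the need to compare the rational lattice $\Gamma w_0\subset W$ with the Euclidean geometry on $W$, is precisely why $V_0$ depends on $\Gamma$ while $\kappa$ depends only on $G,H$.
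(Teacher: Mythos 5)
Your overall strategy --- linearize, use polynomial divergence of $t\mapsto\rho(u(t))w$, and average over the $u(t)$-flow --- is the right one and is essentially the strategy of the paper (which linearizes in a tubular neighbourhood $(x_0S\cap\Siegel(R))\exp\{r\in\mathfrak{r}:\|r\|\leq\delta_0\}$ of each closed orbit rather than via a global representation; that difference is largely cosmetic). However, your step two contains two genuine gaps.

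First, the conclusion ``$\gamma gw_0=v$, thus $x$ lies on the closed $S$-orbit'' does not follow. What the Remez/limit argument gives is that $\gamma gw_0-v$ is small and fixed by $u(t)$; the $u(t)$-fixed subspace is in general positive-dimensional in the transverse direction (this is exactly the $r_0$-component in the decomposition \eqref{gdecomp}, which the paper tracks carefully throughout), so the set of points satisfying your conclusion is a whole slab $x_0S\exp(Y)$ with $Y$ ranging over small $u$-fixed transverse vectors --- a set of positive measure, not the null set $x_0S$. Lemma \ref{iso} cannot rescue this: it separates two \emph{closed} orbits, and $\Gamma g$ is not known to lie on one. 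The paper excludes the constant-polynomial case differently: if the transverse displacement $Y$ were fixed for all time, then genericity of $\mu$-a.e.\ point forces $\supp(\mu)\subset x_0S\exp(Y)$, hence $H\subset\exp(-Y)S\exp(Y)$, contradicting Lemma \ref{assumption-2} for $\|Y\|$ small. Having established that the transverse polynomial is nonconstant, the paper then uses the $(C,\alpha)$-good property of polynomials to bound the \emph{fraction of time} spent $\delta$-close to $x_0S$ by $(\delta/\delta_0)^{\star}$, rather than trying to force the point onto the orbit.

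Second, your Remez step tacitly fixes one pair $(\gamma,v)$ on all of $[0,T]$. The hypothesis only says that for each $t$ in a set of measure $T/2$ there exist \emph{some} $\gamma_t$ and \emph{some} $v_t$ with $\|\rho(u(t))\gamma_tgw_0-v_t\|\leq V^{-\kappa'}$; as $t$ varies the orbit can shuttle between different closed orbits (there are polynomially many) and between different sheets $\gamma gw_0$ of the same orbit. The set of $t$ for which a fixed pair works is a union of intervals, and the polynomial estimate must be applied interval by interval --- this is the covering by intervals $B_j$ in the paper's proof, and the reason the Dani--Margulis linearization machinery is nontrivial. Without this decomposition the limit $T\to\infty$ in your argument is not available. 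Both gaps are repairable, but the repair is essentially the paper's proof.
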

This Proposition is again based on polynomial divergence. 
For each closed orbit $x' S$ as in
\eqref{freddy-copy}, the $\mu$-mass of a
$V^{-\ref{expQuantiso}}$-neighbourhood of $x'S$ is quite small. The
idea of this is that one can linearize the flow in the neighbourhood
of $x' S$ and thereby understand it completely.
Taken in combination with Lemma \ref{iso-copy} (\ref{iso}), this yields 
the claimed result.

\begin{propH} (Proposition \ref{CaseIIlemma})\label{CaseIIlemma-copy}
Let $H \subset S \subset G$, with $S$ connected.
Let $\delta\leq 1\leq N$. There exists some $T_0 = T_0(\Gamma, G, H, N)$ with the following
property:

 Let $T\geq T_0$ and let $v = \vol \Sball(T)$. Suppose that
$\{s_1,\ldots,s_k\}\subset \Sball(T)$ is $\frac1{10}$-separated, that
$k\geq v^{1-\delta}$, and that there exists $x \in \Xcompact$ so
that $x s_i \stackrel{T^{-N}}{\sim} x s_j$.
Then there is
$x' \stackrel{T^{-N_\uparrow}}{\sim} x$ so that $x' S$ is a closed orbit
of volume $\leq T^{\delta_{\downarrow}}$.
\end{propH}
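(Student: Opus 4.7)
Proposition~H is an effective closing lemma: many clustered returns under $S$ should force $x$ to lie near an honest closed $S$-orbit of small volume. I propose to (i)~translate the clustering hypothesis into an abundance of lattice elements lying within $T^{-N}$ of $S$; (ii)~solve a linearized ``simultaneous closing'' equation to locate $x'$ near $x$ whose $S$-orbit is genuinely closed; and (iii)~read off the volume bound from a lattice-point count. For step~(i), write $x = \Gamma g_0$ and set $\Lambda := g_0^{-1}\Gamma g_0$. The hypothesis $x s_i \stackrel{T^{-N}}{\sim} x s_j$ produces for each pair $(i,j)$ an element $\lambda_{ij}\in\Lambda$ with $\lambda_{ij} = s_j\epsilon_{ij}s_i^{-1}$ and $\|\epsilon_{ij}\|\leq T^{-N}$. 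Fixing a base index $j_*$ yields $k\geq v^{1-\delta}$ elements $\lambda_i := \lambda_{ij_*}\in\Lambda$, each within distance $T^{-N}$ of $s_{j_*}s_i^{-1}\in\Sball(T)\cdot\Sball(T)^{-1}\subset\Sball(c_0T)$. Since $x\in\Xcompact$ controls $\height(g_0)$, the injectivity bound~\eqref{injfact} combined with the $\tfrac{1}{10}$-separation of the $s_i$ implies the $\lambda_i$ are themselves $\tfrac{1}{11}$-separated once $T\geq T_0$. Choose an $\Ad(S)$-invariant complement $\mathfrak{r}\subset\g$ to $\mathfrak{s}$ (available since $S$ is semisimple by Lemma~\ref{assumption-2}); each $\lambda_i$ then admits a unique decomposition $\lambda_i = \exp(r_i)\sigma_i$ with $\sigma_i\in S$, $r_i\in\mathfrak{r}$, and $\|r_i\|\ll T^{-N}$.

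\emph{The closing step.} I now seek $x' = x\exp(r_*)$ with $r_*\in\mathfrak{r}$ and $\|r_*\|\ll T^{-N_{\uparrow}}$ whose $S$-orbit is closed. Since $\Ad(S)$ preserves $\mathfrak{r}$, the condition that $\exp(-r_*)\lambda_i\exp(r_*)$ lie in $S$, linearized, becomes the overdetermined linear system
\[
(1-\Ad(\sigma_i))\,r_* \;=\; r_i,\qquad i=1,\ldots,k,
\]
on $\mathfrak{r}$. Consistency of the $r_i$ to order $T^{-N}$ is automatic from the group law on $\Lambda$ applied to products $\lambda_i\lambda_j^{-1}$. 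Solvability requires that $\bigcap_i\ker\bigl((1-\Ad(\sigma_i))|_{\mathfrak{r}}\bigr)$ be trivial with quantitative control; because the $\sigma_i$ are $\tfrac{1}{12}$-separated and number $\geq v^{1-\delta}$, an effective generation argument---resting on the finite list of intermediate Lie subalgebras supplied by Lemma~\ref{assumption-2} and on the trivial-centralizer hypothesis---produces a solution of norm $\|r_*\|\ll T^{-N_{\uparrow}}$. A polynomial-divergence bootstrap in the spirit of Proposition~B then upgrades this first-order solution to an exact simultaneous conjugation of all $\lambda_i$ into $S$, producing $x'$ whose $S$-orbit is closed.

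\emph{Volume bound and main obstacle.} With $\Gamma_{x'S}\subset S$ the stabilizer of $x'$, the $\sigma_i$ lie in $\Gamma_{x'S}\cap\Sball(c_0T)$, so
\[
v^{1-\delta}\;\leq\; k\;\leq\; \bigl|\Gamma_{x'S}\cap\Sball(c_0T)\bigr|\;\ll\;\frac{\vol(\Sball(c_0T))}{\vol(x'S)}\;\ll\;\frac{v}{\vol(x'S)},
\]
which yields $\vol(x'S)\ll v^{\delta}\leq T^{\delta_{\downarrow}}$ by the almost-polynomial growth of $v=\vol(\Sball(T))$. The main obstacle is the closing step: one must quantify effectively the statement that the $\sigma_i$ span $\mathfrak{r}$ under $\Ad$ (to guarantee solvability with controlled norm), and then control the nonlinear corrections in the bootstrap from first-order to exact closing so that the scale $T^{-N_{\uparrow}}$ is preserved throughout. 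The restriction $x\in\Xcompact$ is used precisely to keep the $\sigma_i$ and their bounded products in a region of $S$ where the required distortion and generation estimates remain valid.
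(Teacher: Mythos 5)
Your setup (step (i)) and the shape of your volume count (step (iii)) follow the paper's architecture, but the heart of the argument --- your ``closing step'' --- has a genuine gap. You linearize the condition $\exp(-r_*)\lambda_i\exp(r_*)\in S$ to the overdetermined system $(1-\Ad(\sigma_i))r_*=r_i$ and assert that approximate consistency, an effective generation argument, and a ``polynomial-divergence bootstrap in the spirit of Proposition B'' upgrade a first-order solution to an exact simultaneous conjugation. There is no such mechanism: for a general overdetermined system an approximate solution does not yield an exact one, and nothing in your argument rules out that the $\lambda_i$ simply fail to lie in any common conjugate of $S$ (in which case no $r_*$ exists at all). What forces exactness in the paper is \emph{arithmeticity}: one encodes ``$\gamma$ normalizes $\Ad(g)\mathfrak{s}$'' as ``$\gamma$ fixes $g\,v_S\in(\wedge^{\dim\mathfrak{s}}\mathfrak{g})^{\otimes 2}$'', reduces to boundedly many $\gamma$'s by an effective Noetherian argument, observes that the operator $A=\oplus_i(\delta_i-1)$ has rational entries with numerator and denominator $\leq T^{\star}$, and invokes Lemma \ref{dioph}: a vector within $T^{-N_\uparrow}$ of the kernel of such a matrix lies within $T^{-N_\uparrow}\cdot T^{\star}$ of an exact kernel vector, because the nonzero singular values of a bounded integer matrix are bounded below polynomially (the ``three nearly collinear points of $\Z^2$ are exactly collinear'' phenomenon). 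An equivariant projection onto the orbit $G\cdot v_S$ then realizes the exact kernel vector as $g_1 v_S$. Your proposal has no substitute for this Diophantine rigidity, and Proposition B --- which concerns divergence of unipotent trajectories of \emph{measures} --- does not supply one.

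Second, your inequality $\bigl|\Gamma_{x'S}\cap\Sball(c_0T)\bigr|\ll \vol(\Sball(c_0T))/\vol(x'S)$ is not a free lattice-point count: at that stage you do not yet know that $\Lambda=g_1^{-1}\Gamma g_1\cap S$ is a lattice, and the upper bound for a separated subset of a coset $\Lambda s_0$ inside a ball is exactly Proposition \ref{sl32}(2), whose error term $\int\varphi_0^{\wk}(ss'^{-1})^{\rho}$ is controlled only via the spectral gap for $S$ acting on $L^2(\Lambda\backslash S)$ furnished by Proposition \ref{sectionslice} (ultimately property $\tau$). It is this spectral input that converts ``$\geq v^{1-\delta}$ separated elements in a ball of volume $\asymp v$'' into ``$\Lambda$ is a lattice of covolume $\ll v^{2\delta}$'', which is simultaneously the closedness of $x'S$ and the volume bound. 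Treating it as elementary counting hides the step the paper's outline explicitly flags as using a spectral gap, cf.\ the toy model \eqref{toymodel}.
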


This result constitutes an effective ``closing Lemma'' for actions of semisimple groups on homogeneous spaces.  

{\em Idea of proof.} 
 Choosing a representative $g_0 \in G$ for $x$, the fact that $x s_i$ is close to $x s_j$
means that there is an element $\gamma \in \Gamma$ that moves $g_0 s_i$ near to $g_0 s_j$. 
Thereby, one constructs a whole collection of elements of $\Gamma$ {\em near to}
$g_0 S g_0^{-1}$.

We first establish that all these elements must
themselves lie on a different conjugate $g_0' S g_0'^{-1}$. Here $g_0' \in G$
is very close to $g_0$.  This uses, in particular, the arithmetic nature of $\Gamma$. 
A baby version of this argument is the fact that, if three points in $\Z^2 \cap [-N, N]^2$
all lie within $\frac{1}{100 N}$ of a certain line in the plane, then they 
must {\em exactly} lie on a nearby line. 

Next, we show that the existence of so many elements in $\Gamma$ that lie inside $g_0' S g_0'^{-1}$
force the subgroup $g_0' S g_0'^{-1} \cap \Gamma$ to be a lattice.  
This step uses a spectral gap. 
%
A simple statement with the same general flavor as what we need is the following: Suppose 
we are given a discrete subgroup $\Lambda \subset \SL_3(\R)$ so that
\begin{equation} \label{toymodel}\lim_{T \rightarrow \infty}  \frac{
\#\{m \in \Lambda,\|m\| := \sqrt{\mathrm{Trace}(m. m^t)}
\leq T\}}{T^{5.9}} = \infty\end{equation}
Then $\Lambda$ is a lattice. 
(For comparison, the asymptotic
for $\SL_3(\Z)$ is $T^6$.)

This statement may be derived from Property (T). (The fact that $\Lambda$ has many elements translates
to the existence of a subrepresentation of $L^2(\Lambda \backslash \SL_3(\R))$ near
to the trivial representation.) The precise form of what
we use is somewhat different and is stated in Proposition \ref{sl32}, part (2).

Therefore, we have established that $x' := \Gamma g_0'$ is close to $x=\Gamma g_0$,
and $x' S$ is a closed $S$-orbit.  A more detailed study shows that the volume of $x' S$ is also small. 
\qed

Now, we sketch the proof of Proposition C. 

By Proposition A, for ``most'' $x \in X$, the measure of 
$\mathcal{B}_x := \{s \in S: xs \mbox { generic}\}$ is ``large.''  (Here ``generic'' means the notion specified in the statement of Proposition C.)

More is true: let $O$ be a small neighbourhood of the identity in $S$.
Given $x \in X$, let 
$\mathcal{B}_x'$ consist of $s \in S$ so that $\vol(s O \cap \mathcal{B}_x) > 0.99 \vol(O)$. 
By Fubini's theorem,
the volume of $\mathcal{B}_x'$ is large for ``most'' $x$. 
(Compare the usage of Fubini's theorem in \S \ref{subsec:C}.)

Supposing, for simplicity, that $X$ were compact, we cut $X$ into boxes of size $\epsilon^{\star}$.
Let us observe that if one such $\epsilon^{\star}$ box contained a large fraction of the volume of $x . \mathcal{B}_x'$,
then \propHname would show that there is $x'$, very near to $x$, so that $x' S$ is closed. 
\propGname shows that this cannot happen, at least for ``most'' $x$. 
Therefore, \propGname shows that, for at least one $x \in X$,
no single $\epsilon^{\star}$-box contains a large fraction of the volume of $\mathcal{B}_x$. 

Consequently, for at least one $x \in X$, 
there exist two distinct but close $\epsilon^{\star}$-boxes, each of which contain a
point of the form $x s, s \in \mathcal{B}_x'$. It is easy to see --
by ``adjusting along the $S$-direction'' -- 
that this implies the conclusion of Proposition C.

\subsection{Effective version of \S \ref{subsec:D}} \label{effgen1}
In this section, we discuss some of the results which effectivize
the rather trivial-seeming principles used implicitly in \S \ref{subsec:D}. 
After this, we present the proof of Theorem \ref{thm:main}.

\begin{propD} \label{almostsubalgebracopy} (Proposition \ref{almostsubalgebra})
Fix a Sobolev norm $\Sob_d$; all notions 
of almost invariance are taken with respect to this.  Suppose that $\mu$ is $\epsilon$-almost
invariant under a subgroup $S$ and also under $Z \in \mathfrak{g}$
such that $Z \perp \mathfrak{s}$ and $\|Z\|=1$ where $\mathfrak{s}$
is the Lie algebra of $S$.

Then there is a constant $\consta
\label{almostinvconstant}$\index{xappa23@$\ref{almostinvconstant}$,
generating a bigger subgroup with almost invariance} so that  $\mu$
is also $\constc(d) \epsilon^{\ref{almostinvconstant}}$- almost
invariant under some subgroup $S_*$ with $\dim(S_*) > \dim(S)$.  If
$H \subseteq S$, we may also assume that $H \subseteq S_*$.
\end{propD}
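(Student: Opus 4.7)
My plan is to realize $S_*$ as the connected immersed Lie subgroup of $G$ whose Lie algebra $\mathfrak{s}_* \subset \g$ is the Lie subalgebra generated by $\mathfrak{s}$ and $Z$. Since $Z \perp \mathfrak{s}$ and $\|Z\|=1$ we have $Z \notin \mathfrak{s}$, so $\dim \mathfrak{s}_* > \dim \mathfrak{s}$, and obviously $S_* \supseteq S$; if $H \subseteq S$ then $\mathfrak{h} \subseteq \mathfrak{s} \subseteq \mathfrak{s}_*$, so the last clause is automatic. The content of the Proposition is therefore to propagate the almost-invariance hypotheses under $S$ and $Z$ to almost-invariance under all of $S_*$, with only polynomial loss in $\epsilon$.

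The first step is to convert almost-invariance under the group $S$ into almost-invariance under a basis of $\mathfrak{s}$: fix an orthonormal basis $X_1,\dots,X_k$ of $\mathfrak{s}$ (each of norm $1$), so that $\exp(tX_i) \in S$ satisfies $\|\exp(tX_i)\| \leq 2$ for $|t| \leq 2$ after rescaling the $X_i$ by a fixed constant depending only on $G,H$. Thus $\mu$ is $\constc(d)\epsilon$-almost invariant (w.r.t.\ $\Sob_d$) under each of the vectors $X_1,\dots,X_k,Z$ in the sense of \S \ref{def:almostinvariant}. Now I invoke the properties of almost invariance collected in \S \ref{Appendix-A}: for $Y_1, Y_2 \in \mathfrak{g}$ of norm $\leq 1$, $\eta$-almost invariance under $Y_1$ and $Y_2$ implies $\eta^\star$-almost invariance under $Y_1+Y_2$ and under $[Y_1,Y_2]$ (after suitable rescaling of the bracket so that its norm is $\leq 1$, using \eqref{distortion}).

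Step two is to spanning $\mathfrak{s}_*$ by iterated brackets of $X_1,\dots,X_k,Z$. Any element of $\mathfrak{s}_*$ is a linear combination of iterated brackets, and since $\dim \mathfrak{s}_* \leq \dim \g$, one may find such a spanning collection in which no iterated bracket has depth greater than $\dim \g$. Applying the bracket rule of \S \ref{Appendix-A} at most $\dim \g$ times in succession transforms the exponent $\epsilon$ into $\epsilon^{\kappa_1}$ for some $\kappa_1>0$ depending only on $G,H$. Thus $\mu$ is $\constc(d)\epsilon^{\kappa_1}$-almost invariant under each element of a chosen basis $Y_1,\dots,Y_m$ of $\mathfrak{s}_*$ consisting of vectors of norm $\leq 1$.

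Finally I pass from almost-invariance along each $Y_j$ to almost-invariance under the subgroup $S_*$. Any $g \in S_*$ with $\|g\|\leq 2$ lies in a fixed-size neighborhood of $1$ in $S_*$ and, by the inverse function theorem applied to $(t_1,\dots,t_m) \mapsto \exp(t_1 Y_1)\cdots \exp(t_m Y_m)$, can be written as a product of a bounded number (depending only on $G,H$) of exponentials $\exp(t_j Y_j)$ with $|t_j|\leq 2$. Composing almost-invariances along a bounded number of group elements degrades the bound again by a bounded power, yielding $\constc(d)\epsilon^{\ref{almostinvconstant}}$-almost invariance under $S_*$ for an appropriate exponent $\ref{almostinvconstant}$ depending only on $G,H$. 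The main potential obstacle is verifying that all the compositions and brackets used in these two final steps fit inside the ``$\|g\|\leq 2$'' and ``$\|Y\|\leq 2$'' conventions of \S \ref{def:almostinvariant}, which is handled by an initial rescaling of the basis vectors and a uniform bound on the length of products needed to cover a neighborhood of the identity in $S_*$; these are precisely the kind of routine bookkeeping issues that are absorbed into the properties collected in \S \ref{Appendix-A}.
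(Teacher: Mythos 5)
There is a genuine gap, and it sits exactly at the point the paper itself flags as the difficulty of this Proposition (see the discussion following Proposition D in \S \ref{effgen1}). Your Step two asserts that $\mu$ becomes $\ll\epsilon^{\kappa_1}$-almost invariant under a basis of $\mathfrak{s}_*=\langle\mathfrak{s},Z\rangle_{\mathrm{Lie}}$ with $\kappa_1$ depending only on $G,H$. But the relevant stability estimate (Lemma \ref{stabilityofalmost}) degrades not only in the depth of the iterated brackets but also \emph{linearly in the size of the coefficients} $\sum_t|c_t|$ needed to express a unit vector of $\mathfrak{s}_*$ as a combination of elements of $T^{(k)}$ — and those coefficients admit no bound depending only on $G,H$. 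Concretely: suppose $Z$ lies at distance $\eta$ from the Lie algebra $\mathfrak{s}_1$ of an intermediate subgroup $S_1\supsetneq S$, with $\eta$ tiny (possibly much smaller than any fixed power of $\epsilon$). Generically $\mathfrak{s}_*$ is then much larger than $\mathfrak{s}_1$ (perhaps all of $\g$), but every iterated bracket of $\{X_1,\dots,X_k,Z\}$ lies within $O(\eta)$ of $\mathfrak{s}_1$, so producing a unit vector transverse to $\mathfrak{s}_1$ requires coefficients of size $\gtrsim\eta^{-1}$. The resulting almost-invariance bound $\epsilon^{\kappa}\eta^{-1}$ can exceed $1$ and is vacuous. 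The algebra generated by a set is not semicontinuous in the generators, so no compactness argument over the (compact) space of pairs $(\mathfrak{s},Z)$ rescues a uniform coefficient bound.

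This is precisely why the paper does \emph{not} take $S_*$ to be the group generated by $S$ and $\exp(Z)$. Instead it applies Proposition \ref{any-delta} (proved via the Lojasiewicz inequality) to $T=\{X_1,\dots,X_k,Z\}$ with $\delta=\epsilon^{\alpha}$: this produces a possibly different subalgebra $\mathfrak{w}\supset\mathfrak{h}$ — in the degenerate example above, $\mathfrak{w}=\mathfrak{s}_1$ rather than $\mathfrak{s}_*$ — which almost contains $T$ (forcing $\dim\mathfrak{w}>\dim\mathfrak{s}$) and whose basis vectors are within $\delta^{c}$ of combinations of elements of $T^{(k)}$ with coefficients \emph{explicitly bounded} by $\delta^{-k}$. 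Only then does Lemma \ref{stabilityofalmost} give a usable bound. Your first and third steps (converting group almost-invariance to Lie-algebra almost-invariance, and back again via Lemma \ref{one}) are consistent with the paper; the missing ingredient is the effective-generation statement that replaces your $\mathfrak{s}_*$ by an efficiently generated nearby subalgebra.
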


This is proved in \S \ref{Appendix-A}, and effectivizes
the statement ``If $\mu$ is invariant under $S$ and under $g^{*} \notin S$, then it is invariant
by a strictly larger subgroup $S_* \supset S$'' used in \S \ref{subsec:D}. 
The difficulty may be seen in the following case: let $S_1$ be an
intermediate subgroup, and suppose that $Z$ lies {\em almost} in the
Lie algebra of $S_1$. Then, in the statement of the proposition, one
wants to take $S_* = S_1$ rather than the group $S_2$ generated by
$H$ and $\exp(Z)$. Indeed, because $Z$ lies almost in $S_1$, the
subgroup $H$ and the element $\exp(Z)$ generate $S_2$ very
``inefficiently.'' 

We deduce it from the corresponding Lie algebra statement.
Given $T = \{t_1, \dots \} \subset \mathfrak{g}$,
we set  $T^{(k)}$ to be the set of all possible iterated Lie
brackets of the $t_i$s of depth $\leq k$.

\begin{propE}\label{any-delta-copy}(Proposition \ref{any-delta})
There exists an integer $k$ and some $c>0$ depending only on $G$
with the following property. For any orthonormal subset $T = \{t_1,
\dots \}$ of $\mathfrak{g}$, and $0 < \delta <  1$ there exists a
subalgebra $\mathfrak{w} \subset \mathfrak{g}$ with orthonormal
basis $w_1, w_2, \dots$ satisfying:
\begin{enumerate}
\item
For each $w_i$, there exists a linear combination $w_i' = \sum_{t
\in T^{(k)}} c_t t$, with $c_t \in \R$ satisfying $|c_t| \leq
\delta^{-k}$, so that $\|w_i - w_i'\| \ll \delta^c$.
\item Each $t \in T$ is within $\delta$ of $\mathfrak{w}$ (i.e. $\min_{w \in \mathfrak{w}} \|w-t\| \leq \delta$).
\end{enumerate}
  If the linear span $\langle T \rangle$ of $T$ contains a subalgebra $\mathfrak{h}$,
  then there exists $k$ and $c$ depending on the pair $(\mathfrak{h} , \mathfrak{g})$ so that
  (1) and (2) hold and, in addition, $\mathfrak{h} \subset \mathfrak{w}$.
\end{propE}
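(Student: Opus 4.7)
The plan is to build $\mathfrak{w}$ in two stages: first, run a greedy algorithm using iterated brackets of bounded depth to produce a subspace $V\subset\g$ that is \emph{approximately} a subalgebra and \emph{approximately} contains $T$; second, use a {\L}ojasiewicz-type argument on the Grassmannian to replace $V$ by a genuine nearby subalgebra $\mathfrak{w}$.

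For the first stage, I would fix a depth $k_{0}$ (to be chosen in terms of $\dim\g$) and a threshold $\theta=\delta^{a}$ with a small $a>0$, and then iteratively grow an orthonormal set $\{v_{1},v_{2},\ldots\}$. At each step I look for some $b\in T^{(k_{0})}$ whose orthogonal distance from $\mathrm{span}(v_{1},\ldots,v_{i-1})$ exceeds $\theta$; if one exists, I Gram--Schmidt it in. The process terminates after at most $\dim\g$ steps. Termination forces every $b\in T^{(k_{0})}$ (and hence every $t\in T$) to lie within $\theta$ of $V:=\mathrm{span}(v_{i})$, and once $k_{0}$ exceeds twice the terminating depth, also $[v_{i},v_{j}]$ lies within $O(\theta)$ of $V$. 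The thresholding prevents near-linear-dependence during Gram--Schmidt, so the resulting expressions $v_{i}=\sum_{t\in T^{(k_{0})}}c_{i,t}\,t$ have coefficients bounded by $\theta^{-O(1)}\leq\delta^{-k}$ for some $k$ depending only on $\g$.

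For the second stage, I observe that the set of $d$-dimensional Lie subalgebras of $\g$ is a real algebraic subvariety $\mathcal{L}_{d}$ of the Grassmannian $\mathrm{Gr}(d,\g)$, cut out by polynomial equations of bounded degree in the Pl\"ucker coordinates (encoding $[V,V]\subseteq V$). The {\L}ojasiewicz inequality, applied on the compact Grassmannian, will give $\mathrm{dist}(V,\mathcal{L}_{d})\ll\theta^{c'}$ for some $c'>0$ depending only on $\g$, yielding a genuine subalgebra $\mathfrak{w}$ with $\mathrm{dist}(V,\mathfrak{w})\ll\delta^{c}$. Orthogonally projecting the $v_{i}$ onto $\mathfrak{w}$ and re-orthonormalizing produces the desired basis $\{w_{i}\}$ with $\|w_{i}-v_{i}\|\ll\delta^{c}$. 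Setting $w_{i}':=v_{i}$ then verifies property (1); property (2) follows by combining $\mathrm{dist}(t,V)\leq\theta$ with $\mathrm{dist}(V,\mathfrak{w})\ll\delta^{c}$ after shrinking the exponent.

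For the conditional statement with $\mathfrak{h}\subset\langle T\rangle$, I would initialize the greedy procedure with an orthonormal basis of $\mathfrak{h}$ (expressible in $T$ with coefficient bounds depending on the pair $(\mathfrak{h},\g)$), and in the {\L}ojasiewicz step restrict to the subvariety of those subalgebras that \emph{contain} $\mathfrak{h}$, which is again real algebraic; this yields $\mathfrak{h}\subset\mathfrak{w}$ exactly. The main obstacle I anticipate is extracting a {\L}ojasiewicz exponent $c$ that depends only on $\g$ (or on $(\mathfrak{h},\g)$): this hinges on the fact that $\mathcal{L}_{d}$ is a semialgebraic set of bounded complexity--polynomial degree $\leq 2$ in Pl\"ucker coordinates--independent of $T$ and $\delta$, so that a uniform {\L}ojasiewicz estimate applies.
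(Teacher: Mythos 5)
Your overall architecture --- manufacture an approximate subalgebra out of bounded-depth iterated brackets, then correct it to a genuine nearby subalgebra via the Lojasiewicz inequality applied to the (bounded-complexity, semialgebraic) family of Lie subalgebras --- is the same as the paper's, and your second stage, including the treatment of the conditional statement by restricting to the subvariety of subalgebras containing $\mathfrak{h}$, matches the paper's argument. (The paper applies Lojasiewicz to the function $\sum_{i,j}\|X_1\wedge\dots\wedge X_r\wedge[X_i,X_j]\|^2$ on an open subset of $\mathfrak{g}^r$ rather than on the Grassmannian itself, but that is cosmetic.)

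The genuine gap is in your first stage. Running the greedy Gram--Schmidt procedure at a \emph{fixed} depth $k_0$ and a \emph{fixed} threshold $\theta=\delta^{a}$ does not yield that $[v_i,v_j]$ is within $O(\delta^{c})$ of $V$, and the phrase ``once $k_0$ exceeds twice the terminating depth'' does not repair this. Two distinct things go wrong. First, termination only tells you that elements of $T^{(k_0)}$ are $\theta$-close to $V$; the brackets $[v_i,v_j]$ are combinations of elements of $T^{(2k_0)}$, about which termination says nothing, and you cannot bootstrap: knowing $s\in T^{(k_0)}$ is $\theta$-close to $V$ does not make $[t,s]$ close to $V$ unless you already know $[V,V]$ is close to $V$, which is what you are trying to prove. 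Second, and more fundamentally, even if every element of $T^{(2k_0)}$ were $\theta$-close to $V$, your Gram--Schmidt expressions $v_i=\sum_t c_{i,t}\,t$ carry coefficients as large as $\theta^{-O(\dim\mathfrak{g})}$, so the triangle inequality only places $[v_i,v_j]$ within $\theta^{-O(\dim\mathfrak{g})}\cdot\theta$ of $V$ --- which is enormous, not small. What is needed, and what the paper supplies, is a pigeonhole/stabilization over \emph{both} the depth and the threshold: one locates $m\le\kappa$ and $\delta_1\in[\delta^{\kappa},\delta]$ for which the effective span does not grow when the depth is doubled \emph{and} the threshold is cubed, i.e.\ $\dim W_m[\delta_1]=\dim W_{2m}[\delta_1^{3}]$ (possible because the dimension can jump at most $\dim\mathfrak{g}$ times); the cube in the threshold is exactly calibrated to beat the $\delta_1^{-2}$ coefficient loss incurred by taking one bracket. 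Your proposal needs the analogous iteration --- rerun the greedy procedure with doubled depth and polynomially shrunken threshold whenever the bracket test fails, terminating after at most $\dim\mathfrak{g}$ rounds --- before the Lojasiewicz step can be invoked with exponents depending only on $\mathfrak{g}$.
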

In explicit terms, $\mathfrak{w}$ is a Lie subalgebra
 that is
efficiently generated by small perturbations of the elements of $T$.
The proof of this is (at least intuitively) clear, the $t_i$ span an ``almost-subalgebra''
in an efficient way, and because the space of subalgebras of $\mathfrak{g}$
is compact, this almost-subalgebra is near a genuine algebra.
We will use the Lojasiewisz inequality to carry that argument through in an effective way.

\begin{propF} (Proposition \ref{nearness})\label{nearness-copy}
Let $x_0 \in X$ be so that $x_0 S$ is a closed orbit of
volume $V$,  for some connected $S \supset H$. 
 Suppose $\mu$ is a probability measure on $x_0 S$ that is
$\epsilon$-invariant under $S$ w.r.t.\ a Sobolev norm $\Sob_d$.
Let $\nu$ be the $S$-invariant probability measure on $x_0 S$.

   Then there are $\consta\label{V-constant}$ and $\consta\label{Rconstant}$%
\index{xappa3@$\ref{V-constant}$, exponent of $V$ in
Lemma~\ref{nearness-copy}~(\ref{nearness})}\index{xappa31@$\ref{Rconstant}$,
exponent of $R$ in Lemma~\ref{nearness-copy}~(\ref{nearness})} so
that
\begin{equation}\label{c-smoothness}
  \left| \mu(f)  - \nu(f) \right| \ll_{d}
V^{\ref{V-constant}} \epsilon^{\ref{Rconstant}/d}\Sob_d(f)   \mbox{ for }
   f \in C_c^{\infty}(X). \end{equation}
   
In particular, there are constants $\consta \label{Aconstant}, \consta \label{Bconstant} > 0$%
\index{xappa32@$\ref{Aconstant}$, bound on $V$ in terms of $\epsilon$ in
Lemma~\ref{nearness-copy}~(\ref{nearness})}\index{xappa33@$\ref{Bconstant}$,
exponent for closeness in
Lemma~\ref{nearness-copy}~(\ref{nearness})}
 such that if $V\leq \epsilon^{-\ref{Aconstant}/d}$ then
 $\mu$ and $\nu$ are $\epsilon^{\ref{Bconstant}/d}$-close: 
 \begin{equation*}
  \left|\mu(f)  - \nu(f) \right| \ll_{d} \epsilon^{\ref{Bconstant}/d}\Sob_d(f)   \mbox{ for }
   f \in C_c^{\infty}(X).
 \end{equation*}
\end{propF}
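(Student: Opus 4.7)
The strategy is to exploit the spectral gap for $S$ acting on $L^2_0(\nu)$---which is uniform in the orbit $x_0 S$ by the temperedness of $L^2_0(\nu)^{\otimes(\tempered-1)}$ recalled at the end of \S \ref{notation}---to show that a smoothed version of $\mu$ is close to $\nu$. The plan is to smooth $\mu$ in the $S$-direction to obtain measures $\mu_n = \mu * \phi^{*n}$ that are absolutely continuous with respect to $\nu$, use iterated spectral contraction to drive the density toward the constant $1$, and balance this against the cost of replacing $\mu$ by $\mu_n$ (which is controlled by almost invariance).

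Fix once and for all a smooth symmetric nonnegative bump $\phi$ on $S$ with $\int_S \phi\, ds = 1$, supported on $\{s \in S : \|s\| \leq 2\}$ and with Sobolev norm bounded only in terms of $G,H$. Let $A_\phi$ denote the convolution operator $A_\phi f(x) = \int_S \phi(s) f(xs)\, ds$, so that $(\mu*\phi)(f) = \mu(A_\phi f)$. The spectral gap provides $\theta = \theta(G,H) < 1$ with $\|A_\phi g\|_{L^2(\nu)} \leq \theta \|g\|_{L^2(\nu)}$ for all $g \in L^2_0(\nu)$. Writing $\nu = V^{-1}\,d(\text{Haar})$ on $x_0 S$, Young's convolution inequality on $S$ bounds the density $\rho_1 := d(\mu*\phi)/d\nu$ by $\|\rho_1\|_{L^\infty(\nu)} \ll V$, so $\|\rho_1 - 1\|_{L^2(\nu)} \ll V^{1/2}$. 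A direct computation using the symmetry of $\phi$ shows $\mu_n = \rho_n\nu$ with $\rho_n = A_\phi^{n-1}\rho_1$, whence, using $A_\phi(1)=1$,
\[
\|\rho_n - 1\|_{L^2(\nu)} = \|A_\phi^{n-1}(\rho_1 - 1)\|_{L^2(\nu)} \leq \theta^{n-1}\|\rho_1 - 1\|_{L^2(\nu)} \ll \theta^n V^{1/2}.
\]

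For $f \in C_c^\infty(X)$, split $\mu(f) - \nu(f) = (\mu(f) - \mu_n(f)) + (\mu_n(f) - \nu(f))$. The second term equals $\int f(\rho_n - 1)\,d\nu$, and is bounded by Cauchy-Schwarz together with \eqref{linftyfact} as $\ll \theta^n V^{1/2}\Sob_d(f)$. The first is handled by telescoping
\[
\mu(f) - \mu_n(f) = \sum_{k=0}^{n-1}\bigl(\mu(A_\phi^k f) - \mu(A_\phi^{k+1}f)\bigr),
\]
and applying the $\epsilon$-almost invariance of $\mu$ (integrated against $\phi$) to bound each difference by $\epsilon\,\Sob_d(A_\phi^k f)$. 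Iterating \eqref{sobolevdistort} yields $\Sob_d(A_\phi^k f) \leq C(d)^k \Sob_d(f)$ with $C(d) \ll 2^{\ref{sdconstant} d}$, so the sum is $\ll n\epsilon C(d)^n \Sob_d(f)$. Choosing $n \asymp \log(1/\epsilon)/d$ balances the two contributions and yields the desired bound $V^{\kappa_{\ref{V-constant}}}\epsilon^{\kappa_{\ref{Rconstant}}/d}\Sob_d(f)$. The ``in particular'' clause is then immediate: taking $\kappa_{\ref{Aconstant}}$ small enough ensures that $V \leq \epsilon^{-\kappa_{\ref{Aconstant}}/d}$ forces $V^{\kappa_{\ref{V-constant}}}\epsilon^{\kappa_{\ref{Rconstant}}/d} \leq \epsilon^{\kappa_{\ref{Bconstant}}/d}$ for some positive $\kappa_{\ref{Bconstant}}$.

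The main technical obstacle is the exponential-in-$nd$ growth of Sobolev norms under iterated smoothing---a consequence of the adjoint distortion in \eqref{sobolevdistort}---which is precisely what forces the $1/d$ scaling of the exponent on $\epsilon$ in the conclusion. The crucial non-elementary ingredient is the uniform (in $V$) spectral gap $\theta$ supplied by property $(\tau)$; without it the contraction rate would itself depend on the orbit and only yield a much weaker estimate.
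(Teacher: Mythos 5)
Your strategy is the paper's own argument viewed through the adjoint: you convolve the \emph{measure} $\mu$ and contract its density toward $1$ in $L^2(\nu)$, whereas the paper convolves the \emph{test function} $F=f-\nu(f)$ by $\chi^{(n)}$ and contracts it toward $0$. All the essential ingredients coincide: the uniform spectral gap from Proposition \ref{automorphic}, the telescoping estimate via $\epsilon$-almost invariance with the exponential Sobolev cost $E_{\chi}^{nd}$ (which is what forces the $1/d$ in the exponent of $\epsilon$), an initial bound losing a power of $V$, and the optimization over $n$.

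There is, however, one concrete gap: the claim $\|\rho_1\|_{L^\infty(\nu)}\ll V$ fails when $x_0S$ is noncompact. The density of $\mu\star\phi$ with respect to Haar measure on $x_0S$ at a point $y$ involves the count $\#\{\lambda\in S_{x_0}: \|\tilde x^{-1}\lambda\tilde y\|\leq 2\}$ of returns of the orbit to a bounded $S$-ball, and by \eqref{injfact} this is only bounded by $\height(y)^{\ref{hc2}\dim S}$, which is unbounded along a noncompact orbit; Young's inequality on the group $S$ does not descend to the quotient $S_{x_0}\backslash S$ without this factor. The paper's proof confronts exactly this point: its pointwise bound reads $|F\star\chi^{(n)}(x)|\ll \vol(x_0S)\,\height(x)^{\ref{hc2}\dim S}(1-\delta)^n\|F\|_{L^\infty}$, and the unbounded height is absorbed by truncating with $\min(1,\cdot)$, splitting the orbit into $x_0S\cap\Siegel(R_1)$ and its complement, controlling the latter by Lemma \ref{measureoutsidecompact}, and optimizing over the additional parameter $R_1$ alongside $n$. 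Your argument is complete as written for cocompact $\Gamma$; in general you must replace the $L^\infty$ bound on $\rho_1$ by a height-weighted bound and carry out the same three-parameter optimization, which changes nothing structurally but is where the exponent $\ref{V-constant}$ of $V$ actually comes from.
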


This result is an effective version of the statement ``$\mu(xS) > 0$ implies
that $\mu$ is the $S$-invariant measure on the closed orbit $xS.$''  
 To prove it, let $\chi \in C_c(S)$ be a positive compactly
supported function on $S$ with integral one. For a smooth function
$f$ as in \eqref{c-smoothness}, the values $\mu(f)$ and $\mu(f \star
\chi)$ are approximately the same by almost invariance. However,
repeated applications of the convolution operator $f \mapsto f \star
\chi$ makes $f$ converge to a constant function, whence the result.

\subsection{The theorem is as easy as ABC}\label{ABC}

Say that $\mu$ is $[S, \epsilon , d_S]$-almost invariant if it is
$\epsilon$-almost invariant under a connected subgroup $S \supset H$, w.r.t.\ $d_S$. 
Propositions D and F, from the previous section, easily convert
Propositions A,B,C to the following dichotomy:

\begin{lem*} (\S \ref{completeness})
Suppose that $\mu$ is $[S,\epsilon, d_S]$-almost invariant.
There exists constants $\constc(d_S) \label{epsconst}$, 
$\consta(d_S) \label{volumeconst1}$,
$\consta(d_S) \label{volumeconst2}$,
 $\consta(d_S) \label{moreinvarianceconstant}$, and $d_S'$%
\index{xappa331@$\ref{volumeconst1}$, Last Lemma to iterate}%
\index{xappa332@$\ref{volumeconst2}$, Last Lemma to iterate}%
\index{xappa333@$\ref{moreinvarianceconstant}$, Last Lemma to iterate}%
so that for any $\epsilon\leq\ref{epsconst}$ either:
\begin{itemize} \item
$ |\mu(f) - \mu_{x_0 S}(f)| \leq \constc(d_S)
\epsilon^{\ref{volumeconst2}(d_S)} \Sob_{d_S}(f)$, 
for some closed orbit $x_0 S$ of 
volume $\leq \epsilon^{-\ref{volumeconst1}(d_S)}$.
\item The measure $\mu$ is 
$[S_* ,  \constc(d_S) \epsilon^{\ref{moreinvarianceconstant}(d_S)},
d_S']$
almost invariant, where the connected subgroup $S_* \supset H$ has larger dimension than $S$. 
\end{itemize}
\end{lem*}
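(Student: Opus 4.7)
The plan is to apply Proposition C to $\mu$ directly and dichotomize on whether its second hypothesis---that no closed $S$-orbit of volume $\leq \epsilon^{-\mysymbol}$ carries positive $\mu$-mass---holds. First I would fix an exponent $\mysymbol = \mysymbol(d_S) > 0$ small enough that both Propositions C and F apply; specifically, $\mysymbol \leq \ref{Aconstant}/d_S$, so that any closed $S$-orbit of volume $\leq \epsilon^{-\mysymbol}$ automatically falls within the ``small-volume'' regime of Proposition F.

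In the orbit case---if there is a closed $S$-orbit $x_0 S$ of volume $\leq \epsilon^{-\mysymbol}$ with $\mu(x_0 S) > 0$---I would invoke $H$-ergodicity of $\mu$ (which holds since $\mu$ is the $H$-invariant probability measure on a closed $H$-orbit) together with $H \subseteq S$ to conclude that $\mu$ is supported on $x_0 S$. Proposition F applied to the $\epsilon$-almost-$S$-invariant probability measure $\mu$ on $x_0 S$ then yields $|\mu(f) - \mu_{x_0 S}(f)| \ll_{d_S} \epsilon^{\ref{Bconstant}/d_S} \Sob_{d_S}(f)$, supplying the first alternative with $\ref{volumeconst1}(d_S) := \mysymbol$ and $\ref{volumeconst2}(d_S) := \ref{Bconstant}/d_S$. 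Otherwise, Proposition C hands me $x_1, x_2 \in X$ with $x_2 = x_1 \exp(r)$, $r \in \Scomp$, $\|r\| \leq \epsilon^{\xi}$, both generic to the requisite degree with respect to some $\Sob_{d'}$. Applying Proposition B with $\mu_1 = \mu_2 = \mu$---its last clause, which uses the almost-$S$-invariance of $\mu$---yields a unit vector $Z \in \Scomp$ under which $\mu$ is $\constc(d_S)\,\epsilon^{\eta}$-almost invariant for some $\eta > 0$ built from $\xi$ and $\ref{eipconstant}$. To feed Proposition D I would then replace $Z$ by the normalized orthogonal projection $Z'$ of $Z$ onto $\mathfrak{s}^{\perp}$: because $Z$ lies in the $\Ad(H)$-invariant complement $\Scomp$ to $\mathfrak{s}$, the projection $Z'$ is nonzero with $\|Z'\|$ bounded below by a constant depending only on $(G,H)$, and the standard manipulations listed in \S \ref{Appendix-A} convert almost invariance under $Z$ and under $\mathfrak{s}$ into almost invariance under $Z'$ with only a constant-in-$(G,H)$ loss. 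Proposition D then produces a connected $S_* \supseteq H$ with $\dim S_* > \dim S$ under which $\mu$ is $\constc(d_S)\,\epsilon^{\eta \cdot \ref{almostinvconstant}}$-almost invariant with respect to some enlarged $\Sob_{d_S'}$, completing the second alternative with $\ref{moreinvarianceconstant}(d_S) := \eta \cdot \ref{almostinvconstant}$ and $\ref{epsconst}$ set to the minimum of the thresholds imposed by Propositions C, D, and F.

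The conceptual content is thus simply the chain Proposition C $\to$ Proposition B $\to$ Proposition D (plus Proposition F in the orbit case). The hard part will be the bookkeeping: (i) carefully matching Sobolev degrees across the transitions $d_S \to d' \to d_S'$ so that the final exponents $\ref{volumeconst2}(d_S), \ref{moreinvarianceconstant}(d_S)$ depend only on $d_S, G, H$; (ii) handling the fact that $\Scomp$ need not be orthogonal to $\mathfrak{s}$---this requires the projection step above together with a uniform lower bound on the angle between $\Scomp$ and $\mathfrak{s}$, which is legitimate because the finitely many possibilities for the intermediate subgroup $S$ let us absorb any such geometric constant into the ``depends only on $G,H$'' clause; and (iii) ensuring that the exponent $\xi = \xi(d_S, \mysymbol, G, H)$ delivered by Proposition C stays strictly positive for the specific $\mysymbol$ chosen in the first paragraph, so that the resulting $\eta$ and hence $\ref{moreinvarianceconstant}(d_S)$ are themselves strictly positive.
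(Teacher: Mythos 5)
Your proposal is correct and follows essentially the same route as the paper: apply Proposition C with $\mysymbol = \ref{Aconstant}/d_S$, fall back to Proposition F (via ergodicity, which the paper leaves implicit) when its orbit hypothesis fails, and otherwise chain Proposition C $\to$ Proposition B $\to$ Proposition D exactly as you describe, with the same exponents. The only superfluous step is your projection of $Z$ onto $\mathfrak{s}^{\perp}$: the version of Proposition D actually proved in the paper (Proposition \ref{almostsubalgebra}) takes $Z$ in the $\Ad(H)$-invariant complement $\mathfrak{r}$ directly, so the output of Proposition B can be fed to it without modification.
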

Iterating this lemma yields Theorem \ref{thm:main}.

%


\section{Basic properties of Sobolev norms.} \label{sobnormproofs}

\subsection{Bounding the $L^\infty$-norm}
We extend the notion of Sobolev norm to 
$\R^n$.
We define for $f \in C^{\infty}(\R^n)$ and the open set $B_\epsilon=(-\epsilon,\epsilon)^n\subset \R^n$
the Sobolev norm
\[
 \Sob_{d, B_\epsilon}(f)^2 = \sum_{|\underline{\alpha}| \leq d} \|f^{(\underline{\alpha})}\|_{L^2(B_\epsilon)}^2,
\]
where $f^{(\underline{\alpha})}$ denotes the $\underline{\alpha}$-partial derivative, and
the sum is extended over all multi-indexes $\underline{\alpha}$ of degree $\leq d$.

The following establishes \eqref{linftyfact}.

\begin{lem} \label{upperboundlemma}
 Let $n\geq 1$. There exists a constant $c(n)$ for which
\begin{equation} \label{rn1}
 |f(x)| \leq c(n)  \varepsilon^{-n} \Sob_{n, B_{\varepsilon}}(f)
\end{equation}
for all $\varepsilon \leq 1$, $f\in C^\infty(\R^n)$, and all $x\in B_\epsilon$.

Moreover, there exists a constant $\constc(d)$ and $\ref{linftyconst}$
such that, whenever $\mathcal{D} \in U(\mathfrak{g})$ has order
$\leq r$, 
\begin{equation}\label{linftytechnicalestimate}|\height(x)^{r} \mathcal{D} f|\leq \constc(r, \mathcal{D}) \Sob_{d+r}(f)\end{equation}
for all $f\in C_c^\infty(X)$ whenever $d\geq \ref{linftyconst}$.
\end{lem}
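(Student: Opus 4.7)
For the first bound \eqref{rn1}, the plan is to reduce to the standard Sobolev embedding $H^n(B_1)\hookrightarrow L^\infty(B_1)$ (which is valid for every $n\geq 1$, since $n>n/2$) by rescaling. Setting $\tilde f(y)=f(\varepsilon y)$, the chain rule gives $\tilde f^{(\underline\alpha)}(y)=\varepsilon^{|\underline\alpha|}f^{(\underline\alpha)}(\varepsilon y)$, and a change of variables yields
$$\Sob_{n,B_1}(\tilde f)^2=\sum_{|\underline\alpha|\leq n}\varepsilon^{2|\underline\alpha|-n}\|f^{(\underline\alpha)}\|_{L^2(B_\varepsilon)}^2\leq\varepsilon^{-n}\Sob_{n,B_\varepsilon}(f)^2,$$
where the last step uses $\varepsilon\leq 1$ to discard the positive powers of $\varepsilon$. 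Applying the standard embedding to $\tilde f$ at $x/\varepsilon\in B_1$ then gives $|f(x)|\leq c(n)\varepsilon^{-n/2}\Sob_{n,B_\varepsilon}(f)$, which is in fact stronger than the stated estimate.

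For \eqref{linftytechnicalestimate} I would transfer the problem to $\R^n$ via an exponential chart at $x$ whose size is comparable to the injectivity radius. Fix $x\in X$, set $n=\dim G$, and let $\varepsilon=\ref{hc1}\height(x)^{-\ref{hc2}}$, shrunk by a fixed multiplicative constant (depending only on $G$) so that $\exp$ is a diffeomorphism onto its image on the corresponding ball in $\g$. Define $\Phi\colon B_\varepsilon\to X$ by $\Phi(v)=x\exp(\sum_i v_iX_i)$, where $(X_i)$ is the chosen orthonormal basis of $\g$. By \eqref{injfact} and \eqref{heightdistort}, $\Phi$ is injective, its Jacobian relative to Haar measure is bounded above and below by absolute constants, and $\height\asymp\height(x)$ on $\Phi(B_\varepsilon)$. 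Set $F=f\circ\Phi$.

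The core of the argument is the routine comparison between iterated Lie-algebra derivatives $\mathcal D f(x)$ and Euclidean partial derivatives of $F$ at the origin. A short Baker--Campbell--Hausdorff expansion shows that, for any ordered monomial $\mathcal D=X_{i_1}\cdots X_{i_k}$ of degree $k\leq r$, the value $(\mathcal D f)(x)$ equals $\partial_{i_1}\cdots\partial_{i_k}F(0)$ plus a linear combination, with coefficients depending only on the structure constants of $\g$, of partial derivatives of $F$ at $0$ of strictly smaller order; and conversely every $\partial^{\underline\alpha}F$ with $|\underline\alpha|\leq n+r$ can be written as an analogous combination of pullbacks $(\mathcal D'f)\circ\Phi$ for Lie-algebra monomials $\mathcal D'$ of order $\leq n+r$. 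Applying \eqref{rn1} to each partial derivative of $F$ of order $\leq r$, converting through this dictionary in both directions, changing variables via the bounded Jacobian of $\Phi$, and using $\height\asymp\height(x)$ on $\Phi(B_\varepsilon)$ to factor the weight $\height^{d+r}$ out of the Sobolev norm, I obtain
$$|\mathcal D f(x)|\ll\varepsilon^{-n}\height(x)^{-(d+r)}\Sob_{d+r}(f)\ll\height(x)^{n\ref{hc2}-(d+r)}\Sob_{d+r}(f).$$
Multiplying by $\height(x)^r$ and invoking $\height\geq 1$, the right-hand side is bounded by $\Sob_{d+r}(f)$ as soon as $d\geq n\ref{hc2}$, so taking $\ref{linftyconst}=\max(\dim G,\,n\ref{hc2})$ proves \eqref{linftytechnicalestimate}. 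I do not expect any real obstacle; the only mild chore is the BCH bookkeeping in the comparison between Lie-algebra and partial derivatives, which depends only on $G$ and therefore contributes only a harmless multiplicative constant.
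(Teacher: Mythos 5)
Your proposal is correct, and the two halves deserve separate comments. For the transfer step \eqref{linftytechnicalestimate} you do exactly what the paper does — pull the function back to a ball in $\R^{\dim G}$ of radius comparable to the injectivity radius $\ref{hc1}\height(x)^{-\ref{hc2}}$, use \eqref{injfact} and \eqref{heightdistort} to control injectivity and the fluctuation of $\height$, and apply \eqref{rn1}; the paper merely compresses the chart construction and the dictionary between Lie-algebra monomials and partial derivatives into one sentence, whereas you spell out the (harmless, $G$-dependent) bookkeeping and the resulting choice $\ref{linftyconst}=\max(\dim G,\ \dim G\cdot\ref{hc2})$. For \eqref{rn1} itself your route differs from the paper's: you invoke the standard Sobolev embedding $H^n(B_1)\hookrightarrow L^\infty(B_1)$ as a black box and extract the $\varepsilon$-dependence by rescaling $\tilde f(y)=f(\varepsilon y)$, which even yields the sharper exponent $\varepsilon^{-n/2}$; the paper instead gives a self-contained elementary proof — for $n=1$, either $|f|\geq\tfrac12|f(x)|$ throughout $B_\varepsilon$ (so the $L^2$-norm of $f$ already controls $|f(x)|$) or $f'$ has integral $\geq\tfrac12|f(x)|$ over a subinterval, and the general case follows by Fubini and induction on $n$. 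The trade-off is minor: your version is shorter given the standard embedding theorem and gives a better constant, while the paper's is self-contained and avoids any appeal to the regularity theory of the boundary of the domain. Both are valid, and nothing in the rest of the paper depends on which exponent of $\varepsilon^{-1}$ one obtains.
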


\proof
 The statement in \eqref{rn1} follows for $n=1$ quite easily: If there
 is no $|f(y)|$ smaller than $\frac{1}{2}|f(x)|$ the statement is clear, otherwise the first
 derivative will have an integral $\geq\frac{1}{2}|f(x)|$. Using Fubini's theorem
 \eqref{rn1} follows by induction on $n$.

 Let now $x \in X$ and $f \in C^{\infty}(X)$. We consider the function
$g \mapsto f(xg)$ in a ball of the form $d(g,1) \leq \ref{hc1}
\height(x)^{-\ref{hc2}}$, which by \eqref{injfact} is an injective image of the
corresponding ball in $G$. 
 Choosing a coordinate chart for $G$
around the identity, we transfer the question to $\R^{\dim G}$ and apply
\eqref{rn1}. 

\qed

\subsection{Trace estimates} \label{sobnormproofs-trace}

We are going to use the following basic properties of
the relative trace. See the paper of Bernstein-Reznikov \cite[App.~A]{BR}
for a more careful development of these ideas.  The relative trace of two Hermitian forms $A$ and $B$ (the latter being
positive definite) on a complex vector space $V$
is defined as
\[
 \Tr(A|B)=\sum_i\frac{A(e_i)}{B(e_i)}
\]
if $V$ is finite-dimensional and $e_1,\ldots$ is an orthogonal basis of $V$ with respect to $B$.
\footnote{This is well defined since it equals $\Tr(B^{-1}A)$ if we consider $A$ and $B$ as maps to the
Hermitian dual $V^+$ of $V$.}
In the infinite dimensional case we define
\[
 \Tr(A|B)=\sup_{W\subset V}\Tr(A_W|B_W)
\]
where the supremum goes over all finite dimensional subspaces $W$ of $V$
and $A_W$, $B_W$ are the restrictions of $A$ and $B$ to $W$.
Then
$$
 \Tr(A|B) \leq \Tr(A'|B)\mbox{ if }A \leq A' \mbox{ and }\Tr(A|B') \geq \Tr(A|B)\mbox{ if }B \geq B'
$$
for any Hermitian forms $A$, $A'$, $B$, and $B'$ on $V$. Here
$A\leq A'$ means that $A(v)\leq A'(v)$ for all $v\in V$. If $A\leq B$
then $\Tr(A|B)=\sum_i A(e_i)$ where $e_1,\ldots$ is
an orthonormal basis w.r.t.~$B$. This was already claimed in \S \ref{notation}
and follows from \cite[Prop~A.2]{BR}\footnote{In \cite[App.~A]{BR} continuous
Hermitian forms on a topological vector space were considered, but with $A\leq B$
both forms become continuous w.r.t.\ the norm derived from $B$.}. 

We will be using that
notion and these facts also in the case of a general, not necessarily positive definite, Hermitian form $B$,
but only in the case when $A$ is zero in the radical of $B$ (the latter being the subspace
where $B$ is zero). In this case, the trace is defined as the trace on the quotient of $V$ by the radical.

Although we do not explicitly use it, the following gives a helpful ``normalization'' of the idea of relative traces. Suppose $N_1, N_2, \dots, N_r$
are Hermitian norms on $V$, so that $N_i$ is bounded by a multiple of $N_{i+1}$;
and the relative trace $\Tr(N_i|N_{i+1})$ is finite.
Then there exists an orthogonal basis $e_1, \dots, e_k, \dots$
for the completion of $V$ with respect to $N_r$, so that $N_1$
is also diagonal with respect to this basis, and moreover:
\begin{equation} \label{iterationrelativetrace}
\sup_{j} j^{r-1} \frac{N_1(e_j)}{N_r(e_j)}  < \infty. \end{equation}

\subsection{Proof of \eqref{relativetrace}}

For $x \in X$, consider the linear form on $C^{\infty}_c(X)$ given by:
$$L_x: f \mapsto \height(x)^{r} \mathcal{D}f,$$
where $\mathcal{D}$ is a monomial in the chosen orthonormal basis of $\mathfrak{g}$ with degree $\leq r$. 
By Lemma \ref{upperboundlemma}, 
\begin{equation} \label{preliminarytraceestimate}
\Trace(|L_x|^2|S_{d}^2) <  c_2(r, \mathcal{D})^2\end{equation}
whenever $d \geq \ref{linftyconst} + r$. Integrating
\eqref{preliminarytraceestimate} over $x \in X$, and summing
over $\mathcal{D}$,  yields \eqref{relativetrace}
(cf. \cite[App. A, Prop. 1]{BR}).  \qed


\section{Basic properties pertaining to spectral gap.} \label{spectralgap}

Our primary aim in this section is to prove the following result, as well
as establishing enough background on spectral gap to take advantage of it.

\begin{prop} \label{sectionslice}
Notation being as in \S \ref{gi},
let $H \subset S \subset G$. Let $x \in X$, and $S_x$ the stabilizer of $x$ in $S$.

Then the action of $S$ on the orthogonal complement to locally
constant functions in $L^2(S_x\backslash S)$
has a spectral gap. Moreover, that spectral gap depends only on $G,H$.
\end{prop}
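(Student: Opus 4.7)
The plan is to reduce the desired spectral gap to property $(\tau)$ of Clozel applied to the appropriate $\Q$-structure, then use the finiteness of intermediate subgroups (Lemma~\ref{assumption-2}) to obtain a bound depending only on $G$ and $H$. First, since $[\tilde{S}:S^0]$ is finite, the quotient $S_x\backslash S$ decomposes into finitely many $S^0$-orbits, and the orthogonal complement to locally constant functions decomposes correspondingly; it therefore suffices to prove the claim for connected $S$. Writing $x = \Gamma g_0$, one has $S_x = g_0^{-1}\Gamma g_0 \cap S$, and right translation by $g_0$ yields an $S$-equivariant identification of $L^2(S_x\backslash S)$ with $L^2(\Lambda\backslash S')$ where $S' = g_0 S g_0^{-1}$ and $\Lambda = \Gamma \cap S'$.

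The principal case is when $xS$ is closed, equivalently $\Lambda$ is a lattice in $S'$. Here I would argue as follows. By Lemma~\ref{assumption-2}, $S$ (hence $S'$) is semisimple without compact factors, so Borel density implies $\Lambda$ is Zariski dense in $S'$; since $\Lambda \subset \Gamma \subset \G(\Q)$, its Zariski closure is a $\Q$-algebraic subgroup of $\G$, and therefore $\mathrm{Lie}(S') = \mathrm{Ad}(g_0)\mathfrak{s}$ is defined over $\Q$. This produces a $\Q$-subgroup $\mathbf{S}' \subseteq \G$ with $\mathbf{S}'(\R)^0 = S'$; moreover, the congruence condition defining $\Gamma$ inside $\G(\Q)$ restricts to a congruence condition on $\mathbf{S}'(\Q)$, so that $\Lambda$ contains a congruence subgroup of $\mathbf{S}'(\Z)$. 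At this point I invoke property~$(\tau)$ of Clozel (cited as \cite{LC}): the $\mathbf{S}'(\R)$-action on $L_0^2$ of any congruence quotient admits a spectral gap that depends only on the isomorphism class of $\mathbf{S}'$ as a $\Q$-group.

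To pass to the case when $xS$ is not closed, one observes that $\Lambda\backslash S'$ then has infinite Haar measure, so there are no nonzero locally constant $L^2$-functions to be concerned about. The representation of $S'$ on $L^2(\Lambda\backslash S')$ is weakly contained in the regular representation of $S'$, whose matrix coefficients decay by the Howe--Moore / Cowling--Haagerup--Howe theorem (applicable because $S'$ is semisimple without compact factors); this gives a spectral gap in that case as well, with constants depending only on $S'$. Combining both cases, and using Lemma~\ref{assumption-2} to note that only finitely many intermediate subgroups $H\subseteq S\subseteq G$ arise, one takes the worst of finitely many spectral gaps and obtains the claimed uniform bound depending only on $G$ and $H$.

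The main obstacle is the arithmetic step: verifying that in the closed orbit case the lattice $\Lambda$ is genuinely a congruence subgroup of a $\Q$-form of $S'$, with enough control to ensure the spectral gap from $(\tau)$ is truly uniform as $x$ and $S$ vary. The delicate point is that even after fixing the isomorphism class of $\mathbf{S}'$, there might a priori be infinitely many $\Q$-forms appearing as one varies $g_0$; controlling this requires that the $\Q$-form is rigidified by the embedding $\mathbf{S}'\hookrightarrow\G$ up to finitely many possibilities. A secondary subtlety is handling the disconnected case carefully so that ``locally constant'' rather than merely ``constant'' functions are removed in a way compatible with the $S^0$-orbit decomposition.
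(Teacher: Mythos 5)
Your treatment of the closed-orbit case matches the paper's (Borel density, descent to a $\Q$-form, property $(\tau)$ via Lemma \ref{automorphic}); and your ``secondary'' worry about infinitely many $\Q$-forms is resolved in the paper by the observation (Proposition \ref{proptau}) that Clozel's gap can be made to depend only on the isomorphism class of the \emph{real} group $\mathbf{S}'(\R)$, not on the $\Q$-form. The genuine gap is in your non-closed case. You assert that when $\Lambda\backslash S'$ has infinite volume, $L^2(\Lambda\backslash S')$ is weakly contained in the regular representation of $S'$. This is false for general discrete subgroups of infinite covolume: for a geometrically finite $\Lambda < \SO(n,1)$ with critical exponent $\delta>(n-1)/2$, the space $L^2(\Lambda\backslash S')$ contains a complementary series representation with parameter governed by $\delta$, and as $\delta\to n-1$ this approaches the trivial representation, so there is not even a spectral gap, let alone weak containment in $L^2(S')$. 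Howe--Moore/CHH give decay for the regular representation itself, but nothing about $L^2(\Lambda\backslash S')$ without further input. Nowhere in this branch of your argument do you use the arithmeticity of $\Lambda$, which is exactly what rules out such examples.

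The paper's route through the non-closed case is the missing idea. Let $\mathbf{L}$ be the Zariski closure of $\tilde{S}_x$; by arithmeticity it is (after conjugation) a $\Q$-subgroup, and the dichotomy is: either $\mathbf{L}(\R)=\tilde{S}$, in which case the orbit is closed and one is back in the first case, or $L=\mathbf{L}(\R)$ has $\dim L<\dim S$. In the latter case one writes $L^2(\tilde{S}_x\backslash\tilde{S})=\mathrm{Ind}_L^{\tilde{S}}\,L^2(\tilde{S}_x\backslash L)$ and uses that the norm of convolution on an induced representation is controlled by the norm on $L^2(L\backslash\tilde{S})$; the spectral gap for $L^2(L\backslash S)$, with $\mathbf{L}$ a \emph{proper algebraic} subgroup, is then Lemma \ref{stabilitytempered}, proved by realizing $L\backslash S$ as an orbit in a linear representation and showing the $\SL_2(\R)$-action on the stable locus of $(L\backslash S)^{\times 2}$ is proper, hence $L^2(L\backslash S)^{\otimes 2}$ is tempered. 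To repair your proof you would need to replace the weak-containment claim by this induction-from-the-Zariski-closure argument (or an equivalent use of the algebraicity of $\Lambda$'s closure); as written, the step would fail.
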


We observe that, in the case when $H$ has property (T), this result is obvious.

\subsection{The spectral gap and tempered representations.} \index{tempered}\label{tempered}

For general reference on semisimple groups, see \cite{Wallach}.
We refer also to \cite{Nevo} where the relation of spectral gap and matrix coefficient decay is discussed and used. 
Let $S$ be a (not necessarily connected)  semisimple real group with finite center.

\subsubsection{Spectral gap: definition.}We say that a unitary representation $(\pi, V)$ of $S$ -- not necessarily irreducible -- possesses a spectral gap if
\footnote{It would be more usual to make this definition without the connected component restriction; the present definition is more convenient for our purposes.}
 there is a compactly supported probability measure $\nu$ on the {\em connected component of} $S$, and $\delta > 0$, so that:
$$\|\pi(\nu) v \| < (1-\delta) \|v\|, \ \ (v \in V).$$

It is equivalent to say that the ``irreducible constituents of $\pi$,''
as a subset of the unitary dual of $S^0$, are isolated from the trivial representation in the Fell topology.

\subsubsection{Tempered: definition.} We say that an irreducible, unitary representation $\pi$ of $S$ is {\em tempered} if
it is weakly contained in the regular representation $L^2(S)$.
 We say that a unitary representation $(\pi,V)$
is tempered if it may be disintegrated into tempered representations.

It is equivalent to ask \cite[Theorem 1]{CHH} that
there exist a dense subset $\mathcal{V} \subset V$ so that $s \mapsto \langle \pi(s) v, w \rangle$
belongs to $L^{2+\varepsilon}(S)$ for each $v,w \in \mathcal{V}$ and $\varepsilon > 0$.%
\footnote{More precisely, that Theorem asserts that, if $v \in V$
is so that the diagonal matrix coefficient $\langle \pi(s) v, v \rangle$
belongs to $L^{2+\varepsilon}$, then the representation of $S$
on $\overline{\langle S v \rangle}$ is weakly contained in the regular representation. 
This means that the diagonal matrix coefficient $\langle \pi(s) v, v \rangle$ is uniformly 
approximable, on compacta, by convex combinations of diagonal matrix coefficients associated 
to the regular representation. Clearly,
if this property is valid for a bounded sequence $v_i$, it is also valid
for any limit of the $v_i$s; whence the stated conclusion.}
It is also equivalent to ask that the restriction of $\pi$ to the identity component of $S$ be tempered; 
or that the pull-back of $\pi$ to any finite covering of $S$ be tempered.

If $\pi$ is an irreducible representation of $S$ with compact kernel, then
there exists an integer $m \geq  1$ so that $\pi^{\otimes m}$ is tempered.
This provides a useful measure how close to tempered a representation is:
We say a unitary representation $\pi$ is $1/m$-{\em tempered} if $\pi^{\otimes m}$ is tempered. (It is equivalent to say that it is $L^{2m+\varepsilon}$
for all $\varepsilon > 0$; the latter phrasing appears often in the literature.)

\subsubsection{Relations between spectral gap and temperedness.} \label{Rel:SGT}If $S$ is almost simple, 
\begin{multline}\label{temp} \mbox{If $(\pi, V)$ possesses a spectral gap,}\\
\mbox{then $\pi$ is $1/m$-tempered for some $m \geq 1$.}\end{multline}
This assertion is by now  a ``folk'' result, but is rather remarkable.
The property of having a spectral gap is ``local'' in nature: it depends
on the action of group elements near the identity. On the other hand,
the property of being $1/m$-tempered constrains the action of group
elements very far from the identity. 

The proof of this assertion may be obtained by combining
various results in the literature; see Appendix C.
Obviously this is a rather
unsatisfactory state of affairs; we don't know of any entirely conceptual proof.

This (\eqref{temp}) is false if $S$ is not almost simple. According to our definition, the representation $1 \otimes L^2(\SL_2(\R))$ of $\SL_2(\R) \otimes \SL_2(\R)$ possesses a spectral gap. However,
it is not $1/m$-tempered for any $m \geq 1$.

\subsection{Some facts on matrix coefficients}  \label{matrixcoeff}
We restrict, in the present section (\S \ref{matrixcoeff}), to the
case of $S$ connected.  \footnote{This is mainly because of the lack of suitable references in general.}

 $S$ admits  an Iwasawa decomposition $S = N . A . K$; here $K$ is a maximal compact subgroup of $S$ which is the fixed point set of a global Cartan involution $\Theta: S \rightarrow S$. 
Accordingly, there is a projection $H_A:  S \rightarrow A$.
\footnote{
The notions we are about to define depend on the choice of $K$,
although this dependency is not very important. Moreover, we use the
results of this section for $S$ an intermediate group between $H$ and $G$;
for such we have fixed a choice of maximal compact after Lemma \ref{assumption-2}.}

We define the Harish-Chandra spherical function $\varphi_0$ via the rule:
\begin{equation} \label{HCdef}
\varphi_0(s) := \int_{k \in K} H_A(ks)^{\rho} dk\end{equation}
where $\rho:A \rightarrow \R_{+}$ is the half-sum of the roots (counted with multiplicities) of $A$ acting on $N$,
and $dk$ is the probability Haar measure on $K$.

$S$ also admits a Cartan decomposition $S = K A^{+} K$. The function $\varphi_0$ is bi-$K$-invariant
and belongs to $L^{2+\varepsilon}(S)$, for every $\varepsilon > 0$.   Moreover, in obvious notation (see \cite[Proposition 7.15(c)]{Knapp-red})
\begin{equation} \label{mcgeneral} 
\left| \varphi_0(k a_+ k')  \right| \ll_{\varepsilon} \rho(a_+)^{-1+\varepsilon },
\end{equation}
for any $\varepsilon > 0$; the implicit constant here depends
on the isomorphism class of $S$ only.

  Moreover, for any one-parameter unipotent subgroup $u: \R \rightarrow S$ we have the bound
\footnote{As can be deduced, e.g., by embedding $u$ in an $\SL_2(\R)$.}
\begin{equation} \label{mcunipotent} 
\left| \varphi_0(u(t)) \right| \ll_{\varepsilon} (1+|t|)^{-1+\varepsilon}.
\end{equation}

 It will be convenient to make a slight generalization of this definition
to handle some slight complications arising from groups with multiple simple factors. If $S$ is a direct product 
$S_1 \times \dots S_k$ of almost simple groups, we set:
\begin{equation}\label{weakdef}\varphi_0^{\wk}(s_1, \dots, s_k) = \max_{1 \leq i \leq k} \varphi_{0,S_i}(s_i).\end{equation}
Even if $S$ fails to be a direct product, i.e.
is isogenous to a direct product, there exists an isogeny
$\prod_{i=1}^{I} S_i \rightarrow S$; the function $\varphi^{\wk}_0$
 defined on $\prod_{i=1}^{I} S_i$ by \eqref{weakdef} is bi-invariant by a maximal
compact of $\prod_{i=1}^{I} S_i$, and in particular by the kernel
of the isogeny. It thereby descends to $S$. 

\begin{lem}
Let $\clubsuit \subset S$ be bi-$K$-invariant, i.e
$K. \clubsuit .K = \clubsuit$.  
Then
\begin{multline} \label{clubestimate}\vol(\clubsuit)^{-1} \int_{\clubsuit} \varphi_0(s) ds 
 \ll \vol(\clubsuit)^{-1/3}, \\ 
 \vol(\clubsuit)^{-2}\int_{s,s' \in \clubsuit} \varphi_0(s s'^{-1})^{1/p}
\ll \vol(\clubsuit)^{-\frac{2}{3p}}.\end{multline}
The implicit constants in these estimates depend only the isomorphism
class of $S$ and the choice of Haar measure. 
\end{lem}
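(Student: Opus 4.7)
My plan is to derive both estimates from two standard facts about $\varphi_0$: the integrability $\varphi_0 \in L^{2+\varepsilon}(S)$ for every $\varepsilon > 0$ (which follows from \eqref{mcgeneral} together with the Cartan-coordinate integration formula on $S$), and the spherical functional equation $\int_K \varphi_0(xky)\,dk = \varphi_0(x)\varphi_0(y)$. Granting these, the first estimate is immediate by Hölder's inequality with exponents $(3, 3/2)$:
\[
\int_{\clubsuit} \varphi_0(s)\,ds = \int_S \mathbf{1}_{\clubsuit}(s)\varphi_0(s)\,ds \le \vol(\clubsuit)^{2/3}\,\|\varphi_0\|_3,
\]
with $\|\varphi_0\|_3$ depending only on the isomorphism class of $S$ and the choice of Haar measure.

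The heart of the second estimate is the case $p = 1$. Here I would use right $K$-invariance of $\mathbf{1}_{\clubsuit}$ together with unimodularity of $S$: the substitution $s \mapsto sk^{-1}$ (for any fixed $k \in K$) preserves the integral, so
\[
\int_{\clubsuit}\!\int_{\clubsuit} \varphi_0(s s'^{-1})\,ds\,ds' = \int_{\clubsuit}\!\int_{\clubsuit} \varphi_0(s k s'^{-1})\,ds\,ds'.
\]
Averaging over $K$ and invoking the functional equation on the inside yields $\int_{\clubsuit}\varphi_0(s)\,ds \cdot \int_{\clubsuit}\varphi_0(s'^{-1})\,ds'$. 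Because $\varphi_0$ is a positive real matrix coefficient of a unitary representation of $S$, one has $\varphi_0(s^{-1}) = \overline{\varphi_0(s)} = \varphi_0(s)$, so this collapses to $\bigl(\int_{\clubsuit} \varphi_0\bigr)^2 \ll \vol(\clubsuit)^{4/3}$ by the first estimate, which is precisely the target bound at $p = 1$.

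For general $p$ I would interpolate using Hölder: factoring the integrand as
\[
\mathbf{1}_{\clubsuit}(s)\mathbf{1}_{\clubsuit}(s')\varphi_0^{1/p}(s s'^{-1}) = \bigl[\mathbf{1}_{\clubsuit}(s)\mathbf{1}_{\clubsuit}(s')\varphi_0(s s'^{-1})\bigr]^{1/p}\bigl[\mathbf{1}_{\clubsuit}(s)\mathbf{1}_{\clubsuit}(s')\bigr]^{(p-1)/p}
\]
and applying Hölder with exponents $(p, p/(p-1))$ combines the $p=1$ bound with the trivial estimate $\int\!\!\int \mathbf{1}_{\clubsuit}\mathbf{1}_{\clubsuit} = \vol(\clubsuit)^2$ to give
\[
\int\!\!\int \mathbf{1}_{\clubsuit}\mathbf{1}_{\clubsuit}\varphi_0^{1/p}(s s'^{-1})\,ds\,ds' \ll \vol(\clubsuit)^{4/(3p)}\cdot\vol(\clubsuit)^{2(p-1)/p} = \vol(\clubsuit)^{2 - 2/(3p)},
\]
as required.

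The only real difficulty I foresee is recognizing that the spherical functional equation is essential here: a direct Young-type bound $\int\!\!\int \mathbf{1}_{\clubsuit}\mathbf{1}_{\clubsuit}\varphi_0^{1/p}(ss'^{-1}) \le \|\mathbf{1}_{\clubsuit}\|_a\|\mathbf{1}_{\clubsuit}\|_b\|\varphi_0^{1/p}\|_c$ subject to $1/a+1/b+1/c = 2$ and $c > 2p$ only yields the weaker exponent $2 - 1/(2p)$, so one is forced to exploit the bi-$K$-invariance of $\clubsuit$ to reduce to $\int_{\clubsuit}\varphi_0$. Should the lemma be intended for the weak spherical function $\varphi_0^{\wk}$ of \eqref{weakdef} in the case that $S$ is only isogenous to a product $\prod_i S_i$, one would apply the argument factor by factor on the product cover, using $\varphi_0^{\wk} \le \sum_i \varphi_{0,S_i}$ to handle the $\max$.
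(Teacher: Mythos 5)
Your proof is correct and follows essentially the same route as the paper: the first bound via $L^3$--$L^{3/2}$ duality using $\varphi_0\in L^{2+\varepsilon}$, and the second by combining H\"older (to reduce the exponent $1/p$ to $1$) with the spherical identity $\int_K\varphi_0(s_1ks_2)\,dk=\varphi_0(s_1)\varphi_0(s_2)$, which factors the double integral and reduces it to the first bound. The only difference is cosmetic — you establish the $p=1$ case first and then interpolate, whereas the paper applies H\"older first — and your explicit justification of the $K$-averaging step via bi-$K$-invariance and unimodularity is exactly what the paper's terse "noting the identity" implicitly requires.
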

\proof The first assertion follows
by the duality between $L^3$ and $L^{3/2}$. 

Next, let $p \geq 1$, and consider $\int_{\clubsuit}
 \varphi_0(s s'^{-1})^{1/p} ds \, ds'$. Let $q$ satisfy $1/p +1/q=1$; then, by duality between $L^p$ and $L^q$, 
$$\vol(\clubsuit)^{-2}\int_{s,s' \in \clubsuit} \varphi_0(s s'^{-1})^{1/p}
\leq \left( \int_{s,s' \in \clubsuit} \varphi_0(s s'^{-1}) \right)^{1/p}
\vol(\clubsuit)^{-2/p}$$ 
Noting the identity $\int_{k \in K} \varphi_0(s_1 k s_2)
= \varphi_0(s_1) \varphi_0(s_2)$, cf. \cite[(7.45)]{Knapp-red}, we simplify this to
\eqref{clubestimate}. 
\qed

\subsubsection{Bounds for matrix coefficients of tempered and $1/m$-tempered representations.} If $(\pi, V)$ is a tempered
representation of $S$, it is known \cite{CHH} that:
\begin{equation} \label{mc1} 
\langle s.v , w \rangle \leq \varphi_0(s) . \|v\| \|w\|  (\dim Kv)^{1/2} . (\dim K w)^{1/2} 
\ \ \ (v,w \in V)
\end{equation}

Fix a basis for the Lie algebra $\mathfrak{s}$ of $S$. 
We may define a system of Sobolev norms $\Sob_d^V$ on the 
smooth subspace\footnote{Here $v \in V$ is smooth if the associated orbit map 
$G \stackrel{g \mapsto g. v}{\rightarrow} V$ is smooth,}
of any unitary representation $V$ via\footnote{It should be noted that, when
specialized to the case of $V = L^2_\mu(\Gamma \backslash G)$, this gives norms that do not coincide with the 
family of norms defined in \eqref{sobnormdef}. However, the latter majorizes the former if we allow different 
indices and use \eqref{linftyfact} for as many derivatives along $\mathfrak{s}$ as needed.}
$$\Sob_d^V(f) := \sup_{\mathcal{D}} \|\mathcal{D} f \|$$
where the supremum is taken over $\mathcal{D} \in U(\mathfrak{g})$, the universal enveloping algebra
of $\mathfrak{s}$, which are monomials in the chosen basis of degree $\leq d$. Then
\begin{equation} \label{mc2}
  \langle s.v , w \rangle \leq c_k \varphi_0(s) . \Sob_{k}^V(v) \Sob_k^V(w) \ \ \ (v,w \in V)\end{equation}
for $s \in S$, any integer $k > (\dim K)/2$ and a constant $c_k \geq 1$.

 Suppose, now, that $(\pi,V)$ is $1/m$-tempered; in that case we have:
\begin{equation} \label{mc3}
\langle s .v , w \rangle \leq c'_k \varphi_0(s)^{1/m} \Sob_k^V(v) \Sob_k^V(w) \ \ \ (v,w \in V)\end{equation}
for $s \in S$.

\subsubsection{Bounds for matrix coefficients in presence of a spectral gap.}
As we have discussed (end of \S \ref{Rel:SGT}) it is possible for a representation to possess
a spectral gap, but not to be $1/m$-tempered for any $m \geq 1$. 
In order to quantify the decay of matrix coefficients
in such cases,  we use the function $\varphi_0^{\wk}$
of \eqref{weakdef}.

If $(\pi,V)$ possesses a spectral gap,
there exists $\rho > 0$, depending only on this gap, such that
the following  majorization holds:
\begin{equation} \label{mcweak}
\langle s.v , w \rangle \leq c_k (\varphi_0^{\wk}(s))^{\rho} .
\Sob_{k}^V(v) \Sob_k^V(w) \ \ \ s \in S,, (v,w \in V),\end{equation}
for $ k > (\dim K)/2$. 

By unitary decomposition 
 it suffices to verify \eqref{mcweak} for every irreducible ``constituent'' of $V$,
i.e. every irreducible unitary representation that weakly occurs within $V$.

Take an isogeny $\prod_{i=1}^{I} S_i \rightarrow S$ with each $S_i$ connected 
almost simple. 
Any irreducible unitary representation of $\prod_{i=1}^{I} S_i$
factors as a tensor product $\otimes_{i=1}^{I} \sigma_i$,
where each $\sigma_i$ is irreducible.

The assumption of spectral gap
implies (cf. \eqref{temp}) that there exists an integer $m$
so that every irreducible ``constituent'', upon
pullback to $\prod_{i=1}^I S_i$, is of the form
$\sigma = \otimes_{i=1}^{I} \sigma_i$, where
there exists $1 \leq i \leq I$ so that $\sigma_i^{\otimes m}$
is tempered as an $S_i$-representation.  Assume $i=1$,
the argument being similar in general. 

For $v, w$ in the space of $\sigma$, put
$w' := (1, s_2, \dots, s_I)^{-1} w$. Then:
\begin{multline} \langle (s_1, \dots, s_I) v, w\rangle =
\langle (s_1, 1, \dots, 1) v, (1, s_2, \dots, s_I)^{-1} w \rangle
\\ \ll \varphi_0(s_1)^{1/m} S_k(v) S_k(w') 
\leq \varphi_0^{\wk}((s_1, \dots, s_k))^{1/m} S_k(v) S_k(w).
\end{multline} The very last step follows, because the Sobolev norms here
are taken on $\sigma$ considered as an $S_1$-representation,
and in particular commute with the action of $S_2 \times \dots \times S_I$. 
Our assertion follows.

\subsection{Some estimates of volumes and matrix coefficients} \label{volume}
In the present section, $H, G$ are as in \S \ref{gi}. 

Let $H \subset S \subset G$, with $S$ connected.  We claim that for $T \geq 2$,
and for any $\rho > 0$, 
\begin{multline} \label{volest} 
 \vol \Sball(T) \sim v_S T^{A_S}(\log T)^{\ell_S}, \\
\frac{1}{\vol \Sball(T)^2} \int_{g,g' \in \Sball(T)} 
\varphi_0^{\wk}(g g'^{-1})^{\rho} \ll 
T^{-\zeta_S \rho}\end{multline}
for suitable $v_S, A_S, \zeta_S>0$, and $\ell_S \geq 0$ depending on $S$. Here $f(t)\sim g(t)$ if $\frac{f(t)}{g(t)}\to 1$ as $t\to\infty$. 


The estimate on the volume follows from \cite[Thm.\ 2.7]{GW}.
(Actually, we do not need this level of precision, but it is convenient
to have an asymptotic.)

Indeed, notations as in \eqref{isogeny},  it suffices to compute the measure
of the inverse image of $\Sball(T)$ under $\isogeny$,
i.e. the measure of
\begin{equation} \label{preimageball}\ker(\isogeny) \cdot \prod_{i} \{s \in S_i: \|\isogeny(s_i)\| \leq N^{-1} T^{1/I}\}.\end{equation} We can regard this as a certain union of sets, parameterized by
the finite kernel $\ker(\isogeny)$. The results stated in \cite{GW}
implies an asymptotic for each intersection of these sets, whence also an asymptotic for their union.  \footnote{The reference \cite{GW} assumes that $S$ connected. It also
 proves ``Theorem 2.7'' only in a special case, deferring the general case to another paper. We note however that Benoist and Oh \cite{Benoist-Oh} have given a lovely conceptual argument for such volume asymptotics.  }

Now, let us indicate the proof of the second assertion of \eqref{volest}.
 Take the isogeny $\isogeny: \prod_{i=1}^{I} S_i \rightarrow S$
from \eqref{isogeny}.   Because $\max(a^{\rho},b^{\rho}) \ll a^{\rho}+b^{\rho}$, and the Haar measure
on $S$ pulls back to the Haar measure on $\prod_{i} S_i$, it is enough to check,
for each $1 \leq i \leq I$,
the average value of $\varphi_0(s_i s_i'^{-1})$ over \eqref{preimageball}
is bounded by a negative power of $T$.  The set
\eqref{preimageball} is contained in a set of the form
\begin{equation} \label{preimageball2} \prod_{i} \{s \in S_i: \|\isogeny(s_i)\| \leq R\}.\end{equation}
where $R \asymp T^{1/I}$; moreover, the sets \eqref{preimageball}
and \eqref{preimageball2} have comparable volumes, by what we have just discussed. 

We conclude that it is enough to check that, for each $1 \leq i \leq I$,
the average of $\varphi_0(s s'^{-1})^{\rho}$
over the set $\{(s,s') \in S_i, \|\isogeny(s)\|, \|\isogeny(s')\| \leq 
R\}$, is bounded above by a negative power of $R$. 

That assertion, however, is an almost-immediate consequence of \eqref{clubestimate};
we need only enlarge $\{s \in S_i: \|\isogeny(s)\| \leq R\}$ so that it is bi-invariant under a maximal compact of $S_i$. This increases the volume by at most a constant factor,
by property (1) of \eqref{intermediateballs} and the above volume estimate. 

\subsection{The representation of $\mathbf{S}(\R)$ on an algebraic homogeneous space}

 Having established basic definitions concerning tempered and
$\frac{1}{m}$-tempered representations, we now show that certain naturally occurring examples, viz.,
representations of the real points of an algebraic group,
 on the real points of an (algebraic) homogeneous spaces, have such properties.

We say that, if $V$ is a vector space over a field $k$, and $\G \subset \GL(V)$
an algebraic group, then $v \in V$ is {\em stable} if it is not contained
entirely in the non-negative weight spaces for any $k$-split torus $\G_m \subset \G$.

For example, if $\G= \SL_2$, the irreducible (algebraic) representations of $\G$ are precisely the representations on the space of degree $m$ 
bivariate homogeneous polynomials,
for $m \geq 1$. According to the definition above, 
a degree $m$ polynomial is then stable exactly 
when there is no root in $\mathbb{P}^1(k)$ of multiplicity $\geq m/2$. 

We will use this notion when $k = \R$.  In this case,
the set of stable points is topologically open.

\begin{lem}\label{properaction} Suppose $S$ acts {\em properly} on a real manifold $M$.
Then, if $\nu$ is an $S$-invariant Radon measure on $M$, then
$L^2(M,\nu)$ is tempered as an $S$-representation.
\end{lem}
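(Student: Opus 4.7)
My plan is to use the Cowling--Haagerup--Howe criterion invoked earlier in \S\ref{tempered}: it suffices to exhibit a dense subspace $\mathcal{V} \subset L^2(M,\nu)$ such that, for every $v,w\in\mathcal{V}$ and every $\varepsilon>0$, the matrix coefficient $s\mapsto \langle \pi(s)v,w\rangle$ lies in $L^{2+\varepsilon}(S)$. I would take $\mathcal{V} := C_c(M)$, which is dense in $L^2(M,\nu)$ because $\nu$ is Radon and $M$ is (locally compact Hausdorff, and indeed) $\sigma$-compact.

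Next, I would unwind the matrix coefficient for $v,w \in C_c(M)$:
\[
\langle \pi(s)v,w\rangle \;=\; \int_M v(m\cdot s)\,\overline{w(m)}\,d\nu(m).
\]
Let $K \subset M$ be a compact set containing $\mathrm{supp}(v)\cup \mathrm{supp}(w)$. The integrand vanishes unless $m \in K$ and $ms \in K$, so the matrix coefficient is zero except on the set
\[
\Sigma_K \;:=\; \{s \in S : Ks \cap K \neq \emptyset\}.
\]
The key step is to observe that $\Sigma_K$ is compact. This is precisely the content of properness: the action map $\alpha: S\times M \to M\times M$, $(s,m)\mapsto (ms,m)$, is proper, so $\alpha^{-1}(K\times K)$ is compact in $S\times M$, and $\Sigma_K$ is its projection to $S$, hence compact.

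Finally, by Cauchy--Schwarz the matrix coefficient satisfies $|\langle \pi(s)v,w\rangle|\leq \|v\|_{L^2(\nu)}\|w\|_{L^2(\nu)}$, so it is bounded and compactly supported on $S$, and therefore lies in $L^p(S)$ for every $p\in [1,\infty]$, in particular in $L^{2+\varepsilon}(S)$. Applying the criterion of \S\ref{tempered} completes the proof. There is no real obstacle here beyond carefully identifying properness of the action with compactness of $\Sigma_K$; everything else is a routine density and Cauchy--Schwarz argument.
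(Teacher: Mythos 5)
Your proof is correct and is precisely the argument the paper gives (in compressed form): matrix coefficients of $C_c(M)$ functions are compactly supported by properness, hence lie in $L^{2+\varepsilon}(S)$, and the criterion of \S\ref{tempered} applies. Your version merely spells out the identification of the support with $\Sigma_K$ and the Cauchy--Schwarz bound, which the paper leaves implicit.
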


Recall {\em properness} means that the map $S \times M \rightarrow M \times M$,
defined via $(s,m) \mapsto (sm, m)$ is proper.
\proof
Indeed, it suffices to verify that matrix coefficients $s \mapsto
\langle s f_1, f_2 \rangle$, when $f_1, f_2 \in C_c(M)$, are
in $L^{2+\varepsilon}(S)$. By properness, they are compactly supported.
\qed

\begin{lem} \label{algaction}
Suppose we are given a finite-dimensional algebraic representation
$(\rho, V)$ of $\SL_2$.  Let $M \subset V_{\R}$ be a $\SL_2(\R)$-stable submanifold
so that every $m \in M$ is stable.

Then the action of $\SL_2(\R)$ on $M$ is proper.
\end{lem}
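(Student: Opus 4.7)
Proof plan. To verify properness, it suffices to show that whenever $m_n\to m$ and $s_nm_n\to m'$ in $M$, the sequence $\{s_n\}$ is bounded in $\SL_2(\R)$. I would argue by contradiction: assume $s_n\to\infty$ and derive a contradiction by locating a vector whose norm necessarily blows up.

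The first step is to invoke the $KAK$ decomposition $\SL_2(\R)=KA^+K$, where $A^+=\{a_t:=\mathrm{diag}(t,t^{-1}):t\geq 1\}$ and $K=\SO(2)$. Write $s_n=k_na_{t_n}k_n'$. Since $s_n\to\infty$, we have $t_n\to\infty$; by compactness of $K$, pass to a subsequence with $k_n\to k$ and $k_n'\to k'$. Fix a $K$-invariant inner product on $V$. Under such an inner product the action of the Cartan subgroup $A$ is by positive symmetric operators, so the weight-space decomposition $V=\bigoplus_w V_w$ for $A$ is orthogonal; in particular $\|v\|^2=\sum_w\|v_w\|^2$ for $v=\sum_w v_w$.

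Next, decompose $k_n'm_n=\sum_w v_{n,w}$ with $v_{n,w}\in V_w$. Continuity and $K$-invariance of the decomposition give $v_{n,w}\to v_w$, where $k'm=\sum_w v_w$. Since $m$ is stable and the set of stable vectors is $\SL_2(\R)$-invariant (for each split torus $T$ and element $g\in\SL_2(\R)$, one transports the weight decomposition via $gTg^{-1}$), the vector $k'm$ is stable. Applying the stability definition to the cocharacter $t\mapsto\mathrm{diag}(t^{-1},t)$ shows that $k'm$ must have a nonzero component in some weight space $V_{w_0}$ with $w_0>0$; hence $\|v_{n,w_0}\|\to\|v_{w_0}\|>0$.

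Finally, the $w_0$-component of $a_{t_n}k_n'm_n$ equals $t_n^{w_0}v_{n,w_0}$, so by orthogonality of the weight decomposition and $K$-invariance of the norm,
\[
\|s_nm_n\|=\|a_{t_n}k_n'm_n\|\geq t_n^{w_0}\|v_{n,w_0}\|\longrightarrow\infty,
\]
contradicting $s_nm_n\to m'$. This completes the proof.

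The main thing to get right is the transfer of stability from $m$ to $k'm$ (so that we may work with the diagonal torus after Cartan decomposition) together with the existence of a strictly positive weight; this is the only place the hypothesis enters, and it is essential, since otherwise an $A$-orbit could stay bounded while $A$ escapes to infinity. The remaining ingredients—orthogonality of weight spaces under a $K$-invariant inner product, and compactness of $K$—are standard.
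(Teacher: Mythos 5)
Your proof is correct and follows essentially the same route as the paper's: reduce to the diagonal torus via the Cartan decomposition, decompose into weight spaces, and show that boundedness of $s_nm_n$ for an unbounded sequence $s_n$ would force the limit point to lie in the non-positive weight spaces, contradicting stability. You simply spell out the $K$-factors and the weight-space estimate in more detail than the paper, which states the contradiction with stability without computation.
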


\proof
Let $A = \{a_t\}$ be a diagonal torus within $\SL_2(\R)$. It suffices to verify
that the action of $A$ on $M$ is proper, in view of the Cartan decomposition.

Split $V = \oplus V_n$, where $\{a_t\}$ acts on $V_n$ by the character
$e^{nt}$.

Choose compacta $K_1, K_2 \subset M$. Suppose, to the contrary,
that there exists a sequence $u_i \in K_1,v_i \in K_2$ and an unbounded sequence $a_{t_i} \in A$
so that $v_i = a_{t_i} u_i$.  Let $u$ be a limit of the $u_i$ and $v$ a limit of the $v_i$.  Then neither $u$ nor $v$ are stable, a contradiction.  \qed

\begin{lem}\label{temp-3}
Suppose we are given a finite-dimensional algebraic representation
$(\rho, V)$ of $\SL_2$. Let $\nu$ be an $\SL_2(\R)$-invariant Radon
measure on $V_{\R}$ so that $\nu$-almost all vectors $v \in V_{\R}$
are not fixed by $\SL_2(\R)$. Then almost all vectors in $V_{\R} \oplus V_{\R}$, 
w.r.t.\ $\nu \times \nu$, are stable.
\end{lem}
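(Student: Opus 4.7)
The plan is to parametrize the ``non-negative weight subspaces'' of $V$ by the flag variety $\mathbb{P}^1\cong\SL_2/B$, prove that $\nu$ assigns measure zero to each such subspace, and then conclude by two applications of Fubini.

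For each $\ell\in\mathbb{P}^1(\R)\cong \SL_2(\R)/B(\R)$ (with $A$ the standard diagonal torus and $B\supset A$ the upper triangular Borel), pick $g\in\SL_2(\R)$ with $gB=\ell$ and set $V_\ell:=g\cdot V^{\geq 0}(A)$. Every nontrivial $\R$-split cocharacter of $\SL_2$ is, up to positive rescaling, a conjugate of $t\mapsto\operatorname{diag}(t,t^{-1})$, and $V^{\leq 0}(A)=wV^{\geq 0}(A)$ for the Weyl element $w$, so the family $\{V_\ell\}_{\ell\in\mathbb{P}^1(\R)}$ exhausts every non-negative weight subspace of $V$ arising from an $\R$-split one-parameter subgroup. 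Hence $(v_1,v_2)\in V\oplus V$ is unstable iff both $v_1,v_2\in V_\ell$ for some common $\ell$; denote this unstable locus by $U$.

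The core step is to show $\nu(V_\ell)=0$ for every $\ell$. For $v\in V$ put $L(v):=\{\ell\in\mathbb{P}^1(\R):v\in V_\ell\}$, a closed algebraic subset of $\mathbb{P}^1(\R)$, hence either all of $\mathbb{P}^1(\R)$ or finite. The key identification is $L(v)=\mathbb{P}^1(\R)\iff v\in V^{\SL_2}$: the intersection $\bigcap_\ell V_\ell$ is $\SL_2$-invariant (since $gV_\ell=V_{g\ell}$) and contained in $V^{\geq 0}(A)$, but every nontrivial irreducible $\SL_2$-representation carries both positive and negative $A$-weights, so no nontrivial subrepresentation embeds into $V^{\geq 0}(A)$. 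Fixing any atomless rotation-invariant probability measure $d\ell$ on $\mathbb{P}^1(\R)$, Fubini gives
\[
\int_{\mathbb{P}^1(\R)}\nu(V_\ell)\,d\ell\;=\;\int_V d\ell\bigl(L(v)\bigr)\,d\nu(v)\;=\;0,
\]
since $L(v)$ is $d\ell$-null for $\nu$-a.e.\ $v$ by hypothesis. But $\ell\mapsto\nu(V_\ell)$ is $\SL_2(\R)$-invariant, and $\SL_2(\R)$ acts transitively on $\mathbb{P}^1(\R)$, so this function is constant and therefore identically zero.

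A second Fubini completes the proof: with $W_{v_1}:=\bigcup_{\ell\in L(v_1)}V_\ell=\{v_2:(v_1,v_2)\in U\}$ one has $\nu\times\nu(U)=\int_V\nu(W_{v_1})\,d\nu(v_1)$, and for $\nu$-a.e.\ $v_1$ the set $L(v_1)$ is finite, so $\nu(W_{v_1})\leq\sum_{\ell\in L(v_1)}\nu(V_\ell)=0$. The step I expect to demand the most care is the dichotomy for $L(v)$ together with the identification of the $L(v)=\mathbb{P}^1(\R)$ case with the $\SL_2$-fixed vectors; this rests on elementary weight theory for $\SL_2$, after which the two Fubini applications are immediate.
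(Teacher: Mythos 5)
Your proof is correct, and it is worth recording how it differs from the proof in the paper. The paper's argument is a two-line Fubini over the \emph{group}: for fixed $v_1,v_2$ not fixed by $\SL_2(\R)$, the pair $v_1\oplus g v_2$ is stable for Haar-almost every $g$, and one then integrates over $g$ using the invariance of $\nu$ to transfer this to $\nu\times\nu$. Your argument instead performs the Fubini over the flag variety $\mathbb{P}^1(\R)$ and passes through the intermediate statement that $\nu(V_\ell)=0$ for every single non-negative weight subspace $V_\ell$ --- a claim the paper never isolates, and which is the natural measure-theoretic strengthening of the hypothesis. Both routes ultimately rest on the same algebro-geometric dichotomy (the incidence set $L(v)\subset\mathbb{P}^1(\R)$ is finite unless $v$ is fixed, since it is the real locus of a Zariski-closed subset and $\bigcap_\ell V_\ell$ is an invariant subspace of $V^{\geq 0}(A)$, hence consists of fixed vectors); in the paper's version this appears implicitly in the unproved assertion that the exceptional set of $g$ is Haar-null, which is a finite union of Borel cosets precisely because $L(v_1)$ and $L(v_2)$ are finite. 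What your organization buys is that every step is justified: the parametrization of unstable directions by $\SL_2(\R)/B$, the measurability of the unstable locus (a projection along the compact factor $\mathbb{P}^1$), and the constancy of $\ell\mapsto\nu(V_\ell)$ by transitivity. The only points deserving a word of care, which you have handled correctly, are that $\nu$ is merely Radon (so one should invoke Tonelli for $\sigma$-finite measures, and note that constancy plus vanishing integral still forces $\nu(V_\ell)=0$ even if the value were a priori infinite) and that both orientations of each cocharacter must be included, which your Weyl-element remark addresses.
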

\proof
If $v_1,v_2 \in V_{\R}$ are not fixed by $\SL_2(\R)$, then $v_1 \oplus g v_2$
is stable in $V \oplus V$ for almost all $g \in \SL_2(\R)$.
This, together with Fubini's theorem, implies the stated claim.
\qed

\begin{lem} \label{stabilitytempered}
Let $\mathbf{S}$ be a semisimple $\R$-algebraic group without anisotropic factors, and let $\mathbf{L} \subset \mathbf{S}$
be an algebraic subgroup of strictly lower dimension. 
Let $S$ resp.\ $L$ be the real points of $\mathbf{S}$ 
resp.\ $\mathbf{L}$. Then the right action of $S$
on $L^2(L\backslash S)$ has a spectral gap.
\end{lem}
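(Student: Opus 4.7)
The plan is to reduce the spectral gap statement for $S$ to matrix-coefficient decay for a single principal $\SL_{2}(\R)$ acting on $L^{2}(L\backslash S)^{\otimes 2}$, and then to apply the three preparatory lemmas \ref{properaction}, \ref{algaction}, \ref{temp-3} in sequence.

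First, I would realize $L\backslash S$ as an algebraic orbit. By Chevalley's theorem there is a finite-dimensional algebraic $\mathbf{S}$-representation $W$ over $\R$ and a line $\ell\subset W$ whose stabilizer in $\mathbf{S}$ equals $\mathbf{L}$. Replacing $W$ by $W\otimes W^{*}$ and $\ell$ by the canonical vector in $\ell\otimes\ell^{*}$ produces a representation $V$ and a vector $v_{0}\in V(\R)$ with $\mathrm{Stab}_{\mathbf{S}}(v_{0})=\mathbf{L}$. The orbit map $S\to S v_{0}\subset V_{\R}$ identifies $L\backslash S$, as an $S$-space, with $S v_{0}$; push the natural (quasi-)invariant measure on $L\backslash S$ forward to a Radon measure $\nu$ on $V_{\R}$, concentrated on $S v_{0}$.

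Second, since $\mathbf{S}$ has no anisotropic factors, fix a homomorphism $\phi:\SL_{2}\to\mathbf{S}$ over $\R$ whose image projects nontrivially to each almost-simple factor (take the principal $\SL_{2}$ in each factor, embedded diagonally). Pulling back via $\phi$ makes $V$ into an $\SL_{2}(\R)$-representation, and $\nu$ is still invariant under $\phi(\SL_{2}(\R))$. I claim $\nu$-almost every vector fails to be $\phi(\SL_{2}(\R))$-fixed. The locus $F=\{s\in\mathbf{S}:s^{-1}\phi(\mathbf{SL}_{2})s\subset\mathbf{L}\}$ is Zariski-closed and defined over $\R$; if $F=\mathbf{S}$ then $\mathbf{L}$ contains the normal closure of $\phi(\mathbf{SL}_{2})$ in $\mathbf{S}$, which equals $\mathbf{S}$ by the non-trivial projection condition, contradicting $\dim\mathbf{L}<\dim\mathbf{S}$. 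Hence $F\subsetneq\mathbf{S}$, and by Zariski density of $\mathbf{S}(\R)$ the real locus $F(\R)$ has Haar measure zero, so the set of $\phi(\SL_{2}(\R))$-fixed points in $S v_{0}$ is $\nu$-null.

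Third, Lemma \ref{temp-3} now yields that $\nu\otimes\nu$-almost every pair in $V_{\R}\oplus V_{\R}$ is stable for the diagonal $\SL_{2}(\R)$-action; Lemma \ref{algaction} gives that the $\SL_{2}(\R)$-action on the stable locus is proper; and Lemma \ref{properaction} then shows that $L^{2}(V_{\R}\oplus V_{\R},\nu\otimes\nu)$ is tempered as an $\SL_{2}(\R)$-representation. Since this space is $L^{2}(L\backslash S)\otimes L^{2}(L\backslash S)$, the $\SL_{2}(\R)$-representation on $L^{2}(L\backslash S)$ is $\tfrac{1}{2}$-tempered. By \eqref{mc3} and \eqref{mcunipotent} this gives matrix-coefficient decay along $\phi(\SL_{2}(\R))$ on $L^{2}(L\backslash S)$.

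Finally, I convert this decay into the spectral gap asserted. Choose a bi-$K$-invariant smooth compactly supported probability measure $\nu_{0}$ on $\SL_{2}(\R)$ supported sufficiently far from the identity, and push it to $S$ via $\phi$. Matrix-coefficient decay, combined with averaging over $K$-types (or equivalently with the identity $\|\pi(\nu_{0})\|^{2}=\|\pi(\nu_{0}^{*}\star\nu_{0})\|$), forces the operator norm on $L^{2}(L\backslash S)$ to be strictly less than $1$, which is exactly the required spectral gap for $S$. The most delicate step is this last conversion from pointwise matrix-coefficient bounds on a single subgroup $\SL_{2}(\R)$ to an operator-norm bound under a compactly supported probability measure on $S$; the algebraic preliminaries in the first two steps are standard, with the only technical care needed being the handling of the modular cocycle when $L$ is not unimodular.
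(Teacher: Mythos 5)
Your overall strategy coincides with the paper's: realize $L\backslash S$ inside an algebraic representation, choose an $\SL_2$ projecting nontrivially to every factor, run Lemmas \ref{properaction}--\ref{temp-3} on the tensor square, and convert temperedness of $L^2(L\backslash S)^{\otimes 2}$ into a spectral gap for $S$. The Zariski-closure argument showing the fixed-point set is $\nu$-null, and the final conversion from matrix-coefficient decay along $\phi(\SL_2(\R))$ to an operator-norm bound, are both fine.

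The gap is in your first step. It is not true in general that $\mathbf{L}$ can be realized as the \emph{exact} stabilizer of a vector: that property is equivalent to $\mathbf{L}$ being observable (equivalently, $\mathbf{L}\backslash\mathbf{S}$ quasi-affine), and it fails, e.g., for $\mathbf{L}$ a parabolic subgroup. Your $W\otimes W^{*}$ device does not repair this: for $\mathbf{S}=\SL_2$, $\mathbf{L}=\mathbf{B}$ a Borel and $\ell=\R e_1$, the natural candidate $e_1\otimes e_1^{*}=E_{11}$ has stabilizer the diagonal torus (one must preserve the kernel of $E_{11}$ as well as its image), which is strictly smaller than $\mathbf{B}$ --- and indeed no vector in any representation has stabilizer exactly $\mathbf{B}$. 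Relatedly, Lemma \ref{properaction} requires an $S$-\emph{invariant} Radon measure, hence $L$ unimodular; you flag the modular cocycle as "technical care" but never supply the argument, and the lemma you are invoking simply does not apply to a quasi-invariant measure. The paper resolves both problems with a single preliminary reduction that your proof is missing: replace $\mathbf{L}$ by $\mathbf{L}'=\bigcap_{\chi}\ker\chi$, the intersection of the kernels of all $\R$-characters (and then by its identity component). Since $L'\backslash L$ is amenable, $L^2(L'\backslash S)$ weakly contains $L^2(L\backslash S)$, so it suffices to prove the gap for $\mathbf{L}'$; and $\mathbf{L}'$, being connected with no $\R$-characters, is unimodular and acts trivially on the Chevalley line $\R v_L$, so it \emph{is} the exact stabilizer of the vector $v_L$. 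With that reduction inserted at the start, the rest of your argument goes through.
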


Although this case does not arise in our application,
one should interpret $L^2(L \backslash S)$ as the ``unitary induction from $L$ to $S$ of the trivial representation,'' in the case where there fails to be
an $S$-invariant measure on $L \backslash S$. 

\proof
It is easy to see that one may assume that $\mathbf{L}$
has no characters over $\R$. Otherwise, replace $\mathbf{L}$
by the intersection $\mathbf{L}'$ of the kernels of all such characters.
Then $L^2(L' \backslash S)$ weakly contains $L^2(L \backslash S)$,
and it suffices to prove the theorem replacing $\mathbf{L}$ by $\mathbf{L}'$. 
Similarly, we may assume that $\mathbf{L}$ is connected. 

Fix a morphism from $\SL_2$ to $\mathbf{S}$ that projects
nontrivially to any almost-simple component of $\mathbf{S}$.
Then the measure of points in $L\backslash S$ fixed by $\SL_2(\R)$ is zero.

There exists a representation $(\rho, V)$ of
$\mathbf{S}$ 
and a rational vector $v_L \in V$ so that the stabilizer of the line $\R.v_L$
is precisely $\mathbf{L}$.  
By assumption, $\mathbf{L}$ is connected and has no $\R$-characters, 
so $\mathbf{L}$ is the stabilizer of $v_L$. 

Let $\mathbf{Y}$ be the orbit of $v_L$ under $\mathbf{S}$;
it is an algebraic subvariety of $V$.  Let $Y = \mathbf{Y}(\R)$.
Then $L\backslash S$ is identified with an open subset of $Y$.
Let $\nu$ be the measure on $Y$ corresponding to an $S$-invariant
measure on $L\backslash S$, which exists since $L$ is unimodular.

Consider $V \oplus V$ as an $\SL_2$-representation. Consider
$Y \times Y \subset V_{\R} \times V_{\R}$. The
set of stable points $M \subset Y \times Y$ is an open subset
of full measure (Lemma \ref{temp-3}). 
So $L^2(Y \times Y, \nu\times\nu) = L^2(M, \nu \times \nu)$.

We apply Lemma \ref{properaction}--\ref{temp-3} to conclude that the $\SL_2(\R)$-action
on the tensor product $L^2(L\backslash S)^{\otimes 2}$ is tempered.  Therefore, the $S$-action
on $L^2(L\backslash S)$ has a spectral gap.
\qed

\subsection{Property $\tau$ and its corollaries}
\begin{prop}\label{proptau} Let $\mathbf{S}$ be
an absolutely almost simple, simply connected group over a number field $F$.
Let $v$ be a place of $F$. There is an integer $\uglytempered_{\mathbf{S}(F_v)}$, which depends only on the isomorphism class of $\mathbf{S}(F_v)$,
so that the representation $\mathbf{S}(F_v)$
on $L^2_0(\mathbf{S}(F) \backslash \mathbf{S}(\adele_F))$ is $\frac{1}{\uglytempered_{\mathbf{S}(F_v)}-1}$-tempered. 
\end{prop}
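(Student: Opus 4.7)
The plan is to combine Clozel's theorem on property $(\tau)$ with the folk-style implication \eqref{temp} (spectral gap implies $1/m$-temperedness for almost simple groups), and then argue that the resulting integer $m$ may be chosen to depend only on the local group $\mathbf{S}(F_v)$.

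For the first step I would invoke Clozel's theorem: the action of $\mathbf{S}(\adele_F)$ on $L^2_0(\mathbf{S}(F) \backslash \mathbf{S}(\adele_F))$ possesses a spectral gap. Restricting to the factor $\mathbf{S}(F_v) \subset \mathbf{S}(\adele_F)$, by convolving with a probability measure supported there (obtained from Clozel's averaging measure by integrating out the other components), one obtains a spectral gap for the restricted representation of $\mathbf{S}(F_v)$. Since $\mathbf{S}$ is absolutely almost simple and simply connected, the group $\mathbf{S}(F_v)$ is almost simple in the sense of \S \ref{Rel:SGT}. Applying the implication recorded in \eqref{temp} then yields that the restriction is $1/m$-tempered for some integer $m \geq 1$; set $\uglytempered_{\mathbf{S}(F_v)} := m+1$.

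The hard part is the uniformity claim: that $\uglytempered$ depends only on $\mathbf{S}(F_v)$, not on the global data $(\mathbf{S}, F)$. To handle this I would prove a quantitative form of \eqref{temp}: given an almost simple group $Q$ and $\delta > 0$, there is an integer $m = m(Q, \delta)$ so that every unitary $Q$-representation whose irreducible constituents are Fell-bounded away from the trivial representation by at least $\delta$ is $1/m$-tempered. This is a compactness statement about the unitary dual of $Q$: the non-tempered irreducible unitary representations bounded away from the trivial representation form a relatively compact family of complementary-series type, on which one obtains a uniform $L^{2m+\varepsilon}$-integrability bound for matrix coefficients.

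It then suffices to argue that Clozel's theorem produces a spectral gap at $v$ whose size depends only on $\mathbf{S}(F_v)$. This is the most delicate point. I would extract it from the local nature of Clozel's reduction: his argument restricts to a principal $\SL_2$-subgroup and translates the automorphic input into a bound on Satake (or, at archimedean $v$, Langlands) parameters of the local component, and that bound is governed by the representation theory of $\SL_2(F_v)$ inside $\mathbf{S}(F_v)$ alone. Plugging this local $\delta$ into the quantitative version of \eqref{temp} produces an $m$, hence a $\uglytempered_{\mathbf{S}(F_v)}$, that depends only on the isomorphism class of $\mathbf{S}(F_v)$.
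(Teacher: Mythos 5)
Your overall architecture --- Clozel's theorem as the global input, the implication \eqref{temp} to pass from a spectral gap to $1/m$-temperedness, and a quantitative form of \eqref{temp} showing that $m$ depends only on the group and the size of the gap --- matches the paper's. Your step (3) is essentially what the paper's Appendix C supplies: the quantization of the critical abscissa shows that for a fixed gap $\delta$ there is a uniform $p$ with all constituents $1/p$-tempered. The problem is your step (4), which you correctly identify as the delicate point but do not actually resolve. Clozel's proof for the groups that matter here (those whose local factor at $v$ has real rank one, i.e.\ is locally $\mathrm{SO}(n,1)$ or $\mathrm{SU}(n,1)$) does not proceed by restricting to a principal $\SL_2$ and bounding Satake/Langlands parameters via the representation theory of $\SL_2(F_v)$; it proceeds via the Burger--Sarnak principle and functorial transfers (base change, theta correspondence), producing a homomorphism from a group locally isomorphic to $\mathrm{SU}(n,1)$ into $S=\mathbf{S}(F_v)$ under which the pullback of $L^2_0$ is isolated from the trivial representation with an \emph{absolute} gap. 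A principal-$\SL_2$ argument of the kind you describe would not deliver property $(\tau)$ for these rank-one groups, so as written your mechanism for the uniformity fails.

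Even granting the correct form of Clozel's output, there is a second ingredient you omit and which is needed to turn ``isolated as an $H$-representation for \emph{some} embedding $H\hookrightarrow S$ with $H$ locally $\mathrm{SU}(n,1)$'' into ``gap for $S$ depending only on the isomorphism class of $S$'': one must know that there are only finitely many conjugacy classes of such homomorphisms into $S$ (Lemma \ref{finite-embeddings}), since the global form $\mathbf{S}/F$ varies over an infinite family and a priori so does the embedding. With that finiteness in hand, \eqref{temp} applied to each of the finitely many cases yields a gap, hence a $\uglytempered_S$, depending only on $S$. Finally, note that when the real rank of $\mathbf{S}(F_v)$ is at least $2$ none of this is necessary: Cowling's theorem \cite[Theorem 2.4]{Cowling} gives uniform $L^p$-integrability of matrix coefficients for \emph{every} unitary representation without invariant vectors, so the uniformity is automatic there and your detour through the size of Clozel's gap is not needed. (For nonarchimedean $v$ the analogous statement is \cite{Oh-Duke}; the paper itself only treats archimedean $v$, which is all that is used.)
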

In fact, the integer $\uglytempered$ may be taken to depend only on $\dim(\mathbf{S})$, but we do not need this.  We say ``$\frac{1}{\uglytempered-1}$--tempered'', rather than 
``$\frac{1}{\uglytempered}$--tempered'', to absorb some annoying factors of $\varepsilon$
at later stages in our proof (examples of such factors can be seen in \eqref{mcunipotent}).  

{\em Explicit} (and rather ``good'') values for $\uglytempered$ could be derived from the work of H. Oh
and Gorodnik-Maucourant-Oh: \cite{Oh-Duke} and \cite[Cor 3.26]{Oh}; these results
give explicit (and ``good'') rates of decay for matrix coefficients. Indeed, in the nonarchimedean case when the rank is $\geq 2$,
\cite{Oh-Duke} is apparently the only place in the literature where the existence of $\uglytempered$ is established.
	%

\proof
We confine ourself to the case of $v$ archimedean, which is the only case we use in the present paper (cf. remarks above).

This follows from the solution to property $\tau$ which was completed by L. Clozel \cite{LC};
this solution uses a variety of ingredients: the trace formula, prior ideas of Burger and Sarnak \cite{BS}, 
the ideas of Kazhdan on property $T$, and work of A. Selberg for groups of type $A_1$. 

  However, it does not follow from the main statement
of \cite{LC}, but rather from the proof. Indeed, the statements
of \cite{LC} {\em a priori} depend on the $F$-form $\G$, and not just
the group $G$; however, the proofs give stronger statements.

 If the real rank $\mathbf{S}(F_v)$
exceeds $1$, the assertion is proved explicitly in \cite[Theorem 2.4]{Cowling}.  If the real rank of $S := \mathbf{S}(F_v)$ equals $1$, it is proven in \cite{LC} that there exists a homomorphism $H \rightarrow S$,
where $H$ is a real Lie group locally isomorphic to
 $\mathrm{SU}(n,1)$, some $n \geq 1$, with the property
that the pull-back of $L^2_0(\mathbf{S}(F) \backslash \mathbf{S}(\adele_F))$
is isolated from the identity as an $H$-representation. 
Moreover, this notion of ``isolated'' is absolute, i.e.
independent of $\mathbf{S}$. 

However, there exist only finitely many conjugacy classes
of homomorphisms (of real Lie groups) from $\mathrm{SU}(n,1)$
to $S$; see Lemma \ref{finite-embeddings}.  It follows from that statement (cf. \eqref{temp}) that
the $S$-action on $L_0^2$ has a  spectral gap 
depending only the isomorphism class of the real Lie group $S$, 
and therefore there exists an integer $\uglytempered_S$ as asserted.

%
%
%
%
\qed

\begin{lem} \label{automorphic}
Notation being as in \S \ref{gi},
let $H \subset S \subset G$; let $x S$ be a closed connected orbit of $S$ on $\Gamma \backslash G$.
Let $\nu$ be the $S$-invariant probability measure on $xS$.

There exists $p_{G} \geq 1$, which can be taken to depend only on $\dim G$, so
that $L^2_0(\nu)$ is $\frac{1}{\tempered-1}$-tempered as an $S$-representation.
\end{lem}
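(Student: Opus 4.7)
\proof[Proof plan]
The plan is to identify $(xS, \nu)$ with an arithmetic homogeneous quotient, and then invoke property $\tau$ (Proposition \ref{proptau}) factor by factor.

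First I would reduce to an arithmetic setting. Write $x = \Gamma g_0$ for some $g_0 \in G$, and set $\Lambda' := g_0 S g_0^{-1} \cap \Gamma$. Conjugation by $g_0^{-1}$ identifies the measure-preserving $S$-space $(xS,\nu)$ with $\Lambda' \backslash g_0 S g_0^{-1}$ (with the $S$-action transported through conjugation). Because $xS$ is closed of finite volume, $\Lambda'$ is a lattice in $g_0 S g_0^{-1}$; since $g_0 S g_0^{-1}$ is semisimple without compact factors (Lemma \ref{assumption-2}), the Borel density theorem gives that $\Lambda'$ is Zariski-dense in $g_0 S g_0^{-1}$. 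As $\Lambda' \subset \G(\Q)$, its Zariski closure is a $\Q$-subgroup $\tilde{\mathbf{S}} \subset \G$, and dimension count together with connectedness of $g_0 S g_0^{-1}$ gives $\tilde{\mathbf{S}}(\R)^{\circ} = g_0 S g_0^{-1}$. Because $\Gamma$ is a congruence subgroup of $\G(\Q)$, the intersection $\Lambda' = \Gamma \cap \tilde{\mathbf{S}}(\Q)$ is a congruence arithmetic lattice in $\tilde{\mathbf{S}}$.

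Next I would decompose and apply property $\tau$. Pass to the simply connected cover $\tilde{\mathbf{S}}^{\mathrm{sc}}$ of the derived group of $\tilde{\mathbf{S}}$, and write $\tilde{\mathbf{S}}^{\mathrm{sc}} = \prod_j \mathrm{Res}_{F_j/\Q} \mathbf{S}_j$ as a product of $\Q$-simple factors, with each $\mathbf{S}_j$ absolutely almost simple and simply connected over a number field $F_j$. Strong approximation for each simply connected $\mathbf{S}_j$, combined with the congruence nature of (the preimage of) $\Lambda'$, realizes $L^2(\Lambda' \backslash g_0 S g_0^{-1})$ as a space of $K_f$-invariants, where $K_f$ is a compact-open subgroup of the finite-adelic points, in the tensor product of the adelic automorphic representations $L^2(\mathbf{S}_j(F_j) \backslash \mathbf{S}_j(\adele_{F_j}))$, restricted to the archimedean places above $\R$. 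Connectedness of $xS$ guarantees that $L^2_0$ (orthogonal complement of constants) matches the orthogonal complement of locally constant functions, so decomposes compatibly with this tensor product. Proposition \ref{proptau} applied at the archimedean place of each factor gives that each tensor factor is $\frac{1}{\uglytempered_j - 1}$-tempered for some integer $\uglytempered_j$ depending only on the real Lie group $\mathbf{S}_j(F_j \otimes_{\Q} \R)$. Since a tensor product of representations is at least as tempered as its least tempered factor, $L^2_0(\nu)$ is $\frac{1}{\uglytempered - 1}$-tempered with $\uglytempered = \max_j \uglytempered_j$.

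Finally, for uniformity: as $(H,S,G)$ ranges over admissible triples with $\dim G$ fixed, Lemma \ref{assumption-2} together with Lemma \ref{finite-embeddings} shows that only finitely many real Lie group isomorphism classes arise for the simple factors of the intermediate subgroups, and hence for the simple factors of $\tilde{\mathbf{S}}(\R)$. Taking $\tempered = p_G$ to be the maximum of the $\uglytempered_j$ over this finite set gives the claimed bound depending only on $\dim G$.

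The hard part will be step two: verifying that $\Lambda'$, regarded as a lattice in $\tilde{\mathbf{S}}$, is genuinely congruence (not merely arithmetic). This reduces to showing that the intersection of a congruence subgroup of $\G(\Q)$ with any $\Q$-subgroup $\tilde{\mathbf{S}}$ is congruence in $\tilde{\mathbf{S}}$, which follows because congruence conditions on $\G$ restrict to congruence conditions on $\tilde{\mathbf{S}}$. A secondary technical point is the careful use of strong approximation through the simply connected cover, including the finite-index discrepancy between $\tilde{\mathbf{S}}(\R)^{\circ}$ and $\tilde{\mathbf{S}}(\R)$; this only enlarges $\Lambda'$ by a finite amount and does not affect the temperedness conclusion.
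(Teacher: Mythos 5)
Your proposal is correct and follows essentially the same route as the paper's proof: realize $xS$ as a congruence arithmetic quotient via the Borel density theorem, decompose through an isogeny into absolutely almost simple, simply connected factors over number fields, and apply Proposition \ref{proptau} at the archimedean places, with uniformity in $\dim G$ coming from the finiteness of the possible isomorphism classes. The only cosmetic difference is that you invoke strong approximation explicitly where the paper simply passes from the congruence quotient to the adelic one.
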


Note that this means that $L^2_0(\nu)$ is uniformly isolated (as one varies the closed orbit $xS$) from the identity as an $S$-representation
or as an $H$-representation.

\proof
Replace $S$ by $x S^0 x^{-1}$ and $\Gamma x S$ by $\Gamma x S^0 x^{-1}$.  It is easy
to see that it suffices to prove the statement in this setting.

By the Borel-Wang density theorem,
\cite[Chapter II, Corollary 4.4]{Margulis}
there exists a semisimple connected $\Q$-group $\mathbf{S}$ so that $S$ is a finite index subgroup
of $\mathbf{S}(\R)$. (Take the Zariski closure of $\Gamma \cap S$).

Set $\Gamma_S = \Gamma \cap \mathbf{S}(\R)$. It is a congruence subgroup of $\mathbf{S}(\Q)$.
It suffices to verify that the representation of $\mathbf{S}(\R)$ on $L^2_0(\Gamma_S \backslash \mathbf{S}(\R))^{\otimes m}$ is tempered.  (Here $L^2_0$ denotes orthogonal complement
of {\em locally} constant functions.)

$\Gamma_S$ being a congruence subgroup, it is enough to check that the representation
of $\mathbf{S}(\R)$ on $L^2_0(\mathbf{S}(\Q) \backslash \mathbf{S}(\adele))^{\otimes m}$ is tempered.
In this context, we understand $L_0^2$ as meaning the orthogonal complement of $\mathbf{S}(\R)^{0}$-invariant functions. 

There exists number fields $K_i$,
and absolutely almost simple, simply connected groups $\mathbf{S}_i$
over $K_i$, together with an isogeny $\varphi:\prod_{i=1}^{k} \mathrm{Res}_{K_i/\Q} \mathbf{S}_i \rightarrow \mathbf{S}$.
It is then enough to verify that the representation
of $\mathbf{S}_i(K_i \otimes \R)$ on $L^2_0(\mathbf{S}_i(K_i) \backslash \mathbf{S}_i(\adele_{K_i}))^{\otimes m}$
is tempered. Indeed, this guarantees that the representation of the identity component
of $\prod_{i} \mathbf{S}_i(K_i \otimes \R)$ on $L^2_0(\mathbf{S}(\Q) \backslash \mathbf{S}(\adele))^{\otimes m}$ is tempered. But the former group is a finite covering
of the identity component of $\mathbf{S}(\R)$.  We apply Proposition \ref{proptau} to conclude, taking for $\tempered$ the maximum of all the integers
$\uglytempered_{\mathbf{S}(\R)}$ associated to possible $\mathbf{S}$s. 
 \qed

\subsection{Proof of Proposition \ref{sectionslice}}

Recall that for a subgroup $S$ of $G$ we denote the normalizer
of the Lie algebra
of $S$ by $\tilde{S}$. 

It suffices to show that the action of $S$ on the orthogonal complement
to constant functions in $L^2(\tilde{S}_x\backslash\tilde{S})$
has a spectral gap; or, that the action of $\tilde{S}$
on the same space has a spectral gap.
Set $\mathbf{L}$ to be the Zariski closure of $\tilde{S}_x$ inside $G$;
then
let $L = \mathbf{L}(\R) \subset \tilde{S}$.
If $L=\tilde{S}$,
the $\tilde{S}$-orbit of $\Gamma g \in \Gamma \backslash G$ is closed,
and we apply Proposition~\ref{automorphic}.
Otherwise $L$ satisfies $\dim \, L < \dim \, S$.\footnote{ Note that the group $\mathbf{L}$
conjugated by an element of $G$ representing $x$
is defined over $\Q$ and is $\Q$-anisotropic since its integer points are Zariski-dense. In particular, its real points are unimodular.} 

The representation $L^2(\tilde{S}_x \backslash \tilde{S})$ may be regarded
as the induction from $L$ to $\tilde{S}$, of $L^2(\tilde{S}_x\backslash L)$. 
Now, if $G_1 \subset G_2$ are locally compact groups,
$V$ a unitary representation of $G_1$, and $\nu$ a probability measure on $G_2$,
then the operator norm of convolution with $\nu$ on $\mathrm{Ind}_{G_1}^{G_2} V$
is bounded by the corresponding norm on $L^2(G_1\backslash G_2)$. In particular,
if $L^2(G_1\backslash G_2)$ has a spectral gap, so also does $\mathrm{Ind}_{G_1}^{G_2} V$. 
We invoke Lemma \ref{stabilitytempered} to conclude. 
\qed


\section{Effective generation of Lie algebras.} \label{effectivegen}

Given a subset $T = \{t_1, \dots \}$ of a Lie algebra $\mathfrak{g}$, recall that
we denote by  $T^{(k)}$ the set of all possible iterated Lie
brackets of the $t_i$s of depth $\leq k$. 
In this section, we shall prove the following (``Proposition E''):

\begin{prop}\label{any-delta}
There exists an integer $k$ and some $c>0$ depending only on $\g$
with the following property. For any orthonormal subset $T = \{t_1,
\dots \}$ of $\mathfrak{g}$, and $0 < \delta <  1$ there exists a
subalgebra $\mathfrak{w} \subset \mathfrak{g}$ with orthonormal
basis $w_1, w_2, \dots$ satisfying:
\begin{enumerate}
\item
For each $w_i$, there exists a linear combination $w_i' = \sum_{t
\in T^{(k)}} c_t t$, with $c_t \in \R$ satisfying $|c_t| \leq
\delta^{-k}$, so that $\|w_i - w_i'\| \ll \delta^c$.
\item Each $t \in T$ is within $\delta$ of $\mathfrak{w}$ (i.e. $\min_{w \in \mathfrak{w}} \|w-t\| \leq \delta$).
\end{enumerate}
  If the linear span $\langle T \rangle$ of $T$ contains a subalgebra $\mathfrak{h}$,
  then there exists $k$ and $c$ depending on the pair $(\mathfrak{h} , \mathfrak{g})$ so that
  (1) and (2) hold and, in addition, $\mathfrak{h} \subset \mathfrak{w}$.
\end{prop}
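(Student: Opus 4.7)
\emph{Plan.} My strategy is a two-step construction: a greedy iterated-bracketing procedure to produce a subspace $\tilde{\mathfrak{w}}$ which is ``almost'' closed under the bracket, followed by a Lojasiewicz deformation on the Grassmannian that converts $\tilde{\mathfrak{w}}$ into a genuine Lie subalgebra $\mathfrak{w}$ with quantitative control.

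\emph{Greedy construction of the almost-subalgebra.} Fix a threshold $\eta > 0$, to be chosen later as a small positive power of $\delta$. Seed the basis with the orthonormal set $T$ itself (valid since $|T| \leq \dim \mathfrak{g}$). At each round, compute all brackets $[\tilde w_a, \tilde w_b]$ of current basis pairs and project onto the orthogonal complement of the current span; if some projection has norm $\geq \eta$, normalize it and adjoin it to the basis. Since $\dim \mathfrak{g}$ is finite, the procedure halts after at most $\dim \mathfrak{g}$ rounds, producing an orthonormal set $\{\tilde w_1, \ldots, \tilde w_m\}$ spanning a subspace $\tilde{\mathfrak{w}} \supseteq \langle T \rangle$. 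Since each round raises the bracket depth of the basis by at most one and costs only a division by a scalar $\geq \eta$ upon normalization, an inductive bookkeeping shows each $\tilde w_i$ is an $\eta^{-O(\dim \mathfrak{g})}$-bounded linear combination of elements of $T^{(k_0)}$ for some $k_0 = O(\dim \mathfrak{g})$. By the stopping criterion, every bracket $[\tilde w_a, \tilde w_b]$ lies within distance $\eta$ of $\tilde{\mathfrak{w}}$, and hence by bilinearity $[\tilde{\mathfrak{w}}, \tilde{\mathfrak{w}}] \subseteq \tilde{\mathfrak{w}} + O(\eta)$.

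\emph{Lojasiewicz step.} Embed the Grassmannian $\Grass(\mathfrak{g})$ into projective space via Plucker coordinates; it is then a compact real algebraic variety, and the locus $\Grass^{\Lie}$ of Lie subalgebras is a closed real algebraic subvariety. The defect function
$$\phi(W) := \sup_{v,w \in W,\ \|v\|,\|w\| \leq 1} \dist\bigl([v,w], W\bigr)^2$$
is continuous, semialgebraic, and vanishes precisely on $\Grass^{\Lie}$. The Lojasiewicz inequality therefore provides an exponent $\alpha > 0$, depending only on $\mathfrak{g}$, such that $\dist_{\Grass}(W, \Grass^{\Lie}) \ll \phi(W)^{\alpha}$ uniformly in $W$. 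Applied to $\tilde{\mathfrak{w}}$, for which $\phi(\tilde{\mathfrak{w}}) \ll \eta^2$, this produces a bona fide Lie subalgebra $\mathfrak{w}$ at Grassmannian distance $O(\eta^{2\alpha})$; orthogonal projection of $\{\tilde w_i\}$ onto $\mathfrak{w}$ followed by Gram--Schmidt yields an orthonormal basis $\{w_i\}$ of $\mathfrak{w}$ with $\|w_i - \tilde w_i\| \ll \eta^{2\alpha}$. Setting $w_i' := \tilde w_i$ (the $\eta^{-O(\dim\mathfrak{g})}$-bounded linear combination of $T^{(k_0)}$ from the previous step) and choosing $\eta$ to be a suitable positive power of $\delta$---large enough that $\eta^{-O(\dim \mathfrak{g})} \leq \delta^{-k}$ and simultaneously small enough that $\eta^{2\alpha} \leq \delta$---delivers both conclusion (1) and conclusion (2), the latter since $T \subseteq \tilde{\mathfrak{w}}$ implies $\dist(t, \mathfrak{w}) \ll \eta^{2\alpha}$.

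\emph{The case $\mathfrak{h} \subseteq \langle T \rangle$, and the main obstacle.} If $\langle T \rangle$ contains a subalgebra $\mathfrak{h}$, select a basis of $\mathfrak{h}$ from within $\langle T \rangle$ and seed the greedy procedure with this basis first, so that $\mathfrak{h} \subseteq \tilde{\mathfrak{w}}$ exactly; then apply the Lojasiewicz inequality to the restriction of $\phi$ to the closed real algebraic subvariety of Lie subalgebras containing $\mathfrak{h}$, whose exponent now depends on the pair $(\mathfrak{h}, \mathfrak{g})$. The main technical obstacle I anticipate is the Lojasiewicz step: verifying cleanly that $\phi$ is semialgebraic on the whole Grassmannian (which can be arranged by reformulating the defect in Plucker coordinates and taking suprema over semialgebraic compacta) and extracting a single uniform exponent $\alpha$ valid across all dimension strata. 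A secondary concern is the coefficient bookkeeping in the greedy construction, where one must carefully propagate the bound $|c_t| \leq \eta^{-O(1)}$ through the repeated Gram--Schmidt normalizations so that the final coefficient bound matches the required $|c_t| \leq \delta^{-k}$.
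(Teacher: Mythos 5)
Your proposal is correct in outline, but the first half takes a genuinely different route from the paper. Where you build the almost-subalgebra $\tilde{\mathfrak{w}}$ by a greedy procedure (adjoin the normalized orthogonal component of any bracket that sticks out by more than $\eta$), the paper instead considers, for each depth $m$, the linear map $f_m:\R^{T^{(m)}}\to\mathfrak{g}$, takes its singular value decomposition, defines $W_m[\delta]$ as the span of the singular directions with singular value $\geq\delta$, and then runs a pigeonhole/stabilization argument over pairs $(m,\delta_1)$ until $\dim W_m[\delta_1]=\dim W_{2m}[\delta_1^3]$. The SVD gives the ``efficient spanning'' bound for free, whereas your version requires propagating coefficient bounds through the normalizations --- note that the bound grows doubly exponentially in the number of rounds (each new coefficient mass is roughly $\eta^{-1}$ times a \emph{product} of two earlier ones), not as $\eta^{-O(\dim\mathfrak{g})}$ as written; this is harmless since the number of rounds is $\leq\dim\mathfrak{g}$, but the bookkeeping you flag as a secondary concern is real and should be stated as $\eta^{-K}$ with $K=K(\mathfrak{g})$. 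A genuine advantage of your construction is that $T\subset\tilde{\mathfrak{w}}$ holds \emph{exactly}, which makes conclusion (2) and especially the $\mathfrak{h}$-case cleaner: you can seed with an exact basis of $\mathfrak{h}$ and work on the sub-Grassmannian of subspaces containing $\mathfrak{h}$, whereas the paper only gets $\mathfrak{h}$ almost contained in $W$ and must fold that error into the Lojasiewicz function. (Your phrasing ``restriction of $\phi$ to the subvariety of Lie subalgebras containing $\mathfrak{h}$'' should read ``restriction to the sub-Grassmannian of subspaces containing $\mathfrak{h}$, whose zero locus is the Lie subalgebras containing $\mathfrak{h}$.'') The second halves coincide in spirit, but with one technical caveat: your defect $\phi(W)=\sup_{v,w}\dist([v,w],W)^2$ is a supremum, hence semialgebraic but not real analytic, while the Lojasiewicz inequality invoked in the paper is the real-analytic one; either cite the semialgebraic Lojasiewicz inequality, or replace the sup by the basis-independent sum $\sum_{i,j}\|P_{W^\perp}[e_i,e_j]\|^2$ over an orthonormal basis of $W$ (the paper's own choice is the polynomial $\sum_{i,j}\|X_1\wedge\dots\wedge X_r\wedge[X_i,X_j]\|^2$ on tuples of independent vectors, which sidesteps Grassmannian charts entirely). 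With these adjustments your argument goes through.
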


Let us discuss the motivation for this. Given elements $e_1, \dots, e_r \in \mathfrak{g}$, one can speak of
the ``subalgebra $\mathfrak{w}$ spanned by $\mathfrak{g}$'', but
this notion is not robust under perturbation: this subalgebra might
drop drastically under a small modification of the $e_i$. This is
related to the fact that the $e_i$ might span $\mathfrak{w}$ very
{\em inefficiently.} In explicit terms, there may exist $w \in
\mathfrak{w}$ of small norm, which cannot be written as a linear
combination of Lie brackets of the $e_i$s with small coefficients.
Proposition \ref{any-delta} constructs an effective replacement:
We construct a subalgebra
$\mathfrak{w}$ which ``almost'' contains $T$ and so that elements of
$T^{(k)}$, for suitable small\footnote{Here the word `small' refers
to the fact that it only depends on $\g$.} $k$, ``almost span $\mathfrak{w}$ in
an efficient fashion.''

The proof, given in \S \ref{ADproof} will require some setup on Euclidean spaces (\S \ref{eucl})
and a recollection of the Lojasiewicz inequality (\S \ref{loj}).

\myparagraph \label{eucl} Suppose $V$ is a Euclidean vector space. We shall say
that two subspaces $U_1, U_2 \subset V$ are $\epsilon$-close
if there are
orthonormal bases $u_1, \ldots, u_r$ and $v_1, \ldots, v_r$ for
$U_1, U_2$ so that $\|u_i - v_i\| \leq \epsilon$. We say that $U_1$
is $\epsilon$-almost contained in $U_2$ if there exists $U_2'
\subset U_2$ so that $U \stackrel{\epsilon}{\sim} U_2'$.

Suppose that there exists
an orthonormal basis $u_1,\ldots,u_r$ of $U_1$
so that $\mathrm{dist}(u_i, U_2) \leq \epsilon$.
Then there exists a constant $C_V$, depending only on $\dim V$, so that
 $U_1$ is $C_V\epsilon$-almost contained in $U_2$.

Suppose $V,W$ are two Euclidean vector spaces and $f: V \rightarrow
W$ a linear map between them. 
We recall the singular value decomposition: let $w_1,\ldots, w_n$
be an orthonormal basis of $W$ consisting of eigenvectors of
$ff^t$ with decreasing eigenvalues where $f^t:W\to V$ denotes the 
transpose map. Then the eigenvalues $(ff^t
w_i,w_i)=\|f^t w_i\|^2$ are nonnegative, and we define
$v_i=\frac{1}{\|f^t w_i\|}f^t w_i$ whenever this is defined, say for
$i\leq k$, and extend it to an orthonormal basis of $V$. With
this choice we have $(f v_i,w_i)=\frac{1}{\|f^t
w_i\|}(ff^t w_i,w_i)=\|f^t w_i\|=\|fv_i\|=\sigma_i$ for $i\leq k$
and $fv_i=0$ otherwise.

Therefore, there are orthonormal bases $v_1, v_2,
\ldots,v_m$ for $V$  respectively  $w_1, w_2, \ldots,w_n$ for $W$ so that
$f(v_i) = \sigma_i w_i$ for some $\sigma_i\geq 0$; here, by definition,
$w_i = 0$ for $i > n$.
Let $W[\delta]$ be the space spanned by those $w_i$,
$i\leq k$, for which $|\sigma_i| \geq \delta$.

 Then, for any $v \in V$:
\begin{equation} \label{first}
 \mbox{the element $f(v)$ is within $\delta \|v\|$ of $W[\delta]$.}
\end{equation}
To see this, let $v=\sum_{i}t_i v_i$ and write $f(v)=\sum_{i:
|\sigma_i|\geq\delta}\sigma_i t_i w_i+\sum_{i:
|\sigma_i|<\delta}\sigma_i t_i w_i$. Then the second sum has norm
$\leq \delta \|v\|$, as claimed.

We will be using this construction for various choices of the
function $f$ and the domain $V$ to obtain various subspaces of
$W=\mathfrak{g}$, and the following remark will help to
compare their dimension. Let $V_1 \subset V$ be a subspace, then
\begin{equation}\label{var-comment}
 \|f( v)\| \geq \delta \|v\|\mbox{ for all }v \in V_1\mbox{ implies }\dim W[\delta] \geq \dim
V_1.
\end{equation}
Suppose, to the contrary, that $\dim W[\delta] < \dim
V_1$. Then there would exist $v \in V_1$ perpendicular to all the
$v_i$ with $|\sigma_i| \geq \delta$. This gives a contradiction to
\eqref{first} since $f(v)$ is perpendicular to $W[\delta]$ and has
length $\|f(v)\|\geq \delta\|v\|$ by assumption.

\subsection{Lojasiewicz inequality}
The {\em Lojasiewicz inequality} states that, for $U \subset \R^n$ open,
 $K \subset U$ a compact set,  and $f: U \rightarrow \R$ a real analytic function
with zero-set $Z_f$, there exist constants $c_1, c_2 > 0$ so that:
\begin{equation} \label{loj}|f(x)| \geq c_1 \dist(x, Z_f)^{c_2}, x \in K \end{equation}
Here $\dist$ refers to the Euclidean distance on $\R^n$.
For this, see \cite[Theorem 4.1]{Malgrange}.

\subsection{Proof of Prop. \ref{any-delta}} \label{ADproof}
 We define the map $f_m:\R^{T^{(m)}} \rightarrow \mathfrak{g}$
by sending $(v_t)_{t\in T^{(m)}}$ to $\sum_{t\in T^{(m)}}v_t t$.
Considering $\R^{T^{(m)}}$ as an Euclidean space in the usual way,
we obtain the subspace $W_m[\delta] \subset W=\mathfrak{g}$ by the
above construction.

From the definition it follows that the spaces $W_m[\delta]$
increase when $\delta$ decreases and $m$ is held fixed. Moreover,
their {\em dimensions} increase when $m$ increases and $\delta$ is
held fixed, which follows from \eqref{var-comment}. Finally, as
follows from \eqref{first}: if $m_1 \leq m_2$, then there is an
orthonormal basis for $W_{m_1}[\delta]$, all of whose elements are
within $ \epsilon \delta^{-1}$ of $W_{m_2}[\epsilon]$, i.e.\
$W_{m_1}[\delta]$ is ${\ll\epsilon \delta^{-1}}$-almost contained in
$W_{m_2}[\epsilon]$. In particular, every $t \in T$ is within $
\epsilon$ of $W_m[\epsilon]$ for each $m \geq 1$.

We claim there is a constant $\xappa$
depending only on $\dim(G)$, so
that, for any $\delta \in (0,1)$, we may find $\delta_1 \in [\delta^\xappa,
\delta]$ and $m \leq \xappa$ so that
$$\dim \, W_m[\delta_1] =  \dim \, W_{2m}[\delta_1^3]$$
In fact, this follows from the above remarks: set initially
$\delta_1=\delta$ and $m=1$, and consider the dimensions $\dim \,
W_m[\delta_1] \leq  \dim \, W_{2m}[\delta_1^3]$. If equality holds,
we are done. Otherwise set $\delta_1'=\delta_1^3$ and $m'=2m$. This
way we have increased the dimension of $W_{m'}[\delta_1']$. Since
this can only happen $\dim(G)$ often if we repeat the process, the
claim follows.


Applying \eqref{first} once more we get that $W := W_m[\delta_1]$
has an orthonormal basis $w_1, \dots, w_r$ so that $[w_i, w_j]$ is
at distance $\ll \epsilon \delta_1^{-2}$ from $W_{2m}[\epsilon]$ for
any $\epsilon > 0$.
 In particular,
$[w_i, w_j]$ is at distance $\ll \delta$ from $W$.

It follows that there exists a Lie subalgebra $\mathfrak{w} \subset
\mathfrak{g}$ satisfying $\mathfrak{w} \stackrel{\ll\delta^c}{\sim}
W$, where $c$ depends only on $\mathfrak{g}$. To see this set $r = \dim W$.  Consider
the open subset $U \subset \mathfrak{g}^r$ consisting of vectors $(X_1, \dots, X_r)$
which are linearly independent. The inner product on $\mathfrak{g}$
induces one on all exterior powers. The real-analytic function
\begin{equation} \label{Ffunc} F := \sum_{i,j}
\| X_1 \wedge \dots \wedge X_r \wedge [X_i, X_j] \|^2\end{equation}
vanishes precisely when the span $\langle X_1, \dots, X_r\rangle$ is a subalgebra.
Clearly our assumptions imply that $|F(w_1, \dots, w_r)| \ll \delta^2$; applying
\eqref{loj} to a suitable compact inside $U$ yields the result.


For the final assertion, we apply the Lojasiewicz inequality to a
slightly different variety. In that case the subspace $W$ has the
following additional property: $\mathfrak{h}$ is $\ll\delta$-almost
contained in $W$. This means that $W$ almost belongs to the closed
subvariety of the Grassmannian defined by subspaces $\mathfrak{w}
\subset \mathfrak{g}$ which are Lie subalgebras and which contain
$\mathfrak{h}$. Invoking \eqref{loj} for a suitably modified version of
\eqref{Ffunc} concludes the proof.
\qed


 \section{Almost invariance of measures.}\label{Appendix-A}

 We begin with some reminders to the reader about our notations. 
These points were discussed in \S \ref{notation}, but they are particularly pertinent here. 
Firstly, we always assume that any Sobolev norm $\Sob_d$ we use satisfies $d \geq \ref{linftyconst} + 1$, 
i.e., involves enough derivatives that \eqref{simple-Sobolev-estimate} is valid.  Secondly, if $X \in \mathfrak{g}$,
the notions of ``almost invariant under $\exp(X)$'' and
``almost invariant under $X$'' do not coincide; the latter is stronger,
as it entails almost invariance under $\exp(t X), 0 \leq  t\leq 2$. 
Finally, implicit constants in the notation $\ll$ are permitted
to depend on $d$, if a Sobolev norm $\Sob_d$ is implicitly present. 

The primary purpose of this section is to prove the following (``Proposition D''
of \S \ref{proof}).

\begin{prop} \label{almostsubalgebra}
Fix a Sobolev norm $\Sob_d$; all notions 
of almost invariance are taken with respect to this.  
Suppose that $\mu$ is $\epsilon$-almost invariant under a subgroup $S$,
and also under $Z \in \exp(\mathfrak{r})$, where $\|Z\|=1$. 

Then there is a constant $\ref{almostinvconstant}$\index{xappa23@$\ref{almostinvconstant}$,
generating a bigger subgroup with almost invariance}
so that  $\mu$ is
also $\constc(d) \epsilon^{\ref{almostinvconstant}}$- almost
invariant under some subgroup $S_*$ with $\dim(S_*) > \dim(S)$.  If
$H \subseteq S$, we may also assume that $H \subseteq S_*$.
\end{prop}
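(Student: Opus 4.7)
Plan. The proof combines two ingredients: the effective Lie-algebra generation of Proposition~\ref{any-delta} with the operational properties of almost invariance. The latter, developed elsewhere in this section, state that if $\mu$ is $\epsilon$-almost invariant under $X, Y \in \mathfrak{g}$ of bounded norm, then it is $\epsilon^{\star}$-almost invariant under $X+Y$, under $\lambda X$ (for $|\lambda|$ bounded), and -- crucially -- under the bracket $[X,Y]$, the last of these deriving from the commutator identity
\[
\exp(tX)\exp(tY)\exp(-tX)\exp(-tY) \;=\; \exp\bigl(t^{2}[X,Y] + O(t^{3})\bigr)
\]
together with \eqref{simple-Sobolev-estimate}. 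It follows that any Lie-algebra element obtainable from $\mathfrak{s} \cup \{Z\}$ by a bounded number of brackets and by linear combinations with moderately sized coefficients automatically inherits almost invariance of $\mu$, with a polynomially-tracked loss in $\epsilon$.

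Let $e_1, \dots, e_r$ be an orthonormal basis of $\mathfrak{s}$ and put $T := \{e_1, \dots, e_r, Z\}$, which is orthonormal since $Z \perp \mathfrak{s}$. Apply Proposition~\ref{any-delta} with $\delta := \epsilon^{\alpha}$ for a small $\alpha > 0$ to be chosen (in terms of $G,H$ only); in the variant requiring $H \subseteq S_*$, invoke the final clause of that proposition with $\mathfrak{h} \subseteq \langle T \rangle$. We obtain a subalgebra $\mathfrak{w} \subseteq \mathfrak{g}$ with orthonormal basis $w_1, \dots, w_N$ such that each $w_i$ is within $\delta^{c}$ of a linear combination $w_i' = \sum_{t \in T^{(k)}} c_{t}\, t$ with $|c_t| \leq \delta^{-k}$, and such that each $t \in T$ lies within $\delta$ of $\mathfrak{w}$. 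Because $|T| = \dim\mathfrak{s} + 1$ and the elements of $T$ are orthonormal, a dimension count (valid once $\delta$ is below a constant depending on $\dim G$) forces $\dim \mathfrak{w} \geq \dim \mathfrak{s} + 1$; the connected Lie subgroup $S_* \subset G$ with $\Lie(S_*) = \mathfrak{w}$ thus strictly contains $S^{0}$ and, in the relevant variant, contains $H$.

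It remains to propagate almost invariance from $T$ to $S_*$. Iterating the bracket property $k$ times yields almost invariance under every element of $T^{(k)}$ at parameter $\epsilon^{\star}$ (the exponent depending only on $k$ and $\dim \mathfrak{g}$); the sum-and-scalar property, applied with coefficients bounded by $\delta^{-k}$, gives almost invariance under each $w_i'$ at parameter $\ll \delta^{-k} \epsilon^{\star}$; and the approximation $\|w_i - w_i'\| \ll \delta^{c}$, combined with \eqref{simple-Sobolev-estimate}, upgrades this to almost invariance under $w_i$ at parameter $\ll \delta^{-k} \epsilon^{\star} + \delta^{c}$. Finally, because the $w_i$ form a basis of $\mathfrak{w}$, every $s \in S_*$ with $\|s\| \leq 2$ is a product of a bounded number (depending only on $\dim \mathfrak{w}$) of exponentials $\exp(t_j w_{i_j})$ with $|t_j|$ bounded; multiplicativity of almost invariance thus transfers the estimate to the subgroup $S_*$. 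Choosing $\alpha > 0$ small enough that $\alpha c$ strictly dominates both the loss from the $k$ nested brackets and the factor $k\alpha$ produced by $\delta^{-k}$, we obtain a positive power of $\epsilon$, yielding the exponent $\ref{almostinvconstant}$.

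The main obstacle is bookkeeping rather than any new idea: one must carefully track how the exponent of $\epsilon$ degrades through each of the nested operations -- iterated Lie brackets of depth $k$, linear combinations with coefficients up to $\delta^{-k}$, a closeness error $\delta^{c}$, and a final bounded-length product expansion inside $S_*$ -- and then optimize $\alpha$ so that the final exponent $\ref{almostinvconstant}$ depends only on $(G,H)$. The key technical input is the list of operational properties of almost invariance (sums, scalar multiples, brackets, and bounded-length products) stated cleanly enough to permit this composition; that verification occupies the remainder of \S\ref{Appendix-A}.
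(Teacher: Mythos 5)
Your proposal is correct and follows essentially the same route as the paper: adjoin $Z$ to an orthonormal basis of $\mathfrak{s}$, apply Proposition~\ref{any-delta} with $\delta = \epsilon^{\alpha}$, deduce $\dim\mathfrak{w} > \dim\mathfrak{s}$ from the almost-containment of $T$, and propagate almost invariance through brackets, linear combinations, and the approximation error exactly as the paper packages in Lemma~\ref{stabilityofalmost}, finishing with the bounded-product argument of Lemma~\ref{one} to pass from the Lie algebra to the group.
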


The proof could be much simplified, in our present setting,
by using the assumption
that there are finitely many intermediate subgroups between $H$ and $G$.
However, we shall make use of our (quite general) results from \S \ref{effectivegen} on effective generation
of Lie algebras.

\subsection{Stability properties of almost invariance}
The notion of ``$\mu$ is almost invariant under $Z$,'' for $Z \in \mathfrak{g}$,
is almost stable under linear combinations, under commutators, and under passing from $Z$ to a nearby element $Z'$:

\begin{lem}\label{stabilityofalmost}
let $k \geq 1, 0 < \delta <1$. Let 
$T \subset \mathfrak{g}$ consist of unit vectors, i.e. $\|X\|=1$ for $X \in T$.
Let $Z \in \mathfrak{g}$ be a unit vector so that $\|Z - \sum_{t \in T^{(k)}} c_t t \| \leq \delta$. 

If $\mu$ is $\epsilon$-almost invariant under every $X \in T$
w.r.t.\ $\Sob_d$,
it is also $\ll  (\epsilon^{\consta(k)\label{stabilityofinvarianceconstant}}\sum_{t} |c_t| +\delta)$-almost invariant under $Z$\index{xappa4@$\ref{stabilityofinvarianceconstant}$, stability of almost invariance, Lemma \ref{stabilityofalmost}}
w.r.t.\ $\Sob_d$.
\end{lem}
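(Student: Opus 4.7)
The plan is to reduce the statement to three elementary stability properties of almost invariance, and then to iterate them a bounded number of times. The three building blocks are: (S)~if $\mu$ is $\eta$-almost invariant under unit vectors $X,Y\in\g$, then $\mu$ is $\ll\eta^{\star}$-almost invariant under $X+Y$; (C)~under the same hypothesis, $\mu$ is $\ll\eta^{\star}$-almost invariant under $[X,Y]$; (A)~if $\mu$ is $\eta$-almost invariant under $X\in\g$, and $X'\in\g$ satisfies $\|X\|,\|X'\|\leq 2$ and $\|X-X'\|\leq \delta'$, then $\mu$ is $\ll(\eta+\delta')$-almost invariant under $X'$. I would prove (S) by comparing $\exp(s(X+Y))$ with $(\exp(sX/N)\exp(sY/N))^{N}$: the Baker--Campbell--Hausdorff error is $\ll 1/N$ in norm, each of the $2N$ factors $\exp(sX/N),\exp(sY/N)$ contributes at most $\eta$ to the defect, and optimizing at $N\asymp \eta^{-1/2}$ yields the claimed $\eta^{\star}$. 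An identical argument based on $[\exp(sX/\sqrt{N}),\exp(sY/\sqrt{N})]=\exp(s^{2}[X,Y]/N+O(N^{-3/2}))$ gives (C); for (A), the bound $\|\exp(sX)-\exp(sX')\|\ll\delta'$ for $|s|\leq 2$, combined with \eqref{simple-Sobolev-estimate}, delivers the desired estimate.

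With (S), (C) and (A) in hand, I would next prove by induction on $k$ that there exists $\alpha(k)>0$, depending only on $k$ and $\dim\g$, such that $\mu$ is $\ll \epsilon^{\alpha(k)}$-almost invariant under every $t\in T^{(k)}$. The base case is the hypothesis. For the inductive step each new element of $T^{(k+1)}$ is a Lie bracket of elements of $T^{(k)}$ whose norms are bounded by $1$; after rescaling to unit vectors (C) applies, and the defect grows by a fixed power.

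Finally, to handle the linear combination $Y:=\sum_{t\in T^{(k)}} c_t t$, set $C=\sum_t |c_t|$ and compare $\exp(sY/N)$ with the product $\prod_t \exp(sc_t t/N)$: the BCH error is $\ll C^{2}/N^{2}$, so its $N$-th power differs from $\exp(sY)$ by an element of norm $\ll C^{2}/N$. The $MN$ factors $\exp(sc_t t/N)$ (with $M=|T^{(k)}|\ll\dim(\g)^{\star k}$) each contribute at most $\epsilon^{\alpha(k)}$ to the defect, giving a total of $\ll (MN\epsilon^{\alpha(k)}+C^{2}/N)$; optimizing in $N$ yields that $\mu$ is $\ll C\,\epsilon^{\alpha(k)/2}$-almost invariant under $Y$. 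Applying (A) with $X=Y$, $X'=Z$ and $\delta'=\delta$ (noting $\|Y\|\leq 1+\delta\leq 2$) completes the argument with $\ref{stabilityofinvarianceconstant}(k):=\alpha(k)/2$.

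The main obstacle is bookkeeping: one must confirm that each operation (sum, commutator, approximation) costs only a fixed power of the current defect, and that the total nesting depth is bounded in terms of $k$ and $\dim\g$ alone. Because $|T^{(k)}|$ and the combinatorial depth of brackets in $T^{(k)}$ are both polynomial in $\dim\g$ once $k$ is fixed, the exponent $\alpha(k)/2$ indeed depends only on $k$ and the ambient algebra, as required.
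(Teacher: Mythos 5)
Your proposal is correct and follows essentially the same route as the paper: establish elementary stability of almost invariance under approximation, sums, and commutators via BCH-type comparisons of $\exp$ of a combination with products of many small factors (optimizing the number of factors at $N\asymp\epsilon^{-1/2}$), then induct on bracket depth $k$ and finish with the linear combination and the $\delta$-approximation. The only cosmetic difference is that you treat the multi-term sum $\sum_t c_t t$ directly as an $N$-th power of a product over $T^{(k)}$, whereas the paper first normalizes by $C^{-1}=(\sum_t|c_t|)^{-1}$ and iterates the two-term sum property before rescaling; both yield the same $C\,\epsilon^{\star}+\delta$ bound.
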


\proof The proof is a tedious elaboration of more or less obvious properties.

Let us observe
\begin{itemize}
\item 
If $\mu$ is $\epsilon$-almost invariant under $Z\in\mathfrak{g}$ with $\|Z\|\leq1$,  
and $W\in\mathfrak{g}$ satisfies $\|W - Z\| \leq \delta$, then
$\mu$ is $\ll \max(\epsilon, \delta)$-almost invariant under $W$.
\end{itemize}
This is an easy consequence of
\eqref{simple-Sobolev-estimate}, 
\eqref{sobolevdistort} and the fact that $\exp(tZ)$ and $\exp(tW)$
are at distance $\ll \delta$ from each other, for $t \in [0,2]$. 
Next, 
\begin{itemize}
\item 
 If $\mu$ is $\epsilon$-almost invariant under $\exp(Z)\in\mathfrak{g}$,
and $1 \leq c \leq 2 \|Z\|^{-1}$, 
then $\mu$ is $\ll (c \epsilon+ \|Z\|)$-almost invariant under $cZ$. 
\end{itemize}
Indeed, it is evident that $\mu$ is $\ll n \epsilon$-almost invariant
under $\exp(n Z)$, when $n$ is integral and satisfies $n \leq 10 \|Z\|^{-1}$.  
This implies
the stated conclusion.

\begin{itemize}
\item 
 If $\mu$ is $\epsilon$-almost invariant under $Z \in\mathfrak{g}$,
and $1 \leq c \leq \|Z\|^{-1}$, 
then $\mu$ is $\ll c \epsilon$-almost invariant under $cZ$. 
\end{itemize}
This follows in a similar fashion to the previous statement. 

\begin{itemize}
\item 
Suppose $\mu$ is $\epsilon$-almost invariant under $Z_1, Z_2 \in
\mathfrak{g}$ with $\|Z_i\|\leq 1$. Take  $\alpha_1, \alpha_2$
so that $|\alpha_i| \leq 1$ for $i=1,2$. Then $\mu$ is
$\ll \epsilon^{1/2}$-almost invariant under $Z:= \alpha_1 Z_1 +\alpha_2 Z_2$. 
\end{itemize}
We observe that, for $n \geq 1$ integral,
$$\exp(\alpha_1 Z_1/n) \exp(\alpha_2
Z_2/n) = \exp(\frac{Z}{n}) \exp(W_1),  
$$where $\|W_1\| \ll 1/n^2$. 
As a consequence, it follows that $\mu$ is
$\ll (n^{-2} + \epsilon)$-almost invariant under $\exp(\frac{Z}{n})$. 
It follows by a prior assertion that $\mu$
is $\ll (n^{-1} + n \epsilon) $-almost invariant
under $Z$.  Take $n = \epsilon^{-1/2}$ to conclude. 

\begin{itemize}
\item  Suppose $\mu$ is $\epsilon$-almost invariant under
$Z_1, Z_2$ with $\|Z_i\| \leq 1$. Then 
 $\mu$ is $\ll \epsilon^{1/2}$-almost invariant under $[Z_1, Z_2]$.
\end{itemize}
Indeed, 
\begin{multline*}
 \exp(Z_1/n) \exp(Z_2/n) \exp(Z_1/n)^{-1} \exp(Z_2/n)^{-1} = \\
  \exp([Z_1, Z_2]/n^2) \exp(W_2),
\end{multline*}
where $\|W_2\| \ll n^{-3}$. 
 In view of that, 
$\mu$ is $\ll (\epsilon + n^{-3})$-almost invariant under
$\exp([Z_1, Z_2]/n)$.  (Observe that our choice of the norm on $\g$
is so that $\|[Z_1, Z_2]\| \leq 1$.) By a previously noted fact,
$\mu$ is $\ll (n \epsilon+n^{-2})$-almost
invariant under $[Z_1, Z_2]$.  Take $n = \epsilon^{-1/2}$ to conclude.

These remarks in hand, 
the assertion of the Lemma now follows easily. Regard
$k$ as fixed. We first note that
$\mu$ is $\epsilon^{\xappa(k)}$-almost invariant under each $t \in T^{(k)}$. 
(Observe also that $\|t\| \leq 1$ for every such $t$). 
From this, we deduce that $\mu$ is also $\epsilon^{\xappa'(k)}$-almost invariant 
under $C^{-1} \sum_t c_t t$ where $C=\sum_t|c_t|$.
Thus $\mu$ is
$\ll (C \epsilon^{\ref{stabilityofinvarianceconstant}(k)} + \delta)$-almost invariant
under $Z$, as required. 
\qed

\begin{lem} \label{one}
 There is a constant
$\consta \label{firstlemconstant}$%
\index{xappa41@$\ref{firstlemconstant}$, almost invariance, Lemma \ref{one}} 
so that:

Let $Q = S^{0}$ be the connected component of any
intermediate subgroup $H \subset S \subset G$; let $\mu$
be $\epsilon$-almost invariant under $Q$ w.r.t.\ $\Sob_d$.  
Then $$\left| \mu^{q}(f) -  \mu(f)  \right| 
\ll
 \epsilon \|q\|^{d \ref{firstlemconstant}} \Sob_d(f).$$
\end{lem}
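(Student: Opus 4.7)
The plan is to reduce the claim for arbitrary $q \in Q$ to the hypothesis of $\epsilon$-almost invariance, which only applies directly to $q$ with $\|q\| \leq 2$. I would write $q$ as a product $q_1 \cdots q_n$ of elements of norm at most $2$, telescope over this product, and track the Sobolev norm distortion via \eqref{sobolevdistort}. The essential input is a uniform ``word-length'' estimate: $q \in Q$ admits such a decomposition with $n \ll 1 + \log \|q\|$, uniformly over all intermediate $Q = S^{0}$.

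Since Lemma \ref{assumption-2} guarantees only finitely many intermediate $S$, each of them semisimple, it suffices to prove the estimate for a single fixed connected semisimple $Q$ with constants permitted to depend on $Q$. If $\|q\| \leq 2$ the conclusion is the definition of $\epsilon$-almost invariance, so I assume $\|q\| > 2$. I apply a Cartan decomposition $Q = K A^+ K$ and write $q = k_1 \exp(H) k_2$ with $k_i \in K$ (a fixed maximal compact of $Q$) and $H$ in the closed positive Weyl chamber of a Cartan subalgebra $\mathfrak{a} \subset \Lie(Q)$. Compactness of $K$ gives $\|k_i\| \ll 1$, and \eqref{distortion} together with the fact that $\|\exp(H)\|$ is comparable to $\exp(\lambda_{\max}(H))$ (where $\lambda_{\max}$ is the largest weight of $\rho$ on $\mathfrak{a}$, which satisfies $\lambda_{\max}(H) \asymp \|H\|$ on the positive chamber) yields $\|H\| \ll 1 + \log \|q\|$. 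Choosing the smallest integer $n$ with $\|H/n\| \leq 1$ gives $n \ll 1 + \log \|q\|$ and $\|\exp(H/n)\| \ll 1$; writing $\exp(H) = \exp(H/n)^n$ exhibits $q$ as a product $q_1 q_2 \cdots q_{n+2}$ of $n+2$ elements each of norm at most $2$.

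Now I telescope:
\begin{equation*}
\mu^{q}(f) - \mu(f) = \sum_{i=1}^{n+2} \bigl(\mu^{q_1 \cdots q_i}(f) - \mu^{q_1 \cdots q_{i-1}}(f)\bigr) = \sum_{i=1}^{n+2} \bigl(\mu^{q_i}(h_i) - \mu(h_i)\bigr),
\end{equation*}
where $h_i(x) := f(x q_1 \cdots q_{i-1})$. Since $\|q_i\| \leq 2$, the $i$-th summand is bounded in absolute value by $\epsilon \Sob_d(h_i)$, and \eqref{sobolevdistort} gives $\Sob_d(h_i) \ll \|q_1 \cdots q_{i-1}\|^{\ref{sdconstant} d} \Sob_d(f)$. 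By \eqref{distortion} we have $\|q_1 \cdots q_{i-1}\| \leq (2N)^{i-1}$, so summing the geometric series yields
\begin{equation*}
|\mu^{q}(f) - \mu(f)| \ll \epsilon (2N)^{(n+2) \ref{sdconstant} d} \Sob_d(f) \ll \epsilon \|q\|^{c d} \Sob_d(f)
\end{equation*}
for a constant $c$ depending only on $G$ and $H$ (using $(2N)^{n+2} \ll \|q\|^{c'}$ from $n \ll \log\|q\|$). This is the asserted bound with a suitable choice of $\ref{firstlemconstant}$.

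The main technical point is the uniform $n \ll \log \|q\|$ estimate. This rests on the Cartan decomposition of $Q$ and the standard comparison between matrix-coefficient norms and Weyl chamber coordinates, together with Lemma \ref{assumption-2} to make the implicit constants uniform over the finitely many possible $Q$. Everything else is bookkeeping of distortion factors through \eqref{sobolevdistort} and \eqref{distortion}.
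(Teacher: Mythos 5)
Your proof is essentially the paper's own argument: the same reduction to a single $Q$ via the finiteness of intermediate subgroups, the same logarithmic word-length bound obtained from the Cartan decomposition of $Q$ (this is \eqref{generation} in the text), and the same telescoping sum whose distortion factors are controlled by \eqref{sobolevdistort} and \eqref{distortion}. One small correction: with the convention $\mu^{g}(f)=\int f(xg)\,d\mu$, your identity $\mu^{q_1\cdots q_i}(f)-\mu^{q_1\cdots q_{i-1}}(f)=\mu^{q_i}(h_i)-\mu(h_i)$ with $h_i(x)=f(xq_1\cdots q_{i-1})$ puts $q_i$ on the wrong side, since $\mu^{q_i}(h_i)=\int f(xq_iq_1\cdots q_{i-1})\,d\mu$; telescoping by suffixes instead, i.e.\ taking $h_j=(q_{j+1}\cdots q_n).f$ as the paper does, the argument goes through verbatim.
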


\proof Because there are only finitely many possibilities
for $Q$, it suffices to prove it for each possible $Q$ individually.
(This marks a point where we make usage of $C_\g(\h)=\{0\}$).
\label{UsageA}
We claim that there are constants
$c_1, c_2$ depending only on $Q$ so that, for every $r \geq 2$:
\begin{multline} \label{generation} 
\mbox{Every $q \in Q$ with $\|q\|\leq r$ may be expressed as the product}\\
\mbox{of $\leq c_1  + c_2 \log r$ elements with $\|q\|\leq 2$} 
\end{multline} 
This
follows from the structure theory of semisimple groups:
 $Q$ is the connected component of the real
points of a semisimple algebraic group. So there exists a compact
subgroup $K_Q \subset Q$ and a Cartan subalgebra $\mathfrak{a}_Q
\subset \mathfrak{q}$ so that $Q = K_Q  \exp(\mathfrak{a}_Q) K_Q$.
We are reduced to verifying \eqref{generation} for elements of
$\exp(\mathfrak{a}_Q)$, which is elementary.

 Let $q\in
Q$. By \eqref{generation} there exist $q_1,\ldots,q_n\in Q$ 
with $q=q_1\cdots q_n, \|q_i\| \leq 2$ and $n\leq c_1+c_2\log \|q\|$. Let
$\bar{q}_j=q_{j+1}\cdots q_n$ for all $j=1,\ldots,n-1$,
$\bar{q}_n=1$. We have
\begin{multline}
 |\mu^q(f)-\mu(f)|=|\sum_{j=1}^n
 \mu^{q_k}(\bar{q}_k.f)-\mu(\bar{q}_k.f)|\leq \epsilon\sum_{j=1}^n
 \Sob_d(\bar{q}_k.f)\\
\ll_d \epsilon (1 + \log \|q\|) \Sob_d(f)(2N)^{\ref{sdconstant} n d}
\ll
 \epsilon \|q\|^{d \ref{firstlemconstant}} \Sob_d(f).$$
\end{multline}
by our assumption, \eqref{sobolevdistort}, and \eqref{distortion}.
As in \eqref{distortion} the integer $N$ gives the dimension of the general linear group 
(with respect to which we defined $\|q\|$).
\qed

\subsection{Proof of Proposition \ref{almostsubalgebra}}

 It should be noted that this proof could be very considerably simplified by using
the fact that there are finitely many intermediate subalgebras between $\mathfrak{h}$ and $\mathfrak{g}$.  
However, the proof we indicate will work in a more general setup.\footnote{More precisely, the proof written establishes 
that, under the assumptions
of the Proposition \ref{almostsubalgebra}, the measure $\mu$ is almost invariant
under an intermediate Lie subalgebra $\mathfrak{h} \subset \mathfrak{q} \subset \mathfrak{g}$. 
This statement makes no use of the finiteness of intermediate subgroups.  However, in passing from the Lie algebra 
$\mathfrak{q}$ to the group, we invoke Lemma \ref{one}, which does use the finiteness assumption.}

We observe we are free to assume that $\epsilon$ is {\em sufficiently small},
i.e.\ $\epsilon \leq \epsilon_0(G,H)$, in the statement of the Proposition.
If not, the statement is obvious. 

Assumptions as in the statement of the Proposition \ref{almostsubalgebra}. 
Define $T$ to be a finite subset of $\mathfrak{g}$ obtained
by adjoining $Z$ to an arbitrary, orthonormal basis for $\mathfrak{s}$.
Let $k,c$ be as in Proposition \ref{any-delta}.
 We apply Proposition \ref{any-delta}
in the form indicated in its last sentence,
with $\delta = \epsilon^{\alpha}$, where $\alpha = 
\frac{\ref{stabilityofinvarianceconstant}}{k+c}$. 
It produces a subalgebra $\mathfrak{w} \supset \mathfrak{h}$, with basis $w_i$. 

By part (2) of Proposition \ref{any-delta}, $T$ is ``almost contained in $\mathfrak{w}$,''
this implies, in particular, that $\dim(\mathfrak{w}) > \dim (\mathfrak{s})$
so long as $\epsilon$ is sufficiently small.  

By part (1) of Proposition \ref{any-delta} and Lemma \ref{stabilityofalmost}, 
$\mu$ is $\ll \epsilon^{c \alpha}$-almost invariant under each $w_i$. 
From this, one deduces easily that $\mu$ is also $\epsilon^*$-almost invariant
under $W$, the connected Lie group with Lie algebra $\mathfrak{w}$. 
By our construction, $W \supset H$ and $\dim(W) > \dim(S)$. 
\qed


\section{Effective ergodic theorem.} \label{firststep}

 In this section, we establish an effective version of the Birkhoff ergodic theorem, i.e. we prove  ``Proposition A''
from \S \ref{proof}.  Prior to doing so, we give a precise quantification of the notion of ``generic'' we shall need. 

Throughout, $\mu$ will
denote an $H$-invariant $H$-ergodic
 measure
on $X = \Gamma \backslash G$, so that $L_0^2(\mu)$ is $\frac{1}{\tempered-1}$
tempered as an $H$-representation. This applies, in particular,
 to the measures of central interest in this paper: $\mu$ as in the statement of
  Theorem \ref{thm:main}; or also to the $S$-invariant
probability measures on connected closed $S$-orbits,
for $H \subset S \subset G$.  These comments, and the definition of $p_G$, follow from Lemma \ref{automorphic}, together with the fact that the restriction
of a tempered $S$-representation to $H$ remains tempered.

\subsection{Generic points.} \label{subsec:generic}
Let $T \geq 1$. Let $M = 20 (1 +\tempered)$,
where $\tempered$ was defined in Lemma \ref{automorphic}. \footnote{In practice, 
it needs just to be a sufficiently large fixed number in our arguments; we have used
the notation $M$ to avoid distracting the reader with its specific
value, which is irrelevant.}
 We define $D_T(f)$ as the discrepancy between a
horocycle average of $f$ over a big stretch of the orbit and its
integral:
\begin{equation} \label{fTdef}
 D_T(f) (x) := \frac{1}{(T+1)^M-T^M}
\int_{T^M}^{(T+1)^M} f(x u(t)) \operatorname{d}\!t - \int f
\operatorname{d}\!\mu.
\end{equation} 
Clearly, $D_T$ depends on the choice of the unipotent subgroup.
Since we regard it as fixed, we suppress that dependence.

We say a point $x \in X$ is {\em $T_0$-generic} w.r.t.\ the Sobolev norm $\mathcal{S}$ if, for all integers
 $n \geq T_0$ and all $f \in C^{\infty}(X)$ 
 we have the bound
 \begin{equation} \label{genericdef} | D_n(f)(x)| \leq n^{-1} \Sob(f).\end{equation}
 We say a point $x \in X$ is {\em $[T_0,T_1]$-generic} w.r.t.\ $\mathcal{S}$ if the bound \eqref{genericdef}
 holds  for all integers\index{generic@$T_0$-generic, $[T_0,T_1]$-generic}
 $n \in [T_0,T_1]$.

\begin{prop} \label{lem:genericbound}
For $d\geq d_0$, 
where $d_0$ depends only on $G, H$,  the measure
of points that are not $T_0$-generic w.r.t.\ $\Sob_{d_0}$ is $\ll T_0^{-1}$.
\end{prop}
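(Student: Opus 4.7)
The plan is to first establish an $L^2(\mu)$-bound on $D_n(f)$ using the matrix-coefficient decay coming from $\tfrac{1}{\tempered - 1}$-temperedness of $L_0^2(\mu)$, then to upgrade this to a pointwise bound that is uniform in $f$ via the trace estimate \eqref{relativetrace}, and finally to sum Chebyshev's inequality over integers $n \geq T_0$.

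For the $L^2$-bound, set $f_0 = f - \int f\, d\mu \in L_0^2(\mu)$. Using the $H$-invariance of $\mu$,
\[ \|D_n(f)\|_{L^2(\mu)}^2 \;=\; \frac{1}{L_n^2}\int_{n^M}^{(n+1)^M}\!\!\int_{n^M}^{(n+1)^M} \langle u(t_1 - t_2)f_0,\, f_0\rangle_{L^2(\mu)}\, dt_1\, dt_2, \]
with $L_n := (n+1)^M - n^M \asymp n^{M-1}$. Combining \eqref{mc3} (applied to $L_0^2(\mu)$, which is $\tfrac{1}{\tempered - 1}$-tempered as an $H$-representation by Lemma \ref{automorphic}) with the unipotent matrix-coefficient bound \eqref{mcunipotent}, I get
\[ \bigl|\langle u(t)f_0, f_0\rangle_{L^2(\mu)}\bigr| \;\ll\; (1+|t|)^{-\alpha}\, \Sob_k(f)^2,\qquad \alpha := \frac{1-\varepsilon}{\tempered - 1}, \]
for some fixed $k=k(G)$; the passage from the representation Sobolev norm of $f_0$ to $\Sob_k(f)$ uses \eqref{linftyfact} and the fact that $\mu$ is a probability measure. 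A change of variables $(s,u) = (t_1 - t_2,\, t_1 + t_2)$ shows the double integral is $\ll L_n^{2-\alpha}$, whence, with the choice $M = 20(1+\tempered)$, one has $\|D_n(f)\|_{L^2(\mu)}^2 \ll n^{-c}\,\Sob_k(f)^2$ for some fixed $c \geq 30$ (provided $\varepsilon$ is small).

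To upgrade this to a bound pointwise in $x$ and uniform in $f$, apply \eqref{relativetrace}: fix $d' > k$ with $\Tr(\Sob_k^2 \mid \Sob_{d'}^2) < \infty$, and let $\{e_i\}$ be an orthonormal basis of the completion $V_{d'}$ realizing this trace, so that $\sum_i \Sob_k(e_i)^2 < \infty$. Define the auxiliary function
\[ \Phi_n(x) \;:=\; \sum_i D_n(e_i)(x)^2. \]
Termwise integration gives $\int \Phi_n\, d\mu \leq \sum_i \|D_n(e_i)\|_{L^2(\mu)}^2 \ll n^{-c}\sum_i \Sob_k(e_i)^2 \ll n^{-c}$. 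Any $f \in C_c^\infty(X)$ admits an expansion $f = \sum c_i e_i$ in $V_{d'}$ with $\sum c_i^2 = \Sob_{d'}(f)^2$; note that $D_n(\cdot)(x)$ extends continuously to $V_{d'}$ because it is dominated by $\|\cdot\|_\infty$ and hence by $\Sob_{d'}$ via \eqref{linftyfact}. Cauchy--Schwarz then gives $|D_n(f)(x)|^2 \leq \Phi_n(x)\cdot \Sob_{d'}(f)^2$.

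To finish, Chebyshev's inequality yields $\mu(\{\Phi_n > n^{-c/2}\}) \ll n^{-c/2}$. Subadditivity over integers $n \geq T_0$ bounds the measure of the union of these bad sets by $\sum_{n \geq T_0} n^{-c/2} \ll T_0^{1 - c/2} \ll T_0^{-1}$, since $c \geq 4$. For $x$ outside this union one has $|D_n(f)(x)| \leq n^{-c/4}\Sob_{d'}(f) \leq n^{-1}\Sob_{d'}(f)$ for every integer $n \geq T_0$ and every $f$, so $x$ is $T_0$-generic with respect to $\Sob_{d_0}$ with $d_0 := d'$. The main delicate point is the bookkeeping between Sobolev indices: the index $k$ forced by the matrix-coefficient bound pushes $d'$ above a threshold dictated by \eqref{relativetrace}, but the exponent in $T_0$ comes entirely from the quantitative temperedness (governed by the choice of $M$), so none of this affects the final statement.
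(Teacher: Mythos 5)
Your proof is correct and follows essentially the same route as the paper's: an $L^2(\mu)$ bound on $D_n$ obtained from the temperedness-based decay of unipotent matrix coefficients, upgraded to a bound uniform in $f$ via the relative trace estimate \eqref{relativetrace} and a Cauchy--Schwarz expansion in an orthonormal basis, followed by Chebyshev and a union bound over integers $n\geq T_0$. The only differences are cosmetic --- you package the basis argument through the single auxiliary function $\Phi_n$ where the paper applies Chebyshev to each basis element with threshold $cn^{-1}\Sob_{d'}(e^{(k)})$ and uses a chain of three Sobolev indices --- and your numerical claim $c\geq 30$ is slightly off for large $\tempered$ (there $(M-1)\alpha \to 20(1-\varepsilon)$), but this is harmless since all that is needed is $c\geq 4$.
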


\proof
First, consider a fixed $f \in L^2(X)$ which is in the completion
of $C_c^\infty(X)$ with respect to $\Sob_d$ (for a $d$ specified below). 
The decay of matrix coefficients (cf. \S \ref{matrixcoeff}, esp. \eqref{mcunipotent}, \eqref{mc3} and) implies that:
$$
 \left| \langle u(t) f, f \rangle - \left( \int f \operatorname{d}\!\mu \right)^2 \right| 
  \ll (1+|t|)^{-1/\tempered}  \Sob_{\dim H} (f)^2.
$$

Let $\Box$ denote the square $[T^M, (T+1)^M]^2 \subset \R^2$. 
By definition of $D_T(f)$, 
$$\int_{X} |D_T(f)|^2 \operatorname{d}\!\mu=
\frac{\int_{\Box}\langle u(t) f, u(s) f\rangle \operatorname{d}\!t \operatorname{d}\!s
}{\bigl((T+1)^M-T^M\bigr)^2}
 - \left( \int f
\operatorname{d}\!\mu \right)^2.$$
Split $\Box$ into the sets where $|s-t|\leq T^{\frac{M}{2}}$,
and where $|s-t|\geq T^{\frac{M}{2}}$. Thus:
\begin{multline*}
\int_{X} |D_T(f)|^2
d\mu\ll\frac{1}{T^{2M-2}}(T^{M-1+\frac{M}{2}}+T^{2M-2-
\frac{M}{2\tempered}})\Sob_{\dim
H}(f)^2\\ \ll T^{-4}\Sob_{\dim H}(f)^2
\end{multline*}
In the last equality, we have used the fact that $M \geq 10+ 10 \tempered$. 

Therefore, \begin{equation}\label{basic-generic}
 \mu\bigl(\{x: |D_T(f)(x)|\geq s\}\bigr)\ll s^{-2}T^{-4}\Sob_{\dim H}(f)^2
\end{equation}
for any $s>0$.

To obtain the lemma for all functions $f\in C_c^{\infty}(X)$
we begin by choosing $d > d' > \dim H$ so that the relative traces
$\mathrm{Tr}(\Sob_{d'}^2|\Sob_d^2)$ and
$\mathrm{Tr}(\Sob_{\dim H}^2 | \Sob_{d'}^2)$ are both finite, cf. \eqref{relativetrace}
and \S \ref{sobnormproofs-trace}.

Next, choose an orthonormal basis $e^{(1)}, \dots, e^{(r)}, \dots$ 
for the completion of $C^{\infty}_c(X)$ with respect to the Hermitian norm
$\Sob_{d}$. By the spectral theorem,
we may choose such a basis which is also orthogonal w.r.t.\ $\Sob_{d'}$. 
Note that $\Sob_{\dim H}$ is continuous with respect to the $\Sob_{d'}$, so that
$\sum_{i} \left( \frac{\Sob_{\dim H} (e^{(i)})}{\Sob_{d'}(e^{(i)})} \right)
^2 < \infty$ by our assumption on the relative trace. 
We understand the summand as zero when $\Sob_{d'}(e^{(i)}) =\Sob_{\dim H} (e^{(i)})= 0$.

Let $c>0$ be a constant, which we will specify below in a way depending only on $X$.
Then, by applying \eqref{basic-generic} with $s=cn^{-1}\Sob_{d'}(e^{(k)})$
and $T=n$ for each $n \geq T_0$ and for each $k \geq 1$ we have:
$$
\mu \left( \bigcup_{n \geq T_0, k \geq 1}
 \bigl\{x: n |D_n(e^{(k)})(x)| \geq   c\Sob_{d'} (e^{(k)}) \bigr\} \right)  \ll
T_0^{-1}c^{-2}\ll T_0^{-1}.
$$
Let $E$ be the set indicated on the left-hand side. 
 We claim that any $x \notin E$ is generic w.r.t.\ $\Sob_d$ (once $c$ has been chosen correctly).

Take $f =
\sum_{k} f_k e^{(k)}\in C^\infty_c(X)$. We have for any $n \geq T_0$, and any $x \notin E$, 
\begin{multline*}
 n |D_n(f)(x)| \leq c\sum_{k} |f_k| \Sob_{d'}(e^{(k)}) \leq \\
  c\left( \sum_{k} |f_k|^2 \right)^{1/2}\left( \sum_{k} \Sob_{d'}(e^{(k)})^2 \right)^{1/2}
  =c\Sob_d(f) \left( \sum_{k} \Sob_{d'}(e^{(k)})^2 \right)^{1/2}
\end{multline*}
by linearity of $D_n(\cdot)$ and the finiteness of the relative trace \eqref{relativetrace}.  Choosing $c$ equal to the inverse of the last square root,
we have shown $ |D_n(f)(x)|\leq n^{-1}\Sob_{d}(f)$ for $n\geq T_0$ and $x\notin E$ as required.
\qed

We require also an adaptation of this Proposition to the setting when one works with a measure $\mu$ that is almost invariant (``Proposition A''). We refer to \S \ref{subsec:AE} for a discussion of
its meaning in qualitative terms. 

\begin{prop} \label{lemma3}
Let $H \subset S \subset G$, $S$ connected.  Suppose that $\mu$ is $\epsilon$-almost invariant
under $S$ w.r.t.\ $\Sob_{d}$, for $d \geq \ref{linftyconst}+1$.
Then there exists $\beta \in (0,1/2)$, $d' > d$, depending only on $G$, $H$, and $d$,
so that:

Whenever $R \leq \epsilon^{-\beta}$ and $T_0>0$, the fraction of points 
$(x,s) \in X \times \Sball(R)$ w.r.t.\ $\mu$ resp.\ the Haar measure on $S$
for which $x.s$ is not $[T_0,\epsilon^{-\beta}]$-generic w.r.t.\ $\Sob_{d'}$ 
is $\ll_d T_0^{-1}$.
\end{prop}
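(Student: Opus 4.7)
The strategy is to reduce, via the almost $S$-invariance, the relevant second-moment estimate for the translated point $xs$ back to the one for $\mu$ itself that underlies Proposition \ref{lem:genericbound}. Fix $f \in C_c^\infty(X)$, set $L_n := (n+1)^M - n^M$, and let $F_n(x) := \frac{1}{L_n} \int_{n^M}^{(n+1)^M} (u(t).f)(x)\,dt$, so that $D_n(f)(xs) = F_n(xs) - \mu(f)$. Writing $\mu^s(\phi) := \int \phi(xs)\,d\mu(x)$ and expanding the square yields
\begin{equation*}
  \int_X |D_n(f)(xs)|^2\,d\mu(x) = \mu^s(|F_n|^2) - 2\,\mathrm{Re}\bigl(\overline{\mu(f)}\,\mu^s(F_n)\bigr) + |\mu(f)|^2.
\end{equation*}
Since $\mu$ is $u(t)$-invariant (as $u(t)\in H$), $\mu(F_n)=\mu(f)$. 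Applying Lemma \ref{one} to $Q=S$ (which is connected by hypothesis) gives
\begin{equation*}
  |\mu^s(F_n)-\mu(f)|\ll\epsilon\|s\|^{O(d)}\Sob_d(F_n),\quad |\mu^s(|F_n|^2)-\mu(|F_n|^2)|\ll\epsilon\|s\|^{O(d)}\Sob_d(|F_n|^2),
\end{equation*}
while using \eqref{sobolevdistort} inside the defining integral of $F_n$, together with \eqref{product}, one has $\Sob_d(F_n) \ll n^{O(d)} \Sob_d(f)$ and $\Sob_d(|F_n|^2) \ll n^{O(d)}\Sob_{d'}(f)^2$, where $d' := d+\ref{linftyconst}$ and the implicit exponents depend on $d$, $G$, and $H$ (via $M$). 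On the other hand, the matrix-coefficient calculation at the start of the proof of Proposition \ref{lem:genericbound}, using the $H$-temperedness of $L_0^2(\mu)$ (Lemma \ref{automorphic}) and the choice $M=20(1+\tempered)$, yields $\mu(|F_n|^2)-|\mu(f)|^2 \ll n^{-4}\Sob_{\dim H}(f)^2$.

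Combining the three displays, for any integer $n\geq 1$ and any $s\in \Sball(R)$,
\begin{equation*}
  \int_X |D_n(f)(xs)|^2\,d\mu(x) \;\ll\; n^{-4}\,\Sob_{d'}(f)^2 \;+\; \epsilon\,R^{O(d)}\,n^{O(d)}\,\Sob_{d'}(f)^2.
\end{equation*}
Choose $\beta\in(0,1/2)$, depending only on $d$ and $G,H$, so small that the second error is dominated by the first whenever $n,R\leq\epsilon^{-\beta}$; any $\beta$ with $\beta(O(d)+4)\leq 1$ works. Averaging in $s\in\Sball(R)$ then gives
\begin{equation*}
  \frac{1}{\vol\Sball(R)}\int_{\Sball(R)}\int_X |D_n(f)(xs)|^2\,d\mu(x)\,ds \;\ll\; n^{-4}\,\Sob_{d'}(f)^2
\end{equation*}
uniformly for integers $n\in[1,\epsilon^{-\beta}]$.

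Now follow the Chebyshev-and-orthonormal-basis argument from the second half of the proof of Proposition \ref{lem:genericbound}, applied verbatim to the normalized product measure on $X\times\Sball(R)$ in place of $\mu$ on $X$. Choose $d''>d'$ so that both $\Tr(\Sob_{\dim H}^2|\Sob_{d'}^2)$ and $\Tr(\Sob_{d'}^2|\Sob_{d''}^2)$ are finite (cf.\ \eqref{relativetrace}); pick an orthonormal basis $\{e^{(k)}\}$ with respect to $\Sob_{d''}$ that is simultaneously orthogonal with respect to $\Sob_{d'}$. Chebyshev with threshold $cn^{-1}\Sob_{d'}(e^{(k)})$, summation over $k$ using finiteness of the relative trace, and summation over integers $n\in[T_0,\epsilon^{-\beta}]$ using $\sum_{n\geq T_0}n^{-2}\ll T_0^{-1}$, show that the exceptional set of pairs $(x,s)$ has measure $\ll T_0^{-1}$, yielding the proposition with the $d'$ of the statement taken to be $d''$. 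The main obstacle is the polynomial-in-$n$ growth $\Sob_d(F_n)\ll n^{O(d)}$ coming from averaging $u(t).f$ over $t\sim n^M$ (on which $\|u(t)\|$ is itself polynomial in $n$): this growth has to be absorbed by the factor $\epsilon$ from almost invariance, which is the reason one must cap the genericity range at $n\leq\epsilon^{-\beta}$ with $\beta$ small depending on $d$.
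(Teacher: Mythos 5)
Your proof is correct and follows essentially the same route as the paper: reduce $\int_X|D_n(f)(xs)|^2\,d\mu$ to $\int_X|D_n(f)|^2\,d\mu$ via Lemma \ref{one} at the cost of an error $\epsilon R^{O(d)}n^{O(d)}\Sob_{d+\ref{linftyconst}}(f)^2$, absorb that error into the $n^{-4}$ main term by capping $n,R\leq\epsilon^{-\beta}$, and rerun the relative-trace/Chebyshev argument on the product measure. The only cosmetic difference is that you expand $|D_n(f)|^2$ and apply almost-invariance to $|F_n|^2$ and $F_n$ separately, where the paper applies it to $|D_T(f)|^2$ wholesale; this changes nothing.
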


\proof
Let $B := \Sball(R)$. As in the proof of Proposition \ref{lem:genericbound} we will first estimate
$$
\frac{1}{\vol(B)}\int_{X\times B} \left| D_T(f)(xs) \right|^2
\operatorname{d}\!\mu \operatorname{d}\!s.
$$
This equals
$$
  \frac{1}{\vol(B)} \int_{B} \int_{X} |D_T(f)|^2 \operatorname{d}\!\mu^{s}(x)\operatorname{d}\!s \ll
 \epsilon R^{d \ref{firstlemconstant}} \Sob_{d}(|D_T(f)|^2) + \int_{X} |D_T(f)|^2 d\mu
$$
where $\mu^{s}$ denotes the translated measure;
 we have used the
definition of {\em almost invariant} from \S
\ref{def:almostinvariant}, the fact that $\Sball(R)$
consists of elements with $\|s\| \leq R$, and Lemma \ref{one}. 

 As in the proof of Proposition \ref{lem:genericbound}, the latter term is bounded, up to a constant depending only on 
 $G$ and $H$ by $T^{-4}\Sob_{\dim (H)}(f)^2$.

  On the other hand,
there exists a constant $\xappa$ so that
the first term is, by  \eqref{sobolevdistort}
and \eqref{product}, 
$$ \ll
 \epsilon R^{d \ref{firstlemconstant}} T^{d \xappa}
 \Sob_{d+\ref{linftyconst}}(f)^2.$$
 
We take $\beta = \frac{1}{2} (4 + \ref{firstlemconstant} d +  \xappa d )^{-1}$. 
This choice is made so that
$$\epsilon R^{d \ref{firstlemconstant}} T^{d \xappa} \leq T^{-4},$$
whenever $T \leq \epsilon^{-\beta}, R \leq \epsilon^{-\beta}$. 

Therefore, if $d \geq \dim(H), T \leq \epsilon^{-\beta}, R \leq \epsilon^{-\beta}$, 
$$
 \frac{1}{\vol(B)} \int_{B} \int_{X} |D_T(f)|^2 d\mu^{s}(x) \ll T^{-4}
 \Sob_{d + \ref{linftyconst}}(f)^2.
$$
Reasoning as in the second part of the proof of Proposition \ref{lem:genericbound}, and increasing $d$,  yields the conclusion for all $f\in
C_c^\infty(X)$.
 \qed

\section{Nearby generic points effectively give additional invariance.} \label{subsec:production}

In this section, we shall use the polynomial properties of unipotent flows
and the effective ergodic theorem to establish ``Proposition B.''

Henceforth, $\Scomp$ will denote an $S$-invariant complement
  to the Lie algebra of $S$, inside
$\mathfrak{g}$.   It exists because $S$ is semisimple. 
We recall we have fixed a unipotent one-parameter subgroup $u(t) \subset H$,
see \eqref{udef}.

 There exists some $\consta\label{expconst0}>0$%
\index{xappa46@$\ref{expconst0}$, local coordinates in \S \ref{subsec:production}}
with the following property:
whenever $v,w \in \mathfrak{r}$
 $\|v\|,\|w\|\leq\ref{expconst0}$,
\begin{equation} \label{expconstdef} \exp(v) \exp(w)^{-1} = \exp(w^*) s, w^* \in \mathfrak{r}, s \in S.\end{equation}
where $d(s,1) \leq \|v-w\|$ and $\frac{\|w^*\|}{\|v-w\|} \in [1/2, 2]$. 
Indeed, the map $(v,w) \mapsto (w^*,s)$ is differentiable;
also $(v,v) \mapsto (0, e)$ and the derivative at $(0,0)$
is the map $(v,w) \mapsto (v-w, 0)$. 
The assertion follows. 


%
\subsection{Unipotent trajectories of nearby points}
The discussion that follows may be regarded as an effective
form of the ``equicontinuity'' statements from \S \ref{subsec:B}. 

Let $x_1, x_2 \in X$ so that $x_2 = x_1 \exp(r)$, some $r \in \mathfrak{r}$.
Note that $x_2 u(t) = x_1 u(t) \exp(\mathrm{Ad}(u(-t)) r)$.
We decompose $r = r_0 + r_1$ according to \eqref{gdecomp}.

Then $\mathrm{Ad}(u(-t)) r_0 = r_0$ for all $t$, whereas $t \mapsto
\mathrm{Ad}(u(-t)) r_1$ is a polynomial of degree $\leq
\dim(\mathfrak{g})$ and all of whose coefficients (w.r.t.\ an
orthonormal basis for $\mathfrak{g}$) are $\ll \|r_1\|$.

Suppose $r_1 \neq 0$. Then there exists a positive real time $T$ satisfying $\|r\|^{-*} < T <
\|r_1\|^{-*}$, and a polynomial $q: \R \rightarrow \mathfrak{r}$ of degree $\leq
\dim(\mathfrak{g})$
    with image centralized by $u(\mathbb{R})$
    satisfying $q(0)=0$ and $\max_{s \in [0,2]} \|q(s)\| = \ref{expconst0}$, so that:
\begin{equation}\label{sp}
\mathrm{Ad}(u(-t)). r = q(t/T) +  O(\|r_1\|^{*})\mbox{ for all }t\leq 3T.
\end{equation}
Here, and in the following, the implicit constant in the $O(\cdot)$-notation is understood in
the same sense as in the $\ll$-notation. 

If $r_1 = 0$, the same statement remains true, except one sets ``$T = \infty$''
and ignores the statement about $\max \|q(s)\|$; the polynomial $q$ is in this case constant.

This statement will be used to give an effective version (``Proposition B'') of \eqref{slow-divergence}. 

\begin{prop} \label{MaL}
 Let $d\geq\ref{linftyconst}+1$.
    There exists constants $\ref{eipconstant}>0$ and $ \ref{gammaconstant}>\ref{deltaconstant}>0$ so that:
 \index{xappa19@$\ref{eipconstant}$, exponent in additional invariance}
 \index{xappa2@$\ref{gammaconstant}$, genericity needed for additional invariance}
 \index{xappa21@$\ref{deltaconstant}$, genericity needed for additional invariance}

    Suppose $\mu_1, \mu_2$ are $H$-invariant measures,
 that $x_1, x_2 \in X$ satisfy $x_2 = x_1 \exp(r)$ for some nonzero $r \in \Scomp$, and that
$x_i$ is $[\|r\|^{-\ref{gammaconstant}}, \|r_1\|^{-\ref{deltaconstant}}]$-generic w.r.t.\ $\mu_i$
and a Sobolev norm $\Sob_d$  (for $i=1,2$). 
    Then there is a polynomial $q: \R \rightarrow \mathfrak{r}$ of degree $\leq
\dim(\mathfrak{g})$,
so that:     
\begin{equation} \label{cylon} \left| \mu_2^{\exp{q(s)}}(f) - \mu_1(f)\right| \ll_d \|r_1\|^{\ref{eipconstant}} \Sob_d(f), 1 \leq s \leq 2^{1/M}\end{equation}
If $r_1 \neq 0$, then $\max_{s \in [0,2]}\|q(s)\| = \ref{expconst0}$. 
Moreover, if $\mu_1=\mu_2$ is $\epsilon$-almost invariant under $S$, then $\mu_1$ is $\ll_d \max(\epsilon,\|r\|^{\ref{eipconstant}})^{1/2}$-almost invariant under some $Z\in\mathfrak{r}$ with $\|Z\|=1$.
\end{prop}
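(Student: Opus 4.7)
The plan is to effectivize the argument of Lemma~\ref{easy=proof}(b): compare the $u(\tau)$-orbit average of $f$ along $x_1$ with that of $(\exp q(s)).f$ along $x_2$, using \eqref{sp} to relate the two trajectories and the $[T_0,T_1]$-genericity hypothesis to identify each average with $\mu_1(f)$ and $\mu_2^{\exp q(s)}(f)$ respectively. Here $q$ will be the polynomial supplied by \eqref{sp}, taken with the sign convention needed to make the formula $\mu_2^{\exp q(s)} \approx \mu_1$ work; its normalization $\max_{s \in [0,2]} \|q(s)\| = \ref{expconst0}$ when $r_1 \neq 0$ is inherited directly from \eqref{sp}.

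Fix $s \in [1, 2^{1/M}]$. One chooses $T$ in the range $[\|r\|^{-*}, \|r_1\|^{-*}]$ supplied by \eqref{sp}, and an integer $n$ so that $[n^M, (n+1)^M] \subset [0, 3T]$ and $\tau/T$ varies over an interval of length $O(T^{-1/M})$ around a value determined by $s$. The exponents $\ref{gammaconstant}, \ref{deltaconstant}$ will be fixed so that this $n$ falls inside the generic range $[\|r\|^{-\ref{gammaconstant}}, \|r_1\|^{-\ref{deltaconstant}}]$. For $\tau$ in this window, the polynomial $C^1$-boundedness of $q$ on $[0,2]$ combined with \eqref{sp} yields $\Ad(u(-\tau))r = q(s) + O(T^{-1/M} + \|r_1\|^*)$. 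A Baker--Campbell--Hausdorff expansion then produces
\[
 x_2 u(\tau) \exp q(s) \;=\; x_1 u(\tau) \exp W_{\tau, s}, \qquad \|W_{\tau,s}\| \ll T^{-1/M} + \|r_1\|^*,
\]
and \eqref{simple-Sobolev-estimate} gives $|f(x_2 u(\tau) \exp q(s)) - f(x_1 u(\tau))| \ll (T^{-1/M} + \|r_1\|^*) \Sob_d(f)$ uniformly in $\tau$. Averaging over $\tau \in [n^M, (n+1)^M]$ and invoking the $[T_0,T_1]$-genericity of $x_1$ (applied to $f$) and of $x_2$ (applied to $(\exp q(s)).f$, whose Sobolev norm is $\ll \Sob_d(f)$ by \eqref{sobolevdistort} since $\|q(s)\|$ is bounded) yields
\[
 |\mu_2^{\exp q(s)}(f) - \mu_1(f)| \;\ll\; (n^{-1} + T^{-1/M} + \|r_1\|^*) \Sob_d(f).
\]
Since $n \asymp T^{1/M}$, optimizing $T$ within its allowed range produces the stated bound $\ll \|r_1\|^{\ref{eipconstant}} \Sob_d(f)$.

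For the final assertion, suppose $\mu_1 = \mu_2 = \mu$ is $\epsilon$-almost invariant under $S$, and set $\eta := \max(\epsilon, \|r\|^{\ref{eipconstant}})$. The main estimate gives $\ll \eta$-almost invariance of $\mu$ under $\exp q(s)$ for every $s \in [1, 2^{1/M}]$. Taking two parameters $s_1, s_2$, the measure $\mu$ is then $\ll \eta$-almost invariant under $\exp q(s_2) \exp(-q(s_1)) = \exp Z_{s_1, s_2}$, where a BCH expansion writes $Z_{s_1, s_2} = (q(s_2) - q(s_1)) + O(\|q(s_2)-q(s_1)\|^2)$; the $\mathfrak{s}$-component of the correction can be absorbed via the $\epsilon$-almost invariance under $S$. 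By the normalization of $q$ and a Markov-type inequality for polynomials of degree $\leq \dim \mathfrak{g}$, one can choose $s_1, s_2 \in [1, 2^{1/M}]$ so that $\|q(s_2) - q(s_1)\|$ is bounded below. Normalizing and applying Lemma~\ref{stabilityofalmost} (which introduces the square-root loss through its commutator arguments) will then produce $\ll \eta^{1/2}$-almost invariance under a unit vector $Z \in \mathfrak{r}$, as claimed.

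The main obstacle will be the exponent bookkeeping: the range of $T$ permitted by \eqref{sp}, the range of $n$ forced by the genericity hypothesis, and the optimization of $T$ must all be arranged compatibly to produce an estimate of the form $\|r_1\|^{\ref{eipconstant}}$ with $\ref{eipconstant}$ depending only on $G, H$. A secondary difficulty lies in the last step: extracting a unit vector in $\mathfrak{r}$ requires a Markov-type inequality to guarantee that $q$ has nontrivial variation on the narrow interval $[1, 2^{1/M}]$.
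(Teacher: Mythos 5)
Your treatment of the main estimate \eqref{cylon} is correct and is essentially the paper's own argument: the same choice of $T$ and of the integer $n\asymp T^{1/M}$, the same use of \eqref{sp} together with the small variation of $q(\tau/T)$ over $[n^M,(n+1)^M]$, and the same appeal to genericity of both points (your sign convention for $q$ just moves the translation to the other side, which is harmless). The exponent bookkeeping you flag is the routine matter it appears to be. You should, however, also record the degenerate case $r_1=0$, where $q$ is constant and \eqref{cylon} holds exactly.

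The gap is in the final assertion. What \eqref{cylon} gives you is almost invariance of $\mu$ under the individual \emph{group elements} $\exp(q(s))$, $s\in[1,2^{1/M}]$, whereas the conclusion requires almost invariance under $Z\in\mathfrak{r}$ in the sense of \S\ref{def:almostinvariant}, i.e.\ under the entire segment $\exp(tZ)$, $|t|\le 2$. Choosing $s_1,s_2$ with $\|q(s_2)-q(s_1)\|$ bounded below by a constant produces, after decomposing via \eqref{expconstdef} and absorbing the $S$-component using the $\epsilon$-almost invariance under $S$, an element $w^*\in\mathfrak{r}$ of unit size under which $\mu$ is $\ll\max(\epsilon,\|r\|^{\ref{eipconstant}})$-almost invariant --- but only under the single element $\exp(w^*)$. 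There is no way to interpolate from one group element at unit distance from the identity to the whole one-parameter segment through it, and Lemma~\ref{stabilityofalmost} cannot be invoked here since its hypotheses are stated for Lie-algebra elements, which is exactly what you are trying to produce. Moreover no commutators enter at this stage, so the square root does not come from where you say it does. The correct mechanism is to calibrate the increment: set $E:=\max(\epsilon,\|r\|^{\ref{eipconstant}})^{1/2}$ and choose $s$, $s+E$ in $[1,2^{1/M}]$ with $\|q(s+E)-q(s)\|\gg E$ (possible because $q(0)=r$ is tiny while $\max_{[0,2]}\|q\|=\ref{expconst0}$, so the coefficients of $q'$ are $\gg1$; no further Markov-type input is needed). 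This yields $w^*\in\mathfrak{r}$ with $\|w^*\|\asymp E$ under which $\mu$ is $\ll E^2$-almost invariant; iterating $\exp(w^*)$ about $E^{-1}$ times fills out $\exp(tZ)$, $|t|\le2$, for $Z=w^*/\|w^*\|$, in steps of size $E$ that are interpolated by \eqref{simple-Sobolev-estimate}, with accumulated error $E^{-1}\cdot E^2=E$. It is this rescaling step --- the second bullet in the proof of Lemma~\ref{stabilityofalmost} --- that produces the exponent $1/2$, and it forces the displacement to be of size exactly $\asymp E$ rather than merely bounded below.
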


\proof
  We let $T$ be as in the discussion before the proposition.
  The definition of generic (\S \ref{subsec:generic}) assures us (assuming that $\ref{deltaconstant}$ 
  is sufficiently small and $\ref{gammaconstant}$ is sufficiently large)
  for any integer
  $$n \in [T^{1/M}, (2 T)^{1/M}]\subset
  [\|r\|^{-\ref{deltaconstant}},\|r_1\|^{-\ref{gammaconstant}}]$$
   and for any $f \in C_c^{\infty}(X)$,  we have
  $$
  \left| \int f d\mu_j -  
  \frac{1}{(n+1)^M - n^M} \int_{n^M}^{(n+1)^M} f(x_j u(t)) dt \right| \leq n^{-1} \Sob_d(f), 
 $$
 for $j = 1,2$. 

For $t,t_0 \in [n^M,(n+1)^M]$, we have,
using \eqref{simple-Sobolev-estimate} and \eqref{sp}
\begin{multline*}
f(x_2  u(t))=
f(x_1 u(t) \exp(q(t/T)) ) + O(\|r_1\|^*  \Sob_d(f))  \\
= f(x_1 u(t) \exp q(t_0/T))  + O(T^{-1/M} \Sob_d(f)) + O(\|r_1\|^* \Sob_d(f)).
\end{multline*}
 Here we used in the last line that
$|t-t_0|\ll n^{M-1}\asymp T^{1-1/M}$ for $t,t_0\in[n^M,(n+1)^M]$ --- which shows
that the polynomial $q(t/T)$ has small variation within that
interval.

Thus, \begin{equation}\label{cyclon2}
\mu_2(f)=
  \mu_1(\exp q(t_0/T).f) + O\left(
( T^{-1/M}  + \|r_1\|^* ) \Sob_d(f)\right)
\end{equation}
which implies \eqref{cylon}.

Let us observe that if $r_1 = 0$, then the polynomial $q$ is constant,
say $q \equiv q_0$, 
and we have $\mu_2 (f) = \mu_1 (\exp(q_0) f)$ (exactly). 

Assume in the remainder of the proof that $\mu_1=\mu_2$. 
The prior remark shows that, if $r_1 = 0$, 
$\mu$ is indeed $\|r\|$-almost invariant under $Z=\|r\|^{-1}r$ by \eqref{simple-Sobolev-estimate}; this establishes the final assertion in the case when $r_1 = 0$.

 Assume, thus, that $r_1\neq 0$. 
In this case $q(0)=r$ and $\max_{s\in[0,2]}\|q(s)\|=\ref{expconst0}$, so that the coefficients of $q'(s)$ are $\gg 1$. 
By \eqref{cylon}, $\mu_1$ is
 $\ll\|r\|^{\ref{eipconstant}}$-almost invariant under $\exp(q(s))$ for 
any $s\in[1,2^{1/M}]$. 

Put $E = \max(\epsilon, \|r\|^{\ref{eipconstant}})^{1/2}$. 
There exists $s\in[1,2^{1/M}]$
 with $s+E \in[1,2^{1/M}]$ for which $\|q(s)-q(s+E)\|\gg
E$. (Rather, we may assume this is so; if $E$ is so large that this is false
for a trivial reason, then the final statement of the Proposition correspondingly
becomes trivial). 

With $s$ being so chosen, $\mu_1$ is $\ll \|r\|^{\ref{eipconstant}}$-almost
invariant under $\exp(q(s))$, $\exp(q(s+E)$, and so also under $\exp(-q(s))\exp(q(s+E)) $.
 Put $v=q(s), w=q(s+E)- v$. 
Then, by \eqref{expconstdef},  $\exp(-v)\exp(v+w)=\exp(w^*)s$ where $w^* \in \mathfrak{r}, 
\|w^*\|\asymp E$, and 
 $s\in S$
satisfies $d(s,e)\ll  E$.  

This claim proves the proposition: Since $\mu_1$ is assumed to be $\epsilon$-almost invariant under $S$, we get that $\mu$ is $\ll\max(\epsilon,\|r\|^{\ref{eipconstant}}) = E^2$-invariant under $\exp(w^*)$. 
Since $w\in\mathfrak r$ with $\|w^*\|\asymp E$
 we can iterate this statement $E^{-1}$-many times;
using \eqref{simple-Sobolev-estimate}, we deduce that $\mu$ is
 $\ll E$-almost invariant under $Z=\frac1{\|w^*\|}w^*$. See
the proof of Lemma \ref{stabilityofalmost} (esp. second bulleted point) for details of this type of iteration.

\qed

\subsection{Quantitative isolation of closed orbits.}
From this argument we may draw the following useful corollary regarding the isolation
of closed orbits for semisimple groups. It gives a quantitative
estimate on the spacing between two distinct, closed $S$-orbits,
as well on how closely such an orbit can approach itself. 

The idea of proof is simple. Suppose given two very nearby points $x_1, x_2$
so that $x_1 S, x_2 S$ are both closed and are distinct. 
By modifying $x_1, x_2$ slightly, we may assume that we are in the situation
of Proposition \ref{MaL}, with $\mu_i$ the $S$-invariant probability measure on $x_i S$. The conclusion of Proposition \ref{MaL} then implies that $x_2 S$
contains, loosely speaking, ``many different translates of $x_1 S$ along the direction
$q(s)$.'' Thus, the volume of $x_2 S$ was necessarily large.

\begin{lem} \label{iso} 
There are constants $\ref{separationheightconstant}$
and $\ref{separationvolumeconstant}$ with the following property.
\index{xappa212@$\ref{separationheightconstant}$, exponent of height in isolation}
\index{xappa2121@$\ref{separationvolumeconstant}$, exponent of volume in isolation}

Suppose $H \subset S \subset G$. 
Let $x_1, x_2\in X$ be so that $x_i S$ are closed orbits with volume $\leq V$.
Then either:
\begin{enumerate}
\item  $x_1$ and $x_2$ are on the local $S$-orbit, i.e.\ there exists some $s\in S$ with $d(s,1)\leq 1$
and $x_2=x_1 s$, or
\item $d(x_1, x_2) \gg \min(\height(x_1),\height(x_2))^{-\ref{separationheightconstant}} V^{-\ref{separationvolumeconstant}}$.
\end{enumerate}
\end{lem}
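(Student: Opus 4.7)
The plan is to reduce to the setup of Proposition~\ref{MaL} applied to the two $S$-invariant probability measures $\mu_1, \mu_2$ on $x_1 S, x_2 S$, extract from its conclusion an almost-invariance of $\mu_1$ under a transverse direction, and then argue that such almost-invariance of a measure on a low-volume closed orbit forces the error to be bounded below by $V^{-\star} \min(\height(x_1),\height(x_2))^{-\star}$.

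First I would choose an $\Ad(S)$-invariant complement $\mathfrak{r}$ to $\mathfrak{s}$ in $\g$, available since $S$ is semisimple by Lemma~\ref{assumption-2}. Writing $x_2 = x_1 g$ and decomposing $g = \exp(r) s_0$ with $r \in \mathfrak{r}$ and $s_0 \in S$ both of small norm, the hypothesis that the points are not on a common local $S$-orbit gives $r \neq 0$; after replacing $x_2$ by $x_2 s_0^{-1}$ (same orbit, height distorted by $O(1)$), we have $x_2 = x_1 \exp(r)$ with $0 \neq r \in \mathfrak{r}$ and $\|r\| \asymp d(x_1, x_2)$. The measures $\mu_i$ are $\tfrac{1}{\tempered-1}$-tempered on $L^2_0$ by Lemma~\ref{automorphic}, so Proposition~\ref{lem:genericbound} supplies a set of $T_0$-generic points of $\mu_i$-measure $> 9/10$ in each of $x_1 S, x_2 S$. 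A Fubini argument, using $\Ad(S)$-invariance of $\mathfrak{r}$ to preserve the shape $x_2 = x_1 \exp(r')$ after translation with $\|r'\|\asymp\|r\|$, then lets me simultaneously translate $x_1, x_2$ into their generic sets by some $s \in S$ with $d(s,1)\le 1$.

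Next I would apply Proposition~\ref{MaL}. The case $r_1 = 0$ cannot occur: it would force the exact identity $\mu_2 = \mu_1^{\exp(r)}$, hence $x_2 S = x_1 S \exp(r)$, so $\exp(r) \in N_G(S)$; by Lemma~\ref{assumption-2} the Lie algebra of $N_G(S)$ equals $\mathfrak{s}$, whence $r \in \mathfrak{r} \cap \mathfrak{s} = 0$, a contradiction. Thus $r_1 \neq 0$ and Proposition~\ref{MaL} produces a non-constant polynomial $q:\R\to\mathfrak{r}$ with $\max_{s \in [0,2]}\|q(s)\| = \ref{expconst0}$ and
\[
\bigl|\mu_2^{\exp q(s)}(f) - \mu_1(f)\bigr| \ll \|r_1\|^{\ref{eipconstant}} \Sob_{d_0}(f), \qquad s \in [1,2^{1/M}].
\]
Choosing $s_1, s_2 \in [1,2^{1/M}]$ with $\|q(s_1)-q(s_2)\| \asymp 1$, the triangle inequality gives $\mu_1^{\exp(-q(s_1))} \approx \mu_1^{\exp(-q(s_2))}$, and decomposing $\exp(q(s_1))\exp(-q(s_2)) = \exp(w^*) s'$ via \eqref{expconstdef} with $w^* \in \mathfrak{r}$, $\|w^*\| \asymp 1$, $s' \in S$, the exact $S$-invariance of $\mu_1$ lets the $s'$-factor be absorbed. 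This yields that $\mu_1$ is $\ll\|r_1\|^{\ref{eipconstant}}$-almost invariant under $\exp(w^*)$ for a unit-scale $w^* \in \mathfrak{r}$.

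To convert this into the claimed lower bound on $d(x_1,x_2)$, I would invoke Proposition~\ref{almostsubalgebra} to combine the transverse almost-invariance with the exact $S$-invariance of $\mu_1$ and obtain almost-invariance under a connected $S_* \supsetneq S$ containing $H$, and then test against a smooth bump function $f$ of scale $\delta \asymp \min(\height(x_1),\height(x_2))^{-\ref{hc2}}$ (the injectivity radius from \eqref{injfact}) at a point of $x_1 S \cap \Xcompact$ (existence by Lemma~\ref{measureoutsidecompact}). At such a point, $\mu_1(f)\asymp\delta^{\dim S}/V$ while the translate along a unit direction $Z\in\mathfrak{s}_*\setminus\mathfrak{s}$ gives $\mu_1^{\exp Z}(f)=0$ once one chooses $y$ for which the sheet $x_1 S\exp(Z)$ does not re-enter the ball of radius $\delta$ around $y$; combined with $\Sob_{d_0}(f)\ll \delta^{-\star}\min(\height(x_1),\height(x_2))^\star$, the almost-invariance inequality forces $\|r_1\|^\star \gg V^{-1}\delta^{\dim S + \star}\min(\height(x_1),\height(x_2))^{-\star}$, which rearranges to $d(x_1,x_2)\asymp\|r\| \geq \|r_1\| \gg \min(\height(x_1),\height(x_2))^{-\ref{separationheightconstant}} V^{-\ref{separationvolumeconstant}}$ with appropriate positive exponents. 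The main obstacle is precisely this last step: quantitatively bounding the $\mu_1$-measure of those $y\in x_1 S\cap \Xcompact$ at which the transverse translate re-enters the injectivity-radius ball, via a covering/volume estimate using transversality of $Z$ to $\mathfrak{s}$; this is where the height and volume exponents are produced, and the argument must be careful to avoid any circular appeal to Proposition~\ref{lem:Quantiso}, which itself is proved using this lemma.
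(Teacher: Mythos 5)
Your setup---reducing to $x_2 = x_1\exp(r)$ with $r\in\mathfrak r$, translating both points into generic position by a Fubini argument, and applying Proposition~\ref{MaL} to the two invariant measures $\mu_1,\mu_2$---is essentially the paper's proof. The divergence, and the gap, lies in the endgame. The paper never attempts to show that a transverse translate of the orbit \emph{misses} some ball. It uses the conclusion of Proposition~\ref{MaL} in the opposite, positive direction: once $\|r\|^{\ref{eipconstant}}\Sob_d(f_{x_1})$ is smaller than $\mu_1(f_{x_1})\gg V^{-1}\eta^{\dim S}$, one gets $\mu_2^{\exp q(s)}(f_{x_1})>0$ for \emph{every} $s\in(1,2^{1/M})$, so $x_2S$ must meet each of $\asymp\eta^{-1}$ pairwise disjoint sets $\supp(f_{x_1})\exp(-q(s_i))$ (disjoint because $q$ has unit-scale variation while the supports have diameter $\eta$). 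Each visit contributes a separate disc of radius $\asymp\height(x_1)^{-\ref{hc2}}$ to $x_2S$, whence $\vol(x_2S)\gg\height(x_1)^{-\star}\eta^{-1}$; choosing $\eta$ so that this exceeds $2V$ contradicts $\vol(x_2S)\le V$ unless \eqref{imp-equation} fails, which is exactly the claimed lower bound on $\|r\|$. In short: the guaranteed returns are converted into a volume \emph{lower} bound.

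Your route instead needs an \emph{upper} bound on returns: you must produce $y\in x_1S\cap\Xcompact$ for which $x_1S\exp(Z)$ avoids $B(y,\delta)$, so that $\mu_1^{\exp Z}(f_y)=0$ while $\mu_1(f_y)\gg\delta^{\dim S}/V$. You flag this as the main obstacle, and it is not a technicality: an orbit of volume as large as $V$ can a priori return $\delta$-close to itself, in transverse directions, near essentially every point of $\Xcompact$---the offending returns come from far-away sheets of the orbit, i.e.\ from elements of $\Gamma$ lying near $gSg^{-1}$, and excluding them is precisely the content of the linearization estimate of Proposition~\ref{lem:Quantiso} (unavailable here without circularity, as you note) or of the closing lemma. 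A soft ``transversality of $Z$ to $\mathfrak s$'' covering estimate cannot see this, so the proposal does not close as written; the fix is to run the endgame as the paper does, counting disjoint discs on $x_2S$ rather than seeking a ball the translate avoids. (A minor further point: you only exclude $r_1=0$ rather than arranging $\|r_1\|\gg\|r\|$ as the paper does via \eqref{iotaclaim}; your final chain survives because you lower-bound $\|r_1\|$ and use $\|r\|\ge\|r_1\|$, but the normalization is what keeps the genericity window and the error term in Proposition~\ref{MaL} uniform.)
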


The proof will use a simple argument, which we will also use elsewhere.
Recalling the definition of $r_1$ from \eqref{gdecomp},
there exists $ \constb\label{stronggeneric} > 0$%
\index{yota3@$\ref{stronggeneric}$, lower bound for $r_1$}
 so that, for any $r \in \mathfrak{r}$, 
\begin{equation} \label{iotaclaim}
\vol \Bigl\{s : d(s,1) \leq 1\mbox{ with }\frac{\|(\Ad(s^{-1}) r)_1\|}{\|r\|}\leq \ref{stronggeneric}\Bigr\} < \frac{\vol(\{s \in S: d(s,1) \leq 1\})}{2}.
\end{equation}
Indeed, given $r' \in \mathfrak{r}$, there exists $\varepsilon(r') > 0$ 
so that the measure of 
$
\Bigl\{s : d(s,1) \leq 1\mbox{ with }\frac{\|(\Ad(s^{-1}) r')_1\|}{\|r'\|}\leq \varepsilon\Bigr\}
$ 
is less than $\frac12\vol(\{s \in S: d(s,1) \leq 1\})$. This is so, since for the
function $s \mapsto (\Ad(s^{-1}) r')_1$ is real-analytic and not identically zero; if it were $0$, then $r'$ is centralized by every conjugate of $\{u(t)\}$,
and so also by the Lie algebra of $S$, a contradiction. 
By a compactness argument, one may choose $\varepsilon=\ref{stronggeneric}$ uniform
over $r'\in\g$, establishing \eqref{iotaclaim}. 

\proof 
Throughout, we fix a Sobolev norm $\Sob_d$,
where $d$ is sufficiently large (depending only on $G,H$).\footnote{
Precisely, $d \geq \max(d_0, \ref{linftyconst}+1)$,
$d_0$ as in Lemma \ref{lem:genericbound}.}

We may suppose (adjusting $x_1$ or $x_2$ by an element of $S$) that
$x_1  = x_2 \exp(r)$, with $r \in \mathfrak{r}$, and will establish
a lower bound on $\|r\|$.

Let $\mu_i$ be the $S$-invariant probability measure on $x_i S$.
In view of Proposition \ref{lem:genericbound}, the measure of 
$$
 \{s \in S: d(s,1) \leq 1\mbox{ and $x_i s$ fails to be $V^2$-generic w.r.t.\ }\mu_i\},
$$
with respect to the volume measure on $S$, is $\ll V^{-1}$  for $i \in \{1,2\}$. We  may suppose that $V$ is sufficiently large, so that this measure,
as a fraction of the volume of $\{s \in S: d(s,1) \leq 1\}$, is at most $\frac{1}{10}$. 

We claim there exists $s \in S$, with $d(s,1) \leq 1$ with:
\begin{equation}\label{cond}
\mbox{$x_i s$ are 
$V^2$-generic w.r.t.\ $\mu_i$ and $\|(\Ad(s^{-1})r)_1\| \geq \ref{stronggeneric} \|\Ad(s^{-1})r\|$.}
\end{equation}

Indeed, let $\Omicron \subset \{s \in S: d(s,1) \leq 1\}$ consist of those $s$ 
for which $x_i s$ is $[V^2,\infty)$-generic w.r.t.\ $\mu_i$. 
By \eqref{iotaclaim}, $\Omicron$ must contain an element $s$
satisfying \eqref{cond}.

Replace $x_i$ by $x_i s$ and $r$ by $\Ad(s^{-1})r$, where $s$ is so that \eqref{cond} is satisfied.
This has the effect of increasing $\|r\|$ by, at most, a constant
factor depending only on $G,H$. 
Thereby, we may assume that $x_i$ are both $V^2$-generic and that $\|r_1\| \gg \|r\|$.

Let $\eta>0$ be smaller than the injectivity radius $\ref{hc1}
 \height(x)^{-\ref{hc2}}$ of \eqref{injfact} (specified below). 
Let $f$ be a smooth bump function on $G$ supported in $\{g : d(g,1) \leq \eta\}$ with $0\leq f\leq 1$
which is one in a neighborhood of radius $\gg\eta$ and whose partial derivatives up to order $d$ are everywhere $\ll\eta^{-d}$.
Let $f_{x_1}$ be the function defined by $x_1g \mapsto f(g)$ when $d(g,1) \leq \eta$, and zero outside that ball.

Then, evidently, $\int f_{x_1}\operatorname{d}\!\mu_1 \gg V^{-1}\eta^{\dim S}$. 
On the other hand, the definitions of the Sobolev norm imply that
\begin{equation}\label{fxnorm}\Sob_d(f_{x_1}) \ll  \height(x_1)^d \eta^{-d}.\end{equation}

Without loss of generality, $\|r\| \leq V^{-2/\ref{gammaconstant}}$. 
We apply Proposition \ref{MaL}. 
Therefore, Proposition \ref{MaL} assures us that,
so long as 
\begin{equation}\label{imp-equation}
\height(x_1)^d\eta^{-d} \|r\|^{\ref{eipconstant}} \leq
c V^{-1}\eta^{\dim(S)}
\end{equation}
(for some $c$ obtained from the above implicit constants), then,
\begin{equation} \label{nearby} \int f_{x_1}^{q(s)}\operatorname{d}\!\mu_2>0\mbox{
for }s \in (1, 2^{1/M}).
\end{equation}

In words, \eqref{nearby} asserts that the orbit $x_2 S$ 
passes very close to $x_1 S$, many times.
More precisely, given $s_1, s_2$ so that 
$ \eta \ll  d(q(s_1)^{-1},q(s_2)^{-1}) \ll \height(x)^{-\ref{hc2}}$, 
the functions $f_{x_1}^{q(s_1)}$ and $f_{x_1}^{q(s_2)}$ have 
distinct support. Moreover, $x_2 S$ must intersect
both of their supports.
Thereby, we obtain from $s_1, s_2$, 
two distinct
discs of radius $\asymp \height(x)^{-\ref{hc2}}$ on $x_2 S$.

%
Using not just $s_1, s_2$ but many such $s_i$s, we conclude that:
\begin{equation}\label{volbound}\vol(x_2 S) \gg 
 \height(x_1)^{-\ref{hc2}(\dim S+1)}\eta^{-1}.\end{equation}

Take $\eta$ so that the right hand side is (including the implicit constant) equal to $2V$. Then \eqref{imp-equation} cannot hold, for $\vol(x_2 S) \leq V$. 
This bounds $\|r\|$ from below by
an expression of the form $\height(x_1)^{-\star} V^{-\star}$ as required.
    \qed

\section{The measure of points near periodic orbits is small.} \label{sec:quantiso}

Let $\mu$ be as in Theorem \ref{thm:main}. 
Using the polynomial properties of unipotent flows, together
with the quantitative isolation of periodic orbits established in Lemma \ref{iso}, we will be able to establish that the total $\mu$-measure of the set of points
near periodic $S$-orbits of bounded volume remains rather small.  

The proof proceeds as follows.  By Lemma \ref{iso}, we can restrict attention
to a single periodic $S$-orbit. We then divide $X$ into a piece near the cusp,
and its complement.  The measure of the piece near the cusp
is handled by Lemma \ref{measureoutsidecompact}; the complement
is handled using a linearization of the flow near the periodic $S$-orbit. 

\begin{prop} \label{lem:Quantiso} 
Let $S \supset H$. There exists some $V_0$ depending on $\Gamma$, $G$, and $H$ and some
$\ref{expQuantiso}>0$ with the following property.\index{xappa22@$\ref{expQuantiso}$, exponent in isolation}
 Let $V\geq V_0$ and suppose $\mu(Y) = 0$ if $Y$ is any closed $S$-orbit of volume
$\leq V$.  Then:
\begin{equation}\label{freddy}
\mu\bigl(\bigl\{x  \in X: \mbox { there exists }x'
\stackrel{V^{-\ref{expQuantiso}}}{\sim} x \mbox{ with }\vol(x' S)
\leq V\bigr\}\bigr) \leq1/2
\end{equation}
\end{prop}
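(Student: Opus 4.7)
The strategy is to combine three ingredients: Lemma~\ref{measureoutsidecompact} to discard mass near the cusp; Lemma~\ref{iso} to count the relevant closed orbits; and polynomial divergence of the $u(t)$-flow to bound the $\mu$-measure of a neighbourhood of any single closed orbit. First I would apply Lemma~\ref{measureoutsidecompact} with $R := V^{\alpha}$ for a sufficiently small constant $\alpha > 0$, giving $\mu(X \setminus \Siegel(R)) \leq 1/4$ once $V \geq V_0$ is large enough. It then remains to bound the $\mu$-measure of the set in \eqref{freddy} intersected with $\Siegel(R)$. By Lemma~\ref{iso}, distinct closed $S$-orbits of volume $\leq V$ that come within $V^{-\ref{expQuantiso}}$ of $\Siegel(R)$ are separated from one another, transversally to $S$, by at least $\gg R^{-\ref{separationheightconstant}} V^{-\ref{separationvolumeconstant}}$; a volume-packing argument in $\Siegel(R)$ then bounds the number of such orbits polynomially in $V$.

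The main step is, for each such closed orbit $x_0 S$, to bound the $\mu$-measure of its $V^{-\ref{expQuantiso}}$-neighbourhood $A$ intersected with $\Siegel(R)$. Inside the injectivity radius of a point $x_0 s$, any $x \in A$ can be written as $x = x_0 s \exp(r)$ with $r \in \mathfrak{r}$ and $\|r\| \ll V^{-\ref{expQuantiso}}$, and then
\[
 x\,u(t) \;=\; x_0\, s\, u(t)\, \exp\!\bigl(\Ad(u(-t))\, r\bigr),
\]
so the transverse displacement $\Ad(u(-t)) r$ is a polynomial in $t$ of degree $\leq \dim\mathfrak{g}$. Using $u(t)$-invariance of $\mu$,
\[
 \mu(A) \;=\; \frac{1}{T}\int_0^T \mu\bigl(A\, u(-t)\bigr)\, dt \;=\; \int_X \frac{1}{T}\int_0^T \mathbf{1}_A\!\bigl(x\, u(t)\bigr)\, dt\, d\mu(x)
\]
for any $T > 0$. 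Polynomial divergence gives a pointwise bound on the inner integrand, and the linearization technique (in the spirit of Dani--Margulis, combined with the Kleinbock--Margulis non-divergence estimates already invoked in Lemma~\ref{measureoutsidecompact}) converts this to a bound of the form $\mu(A) \ll V^{C}\cdot V^{-c \cdot \ref{expQuantiso}}$ for some constants $C, c > 0$ depending on $G$, $H$, $\alpha$. Summing over the polynomially many closed orbits and choosing $\ref{expQuantiso}$ large enough relative to $\ref{separationheightconstant}$, $\ref{separationvolumeconstant}$, $\alpha$, and the exponents produced by linearization, the total contribution is $\leq 1/4$; combined with the cusp bound this gives $\leq 1/2$.

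The main obstacle is the linearization step for transverse directions $r \in \mathfrak{r}$ that are (nearly) fixed by $u(t)$: for such $r$ the polynomial $\Ad(u(-t)) r$ does not grow, and so the $u(t)$-flow alone does not push $x$ away from $x_0 S$. This is overcome using the centralizer-free assumption of \S\ref{technical}: since $\mathfrak{r}$ contains no nonzero $H$-fixed vector, every nonzero $r \in \mathfrak{r}$ is moved by some other unipotent one-parameter subgroup of $H$, and one can average over a ball in $H$ to obtain polynomial non-divergence uniformly in the direction of $r$. Turning this into an effective, polynomial-in-$V$ estimate, with constants controlled only in terms of $\Gamma, G, H$, is the technical heart of the proof.
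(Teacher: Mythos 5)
Your outline is structurally the same as the paper's: discard the cusp via Lemma~\ref{measureoutsidecompact}, count the relevant closed orbits via Lemma~\ref{iso} plus packing, and bound the measure of a $\delta$-neighbourhood of a single closed orbit by linearizing transversally to $S$ and exploiting the polynomiality of $t\mapsto \Ad(u(-t))r$ together with the $(C,\alpha)$-good property. Your exact identity $\mu(A)=\int_X \frac{1}{T}\int_0^T \mathbf{1}_A(xu(t))\,dt\,d\mu$ is a harmless variant of the paper's device of working with a single Birkhoff-generic point. The one place where you genuinely diverge is the crux you correctly isolate: transverse displacements $r$ with $r_1=0$ in the decomposition \eqref{gdecomp}, for which the polynomial is constant. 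You propose to handle these by averaging over a ball in $H$ and using the centralizer-free hypothesis to find a conjugate $\Ad(h^{-1})r$ with $\|(\Ad(h^{-1})r)_1\|\gg\|r\|$; this is viable, and is in fact exactly the device \eqref{iotaclaim} that the paper uses in proving Lemma~\ref{iso}. The paper itself resolves the crux more softly: if the transverse polynomial of a $\mu$-generic point were constant, the entire forward $u(t)$-orbit, hence $\supp(\mu)$, would lie in $x_0S\exp(Y)$, forcing $H\subset\exp(-Y)S\exp(Y)$, which Lemma~\ref{assumption-2} forbids for small nonzero $Y$. The trade-off is that your route is more quantitative but heavier, while the paper's is qualitative at this point (nonconstancy of the polynomial is all that is needed, since a nonconstant polynomial eventually exceeds $\delta_0$ and the $(C,\alpha)$-good bound on the visit ratio depends only on the degree).

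Two things you should make explicit. First, you have deferred the entire effective linearization estimate as ``the technical heart,'' so as written the proposal is a strategy rather than a proof; the deferred step does go through, but it is the content of the proposition. Second, locate where the hypothesis $\mu(Y)=0$ for closed orbits of volume $\leq V$ enters: neither polynomial divergence nor $H$-averaging moves a point with $r=0$, i.e.\ a point lying on $x_0S$ itself, and if $\supp(\mu)\subset x_0S$ the conclusion \eqref{freddy} is simply false. The hypothesis is what lets you discard the set $\{r=0\}$ as $\mu$-null and apply your argument only to $r\neq 0$ (uniformly in $\|r\|$, by taking $T$ large in the ergodic average).
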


\proof This is a quantitative form of polynomial nondivergence. More
precisely, we use Lemma \ref{iso} together with the
``linearization'' technique (see \S \ref{existing-work:HD}). 

Let $\mathfrak{r}$ be as in \S \ref{subsec:production}.  Let $x_0 S$ be any closed $S$-orbit.
In view of Lemma~\ref{iso} and Lemma~\ref{measureoutsidecompact} (1),
the number of such orbits with volume $\leq V$ is bounded by a polynomial function of $V$.


We show that for any closed orbit $x_0S$ of volume $\leq V$: 
\begin{equation} \label{boundnearone} 
\mu\bigl( (x_0  S \cap\Siegel(R))\exp\{r \in \mathfrak{r} : \|r\| \leq \delta \} \bigr) \ll 
 V^\star R^\star\delta^\star 
\end{equation}
We shall only sketch the argument for \eqref{boundnearone},
for it is by now quite standard. The deduction of \eqref{freddy} is then straightforward: take $R=V$, $\delta$
the reciprocal of a large power of $V$, and apply Lemma \ref{measureoutsidecompact},


 Let $\Omega_\delta = \{r \in \mathfrak{r}: \|r\| < \delta\}$.
For $ \delta_0 =  R^{-\star}V^{-\star}$, the map
$$(x_0 S \cap \Siegel(R)) \times \Omega_{\delta_0} \rightarrow X, \ \ (y, r) \mapsto y \exp(r)$$
is a diffeomorphism onto an open neighbourhood $\mathcal{N}_{\delta_0}$ of $x_0 S \cap
\Siegel(R)$, as follows from the implicit function theorem, Lemma~\ref{iso} and \eqref{injfact}.

 Let $\pi: \mathcal{N} \rightarrow \Omega_{\delta_0}$
be the natural projection.  For $\delta \leq \delta_0$, let  $\mathcal{N}_\delta =
\pi^{-1} (\Omega_\delta)$.

For $y \in \mathcal{N}$ and $t \in \R$, we have:
\begin{equation} \label{obvious} \pi(y u(t)) = \mathrm{Ad}(u(-t)) \pi(y)\end{equation}
so long as $y u(s) \in \mathcal{N}_{\delta_0}$ for all $s\in [0,t]$. Notice that the
latter is equivalent to $\mathrm{Ad}(u(-s)) \pi(y) \in 
\Omega_{\delta_0}$ for all $s\in[0,t]$.

Choose $x \in X$ which is generic for the flow $u(\cdot)$ w.r.t.\ $\mu$.
Recall that, as in Section \ref{semisimple-measures}, 
this means the measure along the trajectory $\{ x u(t): 0 \leq t \leq T\}$
approximates $\mu$ as $T \rightarrow \infty$.
Let us observe that, for any such $x$ and all points 
$y \in \{x u(t) \cap \mathcal{N}_{\delta_0}\}$, the polynomial
$t \mapsto \mathrm{Ad}(u(-t)) \pi(y)$ is nonconstant. 
Suppose this to be false; set $Y = \pi(y)$. 
The genericity of $y$ implies that 
$\supp(\mu) \subset x_0 S . \exp(Y)$. 
Since $\supp(\mu)$ is an $H$-orbit,
this implies $H\subset \exp(-Y) S\exp (Y)$.
This is impossible, in view of Lemma \ref{assumption-2},
when $\|Y\|$ sufficiently small.  Since
$\|Y\| \leq \delta_0$ and $\delta_0=R^{-\star}V^{-\star}$, we may assume $\|Y\|$ is sufficiently small by changing the implicit constants. 

Let $\delta \leq \delta_0$. 
Let $Z_\delta = \{t \in \R_{\geq 0}: x u(t) \in \mathcal{N}_\delta\}$ 
be the set of times where the orbit is $\delta$-close to $x_0S$. 
Notice that $Z_\delta$ is a union of intervals.
There 
exists a constant $\consta \label{dmconstant}$ so that%
\index{xappa7@$\ref{dmconstant}$, exponent in linearization argument}
we may cover $Z_\delta$ by intervals $B_j$ satisfying:
$\frac{|B_j \cap Z_\delta|}{|B_j|}  \leq \constc \bigl(\delta\delta_0^{-1}\bigr)^{\ref{dmconstant}}$ where
we use the notation $|I|$ to denote the length of an interval $I\subset\R$.  
That follows from \eqref{obvious} and
an argument using the growth properties of the polynomial
$t \mapsto \mathrm{Ad}( u(-t)) \pi(y)$. \footnote{Essentially, the argument here is that a polynomial which is 
$\delta$-small on an interval stays $\delta_0$-small on a bigger interval.} 
Here the intervals $B_j$ can be chosen to be disjoint (since they correspond
to different visits of the orbit of $x$ to $\mathcal N_0$).

We are using this to bound for a given $T$ the fraction of times $t\in[0,T]$ with $xu(t)\in \mathcal N_\delta$. 
Clearly, we may assume each $B_j$ intersects $[0,T]$. Let $J$
be the interval which is the union of $[0,T]$ with the $B_j$. 
Then, 
$$
 |J \cap Z_{\delta}| \leq \sum |B_j \cap Z_{\delta}|
\leq  \constc \bigl(\delta\delta_0^{-1}\bigr)^{\ref{dmconstant}}\sum |B_j| 
\ll \bigl(\delta\delta_0^{-1}\bigr)^{\ref{dmconstant}} |J| .
$$

This being true for a sequence of intervals $J$ of increasing length, we see that 
$$
\mu(\mathcal{N}_\delta) \ll  \bigl(\delta\delta_0^{-1}\bigr)^{\ref{dmconstant}}
\mbox{ whenever }\delta \leq \delta_0=R^{-\star}V^{-\star}.
$$
This is \eqref{boundnearone} (since for $\delta\geq \delta_0$ the claim is trivial anyway).
\qed

\section{Some lemmas connected to lattice point counting.}\label{lattice}

\myparagraph It is a general feature that, if $H_1 \subset H_2$ are nice groups, the spectrum of $L^2(H_2/H_1)$
is related to the volume growth of $H_1$ inside $H_2$.
In the case when $H_1$ is a {\em lattice} inside $H_2$, this may be used to derive asymptotics for the number of points of $H_1$ inside a large ball.

See \cite{Duke-Rudnick-Sarnak, EM} for various instances of this technique in 
the context of Lie groups. 
A closely related idea, in a slightly different context, was introduced
and utilized by G.M. in \cite{Margulis-thesis}.

We shall need a slight variant, where we shall give
upper bounds on the counting functions for {\em cosets} of a lattice. 
 \begin{prop}  \label{sl32}
 Let $H \subset S \subset G$ with $S$ connected.  Let $\Lambda \subset S$ a discrete subgroup, $B \subset S$ an open set, $s_0 \in B$.

   Then:
   \begin{enumerate}
   \item \begin{equation} \label{lowerbound} \bigl|\Lambda \cap \tilde{B} \bigr| \gg \frac{\vol(B \cap S)}{\vol(\Lambda \backslash S)} - C
   \int_{\tilde{B}} \varphi_0^{\wk}(s)^{\rho}\operatorname{d}\! s\end{equation}
        \item
    the cardinality of any $1$-separated subset $\Delta\subset B\cap (\Lambda  s_0) $ is bounded by
\begin{equation} \label{upperbound2} |\Delta|^2 \ll 
\left( \frac{\vol(\tilde{B})^2}{\vol(\Lambda \backslash S)} + 
 \int_{\tilde{B}} \varphi_0^{\wk}(s s'^{-1})^{\rho }\operatorname{d}\! s\operatorname{d}\! s' \right)\end{equation}
\end{enumerate}
where:
\begin{itemize}
\item $s \mapsto \varphi_0(s)$ is the Harish-Chandra spherical function associated to $S$, and $\varphi_0^{\wk}$ the modification
for the case when $S$ has multiple simple factors (see \S \ref{matrixcoeff});
\item $\rho > 0$ depends only on the spectral gap
for $L_0^2(\Lambda \backslash S)$ and $C$ depends only on $G,H$. Here $L_0^2$ 
denotes the orthogonal complement of locally constant functions. 
\item $\tilde{B} = \{s \in S: d(s,1) \leq 1\}   B \{s \in S: d(s,1) \leq 1\}$.
\end{itemize}  Moreover, we interpret $\vol(\Lambda \backslash S) := \infty$
if $\Lambda$ is not of cofinite volume.
  \end{prop}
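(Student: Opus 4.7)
The plan is to reduce both bounds to spectral properties of a single ``bump periodization'' $\phi$ on $\Lambda\backslash S$, exploiting the spectral gap for the $S$-action on the orthogonal complement of locally constant functions from Proposition \ref{sectionslice}. First I will fix a smooth non-negative $\chi$ on $S$ supported in $\Omega := \{s : d(s,1) \leq 1\}$ with $\int_S\chi = 1$ and Sobolev norms controlled by constants depending only on $G,H$, and form $\phi(g) := \sum_{\lambda \in \Lambda}\chi(\lambda g)$, decomposed as $\phi = c + \phi^0$ with $c = 1/\vol(\Lambda\backslash S)$ (take $c=0$ if the volume is infinite). The matrix coefficient bound \eqref{mcweak} will then yield $|\langle R_s\phi^0,\phi^0\rangle| \ll \varphi_0^{\wk}(s)^{\rho}$ uniformly, for some $\rho > 0$ depending only on $G,H$.

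For (1), I will compute the integral $I := \int_{B^{-1}}\langle R_s\phi,\phi\rangle_{L^2(\Lambda\backslash S)}\,ds$ in two ways. Spectrally, $I = \vol(B)/\vol(\Lambda\backslash S) + O\bigl(\int_{\tilde B}\varphi_0^{\wk}(s)^{\rho}\,ds\bigr)$, after using that $\varphi_0^{\wk}$ is symmetric under inversion and $\tilde{B^{-1}} = \tilde{B}^{-1}$. On the other hand, unfolding one factor of $\phi$ yields $\langle R_s\phi,\phi\rangle = \sum_{\mu\in\Lambda}\int\chi(g)\chi(\mu g s)\,dg$, and a change of variables gives $I = \iint\chi(g)\chi(t)|\Lambda\cap tBg^{-1}|\,dt\,dg \leq |\Lambda\cap\tilde{B}|$, since $\chi$ is supported in $\Omega$ and $\Omega B\Omega = \tilde{B}$. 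Combining the two expressions and applying the conclusion to $B^{-1}$ in place of $B$ will produce \eqref{lowerbound}.

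For (2), my plan is to introduce the smooth function $F(g) := \int_{\tilde{B^{-1}}}\phi(gs)\,ds$ on $\Lambda\backslash S$. On one hand I will show $F(s_0)\geq|\Delta|$: writing $\Lambda_0 := \{\lambda : \lambda s_0 \in \Delta\}\subset Bs_0^{-1}$, for each $\lambda\in\Lambda_0$ the inclusion $(\lambda s_0)^{-1}\Omega \subset B^{-1}\Omega\subset\tilde{B^{-1}}$ gives $\Omega\subset\lambda s_0\tilde{B^{-1}}$, so that term of the defining sum contributes $\int_\Omega\chi=1$. On the other hand, orthogonality of $c$ and $L_0^2$ gives
\[
\|F\|_{L^2}^2 = \frac{\vol(\tilde B)^2}{\vol(\Lambda\backslash S)} + \Bigl\|\int_{\tilde{B^{-1}}}R_s\phi^0\,ds\Bigr\|^2,
\]
and expanding the second norm via $\langle R_{s_1}\phi^0,R_{s_2}\phi^0\rangle = \langle\phi^0,R_{s_1^{-1}s_2}\phi^0\rangle$, invoking the matrix coefficient bound, and substituting $s_i\mapsto s_i^{-1}$ will produce the upper bound $\ll \int_{\tilde B\times\tilde B}\varphi_0^{\wk}(ss'^{-1})^\rho\,ds\,ds'$. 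The identical argument applied to derivatives of $\phi$ controls the higher Sobolev norms of $F$ by the same right-hand side, after which a Sobolev embedding finishes: $|\Delta|^2\leq F(s_0)^2 \ll \Sob_k(F)^2 \ll$ the right-hand side of \eqref{upperbound2}.

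The main obstacle I expect is the pointwise-evaluation step for $F$ in (2): a naive Sobolev embedding on $\Lambda\backslash S$ depends on the injectivity radius at $s_0$, which is not uniformly bounded below. I plan to handle this by lifting $F$ to $S$ and estimating $F(s_0)$ via Sobolev norms on a fixed-size neighborhood of a chosen preimage of $s_0$, absorbing all such constants (including fixed factors involving $\Sob_k(\chi)$) into the implicit constants denoted $\ll$.
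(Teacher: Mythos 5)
Your argument for part (1) is sound and is essentially the paper's: one pairs $\pi_*\chi$ against an average of its right translates over (roughly) $B$, unfolds one factor to count lattice points, and splits the other factor into its constant projection plus the complement, which is controlled by \eqref{mcweak}. The problem is in part (2), at exactly the step you flagged. The pointwise bound $F(s_0)\geq|\Delta|$ is correct, but converting it into a bound by $\Sob_k(F)_{L^2(\Lambda\backslash S)}$ is not legitimate with constants depending only on $G,H$: a Sobolev embedding on $\Lambda\backslash S$ costs a power of the injectivity radius at $s_0$ (this is precisely why the norms \eqref{sobnormdef} carry the weight $\height(x)^d$), and your proposed repair of lifting to $S$ does not remove this cost. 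If $N=\tilde s_0\Omega$ is a unit neighbourhood of a lift $\tilde s_0$, then $\int_N|\partial^\alpha(\pi^*F)|^2 = \int_{\Lambda\backslash S}|\partial^\alpha F|^2\, m(x)\,dx$ where the covering multiplicity $m$ is as large as $\#\bigl(\Lambda\cap \tilde s_0\,\Omega\Omega^{-1}\tilde s_0^{-1}\bigr)$; since $s_0$ is an arbitrary point of $B$ and $\Lambda$ an arbitrary discrete subgroup, this count is unbounded (e.g.\ $S=\SL_2(\R)$, $\Lambda=\SL_2(\Z)$, $s_0=\mathrm{diag}(t,t^{-1})$ gives $m\gg t^2$). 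So the constant in $|\Delta|^2\leq F(s_0)^2\ll\Sob_k(F)^2$ cannot be absorbed into $\ll$, and the right-hand side of \eqref{upperbound2} has no term that could compensate.

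The fix is to never evaluate $F$ at a single point. You already have all the ingredients: the same computation that gives $F(s_0)\geq|\Delta|$ shows that the \emph{lift} $\pi^*F=\sum_{\lambda}f(\lambda\,\cdot\,)$ (with $f=\int\chi(\cdot\,s)\,ds$) is $\geq|\Delta|$ on every ball $\delta\,\Omega$ with $\delta\in\Delta$, and these $|\Delta|$ balls are \emph{disjoint in $S$} by the $1$-separation. Unfolding one factor,
$$\|F\|_{L^2(\Lambda\backslash S)}^2=\langle f,\pi^*\pi_*f\rangle_S\geq\sum_{\delta\in\Delta}\int_{\delta\Omega}\bigl(\pi^*F\bigr)\cdot f\gg|\Delta|^2\vol(\Omega),$$
since each $\delta\Omega$ lies in the support region where $f\gg1$. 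This replaces the pointwise evaluation by an honest $L^2$ lower bound upstairs on $S$, where the small injectivity radius of the quotient is irrelevant; combined with your (correct) spectral upper bound on $\|F\|_{L^2}^2$, it yields \eqref{upperbound2}. This is the route the paper takes.
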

Note that one deduces from this bounds for the size of any $\delta$-separated  subset
of $\Lambda s_0 \cap B$, for $\delta < 1$.  Indeed, if there exists a $\delta$-separated
subset of a given metric space of size $N_1$, there exists a $1$-separated subset of size $\geq N_1/N_2$, where $N_2$ is the largest possible size of a $\delta$-separated set within a $1$-ball.

  \proof

  Let $\chi$ be a non-negative smooth function supported in the neighbourhood
$\Omega = \{s \in S: d(s,1) \leq 1/2\}$, satisfying $\int \chi =  1$.  Note that $\Omega = \Omega^{-1}$.
Set $$B_1 = B. \Omega, B_2 = \Omega B_1 = \Omega. B. \Omega,
B_3 = \Omega. B_2 . \Omega,$$ and note that $B_1, B_2, B_3$ are all contained in $\tilde{B}$.

  Let $\pi: S  \rightarrow \Lambda \backslash S$ be the projection and
  $\pi_*: C_c(S) \rightarrow C_c(\Lambda\backslash S)$ be the natural projection map,
  i.e.\ $\pi_*(f)(\Lambda g) = \sum_{\lambda \in \Lambda} f(\lambda g)$.
 Let $\pi^*$ be the pullback
  $C(\Lambda \backslash S) \rightarrow C(S)$.
  Then, for $f_1 \in C_c(S), f_2 \in C_c(\Lambda \backslash S)$, we have:
  $$\langle \pi_*(f_1),f_2  \rangle_{\Lambda \backslash S} = \langle f_1, \pi^* f_2 \rangle_{S}$$

Let $f(s) = \int_{B_2} \chi(ss_1^{-1}) \operatorname{d}\!s_1 \in  C_c(S)$.
Then $\supp(f) \subset \Omega B_2$ and $f \geq 1_{B_1} $, where $1_{B_1}$ is the characteristic function of $B_1$.
Indeed, for $s_1 \in B_1$ we have $f(s_1) = \int_{ B_2} \chi( s_1 s^{-1})\operatorname{d}\!s  \geq
\int_{\Omega s_1} \chi(s_1 s^{-1})\operatorname{d}\!s  = 1$.  By these definitions and since $0 \leq f \leq 1$ always,
we have that
\begin{multline*}
 \langle \pi_* f, \pi_* \chi \rangle_{\Lambda \backslash S} = \langle \pi^* \pi_* f, \chi\rangle_{S}
\\ =  \sum_{\lambda \in \Lambda} \int_{s \in S} f(\lambda s)  \chi(s) ds \leq |\Lambda \cap \mathrm{supp}(f). \Omega|
\leq |\Lambda \cap B_3|.
\end{multline*}

Also note that $\pi^*\pi_*1_{B_1}(s)=\sum_{\lambda\in\Lambda}1_{B_1}(\lambda s)=\bigl|\Lambda s \cap B_1\bigr|$ and so
\begin{multline} \label{dodo}
\langle \pi_* f, \pi_* f \rangle_{\Lambda \backslash S} = \langle f, \pi^* \pi_* f \rangle_{S}
\geq \langle 1_{B_1}, \pi^* \pi_* 1_{B_1} \rangle_{S} =\\ \int_{B_1}
\bigl|\Lambda s \cap B_1\bigr| \operatorname{d}\!s  =
\int_{ B_1} \bigl| \Lambda \cap B_1s^{-1}\bigr|  \operatorname{d}\!s
\end{multline}
Next we need to understand the relationship between
separated subsets of $\Lambda s_0\cap B$ and the expression $\bigl| \Lambda \cap B_1s^{-1}\bigr|$.
Suppose that there exists a $1$-separated subset $\Delta \subset \Lambda s_0 \cap B$,
for some $s_0 \in B$.  Clearly for each $\delta=\lambda s_0 \in \Delta$, we have
$\Delta \delta^{-1} \subset \Lambda s_0 \delta^{-1} \cap B \delta^{-1} \subset \Lambda \cap B \delta^{-1} $.   
In particular, whenever $s \in \delta \Omega$, we must have
$\Delta \delta^{-1} \subset \Lambda \cap B_1 s^{-1}$ and so $|\Lambda\cap B_1 s^{-1}|\geq |\Delta|$.
Because the set $\Delta$ is $1$-separated, the balls $\delta \Omega$ for $\delta\in\Delta$
are disjoint. Each such $\delta \Omega$ with $\delta \in \Delta \subset B$ is contained in $B \Omega \subset B_1$.
Therefore, the integral on the right hand side of \eqref{dodo} is $\geq |\Delta|^2 . \vol(\Omega)$.

We have shown that
$$|\Lambda \cap B_3| \geq \langle \pi_* f, \pi_* \chi \rangle \mbox{ and }
|\Delta|^2 \leq \vol(\Omega)^{-1} \| \pi_* f\|_{L^2(\Lambda \backslash S)}^2.$$
It remains to estimate $\| \pi_* f\|_{L^2(\Lambda \backslash S)}$
and $\langle \pi_* f, \pi_* \chi \rangle$.

 The
integral of $f$ equals $\vol(B_2)$. Consequently, the
projection of $\pi_* f$
onto the locally constant functions has $L^2$-norm $\asymp \frac{\vol(B_2)}{\vol(\Lambda \backslash S)^{1/2}}$. 

Similarly, if $\Proj$ denotes the orthogonal projection from $L^2(\Lambda \backslash S)$ to constant functions,
 we see that
$\langle \Proj \pi_* f, \Proj \pi_* \chi \rangle \gg \frac{\vol(B \cap S)}{\vol(\Lambda \backslash S)}$.

 To handle the projection of $\pi_* f$ onto the orthocomplement of the constants, we use standard bounds on matrix coefficients (see \eqref{mc1} -- \eqref{mc3}
and \eqref{mcweak}). 
We write  $\Proj_0$ for the projection onto the orthogonal complement of the locally constant functions. 
Then the bounds on the matrix coefficients show that there is $\rho > 0$, depending only on the spectral gap, so that:
 $$| \langle s.  \Proj_0 \pi_* \chi ,  \Proj_0 \pi_* \chi \rangle  | \ll \varphi_0^{\wk}(s)^{\rho}  \ \ \ (s \in G). $$
Recall that $f$ was defined as the integral of the right translate of $\chi$ by $s_1^{-1}$ with $s_1\in B_2$,
which obviously give the same description of $\pi_*f$ in terms of $\pi_*\chi$. Therefore, we have proved
the claims \eqref{lowerbound}
  and \eqref{upperbound2} -- at least with certain instances of $\tilde{B}$ replaced by $B_2$.
  But it is easy to see that this is harmless because of the inclusion  $B_2 \subset \tilde{B}$.
  \qed

\section{Effective closing Lemma.}

Let us recall the closing lemma for hyperbolic flows. Let $M$ be a compact manifold; let $h_t: M \rightarrow M$ be a one-parameter flow of smooth diffeomorphisms with hyperbolicity transverse to the flow direction.  Suppose
that $x \in M$ is so that the distance between $h_T x$ and $x$, measured w.r.t.
a fixed Riemannian metric on $M$, is sufficiently small.  Then there exists $y$ close to $x$
and $T'$ close to $T$ so that $h_{T'}y = y$, i.e. $y$ has a periodic orbit under $\{h_t\}$.

We shall need a method for producing
periodic $S$-orbits, which is, in a certain sense, an analogue of this result.  This is Proposition \ref{CaseIIlemma} (``\propHname'' from the outline) -- it asserts that,
if $x \in \Gamma \backslash G$ and if we are given a ``sufficiently large'' collection of elements $s_i \in S$ so that $x s_i$  are all mutually close to each other, then there exists $x'$ near $x$ so that $x' S$ is closed.

\begin{prop} \label{CaseIIlemma}
Let $H \subset S \subset G$, with $S$ connected.
Let $\delta \leq 1 \leq N$. 
There exists some $T_0 = T_0(\Gamma, G, H,N)$ with the following
property:

 Let $T\geq T_0$ and let $v = \vol \Sball(T)$. Suppose that
$\{s_1,\ldots,s_k\}\subset \Sball(T)$ is $\frac1{10}$-separated, that
$k\geq v^{1-\delta}$, and that there exists $x \in \Xcompact$ so
that $x s_i \stackrel{T^{-N}}{\sim} x s_j$.
Then there is $x' \stackrel{T^{-N_\uparrow}}{\sim} x$ so that $x' S$
is a closed orbit of volume $\leq T^{\delta_{\downarrow}}$.
\end{prop}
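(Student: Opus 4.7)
The plan is the three-step strategy outlined in the paper immediately after the statement: produce many elements of $\Gamma$ that almost lie on a conjugate of $S$, promote these to elements lying exactly on a nearby conjugate via a Diophantine argument, and invoke the spectral-gap lattice-point counting of Proposition \ref{sl32} to deduce that the corresponding arithmetic subgroup is a lattice of small covolume. To begin, fix $g_0 \in G$ with $\Gamma g_0 = x$; since $x \in \Xcompact$ one may arrange $\|g_0\| \ll 1$. Each near-coincidence $x s_i \stackrel{T^{-N}}{\sim} x s_j$ yields some $\gamma_{ij} \in \Gamma$ with $g_0^{-1}\gamma_{ij}g_0 \in s_j s_i^{-1} \cdot \{g : d(g,1) \leq T^{-N+\star}\}$ by \eqref{distortion}. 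Fixing $i = 1$ and varying $j$, the elements $\eta_j := g_0^{-1}\gamma_{1j}g_0$ form a subset of $g_0^{-1}\Gamma g_0$ of cardinality $k \geq v^{1-\delta}$, contained in a $T^{-N+\star}$-tube around $\Sball(\star T) \subset S$ and whose projections to $S$ remain $\tfrac{1}{20}$-separated.

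The core step is to promote this approximate incidence with $S$ to an exact incidence with a nearby conjugate. By Lemma \ref{assumption-2} the group $S$ is the identity component of the real points of an algebraic subgroup $\mathbf{S} \subset \mathbf{G}$, and $\mathbf{S}$ varies in only finitely many $G$-conjugacy classes. Applied uniformly to the real-analytic map sending $g$ to the tuple of defining polynomials of $g \mathbf{S} g^{-1} \subset \GL_N$, the Lojasiewicz inequality \eqref{loj} shows that any $\GL_N$-matrix of norm $\leq T^\star$ within $T^{-N+\star}$ of $g_0 \mathbf{S} g_0^{-1}$ lies within $T^{-N_\uparrow}$ of $g_0' \mathbf{S} g_0'^{-1}$ for some $g_0' \stackrel{T^{-N_\uparrow}}{\sim} g_0$. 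Arithmeticity of $\Gamma$ now converts approximate into exact: the matrices $\rho(\gamma_{1j})$ have entries in $\tfrac{1}{D_0}\Z$ for a constant $D_0 = D_0(\Gamma)$, so being within $T^{-N_\uparrow}$ of a fixed algebraic subvariety of $\GL_N$ cut out by polynomials of bounded degree --- whose defining coefficients can be adjusted along the conjugation parameter to acquire bounded denominators --- forces each $\gamma_{1j}$ to lie exactly on $g_0' \mathbf{S} g_0'^{-1}$, once $T \geq T_0(\Gamma, G, H, N)$; this is the rigorous version of the ``three collinear $\Z^2$-points'' toy model of the outline. Setting $x' := \Gamma g_0'$ and $\Lambda := (g_0')^{-1}\Gamma g_0' \cap S$ yields $x' \stackrel{T^{-N_\uparrow}}{\sim} x$ and a $\tfrac{1}{20}$-separated subset $\Delta \subset \Lambda \cap \Sball(\star T)$ of size $\geq v^{1-\delta}/2$.

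For the closing argument I apply Proposition \ref{sl32}(2) to $\Lambda$ with $B = \Sball(\star T)$; combined with the matrix-coefficient estimate in \eqref{volest}, the bound \eqref{upperbound2} becomes
\begin{equation*}
 v^{2(1-\delta)} \ll |\Delta|^2 \ll \frac{\vol(\tilde{B})^2}{\vol(\Lambda \backslash S)} + v^2 T^{-\zeta_S \rho},
\end{equation*}
where $\zeta_S, \rho > 0$ depend only on $G, H$ and $\vol(\Lambda \backslash S) = \infty$ when $\Lambda$ is not of cofinite volume. For $\delta$ small enough in terms of $\zeta_S \rho$ and $A_S$, the error term is negligible compared to $v^{2(1-\delta)}$, forcing $\Lambda$ to be a lattice in $S$ and $\vol(\Lambda \backslash S) = \vol(x' S) \ll v^{2\delta} \leq T^{2 A_S \delta}$, which has the required shape $T^{\delta_\downarrow}$; for larger $\delta$ one enlarges the function $\delta \mapsto \delta_\downarrow$ so as to absorb the trivial cases.

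The main obstacle is the middle step. The first step is bookkeeping via the triangle inequality, and the third is a direct invocation of the spectral-gap counting machinery of \S \ref{lattice}. The second step, however, requires a uniform effective ``almost-algebraic implies algebraic'' statement over the family of conjugates $\{g \mathbf{S} g^{-1}\}_{g \in G}$ --- where the finiteness assertion of Lemma \ref{assumption-2} is essential to keep implicit constants independent of $S$ --- together with a Diophantine height argument that turns the bounded denominators of $\rho(\Gamma)$ into an exact algebraic incidence. It is through this denominator bound that $T_0$ acquires its dependence on $\Gamma$.
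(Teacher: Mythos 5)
Your first and third steps track the paper's argument: the construction of the $\gamma_{ij}$ with $\|\gamma_{ij}\| \le T^{\star}$ and the cocycle relation, and the final application of Proposition \ref{sl32}(2) together with the volume and matrix-coefficient estimates of \eqref{volest} and the uniform spectral gap of Proposition \ref{sectionslice}, are essentially as in the text. The gap is in your middle step, which is where all the work lies. The statement you extract from Lojasiewicz --- that a matrix of norm $\le T^{\star}$ within $T^{-N+\star}$ of $g_0\mathbf{S}g_0^{-1}$ is within $T^{-N_\uparrow}$ of $g_0'\mathbf{S}g_0'^{-1}$ for some $g_0'$ near $g_0$ --- is vacuous as written (take $g_0'=g_0$), and what is actually needed is much stronger: a \emph{single} $g_0'$ close to $g_0$ such that \emph{all} the $\gamma_{1j}$ lie \emph{exactly} in $g_0'\tilde{S}g_0'^{-1}$. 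Your proposed mechanism for the exactness --- that the defining polynomials of $g_0'\mathbf{S}g_0'^{-1}$ ``can be adjusted along the conjugation parameter to acquire bounded denominators'' --- does not work: $g_0'$ is in general a transcendental point, the variety $g_0'\mathbf{S}g_0'^{-1}$ is not defined over $\Q$, and there is no reason a small perturbation of its equations passes through the required lattice points while having controlled height. In the toy model of three near-collinear points of $\Z^2$, the rational object is the line through two of the integer points; you have not identified the analogous rational object here.

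The paper locates the rationality differently. One linearizes via Chevalley: $\tilde S$ is the stabilizer of a vector $v_S \in V = (\wedge^{\dim\mathfrak{s}}\mathfrak{g})^{\otimes 2}$, so the condition $\gamma_{ij}\in g\tilde S g^{-1}$ becomes the linear condition $(\gamma_{ij}-1)(g v_S)=0$. An effective Noetherian argument (\S\ref{Arg:EffectiveNoetherian}) reduces to a bounded number of the $\gamma_{ij}$, producing a single linear map $A=\oplus_i(\delta_i-1)$ which is \emph{rational with bounded denominators and entries of size $T^{\star}$} --- this is where arithmeticity of $\Gamma$ enters --- and which almost annihilates $g_0 v_S$. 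Lemma \ref{dioph}, a lower bound on the nonzero singular values of such a rational matrix (the product of the Galois conjugates of the eigenvalues of $AA^{t}$ is a nonzero integer), then yields an exact kernel vector $v_S'$ within $T^{-N_\uparrow}$ of $g_0v_S$; the kernel of $A$ is the rational object. Even then one is not done: $v_S'$ need not lie on the orbit $G\cdot v_S$, so it does not yet correspond to a conjugate of $\tilde S$. The paper uses an equivariant projection onto $G\cdot v_S$ (\S\ref{eqproj}, a Luna-slice type construction) to replace $v_S'$ by $g_1 v_S$ with $g_1$ near $g_0$, and only then concludes $\gamma_{ij}\in g_1\tilde S g_1^{-1}$. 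Both the reduction to a single rational linear system and the projection back onto the orbit are missing from your argument, and I do not see how to supply them within the framework you set up.
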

Recall that the $\uparrow, \downarrow$ notation was defined in
\S \ref{notation}.   In particular, the result is vacuous
unless $\delta$ is small enough. 
\subsection{Preparations for the proof.}

In the course of proof, we shall use the phrase $T$ sufficiently
large to mean that $T \geq T_0(\Gamma,G,H,N)$; we are free to assume that $T$ is sufficiently large. This allows us to replace any term of the form $cT^{N_\uparrow}$ by $T^{N_\uparrow}$.  Similarly, we shall say $N$ is sufficiently large
if $N \geq N_0(G,H)$. In view of the notation $N_{\uparrow}$,
we are free to assume that $N$ is sufficiently large. 

Fix $g_0 \in G$ so that $\Gamma g_0 = x$.
Here $g_0$ may be chosen
in a compact subset of $G$, depending only on $\Gamma, G$ (see discussion 
subsequent to Lemma \ref{measureoutsidecompact}.)
There is $\gamma_{ij} \in \Gamma$ so that
$ \gamma_{ij} g_0 s_i  \stackrel{T^{-N}}{\sim}  g_0 s_j$ inside $G$.
Thus, by \eqref{distortion2},  $\gamma_{ij} \stackrel{T^{-N_{\uparrow}}}{\sim} g_0 s_j s_i^{-1} g_0^{-1}$
for sufficiently large $T$ and $N$. The assumption $s_i\in \Sball(T)
\implies \|s_i\| \leq T$ together with \eqref{distortion} implies that
 $\|\gamma_{ij} \| \leq \constc \label{aconstant} T^2$
 and  $\gamma_{jk}  \gamma_{ij}  \stackrel{ T^{-N_{\uparrow}}}{\sim} \gamma_{ik}$;
which implies that
 $ \gamma_{jk}\gamma_{ij}  = \gamma_{ik}$
 if $T$ and $N$ are sufficiently large.

To sum up,  in the setting of Proposition \ref{CaseIIlemma}
we have produced a collection of elements $\gamma_{ij}  \in \Gamma$
so that:
 \begin{eqnarray}
\label{aconstantequation} \| \gamma_{ij}\| \leq \ref{aconstant} T^2 \\
 \label{multiplicativity} \gamma_{jk}.\gamma_{ij}  = \gamma_{ik}. \\
 \label{eq:gammadef} \gamma_{ij} g_0 s_i  \stackrel{T^{-N}}{\sim}  g_0 s_j 
\end{eqnarray}

Our proof proceeds as follows:  First, we show we may slightly adjust
$g_0$ to a nearby $g_1$ so that, in fact, $\gamma_{ij} \in g_1 \tilde{S} g_1^{-1}$. Here $\tilde{S}$ is the 
normalizer of $S$ as defined in \S \ref{sec: intermediate}.
Next, we show that $x' = \Gamma g_1$ has the property required:
$x' S$ is a closed orbit of small volume.

\subsection{Proof of Proposition \ref{CaseIIlemma}.} \label{c2lp}
We observe in advance that we will defer two results in the course of proof to the end of the section,
to avoid interrupting the flow.

 Let $e_1, \dots, e_d$ be a basis for $\mathfrak{s}$, the Lie algebra of $S$.
Let $V := ( \wedge^{\dim(\mathfrak{s})}\mathfrak{g} )^{\otimes 2}$,  and set $v_S = (e_1 \wedge \dots \wedge e_d)^{\otimes 2} \in V_{\R} := V \otimes_{\Q} \R$. Then $\G$ acts on the $\Q$-vector space $V$,
$G$ acts on the $\R$-vector space $V_{\R}$, and
the stabilizer of the vector $v_S$ is precisely $\tilde{S}$;
the orbit $G. v_S$ is a smooth submanifold of $V_{\R}$,
as follows from general facts about orbits for algebraic groups. 

  There exists a $G$-equivariant projection map from an open neighbourhood
of $G. v_S$ to $G. v_S$. A precise statement
and self-contained discussion of what we need in \S \ref{eqproj}. 
See \cite[Theorem 2.7]{Luna} for much stronger and general statements. 

For any finite subset $F$ of the $\gamma_{ij}$s,
let \begin{equation} \label{Xdef} X(F) = \{g \in G: F \subset
 g \tilde{S} g^{-1}\}.\end{equation} (We work with $\tilde{S}$ instead of $S$ only to make
the following argument as explicit as possible.)
There is an integer $\consta \label{effnoethconstant}$%
\index{xappa71@$\ref{effnoethconstant}$, Noetherian argument}, depending only on $G$ and $H$ so that:
\begin{equation} \label{Xset} F \subset \{\gamma_{ij}\}, \ \ |F| \leq \ref{effnoethconstant}, \ \ X(F) = X(\{\gamma_{ij}\}).\end{equation} This is an example of an ``effective'' Noetherian argument. Start with $F  = \emptyset$
and adjoin one $\gamma_{ij}$ at a time to $F$. The resulting sets $X(F)$
are the real points of a descending sequence of algebraic varieties, which must terminate; \eqref{Xset} gives an explicit estimate for how far one must go.
We give a precise argument in \S \ref{Arg:EffectiveNoetherian}\hyperlink{Arg:EffectiveNoetherian}.
Take such an $F = \{\delta_1, \dots, \delta_m\}$, where $m \leq \ref{effnoethconstant}$.

The Euclidean norm on $\mathfrak{g}$ induces one on $V_{\R}$ and $V_{\R}^m$.
 Consider the map
$$V_{\R} \xrightarrow{A} V_{\R}^m$$
where $A = \oplus_{i=1}^{m} (\delta_i -1)$. The element $g_0 v_S$ almost belongs to the kernel of $A$: in fact $\|A.  g_0 v_S\| \ll T^{-N_\uparrow}$, as follows from \eqref{eq:gammadef} and the fact $\|s_i\| \leq T$. 

Moreover, with respect to a fixed basis for the $\Q$-vector space
$V$, all the entries of $A$ are rational numbers of numerator and denominator $\leq T^*$. This follows from \eqref{aconstantequation}, and the fact that,
since $\Gamma$ is arithmetic, the denominators of matrix entries of $A$ are bounded below. 

 It
 follows that (see Lemma \ref{dioph} at the end of the present section for an explication) there is a nearby vector in the kernel of $A$
which belongs to the kernel, i.e. an element $v_S' \in V_{\R}$
such that
\begin{equation} \label{deltaequation}\delta_i v_S' = v_S',\ 1 \leq i \leq m
\mbox{ and } \|v_S' -g_0 v_S\| \ll T^{-N_\uparrow}.\end{equation}

We claim that there exists $g_1$, satisfying 
$g_1 \stackrel{T^{-N_\uparrow}}{\sim} g_0$ for large enough $T$ and $N$, so that $v_S' := g_1 v_S$
also satisfies \eqref{deltaequation}.  This claim will imply that
 $g_1\in X(F) \neq \emptyset$ (see \eqref{Xset}) and therefore that
 $\gamma_{ij} \in g_1 \tilde{S} g_1^{-1}$.

We have noted that $g_0$ may be chosen to belong to a compact subset of $G$ depending only on $H$ and $\Gamma$. Therefore, $ v_S$ and so also $g_0^{-1} v_S'$ belongs to a fixed compact subset of $V_{\R}$
depending only on $\Gamma$, $G$, $H$, and $S$. The equivariant projection map 
$\Pi$ mentioned above restricted to this subset
is Lipshitz, with a constant depending only on  $\Gamma$, $G$, $H$, and $S$. Therefore -- replacing $v_S'$ by $g_0 \Pi(g_0^{-1} v_S')$ -- we may assume that
$v_S' \in G. v_S$ without changing the fact that  $\|v_S' - g_0 v_S\| \ll T^{-N_\uparrow}$.
Next, the map $g \mapsto g. v_S$ is a submersive map from $G$ to $G. v_S$, in a neighbourhood of $g_0$. 
We may thereby find $g_1$ near $g_0$ so that $v_S' = g_1 v_S$. The claim after \eqref{deltaequation} follows, at least for $T$ sufficiently large.

Having modified $g_0$ to a nearby  $g_1$, we now proceed to show that
$x' = \Gamma g_1$ has the properties required by the Proposition. 
Precisely, we shall show that:
\begin{equation}\label{desid} \Lambda:=g_1^{-1} \Gamma g_1 \cap S
\mbox{ is a lattice within }S \mbox{ of covolume } \ll T^{\delta_{\downarrow}.}\end{equation}
This will establish the proposition. 

By construction, $g_1 \stackrel{T^{-N_\uparrow}}{\sim} g_0$ and $\gamma_{ij} \in g_1 \tilde{S} g_1^{-1}$.
Set $ \gamma_{ij}' = g_1^{-1} \gamma_{ij} g_1, s_i' = \gamma_{1i}' s_1$.
Then $\gamma_{ij}', s_{i}' \in \tilde{S}$ and $s_i' = \gamma_{ji}' s_j'$ by \eqref{multiplicativity} for all pairs $i,j$.
Also, $s_i' \stackrel{T^{-N_\uparrow}}{\sim} s_i$ for sufficiently large $T$, because $s_i \stackrel{T^{-N_{\uparrow}}}{\sim}
g_0^{-1} \gamma_{1i} g_0 s_1$ (see \eqref{eq:gammadef}).
Consequently, for $T$ sufficiently large, the $s_i'$ are
 $\frac{1}{20}$-separated and belong to 
$B_{S}(\constc \label{grc} T)$.  \footnote{Indeed, the fact that
 $s_i' \stackrel{T^{-N_\uparrow}}{\sim} s_i$ forces
$s_i' = s_i \omega_i$, where $\omega_i \stackrel{T^{-N_{\uparrow}}}{\sim} 1$.
For $T$ sufficiently large, this forces $\omega_i \in S$.
Therefore, by property (3) of \S \ref{intermediateballs}
and the fact $s_i \in B_S(T)$, we conclude that
$s' \in B_S(\ref{grc} T)$. }

We are going to apply the upper bound (part 2) of Proposition \ref{sl32} with
 $\Lambda$ as in \eqref{desid},  $B := B_{S}(\ref{grc} T)$
and with $\{s_i'\}$ as the separated subset of $B \cap \Lambda s_1'$. 
By the remark after Proposition \ref{sl32}, the proposition
also yields bounds for the cardinality of any $\frac{1}{20}$-separated subset, such as $\{s_i'\}$; the upper bound is weaker by
a constant that depends only on $H,G$.
  Notations as in that proposition,
the ball ``$\tilde{B}$'' is contained in $B_{S}(\constc \label{3const} T)$, for $T$ sufficiently large, as we see by applying property (3)
of \S \ref{intermediateballs}. 

In order to apply that Proposition, we apply Proposition \ref{sectionslice}
with $x = \Gamma g_1$ to show that the action
of $S$ on the orthogonal complement of the locally constant functions in
 $L^2(\Lambda \backslash S)$ has a
spectral gap (depending only on $G,H$).
Let $\rho$ be as in the statement of Proposition \ref{sl32}; it depends only on this spectral gap, and, therefore, only on   $H,G$.

  We note that, by results established in \S \ref{volume},
  we have 
  \begin{align} 
\frac{1}{\vol B_{S}(\ref{3const} T)^2} \int_{B_{S}(\ref{3const} T)}
  (\varphi_0^{\wk}(g_1 g_2^{-1}))^{\rho} \operatorname{d}\!g_1\operatorname{d}\!g_2\ll  T^{- \zeta_S \rho} \ll v^{-\frac{\zeta_S \rho}{2 A_S}}\end{align}
for suitable $\zeta_S, A_S> 0$ depending on $S$.

Apply Proposition \ref{sl32}, (2).
We conclude that:
  $$v^{1-\delta} \leq \bigl|\{s_i\}\bigr| \ll
\frac{v}{\sqrt{\vol(\Lambda \backslash {S})}} + v v^{-\zeta_S \rho/4A_S}$$
 In other terms, $ \vol(\Lambda \backslash {S})^{-1/2}  + v^{-\frac{\zeta_S \rho}{4A_S}} \gg v^{-\delta}$.  In particular, if $\delta < \zeta_S \rho / 4 A_S$ and $T$ is sufficiently large, we see that 
 $\vol(\Lambda \backslash {S}) \ll v^{2 \delta}$.  In particular, $\Lambda$ must be a lattice in ${S}$: our conventions dictated that its covolume is $\infty$ if this is not the case. 

  This concludes the proof of \eqref{desid}, and therefore the Proposition.  
 \qed

We now establish some (simple) results that were used in the above proof. 
\subsection{A vector almost in the kernel of a rational matrix,
is near a vector in the kernel.}
In the following statement, $\|\cdot\|$ refers to the standard
Euclidean norm on $\R^n$ and $\R^m$. 
\begin{lem}\label{dioph}
Let $A \in M_{n \times m}(\Z)$ be an $n \times m$ integer matrix,
all of whose entries are $\leq E$ in absolute value. Suppose $v \in
\R^m$ with $\|A v\| \leq \delta$. Then there exists $v_0 \in
\mathrm{ker}(A)$ with $\|v - v_0\| \leq \delta (nm)^{n/2} E^n$.
\end{lem}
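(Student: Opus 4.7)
The plan is to decompose $v$ orthogonally with respect to $\ker(A)$ and bound the nonzero part using a lower estimate on the smallest nonzero singular value of $A$. Specifically, I would write $v = v_0 + v_1$ where $v_0 \in \ker(A)$ and $v_1 \in \ker(A)^{\perp}$. Then $\|v - v_0\| = \|v_1\|$ and $A v_1 = A v$, so $\|A v_1\| \leq \delta$. The restriction of $A$ to $\ker(A)^{\perp}$ is injective, so if $\sigma_{\min}$ denotes its smallest (nonzero) singular value we obtain $\|v_1\| \leq \sigma_{\min}^{-1} \|A v_1\| \leq \sigma_{\min}^{-1} \delta$.

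The remaining task is to bound $\sigma_{\min}$ from below, and this is the only substantive point. Let $r = \mathrm{rank}(A) \leq n$, and let $\sigma_1 \geq \cdots \geq \sigma_r > 0$ be the nonzero singular values of $A$. By the Cauchy--Binet formula, the product $\prod_{i=1}^{r} \sigma_i^2$ equals
\[
\sum_{\substack{I \subset \{1,\dots,n\}\\ |I|=r}}\ \sum_{\substack{J \subset \{1,\dots,m\}\\ |J|=r}} \det(A_{I,J})^2,
\]
where $A_{I,J}$ is the $r \times r$ submatrix of $A$ with rows in $I$ and columns in $J$. Since $A$ has integer entries and rank $r$, at least one $r \times r$ minor is a nonzero integer, so this sum is at least $1$; hence $\prod_{i=1}^{r} \sigma_i \geq 1$. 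On the other hand each $\sigma_i$ is bounded above by the Frobenius norm: $\sigma_i \leq \|A\|_F \leq E\sqrt{nm}$. Therefore
\[
\sigma_{\min} = \sigma_r \geq \prod_{i=1}^{r-1} \sigma_i^{-1} \geq \bigl(E\sqrt{nm}\bigr)^{-(r-1)} \geq \bigl(E\sqrt{nm}\bigr)^{-(n-1)}.
\]
(We may harmlessly assume $E \geq 1$; otherwise $A = 0$ and $v_0 = v$ works.)

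Combining these two bounds gives $\|v - v_0\| = \|v_1\| \leq \delta \, (nm)^{(n-1)/2} E^{n-1}$, which is in fact stronger than the claimed bound $\delta (nm)^{n/2} E^n$. The only nontrivial step is the Cauchy--Binet lower bound for $\sigma_{\min}$; everything else is bookkeeping. I do not anticipate any real obstacle, since the integrality of $A$ immediately supplies the needed arithmetic lower bound on the minors.
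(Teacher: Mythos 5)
Your proof is correct and follows essentially the same route as the paper's: both arguments reduce to a lower bound on the smallest nonzero singular value of $A$ obtained from integrality, and then project $v$ onto $\ker(A)$ along the singular directions. The only difference is cosmetic — you certify that $\prod_{i=1}^{r}\sigma_i^2$ is a positive integer via Cauchy--Binet and a nonzero $r\times r$ minor, whereas the paper argues that the nonzero eigenvalues of $AA^{t}$ are algebraic integers whose conjugates multiply to a nonzero integer; both yield the same (in your case slightly sharper) bound.
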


\begin{proof} There exists an orthonormal basis $v_1, \dots, v_m$ for
$\R^m$ with the property that $A v_i$ are orthogonal in $\R^n$ and
so that the lengths $\sigma_i  := \|A v_i\|$ are decreasing. The
$\sigma_i$ are the singular values and $\sigma_i^2$ are the eigenvalues of 
$A A^{t}$ (see \S \ref{eucl}). 

The matrix $A. A^{t}$ is an $n \times n$ matrix, all of whose
entries are integers $\leq mE^2$ in absolute value. The absolute
value of any eigenvalue is thereby bounded by $nm E^2$. Moreover,
the eigenvalues are algebraic integers; in particular, the product
of all their algebraic conjugates is a nonzero integer. It follows
from this that the absolute value of any nonzero eigenvalue of $A.
A^{t}$ is bounded {\em below} by $(nmE^2)^{-n}$. Therefore, each
nonzero $|\sigma_i| \geq (nm E^2)^{-n/2}$.

Now take $v \in \R^m$ so that $\| A v\| \leq \delta$. The projection
$v'$ of $v$ onto the span of all $v_i$ with $Av_i \neq 0$ therefore has length
$\leq \delta (nm)^{n/2} E^{n}$. Set $v_0 = v-v'$.
\end{proof}

\subsection{Existence of an equivariant projection.}\label{eqproj}
Notations as in the proof. We shall show the existence of an equivariant
projection from a fixed small neighbourhood of $v_S$, onto $G. v_S$. This suffices
for the argument in the text, although one can do much better, see \cite{Luna}. 

The map $g \mapsto g. v_S$ induces a map on tangent spaces and in particular
a map $\mathfrak{g} \rightarrow V_{\R}$. Since $S$ is semisimple, we may choose an $S$-invariant complement
$W$ to the image of $\mathfrak{g}$.  Let $G \times_{S} W$ be the quotient of pairs $(g,w)$
by the equivalence $(gs, s^{-1} w) \sim (g,w)$; it is a vector bundle over $G/S$ and so a manifold. Consider the map $G \times_{S} W \rightarrow V_{\R}$,
given by $(g, w) \mapsto g. (v_S+w)$. The differential of this map is an isomorphism at $(g,w)=(1,0)$
and therefore, by the implicit function theorem, it defines
a diffeomorphism from a neighbourhood $\mathcal{N}_{1} \in G \times_S W$
of $(1,0)$, to a neighbourhood $\mathcal{N}_2$ of $v_S$.  In this neighbourhood there exists a projection that
is, at least locally, equivariant, which is given by
\begin{equation} \label{proj} \Pi: g(v_S +w) \in \mathcal{N}_2
 \mapsto g v_S. \end{equation}

This crude construction already suffices for our simple application. Indeed,
it implies the following weak Lemma, which is easily seen to imply our application:
\begin{lem}\label{lem-eqproj}
There exists a constant $\consta\label{projection-const}>0$%
\index{xappa8@$\ref{projection-const}$, Equivariant projection, Lemma \ref{lem-eqproj}}
with the following property.
Let $v \in \mathcal{N}_2$ and $\delta\in\Gamma$.  
If $\delta v = v$, and $\|v-v_S\| \leq \|\delta\|^{-\ref{projection-const}}$, 
then also $\delta \Pi(v) = \Pi(v)$.
\end{lem}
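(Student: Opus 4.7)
The plan is a two-step argument combining a Diophantine estimate (to get $\delta \in \tilde{S}$) with the local equivariance of the slice construction (to transfer stability from $v_S$ to $\Pi(v)$).

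For the Diophantine step, rewrite $\delta v = v$ as
\[
(\delta - 1)v_S = -(\delta - 1)(v - v_S),
\]
so that, using \eqref{distortion} to bound $\|\delta - 1\|_{\mathrm{op}} \ll \|\delta\|^{a}$ for some absolute $a$, together with the hypothesis $\|v - v_S\| \leq \|\delta\|^{-\ref{projection-const}}$, we obtain
\[
\|(\delta - 1)v_S\| \ll \|\delta\|^{a - \ref{projection-const}}.
\]
Since $\delta \in \Gamma$ is rational with denominators controlled polynomially in $\|\delta\|$ and $v_S$ is a fixed rational vector, $(\delta - 1)v_S$ is a rational vector with polynomially bounded denominator; a Diophantine lower bound in the spirit of Lemma \ref{dioph} then gives that, if nonzero, $\|(\delta - 1)v_S\| \geq c\|\delta\|^{-m}$ for some $m$ depending only on $G$. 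Taking $\ref{projection-const} > a + m$ and $\|\delta\|$ large enough (and handling bounded $\|\delta\|$ by choosing $\ref{projection-const}$ still larger, so that $\|v - v_S\|$ is less than the radius of injectivity of the orbit map near $v_S$) forces $(\delta - 1)v_S = 0$, i.e.\ $\delta \in \tilde{S}$.

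For the equivariance step, write $v = g(v_S + w)$ with $(g, w) \in \mathcal{N}_1$, so that $\Pi(v) = g v_S$, and set $\gamma := g^{-1}\delta g$. The hypothesis $\delta v = v$ translates to $\gamma(v_S + w) = v_S + w$, and the conclusion $\delta \Pi(v) = \Pi(v)$ is equivalent to $\gamma v_S = v_S$. Since $\tilde{S}$ is reductive by Lemma \ref{assumption-2}, we may take the complement $W$ to be $\tilde{S}$-invariant rather than merely $S$-invariant; the natural $\tilde{S}$-action on $G \times_{\tilde{S}} W$ (by change of representative) then renders $\Pi$ genuinely equivariant under $\tilde{S}$ on its domain. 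Combined with $\delta \in \tilde{S}$ (from step one) and $\gamma(v_S + w) = v_S + w$, this yields the result.

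The principal obstacle is the second step: the element $g$ is not rational, so $\gamma$ is not itself subject to Diophantine control, and since $\delta$ may have large norm, the naive local equivariance of $\Pi$ (which applies only when $\delta$ is close to the identity) does not suffice. The resolution is to exploit the $\tilde{S}$-invariance of the slice to change representative of $[(g, w)] \in G \times_{\tilde{S}} W$, writing $\delta \cdot [(g, w)] = [(\delta g, w)] \sim [(g, \gamma w)]$ whenever $\gamma \in \tilde{S}$; the remaining task is to verify that this last condition is forced by $\delta \in \tilde{S}$ together with $\gamma$ fixing the slice point $v_S + w$, via uniqueness of the $G \times_{\tilde{S}} W$ parametrization in $\mathcal{N}_1$.
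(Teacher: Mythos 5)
Your first step does not go through, for two reasons. The Diophantine lower bound requires $(\delta-1)v_S$ to be a rational vector with controlled denominator, hence requires $v_S$ to be rational; but $v_S=(e_1\wedge\dots\wedge e_d)^{\otimes 2}$ is built from a basis of $\mathfrak{s}$, and the intermediate subgroup $S$ (like $H$ itself) is merely a real Lie subgroup of $G$ --- nothing in the setup makes $\mathfrak{s}$ a $\Q$-subspace of $\mathfrak{g}$. (In the closing lemma it is precisely the \emph{perturbed} conjugate $g_1 S g_1^{-1}$ that acquires rationality, not $S$.) Without rationality there is no lower bound on $\|(\delta-1)v_S\|$, and the conclusion $\delta v_S=v_S$ is in fact not what is wanted and is false in general: the lemma asserts $\delta\in g\tilde{S}g^{-1}$ for the (irrational) $g$ with $\Pi(v)=gv_S$, not $\delta\in\tilde{S}$.

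Second, even granting $\delta\in\tilde{S}$, your step two is circular. The identification $[(\delta g,w)]=[(g,\gamma w)]$ in the associated bundle is legitimate only once you know $\gamma=g^{-1}\delta g$ lies in the structure group --- and $\gamma\in\tilde{S}$ is exactly equivalent to the desired conclusion $\gamma v_S=v_S$; it does not follow from $\delta\in\tilde{S}$, since $g$ need not normalize $\tilde{S}$. The paper's proof needs no arithmetic input at all: writing $\delta v-v=(\delta gv_S-v_S)+(v_S-gv_S)+(\delta(gw)-gw)$ and using $\delta v=v$ gives $\|\delta gv_S-v_S\|\ll\|\delta\|^{\star}\|v-v_S\|$, which for $\ref{projection-const}$ large enough shows that $(\delta g,w)$, after adjusting the representative by a suitable $s\in S$, lies in $\mathcal{N}_1$. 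Then $[(g,w)]$ and $[(\delta g,w)]$ are two points of $\mathcal{N}_1$ mapping to the same point $v\in\mathcal{N}_2$, so injectivity of the chart forces $\delta gS=gS$, i.e.\ $\delta\Pi(v)=\Pi(v)$. Your norm estimate on $(\delta-1)(v-v_S)$ is essentially the right computation, but it should feed into this injectivity argument rather than into a rationality argument.
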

\proof
Choose $(g,w) \in \mathcal{N}_1$ so that
$g(v_S+w) = v$.  Observe that $\|g v_S - v_S\|, \|g w\| \ll
\|v_S - v\|$.

Then $\Pi(v) = g v_S$. Since $(\delta v - v) = (\delta g v_S - v_S)
+ (v_S - g v_S)
+ (\delta g w - g w)$, we get
\begin{equation} \label{close}\| \delta g v_S -  v_S \| \leq \|g v_S - v_S\| + \| \delta (gw) - (gw)\| \ll
\|\delta\|^{\star} \|v_S - v\|.\end{equation}

From this, we see that $(\delta g, w) \in \mathcal{N}_1$ if $\ref{projection-const}$ was chosen
big enough.
(Indeed, $(\delta g, w) = (\delta g s, s^{-1} w)$ in $G \times_S W$.
The assumption $\|v - v_S \| \leq \|\delta\|^{-\ref{projection-const}}$ and \eqref{close} 
ensure that $(\delta gs, s^{-1} w)$ is close to $(1,0)$
for a suitable choice of $s$.)

However, $(\delta g, w)$ and $(g, w)$ both map to $\delta v=v$
under the diffeomorphism from $\mathcal{N}_1$ to $\mathcal{N}_2$. Therefore, $\delta g S = g S$, i.e. $\delta \Pi(v) = \Pi(v)$. 
\qed

%

%

\subsection{Effective noetherian arguments}
\label{Arg:EffectiveNoetherian}
\hypertarget{Arg:EffectiveNoetherian}{} Set as before $X(F) = \{g
\in G: F \subset g \tilde{S} g^{-1}\}$. We shall
show that, given any set $F_1$, there exists a subset $F \subset F_1$
of size bounded only in terms of $G, S$, 
so that $X(F_1) = X(F)$. 

Indeed, notations as before, we have:
$$X(\{\alpha\}) = \{g \in G: \alpha g v_S  = g v_S\}
= \{g \in G: g v_S \in \mathrm{Fix}(\alpha)\}$$
where $\mathrm{Fix}(\alpha)$ is the fixed locus of
$\alpha$ acting on $V$. 

Given a vector space of dimension $d$, and a collection
of linear subspaces, any intersection of these subspaces
can in fact be expressed as the intersection of at most $d$ of them. 
In particular, there exists $F_1 \subset F$ of size $\leq \dim V$
so that:
$$\bigcap_{\alpha \in F_1} \mathrm{Fix}(\alpha) = \bigcap_{\alpha \in F} \mathrm{Fix}(\alpha).$$ 
This implies that $X(F_1) = X(F)$, as desired. 
\qed

 \section{A corollary of the closing Lemma.} \label{closing}

 \myparagraph

In the previous section, we presented a version of a closing lemma
for actions of semisimple groups. In the present section, we shall apply
it to the setting needed in this paper, proving the following (``Proposition C''
from \S \ref{proof}).  $\mu$ is as in Theorem \ref{thm:main}. 

 \begin{prop}\label{prop:dichotomy}
Given a connected intermediate subgroup $H\subseteq S\subseteq G$,
$\mysymbol \in(0,1)$, and $d_S \geq 1$, there exists $\xi$ and $d'$ 
depending only on $d_S$, $\mysymbol$, $G$, $H$, and $\epsilon_0
\ll_{d_S,\zeta} 1$, so that:

 Suppose for some $\epsilon \leq \epsilon_0$ that
\begin{enumerate}
\item  $\mu$ is $\epsilon$-almost invariant under $S$, with respect to the Sobolev norm $\Sob_{d_S}$, and that 
\item  $\mu(x S) = 0$ for all closed $S$-orbits of volume $\leq \epsilon^{-\mysymbol}$.
\end{enumerate}
Then there exists $x_1, x_2$ so that $x_2 = x_1 \exp(r)$,
$r \in \mathfrak{r}$, $\|r\| \leq \epsilon^{\xi}$, and
 $x_1,x_2$ are both
$[\|r\|^{-\ref{gammaconstant}},
\|r_1\|^{-\ref{deltaconstant}}]$-generic w.r.t.\ $\Sob_{d'}$. 


%

%
\end{prop}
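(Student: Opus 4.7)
The plan is to follow the outline in \S \ref{subsec:CE} closely, combining Proposition~\ref{lemma3} (effective ergodic theorem), Proposition~\ref{lem:Quantiso} (non-concentration near low-volume closed orbits), and Proposition~\ref{CaseIIlemma} (effective closing lemma), and then extracting the two generic points via a Fubini/pigeonhole argument on a partition of $X$ into small boxes.

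First I would set $V = \epsilon^{-\mysymbol}$ and $R = \epsilon^{-\beta}$ for a small $\beta$ supplied by Proposition~\ref{lemma3}. The ergodic theorem then tells us that, apart from a set of pairs of small $(\mu\times\mathrm{Haar})$-measure, $xs$ is $[T_0,\epsilon^{-\beta}]$-generic w.r.t.\ a slightly larger Sobolev norm $\Sob_{d'}$; take $T_0$ to be a small negative power of $\epsilon$. Denote by $\mathcal{B}_x\subset\Sball(R)$ the set of such good $s$. Apply Fubini to produce a thickened set $\mathcal{B}_x'$: those $s$ whose small neighborhood $sO$, with $O\subset S$ a ball of radius $\epsilon^{\xi'}$ for $\xi'$ suitably smaller than $\xi$, contains $\mathcal{B}_x$ in relative measure $\geq 1-10^{-2}$. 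Standard Fubini arguments, together with Proposition~\ref{lemma3}, show that $\mathrm{vol}(\mathcal{B}_x')\geq \frac{1}{2}\mathrm{vol}(\Sball(R))$ on a set of $x\in\Xcompact$ of $\mu$-measure $\geq 9/10$. Simultaneously, Proposition~\ref{lem:Quantiso} applied at volume $V=\epsilon^{-\mysymbol}$ guarantees, for $\epsilon$ small, a set of $\mu$-measure $>1/2$ of $x$'s with no closed $S$-orbit of volume $\leq V$ passing within $V^{-\ref{expQuantiso}}$. Intersect the two subsets and fix one $x_0\in\Xcompact$ in the intersection.

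Next, cover $x_0\Sball(R)$ by a maximal $\delta$-separated collection of points $\{y_j\}$ with $\delta=V^{-\ref{expQuantiso}}\wedge\epsilon^{\xi'_\uparrow}$, so each ball of radius $\delta$ around $y_j$ carries a well-defined pre-image in $\Sball(R)$ up to a bounded $S$-translate. If some single $\delta$-ball contained $\gtrsim v^{1-\zeta_\downarrow}$ $\frac{1}{10}$-separated elements of $\mathcal{B}_x'$, where $v=\vol(\Sball(T))$ with $T\asymp R$, then Proposition~\ref{CaseIIlemma} (applied with $N$ any large constant) would furnish $x'$ at distance $\leq T^{-N_\uparrow}\leq V^{-\ref{expQuantiso}}/2$ from $x_0$ with $x'S$ closed and $\vol(x'S)\leq T^{\zeta_\downarrow}\leq V$, contradicting the choice of $x_0$. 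Hence the images $x_0\mathcal{B}_x'$ must spread across $\gtrsim v^{\zeta_\downarrow}$ distinct $\delta$-balls. Two such points $x_0 s_1$ and $x_0 s_2$ necessarily differ by an element $g\in G$ with $\|g\|\ll \epsilon^{\xi''}$ but whose $S$-component is nontrivial only in a controlled way; writing $g=\exp(v)s$ with $s\in S$ close to the identity and $v\in\mathfrak{r}$ (using \eqref{expconstdef}), we may absorb the $S$-factor into $s_2$, obtaining $x_0 s_2'=x_0 s_1\exp(r)$ with $r\in\Scomp$ and $\|r\|\ll\epsilon^{\xi''}$.

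Finally, by construction of $\mathcal{B}_x'$, the neighborhoods $s_1 O$ and $s_2' O$ each meet $\mathcal{B}_x$ in a large fraction, so we may perturb $s_1,s_2'$ slightly (by at most the radius of $O$, which is $\ll\epsilon^{\xi'}$) to land at genuinely $[T_0,\epsilon^{-\beta}]$-generic points; the perturbation is absorbed into $r$ at the cost of replacing $\xi$ by a slightly smaller exponent. One chooses $\xi$ so that simultaneously $\|r\|^{-\ref{gammaconstant}}\geq T_0$ and $\epsilon^{-\beta}\geq \|r_1\|^{-\ref{deltaconstant}}$ — this is the tightest bookkeeping step and fixes the allowed range of $\xi$ in terms of $\mysymbol$, $\ref{gammaconstant}$, $\ref{deltaconstant}$, $\beta$, and $\ref{expQuantiso}$. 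The main obstacle is precisely this parameter choreography: the separation scale $\delta$ must simultaneously beat $V^{-\ref{expQuantiso}}$ (to invoke Proposition~\ref{lem:Quantiso}), be comparable to $T^{-N}$ for the closing lemma to trigger, and stay larger than the Fubini thickening $\epsilon^{\xi'}$; meanwhile the resulting $\|r\|$ must fit inside the genericity window of Proposition~\ref{MaL}. These constraints are all polynomial in $\epsilon$, so with enough patience one obtains consistent exponents $\xi$ and $d'$ depending only on $d_S$, $\mysymbol$, $G$, and $H$.
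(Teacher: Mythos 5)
Your overall strategy is the paper's: the same three ingredients (Propositions \ref{lemma3}, \ref{lem:Quantiso} and \ref{CaseIIlemma}) assembled by the same Fubini-plus-pigeonhole argument, with the closing lemma ruling out concentration of $x\mathcal{B}_x'$ in a single small ball and thereby forcing two distinct nearby boxes to be hit. Up to the parameter bookkeeping you acknowledge, the construction of the two nearby points follows the paper's proof of Proposition \ref{prop:dichotomy}.

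There is, however, one genuine gap at the final step. The conclusion requires $x_1,x_2$ to be generic on the window $[\|r\|^{-\ref{gammaconstant}},\|r_1\|^{-\ref{deltaconstant}}]$, where $r_1$ is the component of $r$ transverse to the $\{u(t)\}$-fixed directions in the decomposition \eqref{gdecomp}. Your bookkeeping lists the constraint $\|r_1\|^{-\ref{deltaconstant}}\le\epsilon^{-\beta}$, but nothing in your construction prevents $r_1$ from being much smaller than $r$ --- or zero --- in which case the required window is not contained in the range $[T_0,\epsilon^{-\beta}]$ furnished by Proposition \ref{lemma3}, and the deduction of Proposition B fails. The paper secures the lower bound $\|r_1\|\gg\|r\|$ in the Sublemma concluding \S \ref{closing}: after locating the pair $(s_1,s_2)$, one adjusts $w_1$ within a positive-measure set (a ``point of density'' argument) and invokes a variant of \eqref{iotaclaim} --- for most $\sigma\in S$ near the identity, $(\Ad(\sigma^{-1})r)_1$ has norm comparable to $\|r\|$. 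Relatedly, your phrase ``the perturbation is absorbed into $r$'' glosses over the need for the \emph{exact} identity $xb_1=xb_2\exp(r)$ with $r\in\Scomp$ after perturbing both endpoints into the genuinely generic set; this is precisely what the map $\phi_S$ of \eqref{diffeo}, its injectivity and approximate volume preservation are for, and it is also where the freedom to rotate $r$ by $\Ad(\sigma^{-1})$ is created. Without these two points the argument as written does not close.
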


We strongly suggest the reader glance at the description of this argument,
provided in \S \ref{subsec:CE}, 
prior to reading what follows. Also recall that $r_1$ stands for the non-fixed components of $r$ in the splitting defined in \eqref{gdecomp}.

\subsection{Beginning of the proof}
In this proof, $\epsilon$ {\em sufficiently small}
will mean that $\epsilon$ is bounded above by a constant
$\constc(d_S,\mysymbol)$. In view of the formulation of the Proposition,
we are free to assume $\epsilon$ is sufficiently small.

Let $\beta$ as in Proposition A (\ref{lemma3}) with $d = d_S$.  
Let $A_S$ be as in \eqref{volest}. 

Let $N_{\uparrow}, \delta_{\downarrow}$ be the functions defined in Proposition \ref{CaseIIlemma}.  Take $N$ so large that 
$N_{\uparrow} \geq 2 A_S \ref{expQuantiso}$ where $\ref{expQuantiso}$ is as in 
\propGname (\ref{lem:Quantiso}).  
Choose $\delta$ so small that $\delta_{\downarrow} \leq A_S/2$. 
Let $q$ be chosen so small that:
\begin{itemize}
\item $q \leq \beta$; 
\item $2 q N  \ref{deltaconstant} < \beta$. 
\item $2 q A_S \leq \zeta$. 
\item $q d_S \ref{firstlemconstant} < \frac{1}{2}$. 
\end{itemize} 
Finally, we put $\xi = \frac{\delta q A_S}{5 \dim(G)}$. 
These choices of $N, \delta, q$ depend only on $G,H, \zeta, d_S$.  
We set $T = \epsilon^{-q}$ and $v = \vol(\Sball(T))$.  Our constraints imply that,
for $\epsilon$ sufficiently small, we have 
\begin{equation} \label{constraints} T^{N_{\uparrow}} \geq 
v^{\ref{expQuantiso}}, \
T^{\delta_{\downarrow}} \leq v, \
 v^{-\frac{\delta}{2\dim(G)}} \leq \epsilon^{2 \xi}, \ T \leq \epsilon^{-\beta},
\ v \leq \epsilon^{-\zeta}.
\end{equation}

The reader should not pay too much attention to the mass of constants above:
simply, $N$ and $\delta$ are {\em fixed}; $\epsilon$ is {\em very small},
and $v^{-1}$ is {\em small}.  Set

\begin{equation}\label{eprime}
 E_1 = \bigl\{x : \mbox { there is $x' \stackrel{v^{-\ref{expQuantiso}}}{\sim} x$ such that $x' S$ is closed of volume $\leq v$
}\bigr\}.
\end{equation}
Under our assumption, \propGname
(\ref{lem:Quantiso}) furnishes an upper bound for the $\mu$-measure
of $E_1$. In fact,
\begin{equation}\label{e1bound}
 \mu(E_1)\leq 1/2 \mbox{ for } v\in [V_0(\Gamma, G, H), \epsilon^{-\mysymbol}].
\end{equation}
Note that the condition on $v$ will be satisfied if $\epsilon$
is sufficiently small. 

Apply Proposition~A~(\ref{lemma3}) with $d = d_S, R=T$. 
It produces points generic with respect to a new Sobolev norm
$\Sob_{d'}$; generic is always understood in that sense, in what follows.
More precisely, Proposition \ref{lemma3} asserts that the fraction of pairs $(x \in X, s \in \Sball(T))$ so that
$x.s$ is not
$[T_0, \epsilon^{-\beta}]$-generic is $\ll_{d_S} T_0^{-1}$. We choose $T_0$
large enough so that this fraction of points is
$<\frac{1}{10^{10}}$. This choice of $T_0$ depends only on $\Gamma, G, H, d_S$. 

Next, we observe that:
\begin{equation} \label{observation}
\mbox{The fraction of $(x \in
X, s \in \Sball(T))$ for which $x.s \notin \Xcompact$ is $\leq
\frac{1}{10^{10}}$}\end{equation}
 if $\epsilon$ is sufficiently small. Here ``fraction''
is measured w.r.t. $\mu \times \vol_S$. Indeed, recalling
that $\Xcompact=\Siegel(R_0)$, take a smooth function $F$ so that:
$$1_{X-\Siegel(R_0/2)} \geq F \geq 1_{X-\Siegel(R_0)}$$
Invoking Lemma \ref{one} and the almost invariance, 
$$|\int F(xs) d\mu(x) - \int F d\mu| \ll
\epsilon T^{d_S \ref{firstlemconstant}}
\Sob_{d_S}(F)\mbox{ whenever }s \in \Sball(T).
$$
Thus 
$$\frac{\int_{s \in \Sball(T), x \in X} F(xs)}{\vol \Sball(T)}
- \mu(X-\Siegel(R_0/2)) \ll \epsilon^{1-q d_S \ref{firstlemconstant}}
\Sob_{d_S}(F).$$
 $F$ may be fixed in a fashion depending only on
$\Gamma, G$. Moreover, $1 - q d_S \ref{firstlemconstant} > 1/2$. Therefore, if $\epsilon$
sufficiently small, the observation \eqref{observation} follows. 

For each $x \in X$, let
$$
f(x) = 
\frac{1}{\vol(\Sball(T))} \vol\bigl(\bigl\{s \in \Sball(T): xs \in \Xcompact, xs 
\mbox{ is $[T_0,\epsilon^{-\beta}]$-generic}
\bigr\}\bigr)
$$  

The function $f$ takes
values in $[0,1]$. In view of our remarks above, $\int (1-f(x))
\operatorname{d}\!\mu \leq \frac{2}{10^{10}}$. Thus, the set $E_2 =
\{x \in X: f(x) < 1 - 10^{-6}\}$ satisfies $\mu(E_2) <
\frac{1}{10}$.
Let $\Xgood = \Xcompact - E_1 - E_2$. 

The set $\Xgood$ is a $\mu$-nonempty set and has the following properties:
\begin{enumerate}
\item $\Xgood \subset \Xcompact$. 
\item For any $x \in \Xgood$, the set 
\begin{equation} \label{bdef}\mathcal{B}_x
=\bigl\{s \in \Sball(T):
\mbox{$x s$ is $[T_0, \epsilon^{-\beta}]$-generic and belongs to $\Xcompact$}\bigr\}
\end{equation}
has measure larger than $(1-10^{-6}) \,\vol \Sball(T)$. 
\item For any $x \in \Xgood$,
there does not exist $x' \stackrel{v^{-\ref{expQuantiso}}}{\sim} x$ such that $x' S$ is closed of volume $\leq v$. 
\end{enumerate}

Let $x \in \Xgood, \mathcal{B}_x$ as in \eqref{bdef}. 
\begin{lem*}  
 There exists $b_1, b_2 \in \mathcal{B}_x$
 so that $xb_1 = x b_2 \exp(r)$ for some $r \in \Scomp$, where
 $v^{-\frac{\delta}{2\dim(G)}}
\gg \|r\| \gg T^{-N}$.
 Moreover, $\|r_1\| \gg \|r\|$.
\end{lem*}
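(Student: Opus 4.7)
The plan exploits a volume-vs-dispersion dichotomy: the largeness of $\mathcal{B}_x$ forces its image $x\mathcal{B}_x$ in $X$ to intersect many distinct small balls, while the closing lemma prevents concentration of separated preimages into any single small ball. First I will construct a density refinement $\mathcal{B}_x' \subset \mathcal{B}_x$ by declaring $b \in \mathcal{B}_x'$ iff the measure of $\{s \in S : d(s,1) \leq 1,\ bs \in \mathcal{B}_x\}$ exceeds $0.99\vol\{s : d(s,1) \leq 1\}$. Fubini gives $\vol(\mathcal{B}_x') \geq (1 - 10^{-4})\vol(\mathcal{B}_x)$, and the density condition guarantees that any $b \in \mathcal{B}_x'$ remains in $\mathcal{B}_x$ after sufficiently small $S$-perturbations --- this permits later $S$-adjustments while staying inside $\mathcal{B}_x$.

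Next I will apply \propHname at scale $T^{-N}$: if any $T^{-N}$-ball $B \subset \Xcompact$ contained a $\frac{1}{10}$-separated subset of $x^{-1}(B) \cap \mathcal{B}_x'$ of cardinality $\geq v^{1-\delta}$, \propHname would produce $x' \stackrel{T^{-N_\uparrow}}{\sim} x$ with $x'S$ closed of volume $\leq T^{\delta_\downarrow}$. By the constraints \eqref{constraints}, $T^{-N_\uparrow} \leq v^{-\ref{expQuantiso}}$ and $T^{\delta_\downarrow} \leq v$, placing $x \in E_1$ and contradicting $x \in \Xgood$. Hence each $T^{-N}$-ball has $< v^{1-\delta}$ such separated preimages. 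Since a maximal $\frac{1}{10}$-separated subset of $\mathcal{B}_x'$ has cardinality $\gg v$, at least $\gg v^\delta$ distinct $T^{-N}$-balls are hit by $x\mathcal{B}_x'$. Covering $\Xcompact$ by $\eta_1$-balls with $\eta_1 := v^{-\delta/(2\dim G)}$ (numbering $\asymp v^{\delta/2}$), pigeonhole forces some $\eta_1$-ball to contain $\gg v^{\delta/2} \geq 2$ distinct hit $T^{-N}$-balls. Picking preimages $b_1, b_2 \in \mathcal{B}_x'$ in two such distinct $T^{-N}$-balls yields $T^{-N} \ll d(xb_1, xb_2) \leq 2\eta_1$.

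Then I will write $xb_1 = xb_2 h$ with $d(h,1) \leq 2\eta_1 \leq \ref{expconst0}$, decompose $h = s\exp(r)$ via \eqref{expconstdef}, and replace $b_2$ by $b_2 s \in \mathcal{B}_x$ (using density) to obtain $xb_1 = x(b_2 s)\exp(r)$ with $\|r\| \leq 2\eta_1 = 2v^{-\delta/(2\dim G)}$, giving the upper bound. The lower bound $\|r\| \gg T^{-N}$ is the delicate point: were $\|r\| \ll T^{-N}$ uniformly across cloud pairs, then all pairs would be $S$-aligned, producing $\gg v^{\delta/2}$ distinct elements of $\Gamma \cap gSg^{-1}$ of norm $\ll T^2 \eta_1$, and an appropriate lattice-counting argument via Proposition~\ref{sl32} (combined with a secondary application of the closing lemma at the refined scale) would yield a closed $S$-orbit of volume $\leq v$ near $x$, contradicting $x \in \Xgood$; choosing a pair among the $v^{\delta/2}$ candidates in the cloud that maximises $\|r\|$ then furnishes $\|r\| \gg T^{-N}$. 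Finally, to enforce $\|r_1\| \gg \|r\|$, I invoke \eqref{iotaclaim}: the measure of $s_0 \in S$ with $d(s_0,1) \leq 1$ and $\|(\Ad(s_0^{-1})r)_1\| \geq \ref{stronggeneric}\|\Ad(s_0^{-1})r\|$ exceeds $\frac{1}{2}\vol\{s_0 : d(s_0,1) \leq 1\}$; intersecting with the full-measure set where the density condition ensures $b_1 s_0, b_2 s s_0 \in \mathcal{B}_x$ yields a valid $s_0$, and the replaced pair $(b_1 s_0, b_2 s s_0)$ satisfies the stated relation with $\Ad(s_0^{-1})r$ in place of $r$. The main obstacle will be the secondary closing-lemma / lattice-counting argument needed to exclude the $S$-aligned degenerate case and secure $\|r\| \gg T^{-N}$; the rest is a careful orchestration of the two pigeonhole steps with the density-thickening.
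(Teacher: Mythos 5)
Your overall architecture matches the paper's: the density refinement $\mathcal{B}$ via Fubini, the use of \propHname to forbid $\geq v^{1-\delta}$ separated preimages concentrating near a single point of $X$ (which would contradict $x\notin E_1$), a pigeonhole to produce two nearby-but-not-too-nearby points, and a final $S$-adjustment using a variant of \eqref{iotaclaim} to secure $\|r_1\|\gg\|r\|$. The upper bound $\|r\|\ll v^{-\delta/(2\dim G)}$ and the last step are fine.

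The genuine gap is in the lower bound $\|r\|\gg T^{-N}$. You pigeonhole the images $x\mathcal{B}_x'$ in the ambient space $X$ and extract $b_1,b_2$ with $d(xb_1,xb_2)\gg T^{-N}$; but after writing $xb_1=xb_2\,s\exp(r)$ via \eqref{expconstdef}, a lower bound on $d(xb_1,xb_2)$ gives no lower bound on the transverse component $\|r\|$ --- the displacement could lie almost entirely along $S$. You correctly identify this degenerate case, but your proposed repair does not work: the $\eta_1$-cloud contains only $\gg v^{\delta/2}$ points, far fewer than the $v^{1-\delta}$ separated points that \propHname requires, and feeding $\asymp v^{\delta/2}$ elements of norm $\ll T^{2}$ into Proposition~\ref{sl32}(2) yields a covolume bound like $v^{4-\delta}$, which is far weaker than the bound $\leq v$ needed to contradict $x\notin E_1$. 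So the ``secondary closing-lemma / lattice-counting argument'' cannot be completed as sketched.

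The paper sidesteps this entirely by pigeonholing in the transverse direction from the start: it fixes a local chart $xs=x'\exp(r_s)\sigma_s$ with $r_s\in\Omega_{\mathfrak r}$, $\sigma_s\in\Omegatwo$, and applies the concentration threshold not to balls in $X$ but to preimages of $T^{-N}/4$-balls in $\mathfrak r$ under $s\mapsto r_s$. The quantity $N(y)$ is deliberately defined with an $S$-thickening ($xs\Omega\cap B(y,T^{-N}/2)\neq\emptyset$), so a slab of $s$'s whose transverse coordinates cluster --- no matter how spread out along $S$ --- triggers the Case~II contradiction. Covering $\Omega_{\mathfrak r}$ by $T^{-N}/4$-balls then produces $s_1,s_2$ with $T^{-N}\ll\|r_{s_1}-r_{s_2}\|$, i.e.\ a lower bound on the \emph{transverse} separation, which the Sublemma converts into $\|r\|\gg T^{-N}$. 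You should restructure your pigeonhole along these lines rather than trying to exclude the $S$-aligned case after the fact.
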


The Proposition follows from this, as we now explicate:

 By choice, 
$v^{-\frac{\delta}{2\dim(G)}} \leq \epsilon^{2 \xi}$. 
Then $\|r\| \leq \epsilon^{\xi}$, for $\epsilon$ sufficiently small. 
Let us verify that $$[\|r\|^{-\ref{gammaconstant}},\|r_1\|^{-\ref{deltaconstant}}]  \subseteq
[T_0,\epsilon^{-\beta}].$$ 
Since $T_0 \ll_{d_S} 1$,  the inequality  
$\|r\|^{-\ref{gammaconstant}}\geq T_0$ holds if $\epsilon$ is sufficiently small. The other inequality $\|r_1\|^{-\ref{deltaconstant}}\leq \epsilon^{-\beta}$
follows for sufficiently small $\epsilon$ because $\|r_1\|^{-\ref{deltaconstant}}
\ll \epsilon^{-q N \ref{deltaconstant}}$, and $q N \ref{deltaconstant} \leq \beta/2$.

\subsection{Proof of the lemma}
Let us recollect the situation. We are given $x \in \Xcompact$ and
a subset $\mathcal{B}_x \subset \Sball(T)$ satisfying
$\frac{\vol(\mathcal{B}_x)}{\vol \Sball(T)} \geq 1-10^{-6}$,
and we know
that there does not exist $x' \stackrel{v^{-\ref{expQuantiso}}}{\sim} x$ such that $x' S$ is closed of volume $\leq v$.  

We are free to prove the Lemma for $T$ sufficiently large:
here and in the course of the proof,
 the phrase ``for $T$ sufficiently large''
to mean ``for $T$ larger than a constant that may depend on $\delta$, $N$, $\Gamma$, $G$, and $H$.'' Indeed, we can guarantee $T$ sufficiently large by taking $\epsilon$ sufficiently small. 

Take
{\em sufficiently small} symmetric neighbourhoods $\Omegatwo$, and $\Omegaone$ of the identity in $S$, resp.\ $\Omega_{\mathfrak{r}}$ of $0\in\mathfrak{r}$ such that:
\begin{equation} \label{omega31def} \Omegatwo\subset\Omegaone \subset\{s \in S: d(s,e) \leq 1/4\} \subset S. \end{equation}
The precise notion of {\em sufficiently small} will be specified in the course of the argument; however, it  will depend only on $\Gamma, G, H$ and therefore
constants that depend on $\Omegatwo, \Omegaone, \Omega_{\mathfrak{r}}$
will be absorbed into $\ll$ notation.

Set \begin{equation}\label{bprime} \mathcal{B}  = \{s \in \mathcal{B}_x: \vol(s \Omegaone \cap \mathcal{B}_x)
\geq 0.99 \vol(\Omegaone)\}\end{equation}
We shall verify that, for $T$ sufficiently large,
  we have also 
\begin{equation} \label{Minvolequation}\vol(\mathcal{B})
\gg  \vol(\Sball(T)) (=v).\end{equation}
If we set $f(s) =  \frac{\vol(s \Omegaone \cap \mathcal{B}_x)}{\vol(\Omegaone)}$, then
$\int_{\mathcal{B}_x} f(s) \operatorname{d}\!s=(\vol(\Omegaone))^{-1} \int_{\Omegaone} \vol(\mathcal{B}_xs \cap \mathcal{B}_x)\operatorname{d}\!s$, which exceeds 
 $$\vol(\Omegaone)^{-1}\int_{\Omegaone} \bigl(\vol(\Sball(T) s \cap \Sball(T))
- 10^{-5} \vol(\Sball(T))\bigr)\operatorname{d}\!s.$$
By property (3) of \S \ref{intermediateballs}, there is a constant $c<1$ so that
$\Sball(cT) \subset \Sball(T) s \cap \Sball(T)$ for any $s\in S$ with $d(s,e)\leq 1/4$.  
In particular, using \eqref{volest},
property (3b) of \S \ref{intermediateballs}, and choosing $\Omegaone$ sufficiently small, we can arrange that
\begin{equation} \label{omegachoice}
 \vol\bigl((\Sball(T) s \cap \Sball(T)\bigr) \geq (1-10^{-5}) \vol \Sball(T)
\end{equation}
for any $s\in\Omega$, at least for $T \gg 1$. 
Thus, $\int_{\mathcal{B}_x} f(s)\operatorname{d}\!s > (1-10^{-4}) \vol \Sball(T)$;
in particular, the set of $s \in \Sball(T)$ for which $f(s) > 0.99$
has volume at least $\frac{1}{2} \vol \Sball(T)$.

For $y \in X$, let
$$N(y) :=  \vol\bigl(\bigl\{s \in \mathcal{B}: x s \Omega \cap B(y, T^{-N}/2) \neq\emptyset \bigr\}\bigr)$$
Loosely speaking, $N(y)$ measures the number of ``times'' in $\mathcal B$ for which the corresponding
point in $x \mathcal{B}$ comes within $T^{-N}$ of $y$. We consider two separate cases:
\begin{enumerate}
\item[Case I]: For every $y \in X$, 
$N(y) \leq v^{1-\delta/2}$.
\item[Case II]: There is $y \in X$ with
$N(y)  \geq v^{1-\delta/2}$.
\end{enumerate}

Let us show that Case II cannot occur by using that $x$ is not close to a closed orbit of small volume. Suppose that Case II occurred; let $y_0 \in X$
be so that $N(y_0) \geq v^{1-\delta/2}$. 
Choose a maximal $1$-separated subset $\{s_i\}_{1 \leq i \leq I}$ of
$$\bigl\{s \in \mathcal{B}: x s \Omega\cap B(y_0,T^{-N}/2) \neq \emptyset\bigr\}$$
Then $I \gg v^{1-\delta/2}$.
In explicit terms,  there are
 $\omega_i \in \Omegaone \ (i \in I)$ so that $$x s_i \omega_i \stackrel{T^{-N}}{\sim} x s_j \omega_j \ (i,j \in I).$$

In particular, the elements $s_i' = s_i . \omega_i$ are $1/2$-separated\footnote{Indeed, $d(s_i, s_i') \leq 1/4$ by the left-invariance of the metric,
and $d(s_i, s_j) \geq 1$ by choice.}
 and belong to $\Sball(\constc T) $. In view of our assumptions on $\delta, N$,
 \propHname (\ref{CaseIIlemma}) -- applicable so long as $\epsilon$ is sufficiently small -- would show there is $x' \stackrel{v^{-\ref{expQuantiso}}}{\sim}x$
 so that $x' S$ is a closed orbit of volume $\leq v$; but that contradicts our assumption on the point $x$. 


Therefore, we are in Case I. It will be convenient to pass to a local coordinate system. Clearly,
there is $x' \in \Xcompact$ so that:
$$\vol\{s \in \mathcal{B}: x s \in x' \exp(\Omega_{\mathfrak{r}})  \Omegatwo\} \gg v.$$
Indeed, we may  cover $\Xcompact$ by finitely many neighborhoods of the
 form $ x'\exp(\Omega_{\mathfrak{r}})  \Omegatwo$; and, by assumption,
$x \mathcal{B} \subset \Xcompact$.

For each $s \in \mathcal{B}$ so that $x s$ belongs to our chosen neighbourhood,
we may write:
$$ x s =x' . \exp(r_s). \sigma_s, \ \  r_s \in \Omega_{\mathfrak{r}}, \sigma_s \in \Omegatwo.$$

Let $B$ be any metric ball of radius $T^{-N}/4$ in the Euclidean space $\mathfrak{r}$, so that $B \cap \Omega_{\mathfrak{r}} \neq \emptyset$. 
We claim that the preimage of $B$,
under the map $s \mapsto r_s$, has measure $\leq v^{1-\delta/2}$.
Were this false, there exists a subset $\mathcal{B}' \subset \mathcal{B}$
of volume $> v^{1-\delta/2}$,
so that $x \mathcal{B}' \subseteq x' \exp(B) \Omegatwo$. If $\Omega_{\mathfrak{r}}$
is sufficiently small, $x' \exp(B) \subset B(x'', T^{-N}/2)$,
for a suitable $x'' \in x'\exp(B)$. 

In particular, $N(x'') > v^{1-\delta/2}$, in contradiction to the assumption in Case I.

  Cover $\Omega_{\mathfrak{r}}$ by $T^{-N}/4$-balls
so that each one overlaps with $O(1)$ others. The previous paragraph
 shows that
$\gg v^{\delta/2}$ of these balls contain a point of the form $r_s$, for some $s \in \mathcal{B}$. Thus there is $s_1, s_2$ so that
$T^{-N} \ll \|r_{s_1} - r_{s_2} \| \ll v^{-\delta/(2\dim(\mathfrak{r}))}$.
Thus,
\begin{equation} \label{yellow_ribbon} xs_1 = x' \exp(r_{s_1}) \sigma_1, \ \ x' s_2 = x \exp(r_{s_2}) \sigma_2, \ \
T^{-N} \ll \|r_{s_1} - r_{s_2} \| \ll v^{-\delta/(2 \dim(\mathfrak{r}))}
\end{equation}
and $\sigma_1, \sigma_2 \in \Omegatwo \subset S$.

Using \eqref{yellow_ribbon}, we shall now perturb $s_1$ and $s_2$ along $S$ slightly to
find $b_1,b_2\in\mathcal{B}_x$ as in the first claim of the lemma.
For $i \in \{1,2\}$, let $W_i=\{s\in\Omegaone:s_is\in\mathcal{B}_x\}$. Then $\vol(W_i)>0.9\vol(\Omegaone)$ by \eqref{bprime} and the surrounding discussion. Our
(intuitively obvious) conclusion follows from the following

\begin{sublem}\label{midlemma}
Suppose we are given $\sigma_i \in \Omega'$, $r_i' \in \Omega_\mathfrak{r}$,
and subsets $W_i \subset \Omegaone$ of measure $\vol(W_i) > 0.9 \vol(\Omegaone)$ for $i=1,2$. 
Then there exists $w_i \in W_i$ for $i=1,2$ so that
$$  \exp(r_1') \sigma_1 w_1=  \exp(r_2') \sigma_2 w_2 \exp(r), \ r \in \mathfrak{r}, \|r\| \asymp \|r_1'-r_2'\|, \|r_1\| \gg \|r\|.$$
\end{sublem}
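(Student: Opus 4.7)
\textbf{Plan for Sublemma \ref{midlemma}.} The strategy is to interpret the equation $\exp(r_1')\sigma_1 w_1 = \exp(r_2')\sigma_2 w_2 \exp(r)$ as a smooth constraint whose solution set $\mathcal{F} \subset \Omegaone \times \Omegaone$ is a $\dim S$-dimensional submanifold (via the implicit function theorem), parametrize $\mathcal{F}$ by simultaneous right translation along $S$, and then combine density bounds on $W_1, W_2$ with the compactness argument \eqref{iotaclaim} to locate a point of $\mathcal{F} \cap (W_1 \times W_2)$ at which the non-centralized component $r_1$ dominates.

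\emph{Setup and implicit function theorem.} Since $S$ is semisimple, I upgrade $\mathfrak{r}$ to be $\Ad(S)$-invariant (by complete reducibility). Upon shrinking $\Omegaone, \Omegatwo, \Omega_{\mathfrak{r}}$ appropriately (in a fashion depending only on $G,H$), the map $S \times \mathfrak{r} \to G$, $(s,r) \mapsto s\exp(r)$, is a diffeomorphism onto a neighborhood of $e$ containing every $g(w_1,w_2) := w_2^{-1}\sigma_2^{-1}\exp(-r_2')\exp(r_1')\sigma_1 w_1$; this defines smooth projections $\pi(w_1,w_2) \in S$ and $\rho(w_1,w_2) \in \mathfrak{r}$ with $g = \pi\exp(\rho)$. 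Replacing $w_2$ by $w_2\exp(\epsilon X)$ (for $X\in \mathfrak{s}$) multiplies $g$ on the left by $\exp(-\epsilon X) + O(\epsilon^2)$, so $\partial_{w_2}\pi$ is close to $-\mathrm{Id}$ on $\mathfrak{s}$; the implicit function theorem produces a diffeomorphism $w_2^*:\Omegaone \to \Omegaone$ (with derivative close to the identity) so that $\pi(w_1, w_2^*(w_1)) = e$ for all $w_1 \in \Omegaone$. Setting $r^*(w_1) := \rho(w_1, w_2^*(w_1))$ converts the equation in the sublemma into the identity $\exp(r_1')\sigma_1 w_1 = \exp(r_2')\sigma_2 w_2^*(w_1) \exp(r^*(w_1))$; a direct computation using \eqref{expconstdef} together with the smallness of $\sigma_i, w_i$ gives $\|r^*(w_1)\| \asymp \|r_1'-r_2'\|$ with constants depending only on $G,H$.

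\emph{Equivariance and selection.} The $\Ad(S)$-invariance of $\mathfrak{r}$ yields $g(w_1 s, w_2 s) = s^{-1}g(w_1,w_2)s$, whence $w_2^*(w_1 s) = w_2^*(w_1)s$ and $r^*(w_1 s) = \Ad(s^{-1}) r^*(w_1)$. Fix a base point $w_1^0 \in \Omegaone$ and set $r_0 = r^*(w_1^0)$. Consider the three subsets of $\Omegaone$:
\[
A_1 = \{s: w_1^0 s \in W_1\},\ \ A_2 = \{s: w_2^*(w_1^0)s \in W_2\},\ \ A_3 = \{s: \|(\Ad(s^{-1})r_0)_1\| \geq \ref{stronggeneric}\|\Ad(s^{-1})r_0\|\}.
\]
The hypothesis $\vol(W_i) > 0.9\vol(\Omegaone)$ and a standard Fubini argument over $\Omegaone$ (choosing $w_1^0$ in a set of positive measure where both density conditions hold) give $\vol(A_1), \vol(A_2) \gg \vol(\Omegaone)$; the compactness argument \eqref{iotaclaim} applied to $r_0$ shows that $A_3$ occupies more than half of the unit ball in $S$, and hence a positive fraction of $\vol(\Omegaone)$ once $\Omegaone$ is fixed as a ball of definite radius (depending only on $G,H$). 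Choosing $\Omegaone$ small enough that all three estimates hold simultaneously, $A_1 \cap A_2 \cap A_3$ is nonempty; picking $s$ in this intersection and setting $w_1 = w_1^0 s$, $w_2 = w_2^*(w_1^0) s$ gives $w_i \in W_i$ and $r = \Ad(s^{-1})r_0$ with $\|r\| \asymp \|r_1'-r_2'\|$ and $\|r_1\| \geq \ref{stronggeneric}\|r\|$, as required.

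\emph{Main obstacle.} The substantive point is the simultaneous enforcement of three conditions on the single parameter $s$: membership of $W_1$, membership of $W_2$, and the lower bound on the $\g_1$-component. The former two are "large" open-density conditions obtained from the hypothesis, while the third is a Zariski-open (hence large-measure) condition coming from \eqref{iotaclaim}; the crucial structural observation that reconciles them is that moving along the fiber $\mathcal{F}$ of solutions is exactly the diagonal right-action of $S$, under which $r^*$ transforms by $\Ad$. A minor bookkeeping issue is the normalization between $\Omegaone$ and the unit ball on which \eqref{iotaclaim} is formulated, which is resolved by fixing $\Omegaone$ once and for all to be a ball of radius depending only on $G,H$ and absorbing the resulting implicit constants into $\asymp$ and $\gg$.
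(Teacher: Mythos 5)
Your construction is essentially the paper's. The map $w_1 \mapsto w_2^*(w_1)$ is exactly the map $\phi_S$ of \eqref{diffeo} (obtained there directly from the local product decomposition $G \approx S\exp(\mathfrak{r})$ near the identity rather than via the implicit function theorem), your $r^*$ is $\phi_{\mathfrak{r}}$, the bound $\|r^*\| \asymp \|r_1'-r_2'\|$ rests on \eqref{expconstdef} in both arguments, and the equivariance $r^*(w_1 s) = \Ad(s^{-1})r^*(w_1)$ is the identity $\phi_{\mathfrak{r}}(w_1\sigma) = \Ad(\sigma^{-1})\phi_{\mathfrak{r}}(w_1)$ that the paper combines with a variant of \eqref{iotaclaim} to force $\|r_1\| \gg \|r\|$. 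Your derivation of the equivariance from $\Ad(S)$-invariance of $\mathfrak{r}$ and uniqueness of the local decomposition is a clean way to see it.

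The one step that does not close as written is the selection. From $\vol(A_1), \vol(A_2) \gg \vol(\Omegaone)$ and $\vol(A_3) \gg \vol(\Omegaone)$ you cannot conclude $A_1 \cap A_2 \cap A_3 \neq \emptyset$: three subsets of positive density need not meet. Moreover, with $s$ ranging over all of $\Omegaone$ and $w_1^0$ merely some point of the positive-measure set produced by Fubini, the density of $A_1$ can genuinely be small: $\vol(A_1) = \vol(W_1 \cap w_1^0\Omegaone)$, and if $w_1^0$ lies near the boundary of $\Omegaone$ the translate $w_1^0\Omegaone$ overlaps $\Omegaone$ in well under $\tfrac{3}{4}$ of its volume, so the $0.9$-density of $W_1$ does not transfer. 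The repair is the localization the paper uses: choose $w_1^0$ to be a (weak) point of density, i.e.\ such that the set $\{\sigma \in \Omegatwo : w_1^0\sigma \in W_1,\ w_2^*(w_1^0)\sigma \in W_2\}$ has relative measure $> 1/2$ in the \emph{small} ball $\Omegatwo$ (this follows from Fubini, the $0.9$-density hypothesis, and the fact that $\phi_S$ nearly preserves volume), and then intersect with the variant of \eqref{iotaclaim} formulated for the ball $\Omegatwo$ rather than the unit ball. The compactness argument behind \eqref{iotaclaim} runs verbatim for any fixed ball at the cost of replacing $\ref{stronggeneric}$ by a constant depending on $\Omegatwo$, hence only on $G,H$; with both sets of relative density $> 1/2$ in $\Omegatwo$ the intersection is nonempty and the proof closes.
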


This statement easily implies the Lemma:
First of all, $b_i= s_i w_i \in \mathcal{B}_x$. 
Therefore, $x_i := x b_i$ satisfy the constraints of the Lemma
(i.e.\ $x_2 = x_1 \exp(r)$, where $v^{-\frac{\delta}{2 \dim(G)}} \gg \|r\| \gg T^{-N}$.)

\subsection{Proof of the sublemma.} 
Define functions $\phi_S \times  \phi_\mathfrak{r}(s): \Omegaone \rightarrow S \times \mathfrak{r}$ according to:
\begin{equation}\label{diffeo}
 \bigl(\exp(r_2')\sigma_2\bigr)^{-1}\exp(r_1')\sigma_1 s=\phi_S(s)\exp(\phi_\mathfrak{r}(s)), \ \ \phi_S(s) \in S, \phi_{\mathfrak{r}}(s) \in \mathfrak{r}.
\end{equation}
Strictly speaking, the maps $\phi_{S}$, $\phi_{\mathfrak{r}}$ are functions
of $r_1'$, $r_2'$, $\sigma_1$, $\sigma_2$, and  $s$; but we shall suppress the dependence on the first four variables, which will be fixed throughout the proof.

If $\Omegaone$, $\Omega_\mathfrak{r}$, and $\Omegatwo$ are sufficiently
small, the expression on the left of \eqref{diffeo} is ``sufficiently close'' to the origin. This implies that, if we take the three sets $\Omegaone, \Omegatwo,
\Omega_{\mathfrak{r}}$
sufficiently small, $\phi_S$ and $\phi_{\mathfrak{r}}$ will be defined and smooth.  In that statement, the notion of ``sufficiently small'' depends only on $G,H$.

We require that $\Omegaone$, $\Omegatwo$, and $\Omega_{\mathfrak{r}}$ are so small that:
\begin{itemize}
\item
 $\phi_S$ is injective,
\item
The preimage $\Omegathree := \phi_S^{-1}(\Omegaone)\subset\Omegaone$ of $\Omegaone$
has volume
$\vol(\Omegathree)>0.9\vol(\Omegaone)$.
\item
 The Jacobian of $\phi_S$ is everywhere on $\Omegaone$ between $0.9$ and $1.1$, i.e.\ the map $\phi_S$ almost preserves
 volume.
\end{itemize}
This is possible since the dependence of $\phi_S$ on $s$ and the parameters
 $r_1'$, $r_2'$, $\sigma_1$, $\sigma_2$
is smooth and since $\phi_S(s)=s$ if the latter parameters are trivial. Clearly these restrictions
only depend on $S$.

These conditions imply $\vol(\phi_S(W_1\cap\Omegathree))>0.5\vol(\Omegaone)$. Therefore, $W_2\cap
\phi_S(W_1\cap\Omegathree)$ must be nonempty. Take $w_1\in W_1$ with
$\phi_S(w_1)=w_2 \in W_2$. 
We may also assume that $w_1$ is, in a weak sense, a point of density for this set: $\phi_S^{-1}(W_2) \cap W_1$
intersects $w_1 \Omegatwo$ in a set of volume bounded below.

 Then:
$$
  \exp(r_1')\sigma_1 w_1
= \exp(r_2')\sigma_2 w_2
 \exp(\phi_\mathfrak{r}(w_1)).$$
by \eqref{diffeo}.


We claim $\|\phi_\mathfrak{r}(w_1)\| \asymp \|r_1'-r_2'\|$.
To see that, rearrange \eqref{diffeo} as follows:
 \begin{multline*}
\exp(r_2')^{-1}\exp(r_1')=\\
\sigma_2\phi_S(w_1)\exp(\phi_\mathfrak{r}(w_1))(\sigma_1 w_1)^{-1} \in S \exp(\mathrm{Ad}(\sigma_1 w_1)
\phi_{\mathfrak{r}}(w_1))
\end{multline*} 
It will therefore suffice to prove that, if we express
$\exp(r_2')^{-1} \exp(r_1')$ in the form $s \exp(X) \, (s \in S, X \in \mathfrak{r})$, we have
the majorization $\|X\| \gg \|r_1'-r_2'\|$ for sufficiently small $\Omega_{\mathfrak{r}}$. 
This has already been seen: \eqref{expconstdef}.

It remains to check that 
$\|\phi_{\mathfrak{r}}(w_1)_1\| \gg
\|\phi_{\mathfrak{r}}(w_1)\|$.  We will do this
after possibly modifying $w_1$, using the ``point of density'' assumption.
For $\sigma \in S$ near to the identity, we have
$\phi_{\mathfrak{r}}(w_1 \sigma) = \mathrm{Ad}(\sigma^{-1})
\phi_{\mathfrak{r}}(w_1)$; replacing $w_1$ by a nearby
$w_1 \sigma \in \phi_S^{-1}(W_2) \cap W_1$,
 and applying a suitable variant
of \eqref{iotaclaim}, we conclude.


This concludes the proof of the Sublemma. \qed




\section{An almost invariant measure on a closed orbit is close to the invariant measure.}
We shall prove (``Proposition F'' from \S \ref{proof})

\begin{prop} \label{nearness}
Let $x_0 \in X$ be so that $x_0 S$ is a closed orbit of
volume $V$,  for some connected $S \supset H$.
 Suppose $\mu$ is a probability measure on $x_0 S$ that is
$\epsilon$-invariant under $S$ w.r.t.\ a Sobolev norm $\Sob_d$.
Let $\nu$ be the $S$-invariant probability measure on $x_0 S$.

   Then there are $\ref{V-constant}$ and $\ref{Rconstant}$%
\index{xappa3@$\ref{V-constant}$, exponent of $V$ in
Lemma~\ref{nearness-copy}~(\ref{nearness})}\index{xappa31@$\ref{Rconstant}$,
exponent of $R$ in Lemma~\ref{nearness-copy}~(\ref{nearness})} so
that
\begin{equation}\label{smoothness}
  \left| \mu(f)  - \nu(f) \right| \ll_{d}
V^{\ref{V-constant}} \epsilon^{\ref{Rconstant}/d}\Sob_d(f)   \mbox{ for }
   f \in C_c^{\infty}(X). \end{equation}
 In particular, there are constants $\ref{Aconstant}, \ref{Bconstant} > 0$%
\index{xappa32@$\ref{Aconstant}$, bound on $V$ in terms of $\epsilon$ in
Lemma~\ref{nearness-copy}~(\ref{nearness})}\index{xappa33@$\ref{Bconstant}$,
exponent for closeness in
Lemma~\ref{nearness-copy}~(\ref{nearness})}
 such that if $V\leq \epsilon^{-\ref{Aconstant}/d}$ then
 $\mu$ and $\nu$ are $\epsilon^{\ref{Bconstant}/d}$-close:
 \begin{equation*}
  \left|\mu(f)  - \nu(f) \right| \ll_{d} \epsilon^{\ref{Bconstant}/d}\Sob_d(f)   \mbox{ for }
   f \in C_c^{\infty}(X).
 \end{equation*}
\end{prop}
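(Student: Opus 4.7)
The plan is to realize the sketch given after the statement: introduce a convolution operator $T_\chi$ on $S$ that both preserves $\mu(f)$ up to the almost-invariance error and drives $f$ exponentially toward the constant $\nu(f)$ via the spectral gap, then optimize over the number of iterations. Fix a non-negative, symmetric $\chi \in C_c^\infty(S)$ with $\int_S \chi\,ds = 1$ and $\supp\chi \subset \{s \in S : \|s\| \leq 2\}$, and define $(T_\chi f)(x) = \int_S \chi(s) f(xs)\,ds$ for $f \in C_c^\infty(X)$. Then $T_\chi$ preserves smoothness and preserves $\nu$; by \eqref{sobolevdistort}--\eqref{heightdistort} together with the fact that $\supp \chi^{*k} \subset \{s : \|s\| \leq 2^k\}$, one obtains $\Sob_d(T_\chi^k f) \leq C^k \Sob_d(f)$ for a constant $C = C(d) \geq 1$ (which grows exponentially in $d$, a reflection of the fact that differentiating $T_\chi^k f$ pulls out factors of $\Ad(s^{-1})$ of size $\|s\|^\star$). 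The almost-invariance hypothesis, integrated against $\chi$, yields
\[
 |\mu(T_\chi g) - \mu(g)| \leq \int_S \chi(s)\,|\mu^s(g) - \mu(g)|\,ds \ll \epsilon\,\Sob_d(g),
\]
and telescoping over $g = T_\chi^j f$ gives $|\mu(T_\chi^k f) - \mu(f)| \ll \epsilon\,C^k\,\Sob_d(f)$.

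Next, Proposition~\ref{sectionslice} supplies a spectral gap, depending only on $G, H$, for the $S$-action on the orthogonal complement $L^2_0(\nu)$ of the locally constant functions. Since $\chi$ is symmetric, $T_\chi$ is self-adjoint on $L^2(\nu)$, and the spectral gap gives $\|T_\chi\|_{L^2_0(\nu)\to L^2_0(\nu)} \leq \lambda < 1$ with $\lambda$ depending only on $G, H$, whence
\[
 \|T_\chi^k f - \nu(f)\|_{L^2(\nu)} \leq \lambda^k \|f\|_{L^2(\nu)} \leq \lambda^k \Sob_d(f).
\]
To pair this with the probability measure $\mu$, we promote the $L^2(\nu)$-bound to a pointwise one by one further smoothing. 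For $x \in x_0 S$ and a fixed integer $m \geq 1$, Cauchy--Schwarz applied to $(T_\chi^{k+m} f - \nu(f))(x) = \int_S \chi^{*m}(s)(T_\chi^k f - \nu(f))(xs)\,ds$ gives
\[
 \bigl|(T_\chi^{k+m} f - \nu(f))(x)\bigr| \ll V^{\star}\,\|\chi^{*m}\|_{L^2(S)}\,\lambda^k\,\Sob_d(f),
\]
the polynomial-in-$V$ factor arising from the passage from Haar integration over $\supp\chi^{*m} \subset S$ to the $L^2(\nu)$-norm via a fundamental domain for $S_{x_0}\backslash S$; the overcounting there is controlled by the quantitative isolation of closed $S$-orbits (Lemma~\ref{iso}) together with the lattice-point counting of Proposition~\ref{sl32}. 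Since $\mu$ is a probability measure, this gives $|\mu(T_\chi^{k+m} f) - \nu(f)| \ll V^\star \lambda^k \Sob_d(f)$.

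Combining the two estimates yields $|\mu(f) - \nu(f)| \ll (\epsilon\,C^k + V^\star \lambda^k)\Sob_d(f)$, and optimizing $k \asymp \log(V^\star/\epsilon)/\log(C/\lambda)$ balances the two contributions and produces a bound $V^\star \epsilon^{\theta(d)} \Sob_d(f)$ with $\theta(d) = \log(1/\lambda)/\log(C(d)/\lambda)$. Because $C(d)$ grows exponentially with $d$, one has $\theta(d) \sim \log(1/\lambda)/(d\log 2)$ as $d$ increases, which matches the stated form $\ref{Rconstant}/d$ in \eqref{smoothness}. The quantitative corollary under $V \leq \epsilon^{-\ref{Aconstant}/d}$ then follows by choosing $\ref{Aconstant}$ small enough that $V^{\ref{V-constant}}$ is absorbed into $\epsilon^{\ref{Bconstant}/d}$. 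The main obstacles in the plan are: first, the exponential growth of $\Sob_d$ under iteration of $T_\chi$, which forces the effective exponent $\theta$ to degrade like $1/d$ rather than being absolute; and second, the $L^2 \to L^\infty$ transfer step, where one must control orbit wraparound in $S_{x_0}\backslash S$ using the quantitative isolation and lattice-counting results of Lemma~\ref{iso} and Proposition~\ref{sl32} to keep the polynomial-in-$V$ loss manageable.
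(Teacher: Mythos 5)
Your strategy is the paper's: iterate a smoothing operator $T_\chi$ on $S$, telescope the almost-invariance to get $|\mu(T_\chi^n f)-\mu(f)|\ll \epsilon\, C(d)^n\,\Sob_d(f)$, contract onto constants in $L^2(\nu)$ via the spectral gap of Proposition \ref{automorphic}, and optimize $n$; the $1/d$ decay of the exponent for the same reason. The genuine gap is in your $L^2(\nu)\to L^\infty$ transfer. You assert a pointwise bound $|(T_\chi^{k+m}f-\nu(f))(x)|\ll V^{\star}\lambda^k\Sob_d(f)$ \emph{uniformly} over $x\in x_0S$, with the wraparound in $S_{x_0}\backslash S$ ``controlled by Lemma \ref{iso} and Proposition \ref{sl32}.'' This uniform bound fails when $x_0S$ is noncompact: the number of sheets of $s\mapsto xs$ over the ball $\{\|s\|\leq 2\}$ is governed by the injectivity radius \eqref{injfact} at $x$, i.e.\ it is $\ll \height(x)^{\ref{hc2}\dim S}$, and $\height$ is unbounded on a noncompact closed orbit, with no bound in terms of $V$ alone. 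Neither cited tool removes the height dependence: the separation constant in Lemma \ref{iso} itself degrades with $\height$, and at a point $x_0s$ far out on the orbit the relevant stabilizer elements $\lambda$ with $x_0s\lambda'=x_0s\lambda''$, $\|\lambda'\|,\|\lambda''\|\leq 2$, satisfy only $\|\lambda\|\ll\|s\|^{2}$, so Proposition \ref{sl32} counts points in a region whose separation scale is again the local injectivity radius.

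The paper's proof resolves this by keeping the factor $\height(x)^{\ref{hc2}\dim S}$ explicit, capping the pointwise bound by the trivial $\|F\|_{L^\infty}\ll\Sob_d(F)$, and then integrating
$\min\bigl(1,\ V(1-\delta)^n\height(x)^{\ref{hc2}\dim S}\bigr)$
against $\mu$: one splits $x_0S$ into $\Siegel(R_1)$ and its complement, bounds the $\mu$-mass of the complement by $\ref{m1constant}R_1^{-\ref{m2constant}}$ via Lemma \ref{measureoutsidecompact}, and optimizes over the \emph{two} free parameters $R_1$ and $n$ (three terms to balance, not two). Your argument is complete as written only when $x_0S$ lies in a fixed compact set; in general you need this additional truncation in the height, which is where the specific exponents $\ref{V-constant}$, $\ref{Rconstant}$ come from. (A further small point: $\|f\|_{L^2(\nu)}\leq\Sob_d(f)$ should be routed through $\|f\|_{L^2(\nu)}\leq\|f\|_{L^\infty}\ll\Sob_d(f)$, since $\Sob_d$ is an $L^2$-norm with respect to a smooth measure on $X$, not with respect to the singular measure $\nu$.)
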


One cannot -- at least naively -- dispense with the occurrence of
$V$ in the statement. 

\proof Let $\chi \in C^{\infty}_c(S)$ be a fixed
probability measure supported in the ball $\{s \in S: \|s\| \leq 2\}$. 
%

Denote by $L^2_0(\nu)$ the orthogonal complement of the constant functions. 
 By the spectral gap, furnished by Proposition \ref{automorphic},
we have for $F \in L_0^2(\nu_{x_0 S})$:
  $$\|F \star \chi\|_{L^2(\nu )} \leq (1-\delta) \|F\|_{L^2(\nu)}$$
  where $\delta > 0$ depends only on $\chi$ and the spectral gap for $S$
acting on $L^2(\nu_{x_0 S})$.  In view of the statement of Proposition \ref{automorphic}, which guarantees such uniform spectral gaps, we may regard (after having fixed a choice of $\chi$ for each subgroup $S$) $\delta$ as depending only on $G,H$.

Let $\chi^{(1)} = \chi$ and $\chi^{(n)} = \chi^{(n-1)} \star \chi$ for $n$ a positive integer.  Here $\star$ denotes
convolution on $S$. 
By the definition of ``almost invariance'' (\S \ref{def:almostinvariant}), we have 
$|\mu(F \star \chi) - \mu(F) | \leq
\epsilon  \Sob_d(F)$.
 In consequence, there exists $E_{\chi}>1$ so that: 
\begin{equation} \label{round}| \mu(F \star \chi^{(n)}) - \mu(F) | \ll
\epsilon  E_{\chi}^{nd} \Sob_d(F) \end{equation}

Now, by \eqref{injfact}, we see that that
the fibers of $x \mapsto xs$, considered
as a map $\{s \in S: \|s\| \leq 2\} \rightarrow X$, 
 have size $\ll \height(x)^{\dim(S) \ref{hc2}}$; thus
 $$|F \star \chi(x)| \ll
\height(x)^{\ref{hc2}\dim S} \int |F| d\mathrm{vol} \ll \vol(x_0 S)\height(x)^{\ref{hc2} \dim(S)} \|F\|_{L^2(\nu)}.$$
Applying this to $F \star \chi^{(n-1)}$, we obtain:
\begin{equation}\label{andround}|F \star \chi^{(n)}(x)| \ll \vol(x_0 S) \height(x)^{\ref{hc2} \dim(S)} (1-\delta)^n \|F\|_{L^{\infty}}.\end{equation}

 We combine \eqref{round}, \eqref{andround}, and the fact $| F \star\chi^{(n)}(x)| \leq \|F\|_{L^{\infty}} \ll \Sob_d(F)$.  It results
 $$\mu(F)  \ll \Sob_d(F) \left( \epsilon E_{\chi}^{nd} +  \int_{x_0 S} \min(1,
\vol(x_0 S) (1-\delta)^n \height(x)^{\ref{hc2} \dim(S)}) d\mu \right) $$
To estimate the bracketed quantity on the right-hand side,
we split the integral into $x_0 S \cap \Siegel(R_1)$ and its complement,
where $R_1$ is a parameter that will be optimized.
 In view of Lemma \ref{measureoutsidecompact}, the bracketed quantity
on the right-hand side is bounded by
$$ \epsilon E_{\chi}^{nd}  + \vol(x_0 S)
(1-\delta)^n R_1^{\ref{hc2}\dim S}
+ \ref{m1constant} R_1^{-\ref{m2constant}}$$
We now choose the free parameters $R_1$ and $n$ so that the three quantities
$\epsilon E_{\chi}^{nd}$, $\vol(x_0 S)(1-\delta)^n R_1^{\ref{hc2}\dim S}$, and $R_1^{-\ref{m2constant}}$ are of comparable size. The stated result follows.
\qed

\section{Proof of Theorem \ref{thm:main}}  \label{finalproof}

We shall say that $\mu$ is $[S, \epsilon,d_S]$-almost invariant if $\mu$ is
$\epsilon$-almost invariant under a connected intermediate subgroup $S$
w.r.t.\ a Sobolev norm $\Sob_{d_S}$. 

The precise analog to the non-effective proof, or of \S \ref{subsec:D},
would be to choose a ``maximal'' $S$ under which $\mu$ is almost invariant.
However, we find it clearer in this effective setting
to present instead the proof by iteration, i.e.\ building up, dimension by dimension, a larger and larger $S$ under which $\mu$ is almost invariant.

\subsection{Completion of the proof of Theorem \ref{thm:main}} \label{completeness}

\begin{lem*}
Suppose that $\mu$ is $[S,\epsilon, d_S]$-almost invariant.
There exists constants $\ref{epsconst}(d_S)$, 
$\ref{volumeconst1}$,
$\ref{volumeconst2}$,
 $\ref{moreinvarianceconstant}$, and $d_S'$%
 \index{xappa331@$\ref{volumeconst1}$, Lemma in \ref{ABC} and \ref{completeness}}%
\index{xappa332@$\ref{volumeconst2}$, Lemma in \ref{ABC} and \ref{completeness}}%
\index{xappa333@$\ref{moreinvarianceconstant}$, Lemma in \ref{ABC} and \ref{completeness}}%
so that for any $\epsilon$ sufficiently small (i.e. $\epsilon \ll 1$) either:
\begin{itemize} \item
$ |\mu(f) - \mu_{x_0 S}(f)| \leq \constc(d_S)
\epsilon^{\ref{volumeconst2}(d_S)} \Sob_{d_S}(f)$, 
for some closed orbit $x_0 S$ of 
volume $\leq \epsilon^{-\ref{volumeconst1}(d_S)}$.
\item The measure $\mu$ is 
$[S_* ,  \constc(d_S) \epsilon^{\ref{moreinvarianceconstant}(d_S)},
d_S']$
almost invariant, where the connected subgroup $S_* \supset H$ has larger dimension than $S$. 
\end{itemize}
\end{lem*}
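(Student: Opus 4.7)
\emph{Proof strategy.} The plan is to combine Propositions~C, B, D, and F, which together yield the stated dichotomy in a single step. Fix $\mysymbol > 0$ with $\mysymbol \leq \ref{Aconstant}/d_S$, and apply Proposition~C (= Proposition~\ref{prop:dichotomy}) with this value. Then either $\mu(x_0 S) > 0$ for some closed $S$-orbit $x_0 S$ of volume $V \leq \epsilon^{-\mysymbol}$, or we obtain a pair of nearby generic points $x_1, x_2 = x_1 \exp(r)$ with $r \in \mathfrak{r}$ and $\|r\| \leq \epsilon^{\xi}$, each $[\|r\|^{-\ref{gammaconstant}}, \|r_1\|^{-\ref{deltaconstant}}]$-generic with respect to a Sobolev norm $\Sob_{d'}$ furnished by Proposition~C.

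In the first case, $\mu$ is by hypothesis (from Theorem~\ref{thm:main}) the $H$-invariant probability measure on a closed $H$-orbit and hence $H$-ergodic. Since $x_0 S$ is $S$-invariant, in particular $H$-invariant, ergodicity forces $\mu(x_0 S) = 1$, so $\mu$ is supported on $x_0 S$ while remaining $\epsilon$-almost invariant under $S$. Our choice $\mysymbol \leq \ref{Aconstant}/d_S$ places us exactly in the hypothesis of Proposition~F (= Proposition~\ref{nearness}), which yields $|\mu(f) - \mu_{x_0 S}(f)| \ll_{d_S} \epsilon^{\ref{Bconstant}/d_S} \Sob_{d_S}(f)$. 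This is the first alternative of the Lemma, with $\ref{volumeconst1}(d_S) := \mysymbol$ and $\ref{volumeconst2}(d_S) := \ref{Bconstant}/d_S$.

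In the second case, Proposition~B (= Proposition~\ref{MaL}), applied with $\mu_1 = \mu_2 = \mu$ to the pair $x_1, x_2$, produces a unit vector $Z \in \mathfrak{r}$ (so $Z \perp \mathfrak{s}$) under which $\mu$ is $\ll_{d'} \max(\epsilon, \|r\|^{\ref{eipconstant}})^{1/2}$-almost invariant; since $\|r\| \leq \epsilon^{\xi}$, this quality is $\ll \epsilon^{\alpha}$ for some $\alpha > 0$ depending only on $G$, $H$, and $\xi$. Proposition~D (= Proposition~\ref{almostsubalgebra}) then produces a connected intermediate subgroup $H \subseteq S_* \subseteq G$ with $\dim S_* > \dim S$ under which $\mu$ is $\epsilon^{\alpha \ref{almostinvconstant}}$-almost invariant with respect to some $\Sob_{d_S'}$, realizing the second alternative with $\ref{moreinvarianceconstant}(d_S) := \alpha \ref{almostinvconstant}$. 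The smallness requirement $\epsilon \leq \ref{epsconst}(d_S)$ simply bundles the individual hypotheses of Propositions~C, B, D, and F. The only step that is not pure bookkeeping is the invocation of $H$-ergodicity above to upgrade ``$\mu(x_0 S) > 0$'' to ``$\mu$ supported on $x_0 S$''; everything else consists of composing the four Propositions and tracking exponents and Sobolev indices through the composition.
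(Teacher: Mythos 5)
Your proposal is correct and follows essentially the same route as the paper: apply Proposition~C with $\mysymbol$ tied to $\ref{Aconstant}/d_S$, handle the ``closed orbit of positive measure'' branch via Proposition~F, and handle the ``two nearby generic points'' branch via Proposition~B followed by Proposition~D, with the same exponents. Your explicit use of $H$-ergodicity to upgrade $\mu(x_0S)>0$ to $\mu(x_0S)=1$ (so that Proposition~F is genuinely applicable) is a step the paper leaves implicit, and it is the right justification.
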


Apply Proposition \ref{prop:dichotomy} with $\mysymbol = \ref{Aconstant}/d_S$. 
If it fails to be applicable, we are in the first case of the Lemma,
by Proposition \ref{nearness}. Indeed, we may take
$\ref{volumeconst1}(d_S) = \ref{Aconstant}/d_S$
and $\ref{volumeconst2}(d_S) = \ref{Bconstant}/d_S$. 

Otherwise, Proposition \ref{prop:dichotomy} produces $d_S' > d_S$
and $\xi$,
both of which depend only on $G,H, d_S$, and
two points $x_1, x_2$ so that
 $x_1,x_2$ are both
$[\|r\|^{-\ref{gammaconstant}},
\|r_1\|^{-\ref{deltaconstant}}]$-generic for $\mu$ w.r.t $\Sob_{d_S'}$
moreover $\|r\| \leq \epsilon^{\xi}$. 

Now we apply  Proposition \ref{MaL} with $\mu_1 = \mu_2 = \mu$. 
It shows that $\mu$ is $\ll_{d_S} \epsilon^{\min(1/2,\ref{eipconstant} \xi/2)}$-almost invariant
under an element $Z \in \mathfrak{r}$ which satisfies $\|Z\|=1$. 
In view of Proposition \ref{almostsubalgebra}, $\mu$
is $\ll_{d_S} \epsilon^{\ref{almostinvconstant} \min(1/2, \ref{eipconstant} \xi/2)}$-
almost invariant under a subgroup $S_* \supset H$ of strictly larger dimension. 

The proof of the Lemma is complete. 

\subsection{Proof by iteration}
We now prove Theorem \ref{thm:main} by iterating the Lemma.
Now, by definition, $\mu$ is $[H, \epsilon, d_H]$-almost invariant for
arbitrary $\epsilon$ and $d_H = \ref{linftyconst}+1$. By the lemma, for small enough $\epsilon$ either:
\begin{enumerate}
\item $|\mu(f) - \mu_{x_0 H}(f)| \ll \epsilon^{\ref{volumeconst2}(d_H)}
 \Sob_{d_H}(f)$, and
 $x_0 H$ has volume $\leq \epsilon^{-\ref{volumeconst1}(d_H)}$.\footnote{In fact, in the case, $\mu = \mu_{x_0 H}$; we write
it in a way that will resemble the other steps of the iteration.}
\item  $\mu$ is $[S_1, c_1 \epsilon^{\xi_1}, d_1]$-almost invariant for some $S_1 \supset H$ of strictly larger dimension;
here $\xi_1,d_1$ depend only on $H,G$ and $c_1 \ll 1$. 
\end{enumerate}

Suppose the second case occurs. By the Lemma again,
applied with $c_1 \epsilon^{\xi_1}$ instead of $\epsilon$, 
one of the following occur for small enough $\epsilon$:
\begin{enumerate}
\item
$|\mu(f) - \mu_{x_0 S_1}(f)| \ll \epsilon^{\xi_1 \ref{volumeconst2}(d_1)}
 \Sob_{d_1}(f)$, where
 $x_0 S_1$ is a closed $S_1$-orbit of volume $\ll \epsilon^{-\xi_1 
\ref{volumeconst1}(d_1)}$, or:
\item $\mu$ is $[S_2, c_2\epsilon^{ \xi_2}, d_2]$-almost invariant
 for some $S_2 \supset  H$ of strictly larger dimension than $S_1$.
  Here $ \xi_2, d_2$ depends only on $H$ and $G$, 
whereas $c_2 \ll 1$. 
\end{enumerate}

Iterating this process -- which we can do
at most $\dim(G)-1$ times -- we arrive at the following conclusion:
$$|\mu(f) - \mu_{x_0 S_j}(f)| \ll
\epsilon^{ \xi_j \ref{volumeconst2}(d_j)} \Sob_{d_j}(f)$$
 where $S_j \supset H$ has dimension $\geq \dim(H) + j$,
and $x_0 S_j$ is a closed orbit of volume $\ll \epsilon^{- \xi_j 
\ref{volumeconst1}(d_j)}$. 

In the rest of the proof,
we will abbreviate an expression like $|\mu(f) - \mu_{x_0 S}(f)|
\leq \epsilon \Sob_{d}(f)$ to the phrase ``$\mu$ and $\mu_{x_0 S}$
are $\epsilon$-close w.r.t. $\Sob_{d}$;'' sometimes
we will suppress mention of the Sobolev norm.

Let $\Delta$ be the supremum of all
quantities $\xi_j \ref{volumeconst1}(d_j)$ that may arise through the above
process. Let $\delta$ be the infimum of all quantities
$\xi_j \ref{volumeconst2}(d_j)$ that may arise through the above process.
Let $d$ be the supremum of all $d_j$ that may arise through the above process. 
These three constants depend only on $H$ and $G$, for there are a finite list
of chains of intermediate subgroups $H \subset S_1 \subset S_2
\subset \dots S_j$. 

Thus, we conclude that for $\epsilon$ smaller than some constant $\epsilon_0 \ll 1$, 
there is an intermediate subgroup $S_j \supset H$
and a closed $S_j$-orbit $x_0 S_j$ of volume $\leq \constc \label{FinalVolumeConstant} \epsilon^{-\Delta}$,
so that $\mu$ is $\leq \constc \epsilon^{\delta}$-close w.r.t.\ $\Sob_d$
to the $S_j$-invariant probability measure on $x_0 S_j$. 

The conclusion of our theorem follows: choose $\epsilon$
so that $\ref{FinalVolumeConstant}\epsilon^{-\Delta}=V$. Then the 
 above applies for $V \gg 1$, showing that $\mu$ is $\ll V^{-\delta/\Delta}$-close w.r.t.\ $\Sob_d$ to the 
$S$-invariant probability measure on the closed orbit $x_0S$ of volume
 $\leq V$, where $S$ is some intermediate subgroup.

 By choosing appropriately
 $V_0$ (from the statement of Theorem \ref{thm:main}) in a way depending on $G$, $H$ and $\Gamma$,  we can remove the implicit constant by lowering the exponent:
 $\mu$ is $\leq V^{-\delta/(2\Delta)}$-close to the measure on $x_0S$ whenever $V \geq V_0$. \qed

\subsection{Proof of the topological Theorem \ref{mcor1}.}

\proof[Proof of Theorem \ref{mcor1}]

Notations as in Theorem \ref{thm:main}; especially, let $\delta$, $d$, and $V_0$ be
as in that Theorem. 
Let $N$ be a large integer; it will be chosen in course of the proof to depend only on $G,H$. 

Let $\gimel$ equal the number of intermediate subgroups
$H \subseteq S \subseteq G$; it is finite by Lemma \ref{assumption-2}. 

Consider the set $\mathcal{V}$ of all volumes of closed $S$-orbits $x_0 S$,
where $H \subset S \subset G$. 
The cardinality of $\mathcal{V}$ is $\leq \gimel$. 

Take $V> V_0^{N^{\gimel+1}}$ and consider the intervals 
$$ (V^{1/N^{\gimel+1}}, V^{1/N^{\gimel}}],\ldots,
(V^{1/N^{d}}, V^{1/N^{d-1}}], \dots,(V^{1/N}, V],
\mbox{ for } 1 \leq d \leq \gimel+1.
$$
It is clear that one of these intervals will contain no element of $\mathcal{V}$. Call it  $(X, X^N]$. 
We apply Theorem \ref{thm:main} with the parameter ``$V$'' set to $X^{N}$.
It follows that there exists a closed $S$-orbit $x_0 S$ of volume $\leq X$,
for some intermediate subgroup $H \subset S \subset G$, so that:

\begin{equation}\label{Clifford}\biggl|\int f
\operatorname{d}\!\mu-\int f \operatorname{d}\!\mu_{x_0 S}\biggr|<X ^{-N\delta} \Sob_d(f),
\end{equation}

Now, let $x \in x_0 S$. Let $\eta, f_x$ be as in the proof of 
Lemma \ref{iso}; roughly $\eta$ is any number less than the injectivity radius at $x$, and $f_x$ is a bump function around $x$
of radius $\eta$.  The Sobolev norms of $f_x$ are estimated in \eqref{fxnorm}. 

Applying \eqref{Clifford}, we see that:
$$\left| \mu(f_x) -  \mu_{x_0 S}(f_x)  \right| \ll X^{-N \delta}
\height(x)^{d} \eta^{-d}.$$
On the other hand, it is clear that 
$$\mu_{x_0 S}(f_x) \gg 
\eta^{\dim(S)} \vol(x_0 S)^{-1} \gg \eta^{\dim(S)} X^{-1}.$$
In particular, so long as:
$$\eta^{\dim(S) + d} \gg X^{1-N \delta} \height(x)^{d}$$
we will have $\mu(f_x) > 0$; in particular, $\supp(\mu)$ must intersect
the $\eta$-ball around $x$.  

Specialize to the case when $\Gamma \backslash G$ is compact; the general case of 
Theorem \ref{mcor1} follows in a precisely similar way. 
Taking $N$ sufficiently large, applying Lemma \ref{iso}, and noting that 
$X \geq V^{1/N^{\gimel+1}}$, we conclude 
that $\supp (\mu)$ intersects any ball on $x_0 S$ of radius
$\geq  V^{-\star}$. Here the notion of ``ball'' is taken with respect to the induced Riemannian metric on $x_0 S$. 
\qed

From the proof we can extract the following Corollary.
In essence it is equivalent to the Theorem, but it is a formulation
that seems at first a little stronger and is helpful in various contexts. 

\begin{cor}\label{mcor2}
 Notation as in Theorem \ref{thm:main}, let $\Delta>0$ be real.
There exists $r\in(0,1)$ and $W_0>0$,
 depending on $G$, $H$, $\Gamma$, and $\Delta$, so that:

For any closed $H$-orbit $x_0H$ and any $W>W_0$
 there exists $V\in [W^r,W]$, an intermediate subgroup $S$, and a closed orbit $x_0S$ of volume $\leq V$, so that $\mu$ is $V^{-\Delta}$-close to $\mu_{x_0S}$ w.r.t.\ $\Sob_d$. 
\end{cor}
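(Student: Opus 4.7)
The plan is to trade the better decay rate $\Delta$ against the freedom to choose $V$ anywhere in the range $[W^r, W]$, by means of the same pigeonhole argument that drives the proof of Theorem \ref{mcor1}. The key observation is that for the fixed closed $H$-orbit $x_0 H$, the set
\[
 \mathcal{V} := \{\vol(x_0 S) : H \subseteq S \subseteq G, \ x_0 S \text{ closed}\}
\]
is finite of cardinality at most $\gimel$, the total number of intermediate subgroups (finite by Lemma \ref{assumption-2}). If we can locate $V$ in a multiplicative gap of $\mathcal{V}$, then Theorem \ref{thm:main} applied at that $V$ must return an orbit whose volume is strictly below the gap, and the resulting closeness $V^{-\delta}$ can be reinterpreted as $V'^{-\Delta}$ for a substantially smaller $V'$.

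Let $\delta, d, V_0$ be as in Theorem \ref{thm:main}. We may assume $\Delta > \delta$ (otherwise the theorem itself gives the conclusion with $V = W$). Fix an integer $N$ depending only on $G, H, \Delta$ with $N \geq \gimel + 1$ and $N\delta \geq \Delta$. Given $W \geq W_0 := V_0^{N^{\gimel+1}}$, partition the interval $[W^{1/N^{\gimel+1}}, W]$ into the $\gimel + 1$ consecutive sub-intervals $(W^{1/N^{k+1}}, W^{1/N^k}]$ for $k = 0, 1, \dots, \gimel$. Since $|\mathcal{V}| \leq \gimel$, at least one such interval, say $(X, X^N]$ with $X = W^{1/N^{k+1}}$, contains no element of $\mathcal{V}$.

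Next, apply Theorem \ref{thm:main} with parameter $X^N \geq V_0$. This produces an intermediate $S$ and a closed orbit $x_0 S$ of volume $\leq X^N$ with $\mu$ being $(X^N)^{-\delta}$-close to $\mu_{x_0 S}$ w.r.t.\ $\Sob_d$. Because $\vol(x_0 S) \in \mathcal{V}$ and no element of $\mathcal{V}$ lies in $(X, X^N]$, we must have $\vol(x_0 S) \leq X$. Now define
\[
 V := X^{N\delta/\Delta}.
\]
Then by construction $V^{-\Delta} = X^{-N\delta} = (X^N)^{-\delta}$, so $\mu$ is $V^{-\Delta}$-close to $\mu_{x_0 S}$; the inequality $V \geq X \geq \vol(x_0 S)$ follows from $N\delta/\Delta \geq 1$; and $V \leq X^N \leq W$ from $\delta < \Delta$.

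Finally, writing $V = W^{\delta/(\Delta N^k)}$, the worst case $k = \gimel$ yields the lower bound $V \geq W^{\delta/(\Delta N^\gimel)}$, so we set $r := \delta/(\Delta N^\gimel)$, which depends only on $G, H, \Delta$ (through $\delta$, $\gimel$, $N$). There is no substantive obstacle here; the entire argument is bookkeeping around a pigeonhole, already present in embryonic form in the proof of Theorem \ref{mcor1}. The only thing that requires attention is the simultaneous satisfaction of the two constraints on $N$ (one combinatorial, one analytic) and the verification that the chosen $V$ lies in $[W^r, W]$ uniformly across the $\gimel + 1$ possible values of $k$.
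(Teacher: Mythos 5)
Your argument is correct and is essentially the paper's own: the corollary is extracted from the same pigeonhole on the (at most $\gimel$-element) set of volumes $\mathcal{V}$ used in the proof of Theorem \ref{mcor1}, finding a gap $(X,X^N]$ and applying Theorem \ref{thm:main} at $X^N$ to force the returned orbit's volume down to $\leq X$ while retaining closeness $X^{-N\delta}$. Your choice $V=X^{N\delta/\Delta}$ (rather than simply $V=X$ with $N\delta\geq\Delta$) is a cosmetic variation; the verification of $V\in[W^r,W]$ and the dependencies of $r$ and $W_0$ are all in order.
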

In other terms, we have ``amplified'' the exponent $\delta$
of Theorem \ref{thm:main} to an arbitrarily large $\Delta$.
Of course, this carries a hidden cost. 
\qed

\section{An arithmetic application: distribution of integral points
on prehomogeneous hypersurfaces} \label{arithappl}

\subsection{Introduction.}

\subsubsection{Discriminant and height.}
Notation as in our main theorem, closed $H$-orbits also have an {\em arithmetic} invariant, the ``discriminant''
which measures their arithmetic complexity (cf.\ \cite{ELMV}).
In arithmetic applications, what one can easily measure is the discriminant,
rather than the volume, of a closed $H$-orbit; thus we shall
present a proposition relating the two. This result is Proposition \ref{heightdisc}. 

We have already indicated some arithmetic applications of our results in
\S \ref{sarithmetic}; mainly to see how Proposition \ref{heightdisc} arises naturally,
we shall present 
  another class of applications: to problems of Linnik type.

\subsubsection{Linnik problems.}
By a {\em Linnik-problem}, we have in mind the following: $f$ is a homogeneous polynomial on a $\Q$-vector space, and we wish to analyze
the distribution of integral points on the level set $\{f^{-1}(d)\}$. 
For a discussion of problems of this type, see also \cite{MV-ICM}. 
In the case when $f$ is {\em prehomogeneous}, this problem is amenable
to analysis by our methods (cf. \cite{EO}).  We shall present
a quantitative theorem in this direction, for certain classes of such $f$,
in Proposition \ref{HEEprop}. 

This class of applications builds on the work of others.
In particular, the applicability of Ratner's theorem
to these problems was observed by A. Eskin and H. Oh \cite{EO2}, 
related problems were studied by W. Gan and H.Oh.  The idea
of using invariant theory to handle focussing problems originates
in work of A. Yukie \cite{YukieI, YukieII}.

\subsection{Some comments on heights.}
Let $W$ be a $\Q$-vector space equipped with a Euclidean norm
and an integral lattice $W_{\Z} \subset W$. 
The {\em height} of a subspace $W'  \subset W$ is, 
by definition, the norm $\|e_1 \wedge \dots \wedge e_r\|_{\wedge^{r} W}$,
where $e_1, \dots, e_r$ is a basis for $W' \cap W_{\Z}$,
and the norm on $\wedge^r W$ is that derived from $W$. 
In explicit terms:
$$\height(W')^2 = \det((e_i, e_j)).$$

We shall use the following simple principles:
\begin{equation} \label{UP0} \mbox{A subspace of low height has a basis of low height.}  \end{equation}
In explicit terms, one may choose a basis for $W' \cap W_{\Z}$
so that $\|e_i\| \ll \height(W')$.  This assertion is simply lattice
reduction theory, together with the observation that
the lengths of elements of $W_{\Z}$ are bounded below. 
\begin{equation} \label{UP1}\mbox{A system
of linear equations of low height  has a solution set of low height,}
\end{equation}
In more explicit terms, given a $m \times n$ integral matrix $A$,
all of whose entries are bounded above by a constant $\|A\|$,
the kernel of $A$ -- considered as a subspace of $\Q^m$,
where we endow $\R^m$ with the Euclidean norm -- has height bounded by $\ll \|A\|^{\kappa(m,n)}$. 

\subsection{The discriminant of a closed $H$-orbit.}
Notation as in our main theorem.
Suppose that $\Gamma g H$ is a closed $H$-orbit.  We shall attach to it an arithmetic invariant, the
{\em discriminant}. 
The corresponding procedure when $H$ is a torus was introduced in
\cite{ELMV}. 

First of all, we observe that, with $\Lambda_g = \Gamma \cap g H g^{-1}$, 
\begin{equation} \label{zar} \mbox{the Zariski closure of $\Lambda_g$ has Lie algebra $\mathrm{Ad}(g) \mathfrak{h}$.}\end{equation}
 This follows from the Borel-Wang density theorem
\cite[Chapter II, Corollary 4.4]{Margulis}, together with the fact that the algebraic group underlying $H$
has no compact factors.
Thus, $\Ad(g) \mathfrak{h}$ is a $\Q$-subspace of $\mathfrak{g}$.

Let $r = \dim(H)$, and
define $V = (\wedge^r \mathfrak{g})^{\otimes 2}$,  $V_{\Z} = (\wedge^r \mathfrak{g}_{\Z})^{\otimes 2}$.
Choosing any $\Q$-basis
$e_1, e_2, \dots, e_d$ for the Lie algebra of $\Ad(g) \mathfrak{h}$, we
set
$v_{gH}= \frac{(e_1 \wedge e_2 \wedge \dots \wedge e_r)^{\otimes 2}}{\mathrm{det}(B(e_i, e_j))},$ where
$B$ is the Killing form\footnote{The restriction of $B$ to $\mathfrak{h}$
is nondegenerate. This statement may be verified at the level of complexifications. Choose a real form of $\mathfrak{h}_{\C}$ which is compact. 
The restriction of $B$ to this subalgebra is negative definite, whence the assertion.}
 on $\mathfrak{g}$.  Then $v_{gH}$ is independent of the choice of basis $(e_i)$ for $\Ad(g) \mathfrak{h}$.  
We define:\footnote{By virtue of the fact that $\mathfrak{g}_{\Z}$
is $\Gamma$-stable, this definition is independent of the 
choice of $g$.}
$$\disc(\Gamma g H) = \min\{ m \in \Z:  v_{gH} \in m^{-1} V_{\Z}\}.$$

\begin{prop} \label{heightdisc}
There exists $\consta \label{discvol} >0$%
\index{xappa81@$\ref{discvol}$, bound on volume in terms of discriminant}
so that, for any $x \in \Gamma \backslash G$ with $x H$ closed:
\begin{enumerate}
\item There exists a representative $g \in G$ for $x$ so that the height
of $\Ad(g) \mathfrak{h}$ is $\asymp \disc(xH)^{1/2}$.
\item $\vol(x H) \gg \disc(x H)^{\ref{discvol}}.$
\item $\vol(x H) \ll \disc(xH)^{\xappa}$. \end{enumerate}
\end{prop}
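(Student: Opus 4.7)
The plan is to relate three measures of size attached to $xH$: the Euclidean height of the $\Q$-subspace $\Ad(g)\h \subseteq \g$, the discriminant $\disc(xH)$ (essentially the covolume of the integer lattice $L_g := \Ad(g)\h \cap \g_\Z$ with respect to the Killing form), and the orbit volume $\vol(xH)$ (the covolume of $\Lambda_g := \Gamma \cap gHg^{-1}$ in $gHg^{-1}$).

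For part (1), observe that for any $\Z$-basis $e_1,\ldots,e_r$ of $L_g$ one has $\height(\Ad(g)\h)^2 = \det((e_i,e_j)_{\mathrm{Eucl}})$, whereas $\disc(xH)$ is commensurate --- up to bounded factors coming from the $\Q$-denominators of $B$ on $\g_\Z$ --- with $|\det(B(e_i,e_j))|$. Since the Euclidean form on $\g$ equals $(x,y) = -B(x, \Theta y)$ for a Cartan involution $\Theta$ compatible with the chosen orthonormal basis, the ratio of these two determinants measures how far $\Ad(g)\h$ is from being $\Theta$-stable. I would use a reduction-theory argument for the $\Gamma$-action on $G/\tilde{H}$, essentially a Mahler-type compactness argument in the Grassmannian $\Grass_r(\g)$, to produce a representative within the coset $\Gamma g$ for which $\Ad(g)\h$ sits in bounded ``Cartan-reduced'' position, yielding $\height(\Ad(g)\h) \asymp \disc(xH)^{1/2}$.

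For part (2), I would apply Minkowski's second theorem to $\Lambda_g$. By Borel--Wang (cf.\ \eqref{zar}) the Zariski closure of $\Lambda_g$ has Lie algebra $\Ad(g)\h$, so $\Lambda_g$ is a lattice in $gHg^{-1}$ of covolume $\vol(xH)$. The successive minima produce $r := \dim H$ linearly independent $\gamma_1,\ldots,\gamma_r \in \Lambda_g$ with $\prod d(\gamma_i,1) \ll \vol(xH)$; the integer matrices $\gamma_i - 1 \in \g_\Z$ lie within $O(d(\gamma_i,1)^2)$ of $\Ad(g)\h$, and their projections onto $\Ad(g)\h$ along a fixed complement remain linearly independent with norm $\ll d(\gamma_i,1)$. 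Approximating these projections by honest elements of $L_g$ produces a basis with product of norms $\ll \vol(xH)^\star$, so $\height(\Ad(g)\h) \ll \vol(xH)^\star$. Combined with part (1) this gives $\vol(xH) \gg \disc(xH)^{\ref{discvol}}$ for some positive $\ref{discvol}$.

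For part (3), I would exploit the arithmetic structure of $gHg^{-1}$: by Borel--Wang this is a $\Q$-algebraic subgroup of $\G$, and since $v_{gH}$ has denominator $\disc(xH)$ in $V_\Z$, the defining ideal of $gHg^{-1}$ is generated by polynomials whose coefficients have denominators polynomially bounded in $\disc(xH)$. Consequently $\Gamma \cap gHg^{-1}$ contains a principal congruence subgroup of $(gHg^{-1})(\Z)$ of level polynomially bounded in $\disc(xH)$; combining the Borel--Harish-Chandra volume bound for $(gHg^{-1})(\Z)$ --- itself polynomial in the discriminant by standard arguments --- with the polynomial bound on the index of a principal congruence subgroup yields $\vol(xH) \ll \disc(xH)^\kappa$. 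The principal obstacle lies in part (1): producing a representative $g$ in ``Cartan-balanced'' position requires a genuine reduction-theory argument, since the naive ratio $\height^2/\disc$ can be arbitrarily large when $\Ad(g)\h$ drifts toward the Cartan-unstable part of $\Grass_r(\g)$; parts (2) and (3) are comparatively routine once the relationship between $L_g$ and $\Lambda_g$ (furnished by the exponential map near the identity) is quantified.
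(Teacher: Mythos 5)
Your part (1) is in substance the paper's argument (compare two definite forms on $\wedge^r(\Ad(g)\mathfrak{h})$ over a compact set of $g$'s), but the ``principal obstacle'' you identify is not where the difficulty lies: the existence of a representative $g$ in a fixed compact subset of $G$ is exactly Lemma \ref{measureoutsidecompact}(1), already established via quantitative nondivergence of unipotent flows and the trivial-centralizer hypothesis. A bare Mahler-compactness argument in the Grassmannian would not by itself rule out an orbit living entirely near the cusp.

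The genuine gaps are in part (2). First, ``Minkowski's second theorem'' applies to lattices in a Euclidean space, not to the lattice $\Lambda_g$ in the noncompact semisimple group $gHg^{-1}$; there is no elementary successive-minima statement producing elements of $\Lambda_g$ of norm polynomially bounded by the covolume. This is precisely the nontrivial input: the paper obtains a set of elements of $\Lambda_g$ of norm $\ll \vol(xH)^{\star}$ generating a Zariski-dense subgroup from the lattice-point-counting \emph{lower} bound of Proposition \ref{sl32}(1), which rests on the spectral gap for $L^2(\Lambda_g\backslash gHg^{-1})$. Second, even granting such elements $\gamma_i$, your endgame fails: exhibiting $r$ linearly independent real vectors of small norm in $\Ad(g)\mathfrak{h}$ gives no upper bound on the covolume of the integer lattice $\Ad(g)\mathfrak{h}\cap\mathfrak{g}_{\Z}$ (those vectors are not lattice vectors, and one cannot ``approximate'' them by lattice vectors of comparable norm --- that is the very quantity being bounded; note also that $\gamma_i - 1$ need not lie in $\mathfrak{g}$ at all). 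The paper instead bounds the height of $\Ad(g)\mathfrak{h}$ by realizing it as the solution set of an \emph{integral} linear system of controlled height: one takes a Chevalley representation $W$ with $\{v\in\mathfrak{g}: v\cdot W_{\R}^{H}=0\}=\mathfrak{h}$, shows the fixed space $W^{F}$ of the small elements $F\subset\Lambda_g$ has low height via the effective Noetherian argument and \eqref{UP1}, and then applies \eqref{UP0}--\eqref{UP1} again to the annihilator. Without this ``kernel of a low-height integral system'' step, the bound $\height(\Ad(g)\mathfrak{h})\ll\vol(xH)^{\star}$ does not follow. (Part (3) is not proved in the paper, so your sketch there cannot be checked against it.)
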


We will not prove the third assertion, because we do not need it,
but in fact the proof is substantially easier than the second assertion. 
\proof (Sketch).
 We have seen (Lemma \ref{measureoutsidecompact}) that there exists a fixed compact set $\compactumfortyone \subset G$
whose projection to $\Gamma \backslash G$ necessarily intersects $xH$. 
Take $g \in \compactumfortyone$ to be any representative for $x$.

For the first observation, we observe that both the restriction of the Killing form,
and the chosen Euclidean structure on $\mathfrak{g}$, induce definite quadratic forms
on $\wedge^r \left(\Ad(g) \mathfrak{h}\right)$, for each $g \in G/H$. Clearly, the ratio of these forms vary continuously.
In particular, if $g$ lies within a fixed compact subset of $G/H$, the height of $\Ad(g) \mathfrak{h}$
is, up to constants, comparable to the square root of the discriminant, whence the first conclusion.

 There are multiple methods of proof for the second assertion. We indicate a
proof using dynamical ideas. 

We first claim an effective version
of \eqref{zar}. Namely,  there exist constants $\constc \label{c1const}, 
\consta \label{c3const}$\index{xappa9@$\ref{c3const}$, effective generation of Zariski closed group} so that:
\begin{eqnarray*}\label{mdr}\mbox{the Zariski closure $\mathfrak{L}'$ of the group generated by} \\  \nonumber
\{\lambda \in \Lambda_g : \|\lambda\| \leq \ref{c1const} \vol(x H)^{\ref{c3const}}
 \}    \
\mbox{ has Lie algebra $\mathrm{Ad}(g) \mathfrak{h}$.}\end{eqnarray*}
This can be established using results about lattice point counting,
such as those from \S \ref{lattice}. 

This being established,
let $T \geq \ref{c1const} 
\vol(xH)^{\ref{c3const}}$. 
Let $F := \{\lambda \in \Lambda: \|\lambda\| \leq T\}$. 

%

Let us fix a representation $\G$ on a $\Q$-vector space
$W$ with the following property: if $W_{\R}^{H}$ denotes
the $H$-fixed vectors in $W_{\R}$, then the pointwise stabilizer
of $W_{\R}^{H}$ in $G$ is precisely the Zariski-closure of $H$
within $G = \G(\R)$. It is possible to do this, by Chevalley's theorem
and the fact that the Zariski-closure of $H$ is connected
and does not admit algebraic characters.  In particular, 
\begin{equation} \label{PS}
\{v \in \mathfrak{g}: v . W_{\R}^{H} = 0\} = \mathfrak{h}.
\end{equation}

Fix, once and for all, a Euclidean norm on $W_{\R}$
and an integral lattice $W_{\Z}$.

Consider the fixed subspace $W^F$ for $F$ inside $W$.
We claim that $W^F$ has ``low height'', i.e.
bounded by a power of $\|g\|$ and $\vol(x H)$, when considered as a subspace
of $W$.  Indeed, one
may replace $F$ by a subset $F' \subset F$, with cardinality
bounded in terms of $G$ alone, so that $W^{F'} = W^F$,
by similar arguments to  \S \ref{Arg:EffectiveNoetherian}. 
The claim follows from \eqref{UP1}. 

Our definitions are so that the subgroup generated
by $F$ is Zariski-dense in $g H g^{-1}$;
it follows that $W_{\R}^F$ equals $g . W_{\R}^H$. 

Therefore, the subalgebra of $\mathfrak{g}$ defined by:
\begin{equation} \label{fixed}\{ v \in \mathfrak{g}: v .  W^F = 0\} \end{equation}
also has ``low height'', in the same sense.
This follows from \eqref{UP0} and \eqref{UP1}. 
By our assumption, the subalgebra defined by \eqref{fixed}
coincides with the Lie algebra of $\Ad(g) \mathfrak{h}$.

We have established that $\Ad(g) \mathfrak{h} \subset \mathfrak{g}$
has ``low height'' in terms of $\vol(x H)$; in view of the proof of the first assertion of the present Proposition, we are done.  \qed



We observe that this Proposition gives immediately an alternate proof of Lemma \ref{iso}.

\subsection{Application to Linnik problems.} \label{Linnikapp}

We shall focus on the following setting:
Let $\G$ be a semisimple algebraic $\Q$-group which acts
on a $\Q$-vector space $V$, which
preserves a polynomial invariant $f: V \rightarrow \Q$,  
and so that (algebraically) each level set of $f$ is a single $\G$-orbit. 
Let $V_{\Z} \subset V$
be a lattice, and $\Gamma$ a congruence subgroup of $G := \G(\R)$
that preserves $V_{\Z}$.

This is, roughly speaking, the setting for the (arithmetic) study
of ``prehomogeneous vector spaces.'' We shall impose an additional condition: 
that the stabilizer of a generic point
 is semisimple and has finite centralizer in $\G$. 

Here are two specific instances with these properties (more can
be found by examining the tables of prehomogeneous vector spaces). 

\begin{enumerate}
\item[Case A.] Fix an integer $r \geq 3$.
Let $\G = \SL(r), \Gamma=\SL(r,\Z)$, take $V_{\Z}$ to consist
of $r \times r$ integral symmetric matrices, and $f = \det$. 

Here $\dim(V) = r(r+1)/2$, $\deg(f) = r$, and the stabilizer
of a generic point is a form of the orthogonal group $\SO(r)$. 

\item[Case B.] Fix $r \in \{7,8\}$.\footnote{The space with $r=6$ would
still be prehomogeneous; however, the stabilizer of a generic point
has an infinite centralizer.}
Let $\G = \SL(r), \Gamma=\SL(r,\Z)$, let $V_{\Z}$
be the space of alternating trilinear forms on $\Z^r$,
with integral values; 
and $f$ is the {\em discriminant} (cf. \S \ref{Alt}). 

Here, if $r=7$, then $\dim(V)=35, \deg(f)=7$ and the stabilizer
of a generic point is a form of $G_2$.  If $r=8$,
then $\dim(V) = 36$ and $\deg(f)=16$, and the stabilizer
of a generic point is a form of $\SL(3)$. 
\end{enumerate}

In Case B,  some questions
in Diophantine geometry on $V$, analogous to the Oppenheim conjecture,
 were studied by ergodic methods by Yukie and collaborators
(\cite{YukieI}, \cite{YukieII}) and, indeed, we use a technique
to handle ``focussing'' analogous to that of \cite{YukieI, YukieII}.

The following Proposition gives a quantitative solution to Linnik's distribution
problem in the Cases A and B above.  A non-effective version in Case A was established
by Eskin and Oh, \cite{EO}. We observe that, in both these cases,
the degree is ``too large'' relative to the dimension for the Hardy-Littlewood method to be applicable. (For instance, Case A includes the case
of a (special) cubic form in five variables.)

\begin{prop} \label{HEEprop}
Let $(V, f)$ be as in Case A or Case B, defined above.

Let $V^{nc}_{\R}$ be the open subset of $V_{\R}$ comprising
points whose stabilizers in $\G(\R)$ are noncompact. 

Suppose $\Omega \subset \{x \in V^{nc}_{\R}: f(x) = 1\}$ is compact
with smooth boundary.

 Let $d \rightarrow \infty$
vary through integers with bounded square part\footnote{I.e. the
set of perfect squares which divide some $d$ is bounded}
 and so that $f^{-1}(d) \cap V_{\Z} \neq \emptyset$. 

Then:
$$\bigl|\{ M \in V_{\Z}: f(M) = d, \frac{M}{d^{1/\deg(f)}} \in \Omega\}\bigr| = C_d( \vol(\Omega) + O_\Omega(d^{-\delta})).$$
Here $\liminf_{d} \frac{\log C_d}{\log d} > 0$.
\end{prop}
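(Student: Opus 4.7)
The strategy is to convert the integral-point counting problem into an equidistribution statement for closed $H$-orbits on $X = \Gamma\backslash G$, where $H$ is the stabilizer of a generic point, and then apply Theorem \ref{thm:main} quantitatively via Proposition \ref{heightdisc}. The main obstacle will be controlling the \emph{focussing phenomenon}: the possibility that the relevant orbit measure is close to a measure supported on a closed orbit of a proper intermediate subgroup. The bounded-square-part hypothesis on $d$ is used precisely to rule this out.

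First I would set up the orbit decomposition. Fix $M_1 \in V^{nc}_\R$ with $f(M_1) = 1$, chosen in the $G$-orbit whose projection to $f^{-1}(1)$ accumulates at $\Omega$; let $H = \mathrm{Stab}_\G(M_1)$. In both Cases A and B, $H$ is semisimple with finite centralizer in $\G$, so it satisfies the hypotheses of \S \ref{technical}. Decompose $\{M\in V_\Z : f(M)=d\}$ into finitely many $\Gamma$-orbits $\Gamma M_d^{(j)}$; writing $M_d^{(j)} = d^{1/\deg f} g_d^{(j)} M_1$, each $x_0^{(j)}H := \Gamma g_d^{(j)} H$ is a closed $H$-orbit in $X$. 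The standard Eskin--McMullen/Duke--Rudnick--Sarnak unfolding gives
\begin{equation*}
 \bigl|\Gamma M_d^{(j)} \cap d^{1/\deg f}\Omega\bigr| = \frac{L(\Omega)}{\vol(x_0^{(j)}H)} \bigl(1 + E_{d,j}\bigr),
\end{equation*}
where $L(\Omega)$ is a fixed smooth functional of $\Omega$ (essentially the integral of a bump function over $\Omega$ against a $G$-invariant density on the level set), and $E_{d,j}$ is controlled by the Sobolev-norm distance between $\mu_{x_0^{(j)}H}$ and the $G$-invariant probability measure on $X$.

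Next, a direct computation of the Killing form restricted to $\Ad(g_d^{(j)})\mathfrak{h}$, read off from $M_d^{(j)}$, shows $\disc(x_0^{(j)}H) \asymp d^{A}$ for some explicit $A>0$ depending only on $(\G,V,f)$. Proposition \ref{heightdisc} then gives $\vol(x_0^{(j)}H) \gg d^{A\ref{discvol}}$, so Theorem \ref{thm:main} produces $E_{d,j} \ll d^{-\delta'}$ \emph{unless} $\mu_{x_0^{(j)}H}$ is in fact close to a closed-orbit measure $\mu_{x_0^{(j)}S}$ for some proper intermediate $H \subsetneq S \subsetneq G$ with $\vol(x_0^{(j)}S)\leq d^{\eta}$ for a prescribed small $\eta>0$.

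The main obstacle is ruling out, or summing away, this intermediate contribution. In Case A the hypothesis $\Omega\subset V^{nc}_\R$ together with Lemma \ref{assumption-2} eliminates proper connected intermediate subgroups. In Case B (with $r=7$) the genuine obstruction is $S = \SO_7\supset G_2$; following the invariant-theoretic focussing argument of \cite{YukieI,YukieII}, if $x_0^{(j)}H$ lies inside a closed $\SO_7$-orbit then the tensor $M_d^{(j)}\in \wedge^3(\Z^7)$ must be $\SO(q)$-invariant for some integral quadratic form $q$ on $\Z^7$. Extracting $\SO_7$-invariants then forces $d$ to be divisible by a large power of $\disc(q)$, so the bounded-square-part hypothesis restricts $d$ to a set where this cannot occur; the analogous algebraic analysis disposes of the $r=8$ case. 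Combined with Lemma \ref{iso} (which gives a polynomial bound on the total number of closed $S$-orbits of volume $\leq V$), the intermediate contribution is $O(d^{-\delta''})$ after summation. Summing the main terms yields the claimed asymptotic with $C_d = c\sum_j \vol(x_0^{(j)}H)^{-1}$; the lower bound $\liminf(\log C_d)/\log d > 0$ follows from the polynomial growth of the number of $\Gamma$-orbits on $f^{-1}(d)\cap V_\Z$, established by standard local-global and class-number arguments in arithmetic families.
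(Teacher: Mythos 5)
Your overall architecture --- decomposing $f^{-1}(d)\cap V_{\Z}$ into $\Gamma$-orbits, lower-bounding $\vol(\Gamma g_y H_{i(y)})$ via the discriminant and Proposition \ref{heightdisc}, and then invoking Theorem \ref{thm:main} --- is the paper's, and your Case A is essentially correct (though the absence of proper intermediate subgroups for $H=\SO(q)\subset\SL_r$ is the specific fact recorded in \S \ref{technical}, Example 2, not a consequence of Lemma \ref{assumption-2}, which only gives finiteness and semisimplicity). The gap is in Case B. You attempt to show that containment of $\Gamma g_y H_{i(y)}$ in a closed $\SO_7$-orbit ``cannot occur'' when $d$ has bounded square part; but this containment \emph{always} occurs: $g_y S_{i(y)} g_y^{-1}$ is the stabilizer of the integral vector $\cov(y)\in\Sym^2\Z^r$, hence is the group of real points of a $\Q$-group, hence $\Gamma g_y S_{i(y)}$ is always closed. (Relatedly, the trilinear tensor $M$ is never $\SO(q)$-invariant --- its stabilizer is exactly the relevant conjugate of $G_2$; what the orthogonal group stabilizes is the covariant $q=\cov(M)$ itself.) The correct move, and the one the paper makes, is not to exclude this orbit but to show it has \emph{large volume}: $\det(\cov(y))$ is a fixed multiple of a power of $d=\disc(y)$, so the bounded-square-part hypothesis bounds the gcd of the entries of $\cov(y)$, hence the height of the line $\Q.\cov(y)$ is $\gg d^{c}$ for some $c>0$; by Lemma \ref{lowerbounddisc} and Proposition \ref{heightdisc} the volume of $\Gamma g_y S_{i(y)}$ is then $\gg d^{\kappa'}$. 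Applying Theorem \ref{thm:main} with the parameter $V$ chosen below both $\vol(\Gamma g_y H_{i(y)})$ and $\vol(\Gamma g_y S_{i(y)})$ forces closeness to the Haar measure on $X$ for \emph{every} relevant orbit, so no summation over exceptional orbits is needed.

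Your fallback --- ``summing away'' the intermediate contribution using Lemma \ref{iso} --- does not close this gap either. Lemma \ref{iso} bounds polynomially the \emph{number} of closed $S$-orbits of volume $\leq V$, but it gives no control on how many points of $f^{-1}(d)\cap V_{\Z}$ lie on $H$-orbits trapped inside such $S$-orbits, nor on the number of lattice points those orbits contribute to $d^{1/\deg(f)}\Omega$. Since the proposition asserts an asymptotic with relative error $O_\Omega(d^{-\delta})$, even one badly-distributed orbit carrying a positive proportion of the integral points would destroy the statement; what is required is the per-orbit equidistribution, and that is exactly what the volume lower bound on the ambient closed $S$-orbit delivers.
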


The proof will be only sketched. Presumably $\frac{\log C_d}{\log d}
\rightarrow \frac{\dim(V) - \deg(f)}{\deg(f)},$ but we do not establish this.\footnote{
It is related to the question of the precise relationship between
discriminant and volume of a periodic orbit. In principle, this
can be reduced to the computation of Tamagawa number, but the local computations seem difficult in the most general setting.}
We intend to elaborate on this and other applications
in the $S$-arithmetic sequel to this paper.  

The restriction that $\Omega \subset V^{nc}$ can presumably be removed
by $p$-adic methods, e.g. \cite{EV}.
 For ``Case A''
it is clear that $f$ takes on all integer values, and so
we can dispense with the restriction $f^{-1}(d) \cap V_{\Z}$ be nonempty;
it would be nice to establish the exact range of values taken by $f$
in ``Case B.''

\subsection{Prehomogeneous vector spaces: generalities} \label{PHVS}
Let us proceed in the general setting enunciated at the start
of \S \ref{Linnikapp}.   

The set of points in $V - f^{-1}(0)$ whose stabilizer
is a prescribed subgroup of $\G$ is a finite set of lines,
by virtue of the assumption that the generic stabilizer is
semisimple and has finite centralizer. 

The level set $f^{-1}(1)$ is a union of a finite collection of $G = \G(\R)$-orbits. Fix representatives $x_i$ for each orbit, and let $H_i$ be the 
stabilizer of $x_i$.  (We do not assume that $H_i$ is connected;
we will not go into details about the easy arguments
required, in what follows, to get around connectedness issues.)
We fix also a compact subset $\Omega \subset f^{-1}(1)$.

For $d >0$, each level set $f^{-1}(d)$ is -- by scaling -- identified 
with 
$\bigcup_{i} G/H_i$.  For $y \in f^{-1}(d) \cap V_{\Z}$, let $\bar{y}$
be its projection to $f^{-1}(1)$; we may choose 
$i(y), g_y$ so that $\bar{y} = g_y . x_{i(y)}$. 

The orbit $\Gamma g_y H_i$ is then closed,
because the group $g_y H_i g_y^{-1}$ is the stabilizer of $y$
and therefore corresponds to the real points of a $\Q$-algebraic group.

\begin{lem} \label{lowerbounddisc}
There exists $c> 0$ with the following property:
for any $y$ with $\bar{y} \in \Omega$ and $f(y) \neq 0$,  the discriminant of $\Gamma g_y H_i$ is $\gg_{\Omega} \height(\Q.y)^{c}$. 
\end{lem}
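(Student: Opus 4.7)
The plan is to reduce the lemma to a polynomial lower bound on the height of the rational subspace $\mathfrak{l}_y := \{X \in \mathfrak{g}: X\cdot y = 0\} \subset \mathfrak{g}$ cutting out the Lie algebra of the stabilizer of $y$.

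First I would unpack what the various objects are. The stabilizer of $y$ in $\G$ is an algebraic $\Q$-subgroup (the scheme-theoretic annihilator of $y$), and by the equation defining it we have $\mathfrak{l}_y = \mathrm{Ad}(g_y)\mathfrak{h}_i$; crucially, $\mathfrak{l}_y$ depends only on the line $\Q\cdot y$, so if $y_0$ is the primitive integer vector on $\Q\cdot y$ then $\mathfrak{l}_y = \mathfrak{l}_{y_0}$ and $\height(\Q.y) = \|y_0\|$. Because $g_y \cdot x_i = \bar y \in \Omega$, we may choose $g_y$ in a compact subset of $G$ depending only on $\Omega$, so $\mathfrak{l}_{y_0}$ ranges over a compact subset of the Grassmannian as $\bar y$ varies in $\Omega$.

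Next I would reduce the discriminant to a height computation. Take a $\Z$-basis $\{e_1,\ldots,e_r\}$ of the saturated lattice $\mathfrak{l}_{y_0}\cap\mathfrak{g}_\Z$. The wedge $e_1\wedge\cdots\wedge e_r$ is then a primitive element of $\wedge^r\mathfrak{g}_\Z$, and since the tensor square of a primitive integer vector remains primitive, $(e_1\wedge\cdots\wedge e_r)^{\otimes 2}$ has content one in $V_\Z$. From the defining formula of $v_{g_yH_i}$ one therefore gets
\[
\disc(\Gamma g_yH_i) = |\det B(e_i,e_j)|,
\]
where $B$ is the Killing form. Passing to an orthonormal basis $\{u_i\}$ of $\mathfrak{l}_{y_0}$ gives $|\det B(e_i,e_j)| = \height(\mathfrak{l}_{y_0})^2 \cdot |\det B(u_i,u_j)|$. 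Since the second factor is a continuous, nonvanishing function on the compact set of possible $\mathfrak{l}_{y_0}$'s (nondegeneracy of $B$ on the semisimple Lie algebra $\mathfrak{h}_i$, plus compactness), it is bounded above and below by positive constants depending only on $\Omega$. Hence
\[
\disc(\Gamma g_yH_i) \asymp_\Omega \height(\mathfrak{l}_{y_0})^2,
\]
and it is enough to show $\height(\mathfrak{l}_{y_0}) \gg_\Omega \|y_0\|^{c/2}$.

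For this lower bound I would use a Plücker construction. The assignment $y \mapsto \mathfrak{l}_y$, extended via Plücker, is a rational $\G$-equivariant map $V \dashrightarrow \mathbb{P}(\wedge^r\mathfrak{g})$ which is regular on $V^{nc}_\R$; clearing denominators, it lifts to a nonzero $\G$-equivariant polynomial map $\Psi: V \to \wedge^r\mathfrak{g}$, homogeneous of some degree $\kappa$, whose value at $y$ spans the Plücker line of $\mathfrak{l}_y$. (Concretely, $\Psi$ can be taken from the cofactor expressions for the kernel of the matrix of $\phi_y: X\mapsto X\cdot y$, whose entries are linear in $y$.) On the compact $\Omega$ the map $\Psi$ is bounded, and is strictly positive since $\mathfrak{l}_y$ has the generic dimension on $V^{nc}_\R$; by homogeneity this gives $\|\Psi(y_0)\| \asymp_\Omega \|y_0\|^\kappa$ for $y_0$ on the cone over $\Omega$. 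Writing $\Psi(y_0) = N(y_0)\,\omega_{y_0}$ with $\omega_{y_0} \in \wedge^r\mathfrak{g}_\Z$ primitive (so $\|\omega_{y_0}\| = \height(\mathfrak{l}_{y_0})$) and $N(y_0)\in\Z_{>0}$, one gets $\height(\mathfrak{l}_{y_0}) = \|\Psi(y_0)\|/N(y_0) \asymp_\Omega \|y_0\|^\kappa/N(y_0)$.

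The hard part will be controlling the content $N(y_0)$, i.e., showing $N(y_0) \ll \|y_0\|^{\kappa-c}$ for some $c>0$. The trivial bound $N(y_0) \leq \|\Psi(y_0)\|$ is useless, and a priori $\Psi(y_0)$ could be very non-primitive. To rule this out I would use $\G$-equivariance of $\Psi$ together with a counting argument: a large value of $N(y_0)$ would force the $\phi_{y_0}$-image lattice $\phi_{y_0}(\mathfrak{g}_\Z)$ in $\mathrm{im}(\phi_{y_0})\cap V_\Z$ to be very dense, which via the Cauchy--Binet identity $\height(\mathfrak{l}_{y_0})\cdot\mathrm{covol}(\phi_{y_0}(\mathfrak{g}_\Z)) = \prod_i \sigma_i(\phi_{y_0}) \asymp \|y_0\|^{\mathrm{rk}\,\phi_{y_0}}$ would contradict the primitivity of $y_0$ (since primitivity of $y_0$ rules out all image-vectors having a common $\Z$-divisor). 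This is essentially a statement that the $\G(\Z)$-orbit map from $\G(\Z)$ to $\phi_{y_0}(\mathfrak{g}_\Z)$ has controlled cokernel torsion, which one can extract using the $\G$-equivariance of $\Psi$ and the fact that $\mathfrak{l}_{y_0}$ is the full Lie algebra of the scheme-theoretic stabilizer of $y_0$. This is the one step that is not routine.
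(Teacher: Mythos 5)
Your reductions are fine up to the last step: the identification $\disc(\Gamma g_yH_i)=|\det B(e_i,e_j)|$ for a $\Z$-basis of the saturated lattice, and hence $\disc \asymp_\Omega \height(\mathrm{Ad}(g_y)\mathfrak{h}_i)^2$, is exactly the content of Proposition \ref{heightdisc}(1). But the step you yourself flag as ``not routine'' --- controlling the content $N(y_0)$ of the Pl\"ucker vector $\Psi(y_0)$ --- is a genuine gap, and the sketch you give does not close it. The Cauchy--Binet identity only gives $\prod_i\sigma_i(\phi_{y_0})\ll\|y_0\|^{\mathrm{rk}\,\phi_{y_0}}$, not an asymptotic equality (the smallest nonzero singular values need not be $\gg 1$ uniformly), and to get a lower bound on $\height(\mathfrak{l}_{y_0})$ from it you would need an \emph{upper} bound on $\mathrm{covol}(\phi_{y_0}(\mathfrak{g}_\Z))$ inside the saturated image lattice --- which is precisely the content-control problem you started with, restated. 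Primitivity of $y_0$ does not by itself prevent the image vectors $X\cdot y_0$, $X\in\mathfrak{g}_\Z$, from spanning a lattice of large index in its saturation.

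The difficulty evaporates if you run the height comparison in the opposite direction, which is what the paper does: instead of lower-bounding $\height(\mathfrak{l}_{y_0})$ in terms of $\|y_0\|$, upper-bound $\|y_0\|=\height(\Q.y)$ in terms of $\height(\mathfrak{l}_{y_0})$. The fixed-point set of $\mathfrak{l}_{y_0}=\mathrm{Ad}(g_y)\mathfrak{h}_i$ in $V$ is a linear subspace containing $y$; its intersection with the Zariski-open set $V-f^{-1}(0)$ is nonempty and contained in the finitely many lines of points with the prescribed stabilizer, so the fixed space is exactly the line $\Q.y$. Now choose, via \eqref{UP0}, a basis $e_1,\dots,e_r$ of $\mathfrak{l}_{y_0}\cap\mathfrak{g}_{\Z}$ with $\|e_j\|\ll\height(\mathfrak{l}_{y_0})$; then $\Q.y$ is the kernel of the integral linear system $\{e_j\cdot v=0\}$ whose entries are $\ll\height(\mathfrak{l}_{y_0})$, and \eqref{UP1} gives $\height(\Q.y)\ll\height(\mathfrak{l}_{y_0})^{\kappa}$ for some $\kappa$ depending only on the dimensions. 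In this direction there is no content to control: the kernel of an integer matrix is automatically a saturated sublattice, and its height is bounded by the minors of the matrix. Combining with your (correct) relation $\disc\asymp_\Omega\height(\mathfrak{l}_{y_0})^2$ yields the lemma with $c=2/\kappa$.
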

\proof
Consider the fixed points for the Lie algebra $\Ad(g_y) \Lie(H_i)$.

This set of fixed points is a linear subspace;
on the other hand, it intersects the Zariski-open set $V - f^{-1}(0)$ in a nonempty finite set of lines.
Therefore,  it must consist of a single line. 
We may therefore characterize $\Q.y$ as the fixed line for
$\Ad(g_y) \Lie(H_i)$.  \eqref{UP0} and \eqref{UP1} imply that the height of $\Q.y$
is bounded by a power of the height of $\Ad(g_y) \Lie(H_i)$. 

Because $\bar{y} \in \Omega$, we may suppose that $g_y$
belongs to a fixed compact subset within $G$. 
Our claim now follows from the first assertion of Proposition \ref{heightdisc}. 
\qed  

\subsubsection{Proof of Proposition \ref{HEEprop} in Case A} \label{Acaseproof}
Let us restrict to Case A. 

Suppose that $y \in V_{\Z}^{nc} \cap f^{-1}(d)$,
with $\bar{y} \in \Omega$. 

It follows from Lemma \ref{lowerbounddisc} and
the assumption on $d$ (bounded square part) that:
\begin{equation} \label{shopper}\vol(\Gamma g_y H_i) \gg d^{\kappa},\end{equation}
for some $\kappa > 0$. 

Indeed, if we let $\spadesuit$ be the g.c.d.
of the entries of $y$, i.e. the largest integer
so that $\spadesuit^{-1} y \in V_{\Z}$, then
$\spadesuit^r$ divides $\det(y) = d$. Because of the assumption
that $d$ had bounded square part, $\spadesuit$ is also bounded.
But the height of $\Q.y$ is, up to bounded multiples,
the norm of $\spadesuit^{-1} y$, whence our conclusion. 

In combination with our main theorem, this implies
 Proposition \ref{HEEprop} in ``Case A.''
See \cite{EO} for details of translating equidistribution of $H$-orbits to statements in the style of Proposition \ref{HEEprop}. 

 The idea, in words,
is that our main theorem shows, in an effective sense, the equidistribution
of $\Gamma g_y H_i \subset \Gamma \backslash G$. (We know, by
\eqref{shopper}, that this orbit has large volume). 
This, however, is equivalent to the uniform distribution (interpreted suitably)
of $\Gamma g_y$ on $G/H$, or, equivalent to the uniform distribution
of the $\Gamma$-orbit $\Gamma y$ on $f^{-1}(d)$.
The integral points on $f^{-1}(d) \cap V_{\Z}$ are the union of finitely many such orbits, whence our conclusion. 

\subsubsection{Preparations for the proof in Case B}
In order to establish Proposition \ref{HEEprop} in ``Case B,''
we shall give a little more background on alternating trilinear forms. 

Let us recall that $V_{\Z}$ is the space of {\em trilinear alternating tensors}
on $\Z^r$. In explicit terms, an element $t \in V$
is an alternating, trilinear map:
$$t: \Z^r \times \Z^r \times\Z^r \rightarrow \Z.$$

%

\subsubsection{Invariants and covariants} \label{Alt}
For $6 \leq r \leq 8$, trilinear tensors have a {\em discriminant}: 
a polynomial function $\disc: V \rightarrow \Q$, 
 homogeneous of degree $4$ (resp. $7, 16$) when $r=6$ (resp. $7, 8$).  For definitions, we refer to \cite{YukieI, YukieII};
unfortunately, we do not know of any entirely simple definition. 

There exists a polynomial $\SL(d)$-equivariant map:
$$\cov: V \rightarrow \Sym^2 \Q^r$$
of degree $3$ (when $r=7$) and of degree $10$ (when $r=8$). 
This is proven in \cite[1.16]{YukieI} and \cite[Definition 2.13]{YukieII}. 

The determinant of $\cov(t)$ is a scalar multiple of 
$\disc(t)^3$ ($r=7$)
and a scalar multiple of $\disc(t)^5$ (when $r=8$).  

By ``clearing denominators'',  we may suppose
that $\disc$ maps $V_{\Z}$ into $\Z$, and that $\cov$ maps
$V_{\Z}$ into $\Sym^2 \Z^r$. 

\subsubsection{Proof of Proposition \ref{HEEprop} in Case B}
These preliminaries being established, let us turn to the proof of Proposition
\ref{HEEprop} in the case at hand. 

We follow the general notations of \S \ref{PHVS}, with $\G = \SL_r$,
$V$ as above, $f = \disc$. In particular,
we choose points $x_i \in f^{-1}(1)$ as in \S \ref{PHVS}. 
Let $S_i$ be the stabilizer of $\cov(x_i)$ inside $G$; it is a special orthogonal group, which contains $H_i$ by the equivariance of $\cov$. 

Take $y \in V_{\Z} \cap f^{-1}(d)$
so that the stabilizer of $y$ is non-compact. 
Then $\Gamma g_y S_{i(y)}$ is closed;
for, by equivariance, $\Ad(g_y) S_{i(y)}$ is the stabilizer
of the integer vector $\cov(y_i) \in \Sym^2 \Z^r$, and therefore the real points
of a $\Q$-subgroup. 

It follows from the Lemma of \S \ref{PHVS}
that the volumes of $\Gamma g_y H_{i(y)}$ and $\Gamma g_y S_{i(y)}$
are bounded below in terms of the respective heights of the lines
$\Q.y$ and $\Q. \cov(y)$. 
Now $\disc(y) = d$ and $\det(\cov(y))$ is
a fixed multiple
of $d^3$ or $d^{10}$,
according to whether $r=7$ or $8$. 
Reasoning as in \S \ref{Acaseproof}, and
we conclude that the height of $\Q.y$ exceeds $d^{1/\deg(f)}$.
and that the height of $\Q. \cov(y)$ exceeds
$d^{3/7}$ resp. $d^{1/8}$, according to whether $r=7$ or $r=8$. 

Finally, it is proven in \cite{YukieI, YukieII} that 
the Lie algebra of $S_i$ is the only
such algebra intermediate between $H_i$ and $G$. 

The claimed result of Prop. \ref{HEEprop} follows from Theorem \ref{thm:main}. 
Indeed, our discussion above has shown that both $\Gamma g_y H_{i(y)}$
and $\Gamma g_y S_{i(y)}$ has ``large'' volume; applying the Theorem shows
that $\Gamma g_y H_{i(y)}$ is approximately uniformly distributed
in $\Gamma\backslash G$. This translates into the statement of Proposition \ref{HEEprop}, as carried out in \cite{EO} and recalled in approximate
form in \S \ref{Acaseproof}. 
\qed


\appendix


\section{Proof of Lemma \ref{assumption-2}}\label{sec-ass-2}

In this section, $k$ will be a local field of characteristic zero,
$\G$ a semisimple algebraic group over $k$, $\g$ its Lie algebra, $\h \subset \g$
a semisimple subalgebra. 

We establish some preliminary results.

\subsection{Embeddings of semisimple Lie algebras}

\begin{lem}\label{finite-embeddings}
Let $\mathfrak{s}$ be a semisimple Lie algebra over $k$.
There exist finitely many embeddings of $\mathfrak{s}$
into $\mathfrak{g}$, up to $\G(k)$-conjugacy. 
\end{lem}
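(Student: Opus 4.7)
The plan is to reduce to a statement over the algebraic closure by a Galois cohomology argument, and then establish the algebraically closed case via Lie algebra rigidity. So two ingredients: (i) finiteness of $\G(\bar{k})$-conjugacy classes of embeddings, (ii) finiteness of the fibers of the natural map from $\G(k)$-conjugacy classes to $\G(\bar{k})$-conjugacy classes.

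For (i), I would consider the affine variety $V \subset \mathrm{Hom}_k(\mathfrak{s}, \mathfrak{g})$ of Lie algebra homomorphisms and its Zariski-open subset $V^\circ$ of injective ones. The group $\G$ acts on $V^\circ$ by $(g, \phi) \mapsto \mathrm{Ad}(g) \circ \phi$. The tangent space to the orbit through $\phi$ inside $V^\circ$ is the image of $\mathfrak{g} \to \mathfrak{g}$ given by $X \mapsto [X, \phi(\cdot)]$, while the tangent space to $V^\circ$ at $\phi$ consists of the $1$-cocycles $Z^1(\mathfrak{s}, \mathfrak{g})$, with $\mathfrak{s}$ acting via $\phi$ and the adjoint representation. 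Because $\mathfrak{s}$ is semisimple, Whitehead's lemma gives $H^1(\mathfrak{s}, \mathfrak{g}) = 0$, so every cocycle is a coboundary; equivalently, the orbit map is submersive and every $\G(\bar{k})$-orbit is open in $V^\circ(\bar{k})$. Since $V^\circ$ has finitely many irreducible components, there are only finitely many $\G(\bar{k})$-orbits.

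For (ii), fix a $\G(\bar{k})$-orbit $\mathcal{O} \subset V^\circ(\bar{k})$, and pick $\phi_0 \in \mathcal{O} \cap V^\circ(k)$ (if the intersection is empty, there is nothing to do). Let $Z = Z_\G(\phi_0(\mathfrak{s}))$, a $k$-subgroup of $\G$; it is reductive because $\phi_0(\mathfrak{s})$ is semisimple and acts completely reducibly on $\mathfrak{g}$ via $\mathrm{Ad}$. The orbit $\mathcal{O}$ is identified with $\G/Z$ as a $\G$-variety, and standard Galois cohomology yields a bijection between the set of $\G(k)$-orbits inside $(\G/Z)(k) = \mathcal{O} \cap V^\circ(k)$ and the pointed kernel $\ker(H^1(k, Z) \to H^1(k, \G))$. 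Over a local field of characteristic zero, $H^1(k, Z)$ is finite for any reductive group $Z$ (Borel--Serre in the archimedean case, Kneser in the non-archimedean case), so $\mathcal{O} \cap V^\circ(k)$ decomposes into finitely many $\G(k)$-orbits.

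Combining the two steps gives the finiteness claimed in the lemma. The only place that requires care is verifying that the centralizer $Z$ is reductive (so that the finiteness of $H^1(k, Z)$ applies) and that one has the correct parametrization of $\G(k)$-orbits in $(\G/Z)(k)$ by the pointed kernel; both are standard, so I do not expect a serious obstacle. An alternative, slightly more hands-on route to (i) is to invoke Richardson's theorem on conjugacy of reductive subgroups directly, but the Whitehead-lemma argument above is self-contained and fits naturally with the rigidity philosophy used elsewhere in the paper.
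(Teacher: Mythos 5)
Your proof is correct, and the Galois-descent half coincides with the paper's: both arguments reduce to $\bar{k}$ by observing that the variety of embeddings is a finite union of homogeneous spaces $\G/Z$ and that $\G(k)$-orbits on $(\G/Z)(k)$ are controlled by $\ker\bigl(H^1(k,Z)\to H^1(k,\G)\bigr)$, which is finite over a local field of characteristic zero. (You invoke reductivity of $Z$ to get finiteness of $H^1(k,Z)$; this is fine, though by Borel--Serre the finiteness holds for arbitrary linear algebraic groups over such $k$, so that verification is not actually needed.) Where you genuinely diverge is the geometrically closed case. The paper embeds $\G$ into $\GL(n)$, quotes the finiteness of $\GL(n,\bar{k})$-orbits on $\mathrm{Hom}(\mathfrak{s},\mathfrak{gl}_n)$ from the representation theory of semisimple Lie algebras, and then descends from $\GL(n)$ to $\G$ via Richardson's theorem on reductive pairs. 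You instead argue directly on the homomorphism variety: Whitehead's first lemma gives $H^1(\mathfrak{s},\mathfrak{g})=0$, so the coboundaries (the tangent space to the orbit) fill out the cocycles (the Zariski tangent space to the variety), every orbit closure is an irreducible component, and finiteness of components gives finiteness of orbits. Your route is self-contained and avoids the external appeal to Richardson -- indeed it is essentially the infinitesimal rigidity mechanism underlying Richardson's own proof -- at the cost of carrying out the tangent-space computation; the paper's route is shorter on the page but leans on two quoted results. One cosmetic slip: the coboundary map lands in $\mathrm{Hom}(\mathfrak{s},\mathfrak{g})$, not in $\mathfrak{g}$, but this does not affect the argument.
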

\proof
It suffices to prove this statement over the algebraic closure $\bar{k}$. 
Indeed, that being assumed,
the affine variety $\mathbf{X}$ parameterizing
embeddings of $\mathfrak{s}$ into $\mathfrak{g}$
is a finite union of homogeneous $\G$-spaces. The finiteness 
of Galois cohomology over local fields
assures that, given a homogeneous $\G$-space $\mathbf{Y}$, the
set of $\G(k)$-orbits on $\mathbf{Y}(k)$ is also finite.

For the statement over $\bar{k}$, choose an embedding
of $\G$ into the general linear group $\GL(n)$.
The representation theory of semisimple Lie algebras
assures that the number of $\GL(n, \bar{k})$-orbits
on homomorphisms $\mathfrak{s} \rightarrow \mathfrak{gl}_n$
is finite. Our assertion then follows from \cite[Theorem 7.1]{Richardson}. 
\qed

\subsection{Parabolic subgroups}

\begin{lem} 
If $\mathbf{S}$ is a proper algebraic subgroup of $\mathbf{G}$, so that $\mathbf{S} = N_G(R_u(\mathbf{S}))$, 
then $\mathbf{S}$ is parabolic. 
\end{lem}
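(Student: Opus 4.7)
The plan is to use the Borel--Tits theorem on normalizers of unipotent subgroups to produce a parabolic that contains $\mathbf{S}$, and then to force equality by the self-normalizing nature of the hypothesis.

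First I would observe that $R_u(\mathbf{S}) \neq 1$: otherwise $N_{\mathbf{G}}(R_u(\mathbf{S})) = \mathbf{G}$, contradicting the properness of $\mathbf{S}$. So $R_u(\mathbf{S})$ is a nontrivial unipotent subgroup of $\mathbf{G}$. By the theorem of Borel--Tits on unipotent subgroups of reductive groups (\cite{BT}, Corollaire 3.2), there exists a parabolic subgroup $\mathbf{P} \subset \mathbf{G}$ such that
\[
\mathbf{S} \subseteq N_{\mathbf{G}}(R_u(\mathbf{S})) \subseteq \mathbf{P}, \qquad R_u(\mathbf{S}) \subseteq R_u(\mathbf{P}).
\]

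The crux is to upgrade the inclusion $R_u(\mathbf{S}) \subseteq R_u(\mathbf{P})$ to an equality. Suppose for contradiction that $R_u(\mathbf{S}) \subsetneq R_u(\mathbf{P})$. Since both are connected, the normalizer condition in the connected nilpotent algebraic group $R_u(\mathbf{P})$ (which, at the Lie algebra level, is the statement that the normalizer of a proper subalgebra in a nilpotent Lie algebra strictly contains it) gives
\[
\bigl(N_{R_u(\mathbf{P})}(R_u(\mathbf{S}))\bigr)^{0} \;\supsetneq\; R_u(\mathbf{S}).
\]
On the other hand, $N_{R_u(\mathbf{P})}(R_u(\mathbf{S})) = R_u(\mathbf{P}) \cap N_{\mathbf{G}}(R_u(\mathbf{S})) = R_u(\mathbf{P}) \cap \mathbf{S}$ using the hypothesis. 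This intersection is a closed normal unipotent subgroup of $\mathbf{S}$ (normal because both $\mathbf{S}$ and $R_u(\mathbf{P})$, being normal in $\mathbf{P}\supseteq\mathbf{S}$, are preserved by conjugation by $\mathbf{S}$), hence its identity component is contained in $R_u(\mathbf{S})$. This contradicts the strict inclusion just obtained. Therefore $R_u(\mathbf{S}) = R_u(\mathbf{P})$, and consequently
\[
\mathbf{S} \;=\; N_{\mathbf{G}}(R_u(\mathbf{S})) \;=\; N_{\mathbf{G}}(R_u(\mathbf{P})) \;=\; \mathbf{P},
\]
since the parabolic $\mathbf{P}$ is its own normalizer in $\mathbf{G}$, which coincides with $N_{\mathbf{G}}(R_u(\mathbf{P}))$.

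The main obstacle is the second step, where one must extract a contradiction from the strict inclusion $R_u(\mathbf{S}) \subsetneq R_u(\mathbf{P})$. The subtlety there is only mild and comes from keeping track of connected components: one needs that a closed unipotent subgroup of a connected solvable group which is strictly larger than $R_u(\mathbf{S})$ at the group level must also be strictly larger on the level of identity components, which is where the Lie-algebraic normalizer condition in nilpotent algebras enters. Everything else is a direct application of Borel--Tits.
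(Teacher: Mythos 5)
Your proof is correct, and it is exactly the argument the paper has in mind: the paper gives no proof of this lemma, deferring entirely to \cite{BT} and \cite{Weisfeiler}, and your chain (Borel--Tits gives a parabolic $\mathbf{P}$ with $N_{\mathbf{G}}(R_u(\mathbf{S}))\subseteq\mathbf{P}$ and $R_u(\mathbf{S})\subseteq R_u(\mathbf{P})$; the normalizer-grows property of unipotent groups plus maximality of the unipotent radical forces $R_u(\mathbf{S})=R_u(\mathbf{P})$; self-normalization of $R_u(\mathbf{P})$ gives $\mathbf{S}=\mathbf{P}$) is the standard proof found there. One cosmetic slip: in the parenthetical justifying normality of $R_u(\mathbf{P})\cap\mathbf{S}$ in $\mathbf{S}$, the reason ``being normal in $\mathbf{P}$'' applies only to $R_u(\mathbf{P})$, while $\mathbf{S}$ is preserved under conjugation by itself trivially; this does not affect the argument.
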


See \cite{BT} or \cite{Weisfeiler}.

\begin{lem} \label{UR} Any algebraic subgroup of $\G$, with a nontrivial unipotent radical, is contained
in a parabolic subgroup of $\G$. 
\end{lem}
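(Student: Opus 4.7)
The plan is to apply the previous lemma to a parabolic-like subgroup obtained by an iterative normalizer/unipotent-radical construction starting from $\mathbf{H}$.

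First I would set $\mathbf{U}_0 := R_u(\mathbf{H})$, which is nontrivial by hypothesis, and $\mathbf{S}_0 := N_{\mathbf{G}}(\mathbf{U}_0)$. Since $\mathbf{U}_0$ is normal in $\mathbf{H}$, we have $\mathbf{H} \subseteq \mathbf{S}_0$, and since $\mathbf{U}_0$ is a normal unipotent subgroup of $\mathbf{S}_0$, it is contained in $R_u(\mathbf{S}_0)$. I would then inductively define $\mathbf{U}_{i+1} := R_u(\mathbf{S}_i)$ and $\mathbf{S}_{i+1} := N_{\mathbf{G}}(\mathbf{U}_{i+1})$. Because $R_u(\mathbf{S}_i)$ is characteristic in $\mathbf{S}_i$, the group $\mathbf{S}_i$ normalizes $\mathbf{U}_{i+1}$, so $\mathbf{S}_i \subseteq \mathbf{S}_{i+1}$, and similarly $\mathbf{U}_i \subseteq \mathbf{U}_{i+1}$. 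This yields an ascending chain of algebraic subgroups of $\mathbf{G}$.

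Next I would observe that the chain $\mathbf{S}_0 \subseteq \mathbf{S}_1 \subseteq \cdots$ stabilizes, for dimension reasons together with the fact that the number of connected components of an algebraic subgroup of $\mathbf{G}$ of bounded dimension is bounded. Call the stable value $\mathbf{S}$. Then $\mathbf{S} = N_{\mathbf{G}}(R_u(\mathbf{S}))$ by construction. Moreover $R_u(\mathbf{S})$ contains $\mathbf{U}_0$, so it is nontrivial; since $\mathbf{G}$ is semisimple, $R_u(\mathbf{G}) = \{1\}$, which forces $\mathbf{S}$ to be a proper subgroup of $\mathbf{G}$.

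Applying the preceding lemma to $\mathbf{S}$, we conclude that $\mathbf{S}$ is a parabolic subgroup of $\mathbf{G}$. Since $\mathbf{H} \subseteq \mathbf{S}_0 \subseteq \mathbf{S}$, this completes the proof. The only step requiring any care is the verification that the chain stabilizes at a proper subgroup of $\mathbf{G}$, and this is handled by the semisimplicity hypothesis ruling out $\mathbf{S} = \mathbf{G}$; there is no genuine obstacle beyond bookkeeping, as the real content is packaged in the previously stated characterization of parabolics as self-normalizing subgroups of their unipotent radicals.
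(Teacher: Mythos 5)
Your proof is correct and follows essentially the same iterated normalizer-of-the-unipotent-radical construction as the paper, terminating in a self-normalizing group to which the preceding lemma applies. One small repair: the stabilization of the chain is better justified by noting that the $\mathbf{U}_i$ are connected (unipotent groups in characteristic zero are connected), so their chain stabilizes by dimension and hence so does the chain of normalizers $\mathbf{S}_i = N_{\mathbf{G}}(\mathbf{U}_i)$; the general claim that algebraic subgroups of bounded dimension have boundedly many components is false (e.g., the finite subgroups $\mu_n$ of a one-dimensional torus), so an ascending chain of possibly disconnected algebraic subgroups need not stabilize.
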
 

\proof
Let $\mathbf{S}$ be an algebraic subgroup with nontrivial unipotent radical.
Define, inductively, $\mathbf{S}^{(0)} = \mathbf{S}$ and $\mathbf{S}^{(j+1)} = N_G(R_u(\mathbf{S}^{(j)}))$, for $j \geq 0$. Note that $\mathbf{S}^{(j)}$ normalizes $R_u(\mathbf{S}^{(j)})$ so that $\mathbf{S}^{(j)}\subset \mathbf{S}^{(j+1)}$. Furthermore, $R_u(\mathbf{S}^{(j)})\subset R_u(\mathbf{S}^{j+1})$. Since unipotent groups are connected in characteristic zero, the increasing chain $R_u(\mathbf{S}^{(j)})$ of algebraic subgroups must necessarily stabilize. Therefore, the same must hold for the chain $\mathbf{S}^{(j)}$.  The previous lemma allows us to conclude the proof. 
\qed

\subsection{Intermediate subgroups}

\begin{lem} 
Suppose $\h \subset \g$ is semisimple and has trivial centralizer.
Then any intermediate subalgebra is semisimple.
\end{lem}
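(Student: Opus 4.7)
\smallskip

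\noindent\textbf{Proof plan.} The plan is to first establish a lemma that the trivial centralizer hypothesis already forbids $\mathfrak{h}$ from sitting inside any proper parabolic subalgebra of $\mathfrak{g}$, and then to bootstrap this via Lemma \ref{UR} to rule out any nonzero nilpotent radical in an intermediate subalgebra.

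\smallskip

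\noindent\emph{Step 1: $\mathfrak{h}$ is not contained in any proper parabolic.} Suppose to the contrary that $\mathfrak{h}\subseteq \mathfrak{p}$ for a proper parabolic subalgebra $\mathfrak{p}$ of $\mathfrak{g}$, and write $\mathfrak{p}=\mathfrak{l}\oplus \mathfrak{u}$ with $\mathfrak{u}$ the nilradical and $\mathfrak{l}$ a fixed Levi. By the Malcev--Harish-Chandra conjugacy theorem, applied in the algebraic group $\mathbf{P}$, there exists $u\in\exp(\mathfrak{u})$ so that $\mathrm{Ad}(u)\mathfrak{h}\subseteq \mathfrak{l}$. Since $\mathbf{P}$ is a proper parabolic, the center $\mathfrak{z}(\mathfrak{l})$ is nonzero, and it commutes with $\mathrm{Ad}(u)\mathfrak{h}$. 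Transporting back via $\mathrm{Ad}(u^{-1})$, we obtain a nonzero subspace of $\mathfrak{g}$ that centralizes $\mathfrak{h}$, contradicting our hypothesis $C_\mathfrak{g}(\mathfrak{h})=0$.

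\smallskip

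\noindent\emph{Step 2: the main statement.} Let $\mathfrak{h}\subseteq \mathfrak{s}\subseteq \mathfrak{g}$ be an intermediate subalgebra, and let $\mathfrak{n}=[\mathfrak{s},\mathrm{rad}(\mathfrak{s})]$. In characteristic zero this is the nilpotent radical of $\mathfrak{s}$. I claim first that $\mathfrak{n}=0$: otherwise the algebraic hull $\mathbf{S}^{\mathrm{alg}}\subseteq \mathbf{G}$ (the smallest algebraic subgroup whose Lie algebra contains $\mathfrak{s}$) has nontrivial unipotent radical, because elements of $\mathfrak{n}$ are $\mathrm{ad}$-nilpotent, hence nilpotent in $\mathfrak{g}$ under any faithful representation, and generate a nontrivial unipotent algebraic subgroup. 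Lemma \ref{UR} then places $\mathbf{S}^{\mathrm{alg}}$ inside a proper parabolic $\mathbf{P}$, giving $\mathfrak{h}\subseteq \mathfrak{s}\subseteq \Lie(\mathbf{P})$, which contradicts Step 1. Hence $\mathfrak{n}=0$. But then $\mathrm{rad}(\mathfrak{s})$ centralizes all of $\mathfrak{s}$ and in particular centralizes $\mathfrak{h}$, so $\mathrm{rad}(\mathfrak{s})\subseteq C_\mathfrak{g}(\mathfrak{h})=0$, and $\mathfrak{s}$ is semisimple.

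\smallskip

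\noindent\emph{Main obstacle.} The routine parts are the Levi conjugacy input and the algebraic reduction in Step 2. The genuinely delicate point is verifying that an intermediate subalgebra $\mathfrak{s}$ with $\mathfrak{n}\neq 0$ really does give rise to an algebraic hull with a nontrivial unipotent radical — i.e., that the $\mathrm{ad}$-nilpotent ideal $\mathfrak{n}$ is captured by genuine unipotent elements in $\mathbf{G}$. In characteristic zero this is standard (via Jordan decomposition in $\mathrm{Ad}(\mathbf{G})$, noting that each $x\in \mathfrak{n}$ has $\exp(tx)$ a one-parameter unipotent subgroup of $\mathbf{G}$), but it is the step where one has to be careful, since $\mathfrak{s}$ itself need not be algebraic.
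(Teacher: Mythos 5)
Your overall route coincides with the paper's: rule out containment of $\mathfrak{h}$ in a proper parabolic using the trivial-centralizer hypothesis, invoke Lemma \ref{UR} to exclude a nonzero nilpotent radical, and then observe that a central radical must centralize $\mathfrak{h}$ and hence vanish. Step 1 is correct (the paper reaches the same conclusion via complete reducibility of the $\mathbf{H}$-action on $\mathfrak{p}$ and the nontrivial center of $\mathbf{P}/\mathbf{N}$, rather than Malcev conjugacy into a Levi, but both arguments work), and your concluding step -- $[\mathfrak{s},\mathrm{rad}(\mathfrak{s})]=0$ forces $\mathrm{rad}(\mathfrak{s})\subseteq C_{\mathfrak{g}}(\mathfrak{h})=0$ -- is if anything more direct than the paper's.

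The one inference that does not go through as written is the passage, in Step 2, from ``$\mathfrak{n}$ generates a nontrivial unipotent algebraic subgroup of $\mathbf{S}^{\mathrm{alg}}$'' to ``$\mathbf{S}^{\mathrm{alg}}$ has nontrivial unipotent radical.'' Containing a nontrivial unipotent subgroup is far weaker than having a nontrivial unipotent radical ($\SL_2$ contains many unipotent subgroups and has trivial unipotent radical); what is needed is that some nontrivial \emph{connected normal} unipotent subgroup exists. You single out the unipotence of the elements of $\mathfrak{n}$ as the delicate point, but that part is routine: the nilpotent radical acts nilpotently in every finite-dimensional representation of $\mathfrak{s}$, in particular under the faithful representation $\mathrm{ad}_{\mathfrak{g}}$. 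The genuinely missing ingredient is normality of $\exp(\mathfrak{n})$ in $(\mathbf{S}^{\mathrm{alg}})^0$. It is supplied by noting that $N_{\mathbf{G}}(\mathfrak{s})$ is an algebraic subgroup whose Lie algebra contains $\mathfrak{s}$, so $\mathbf{S}^{\mathrm{alg}}\subseteq N_{\mathbf{G}}(\mathfrak{s})$; consequently $\mathfrak{s}$ is an ideal of $\Lie(\mathbf{S}^{\mathrm{alg}})$, and with it the characteristic ideal $\mathfrak{n}$, so $\exp(\mathfrak{n})$ is indeed a nontrivial connected normal unipotent subgroup and Lemma \ref{UR} applies. (The paper avoids the issue by working from the outset with the connected normalizer of $\mathfrak{s}$ rather than the algebraic hull, for which the required normality is immediate.) With that one correction your proof is complete.
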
 
\proof
Take $\h \subset \mathfrak{s} \subset \g$. Suppose $\mathfrak{s}$ were not semisimple. 
Then $\mathbf{S}$, the connected component of the normalizer of $\mathfrak{s}$, must also fail to be semisimple.  

It suffices, therefore, to prove that any intermediate algebraic subgroup between 
$\mathbf{H}$ and $\G$ is semisimple. Here $\mathbf{H}$ is the connected algebraic group
with Lie algebra $\h$. 
 
We claim that $\mathbf{S}$ cannot have nontrivial unipotent radical. 
In view of Lemma \ref{UR}, were this false, then $\mathbf{H}$ would be contained
in a proper parabolic subgroup $\mathbf{P}$ of $\G$. 
Consider the representation of $\mathbf{H}$ on the Lie algebra $\mathfrak{p}$;
let $\mathfrak{n}$ be the Lie algebra of the unipotent radical $\mathbf{N}$ of $\mathbf{P}$. The quotient group $\mathbf{P}/\mathbf{N}$ has nontrivial center;
therefore, $\mathbf{H}$ fixes a subspace in its adjoint action
on $\mathfrak{p}/\mathfrak{n}$. By semisimplicity of $\mathbf{H}$,
it also fixes a subspace in its action on $\mathfrak{p}$. 
Therefore, $\mathfrak{h}$ has a nontrivial centralizer, contradiction.


To conclude note that in absence of a unipotent radical, the radical of the subgroup $\mathbf{S}$ is central. Since $\h$ has trivial centralizer, $\mathbf{H}$ has finite centralizer. 
Therefore, we see that the radical of $\mathbf{S}$ must be trivial. 
 \qed

\begin{lem} 
Suppose $\h \subset \g$ is semisimple and has trivial centralizer.
Then there exist only finitely many intermediate subalgebras $\h \subset \mathfrak{s} \subset \mathfrak{g}$. 
\end{lem}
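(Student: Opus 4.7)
The plan is to apply Lemma \ref{finite-embeddings} twice, bridged by the triviality of the centralizer of $\mathfrak{h}$. By the preceding lemma, every intermediate subalgebra $\mathfrak{s}$ is semisimple. Since $\dim \mathfrak{s} \leq \dim \mathfrak{g}$, only finitely many isomorphism classes of $\mathfrak{s}$ can occur, so it suffices to fix one isomorphism class $\mathfrak{s}_0$ and bound the number of intermediate subalgebras of that class.

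First I apply Lemma \ref{finite-embeddings} to embeddings $\mathfrak{s}_0 \hookrightarrow \mathfrak{g}$: the subalgebras of $\mathfrak{g}$ isomorphic to $\mathfrak{s}_0$ fall into finitely many $\mathbf{G}(k)$-orbits under the adjoint action, so it remains to bound, within one such orbit, how many members contain $\mathfrak{h}$. Fix a representative $\mathfrak{s}$, with $\mathbf{S}$ denoting the connected algebraic subgroup with Lie algebra $\mathfrak{s}$; every other member has the form $\mathrm{Ad}(g)\mathfrak{s}$ for some $g \in \mathbf{G}(k)$, and the condition $\mathfrak{h} \subset \mathrm{Ad}(g)\mathfrak{s}$ is equivalent to $\mathrm{Ad}(g^{-1})\mathfrak{h} \subset \mathfrak{s}$. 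A second application of Lemma \ref{finite-embeddings}, now to embeddings $\mathfrak{h} \hookrightarrow \mathfrak{s}$, shows these images form finitely many $\mathbf{S}(k)$-orbits.

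Within a single $\mathbf{S}(k)$-orbit, suppose $\mathrm{Ad}(g_1^{-1})\mathfrak{h} = \mathrm{Ad}(s)\mathrm{Ad}(g_2^{-1})\mathfrak{h}$ for some $s \in \mathbf{S}(k)$. A direct rearrangement yields $g_2 s^{-1} g_1^{-1} \in N_{\mathbf{G}(k)}(\mathfrak{h})$, and since $s$ normalizes $\mathfrak{s}$, the resulting subalgebras satisfy $\mathrm{Ad}(g_2)\mathfrak{s} = \mathrm{Ad}(n)\mathrm{Ad}(g_1)\mathfrak{s}$ for some $n \in N_{\mathbf{G}(k)}(\mathfrak{h})$. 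Now the hypothesis $C_{\mathfrak{g}}(\mathfrak{h}) = 0$, combined with the identity $\mathrm{Der}(\mathfrak{h}) = \mathrm{ad}(\mathfrak{h}) \cong \mathfrak{h}$ for semisimple $\mathfrak{h}$, forces $N_{\mathfrak{g}}(\mathfrak{h}) = \mathfrak{h}$. Hence the algebraic group $N_{\mathbf{G}}(\mathfrak{h})$ has identity component $\mathbf{H}$, with a finite component group embedded in $\mathrm{Out}(\mathfrak{h})$. Since $\mathbf{H}$ stabilizes every subalgebra containing $\mathfrak{h}$, the subalgebra $\mathrm{Ad}(n)\mathrm{Ad}(g_1)\mathfrak{s}$ depends only on the class of $n$ in the finite group $N_{\mathbf{G}(k)}(\mathfrak{h})/\mathbf{H}(k)$, yielding a finite count per orbit and hence in total.

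The main obstacle is this last paragraph, which links the intrinsic classification of embeddings $\mathfrak{h} \hookrightarrow \mathfrak{s}$ (modulo $\mathbf{S}(k)$) to the count of intermediate subalgebras sitting inside $\mathfrak{g}$. The triviality of the centralizer enters precisely to render the correction factor $N_{\mathbf{G}(k)}(\mathfrak{h})/\mathbf{H}(k)$ finite; without that hypothesis, the passage from ``embeddings of $\mathfrak{h}$ into $\mathfrak{s}$'' back to ``subalgebras of $\mathfrak{g}$ containing $\mathfrak{h}$'' would introduce an infinite group of ambiguities and the chain of reductions would break.
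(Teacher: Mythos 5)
Your argument is correct and follows essentially the same route as the paper's proof: two applications of Lemma \ref{finite-embeddings} (for $\mathfrak{s}\hookrightarrow\mathfrak{g}$ and for $\mathfrak{h}\hookrightarrow\mathfrak{s}$), followed by the observation that the trivial centralizer forces $N_{\mathfrak{g}}(\mathfrak{h})=\mathfrak{h}$, so $N_{\mathbf{G}}(\mathfrak{h})$ contains $\mathbf{H}$ with finite index. Your final paragraph merely makes explicit the step the paper leaves to the reader (that $\mathbf{H}$ stabilizes every subalgebra containing $\mathfrak{h}$, so the ambiguity is controlled by the finite quotient $N_{\mathbf{G}(k)}(\mathfrak{h})/\mathbf{H}(k)$).
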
 

  \proof
It suffices to prove this statement over the algebraic closure. 
 
 There exist only finitely many isomorphism classes of semisimple Lie algebras $\mathfrak s$ that can be embedded into $\mathfrak g$. For each such,
there exist -- by Lemma \ref{finite-embeddings} -- only finitely many $\G(\bar{k})$-conjugacy classes
of embeddings. Choose representatives for the image
of every such embedding, calling them $\mathfrak{s}_1, \dots, \mathfrak{s}_m$. 
Let $\mathbf{S}_j \subset \mathbf{G}$ be a connected semisimple algebraic group
with Lie algebra $\mathfrak{s}_j$. 

Then, for each $j$, there exist but finitely many $\mathbf{S}_j(\bar{k})$-conjugacy classes of subalgebras of $\mathfrak{s}_j$, isomorphic to $\mathfrak{h}$. 
Call them $\mathfrak{h}_{ij}$, $1 \leq i \leq N_j$. 

Take any intermediate subalgebra $\mathfrak{s} \supset \mathfrak{h}$.
The pair $(\mathfrak{h} \subset \mathfrak{s})$ is conjugate
under $\mathbf{G}(\bar{k})$ to $(\mathfrak{h}_{ij} \subset \mathfrak{s}_j)$ for
some $j$ and $i$. 

 Thus, there exist only finitely many
possibilities for the conjugacy class of the pair $(\mathfrak{h} \subset \mathfrak{s})$. However, the normalizer of $\mathfrak{h}$ in $\mathbf{G}$, 
contains $\mathbf{H}$ as a finite index subgroup.  
The claimed result follows. 
\qed

\proof[Proof of Lemma \ref{assumption-2}] 
 The results of this section
imply all the claims, save that every intermediate subgroup $S \supset H$
has no compact factors. However, $\mathfrak{h}$ would, by necessity,
centralize the Lie algebra of any compact factor. This contradicts
the assumption that $\mathfrak{h}$ has no center. \qed


\section{ Proof of Lemma \ref{measureoutsidecompact}.} \label{manfredsec}

For the proof of Lemma \ref{measureoutsidecompact} we will use the non-divergence results for actions of unipotent subgroups.

In the paper \cite{M-non-divergence}, by G.M.,
 it has been shown that a point of low height ``returns to the set
of low height infinitely often'' under a one-parameter unipotent flow. 

 Dani has refined this in \cite{Dani-unipotent}
to show that the corresponding conclusion remains valid,
even without the constraint that the original point have low height, 
unless there is a rational subspace
that is invariant under the unipotent one-parameter subgroup considered (i.e.\ a rational constraint prohibits that).

This is almost what we need to prove (1) of Lemma \ref{measureoutsidecompact}. What we will use and actually need for
(2) of Lemma \ref{measureoutsidecompact} is
a quantification of this phenomenon.
We use results from the paper \cite{Kleinbock-Margulis-Ann} by Kleinbock and G.M.

\begin{proof}
For $g \in G$, we call a subspace $V \subset \mathfrak{g}$ 
{\em $g$-rational} if $V \cap \mathrm{Ad}_g^{-1}\mathfrak g_\Z$ 
is a lattice in $V$. 
 We define the covolume of a $g$-rational subspace to be the volume of
 $V/(V\cap (\mathrm{Ad}_g^{-1}\mathfrak g_\Z))$.  
A $H$-invariant, $g$-rational subspace $V$ of low covolume prohibits a point of low height on $\Gamma g H$.  (Let us note that $\Ad_h: V\rightarrow V$
has determinant $1$, because $H$ is semisimple). 

We shall show that, no matter what $g$ is, there 
are {\em no} $g$-rational subspaces of low covolume. 
This will be by induction on $\dim(V)$. 
 The notion of ``low covolume'' will be specified as we go along. 
After this is done, we may establish statement (1) of the Lemma. 

 The beginning of the induction is rather trivial; there are no $H$-invariant
lines $V \subset \mathfrak g$
 by our assumption that the centralizer of $H$ on $\mathfrak g$ is trivial.

 Suppose now we have already established that there are no $H$-invariant $g$-rational subspaces
 of covolume less than $c<1$ and dimension $<k$ for some $k\leq\dim G$.
 Suppose also that $V$ is an $H$-invariant $g$-rational subspace
 with covolume $v$ and dimension $k$ for some $v>0$. As a first step towards our inductive step:
 
{\em Claim.} If $v$ is sufficiently small, there exists $h \in H$ 
so $\mathrm{Ad}_h\mathrm{Ad}_g^{-1}\mathfrak g_\Z\cap V$ 
 has a basis consisting of vectors of length $\ll v^{\frac{1}{k}}$.

 To prove the claim we shall use the result of \cite{Kleinbock-Margulis-Ann} mentioned. 
There are only finitely many sublattices $L_i \subset \mathrm{Ad}_g^{-1}\mathfrak g_\Z\cap V$ of covolume less than $c$ and dimension strictly less than $k$. 
 Let $U$ be the image of $u(t)$, defined in \eqref{udef};
consider the set of $h \in H$ s.t. $\Ad(h) U$ preserves each $L_i$. 
This is an algebraic condition, i.e. the real points of a real algebraic
subvariety of $\G_{\R}$. Thus either:
\begin{enumerate}
\item There is $h\in H$ such that $hUh^{-1}$
 does not leave any of these subspaces invariant;
 \item There exists a sublattice $L_j$ so that 
 we have that $hUh^{-1} (L_j) = L_j$ for all $h \in H$. 
\end{enumerate}

The normal subgroup generated by $U$ must coincide with $H$. 
Thus, in the second case, $H$ preserves $L_j$, a contradiction. 
We are thereby in the first case; conjugating $U$, we may assume
without loss of generality that $U$ leaves invariant
no $g$-rational subspace $W \subset V$ of dimension $<k$ and covolume
$< c$. 

Put 
 $h(t)=\mathrm{Ad}_{u(t)}|_V g'\in\operatorname{SL}(V)$, a polynomial in $t$.
 The same holds for $\bigwedge^\ell h(t)\in\operatorname{SL}(\bigwedge ^\ell V)$ for any $\ell<k$. Therefore, 
 the square of $\psi_W(t)=\operatorname{covol}(h(t)(W\cap \mathrm{Ad}_g^{-1}
\mathfrak{g}_{\Z})$ is also a polynomial
 for any subspace $W \subset V$ of dimension $\ell<k$ which intersects
$\mathrm{Ad}_g^{-1} \mathfrak{g}_{\Z}$ in a lattice. 
 \footnote{Also note that $\psi_W$ agrees up to a bounded multiplicative factor with the function $\psi_{W\cap\mathfrak g^{-1}g_\Z}$
 defined in \cite[\S 5]{Kleinbock-Margulis-Ann} which is defined by a different norm on $\bigwedge^{\dim W}\R^k$.}

There are only finitely many subspaces $W \subset V$ of dimension $\ell<k$
 with $\psi_W(0)<cv^{-\ell/k}$.
For such $W$, it is possible to choose 
some $t_W$ with $\psi_W(t_W)\geq c v^{-\ell/k}$.
 I.e.\ for some $r>0$ we know that if $v$ is sufficiently small depending on $c$ and $k$
 that the supremums norm of these functions satisfy
 $\|\psi_W\|_{[-r,r]}\geq 1$ for all such subspaces $W$.

 By \cite[Prop.~3.2]{Kleinbock-Margulis-Ann}
 the polynomials $\psi_W^2$ are all uniformly $(C,\alpha)$-good for some $C>0$ and $\alpha>0$
 that depend only the degree of $h(t)$ and so on $\dim G$ ---
 the same holds for $\psi_W$ (with slightly different constants).
 We do not need to define this notion since this is
 only used as the second and last assumption of \cite[Thm.~5.2]{Kleinbock-Margulis-Ann}.
 From this quantitative non-divergence theorem we conclude that
 \[
  |\{t\in[-r,r]:~h(t) (\Ad_{g^{-1}} \mathfrak{g}_{\Z} \cap V)\mbox{ contains an element of size }<\epsilon\}|\ll \epsilon^\alpha r,
 \]
 where the implicit constant depends on $C$ and $k$. If we choose $\epsilon$ small
 enough in comparison to the implicit constant we see that there exists some $t$
 such that $h(t)(\Ad_{g^{-1}} \mathfrak{g}_{\Z} \cap V)$ does not contain an element of size smaller than $\epsilon$.
 Going back to $\mathfrak g$ this shows that $V\cap(\mathrm{Ad}_{u(t)g^{-1}}\mathfrak g_\Z)$ does not
 contain an element of size $< \epsilon v^{1/k}\ll v^{1/k}$. Therefore, $V\cap(\mathrm{Ad}_{u(t)g^{-1}}\mathfrak g_\Z)$
 is generated by elements of size $\ll v^{1/k}$ by Minkowski's theorem on successive minima
 for lattices and since $v$ is the covolume.

 To conclude the induction we need to show that there cannot be any $H$-invariant $g$-rational
 subspace $V$ which has small covolume $v$. By the above claim
 we may assume that $V\cap(\mathrm{Ad}_g^{-1}\mathfrak g_\Z)$ is generated by elements $\{w_i\}$ of size $\ll v^{1/k}$.
 Recall that for any $w,w'\in\mathfrak g$ we have
 $\|[w,w']\|\leq  \|w\|\|w'\|$. Suppose $v$
 is small enough such that $V\cap(\mathrm{Ad}_g^{-1}\mathfrak g_\Z)$ is generated by elements of size $<\frac{1}{2 }$. (This choice of $v$ depends only
on the implicit constants in the argument above). 

Therefore, $[w_i,w_j]$ have length $\leq\frac{1}{4 }$;
 similarly for higher order commutators. Also recall that $\g_\Z$ satisfies
 $[\g_\Z,\g_Z]\subset \g_\Z$ and so all of these commutators belong to the lattice $\mathrm{Ad}_g^{-1}\mathfrak g_\Z$.
 Therefore, the Lie algebra $\mathfrak l$ generated by $V$ is nilpotent. 

 Moreover, since $V$ is assumed to be invariant under $H$, the same holds for the Lie algebra $\mathfrak l$.
 Since $H$ is semisimple, we must have that the Lie algebra $\mathfrak h$ of $H$ and $\mathfrak l$ intersect trivially
 since otherwise we have found a nilpotent Lie ideal $\mathfrak l\cap\mathfrak h$ contained in $\mathfrak l$.
 This shows that there is an intermediate
 subgroup $S\subset G$ with Lie algebra $\mathfrak h\otimes \mathfrak l$ which fails to be semisimple in contradiction to
 Lemma \ref{assumption-2}.

 The proof of (1) is now similar to part of the above induction. In fact we have established that there are no
 $H$-invariant $g$-rational subspaces $V$ of small covolume for any dimension $k<\operatorname{dim} G$,
 and as above we may assume this is also true for $U$-invariant $g$-rational subspaces.
 Therefore, the square of $\psi_V(t)=\operatorname{covol}(\mathrm{Ad}_{u(t)}(V\cap \mathrm{Ad}_g^{-1}\mathfrak g_\Z))$
 (for any $g$-rational subspace $V$) is either an unbounded polynomial or equal to a constant $\geq\rho\gg 1$.
 Therefore, for any large enough $r>0$ we will have
 $\|\psi_W\|_{[-r,r]}\geq \rho$ for all $g$-rational subspaces, and so
 \begin{equation}\label{DKM-nondivergence}
  |\{t\in[-r,r]:\Gamma g u(t)\notin\Siegel(\epsilon^{-1})\}|\ll (\frac{\epsilon}{\rho})^\alpha r,
 \end{equation}
 again by \cite[Thm.~5.2]{Kleinbock-Margulis-Ann}. Here the implicit constant
 only depends on $G$. If we choose $\epsilon=R_0^{-1}$ small enough,
 then for some $t$ we have $\Gamma g u(t)\in\Siegel(R_0)$ as claimed.

 Turning to (2) assume $\mu$ is $H$-invariant and $H$-ergodic, then by the Mautner phenomenon
 $U$ acts also ergodically. Therefore, by the pointwise ergodic theorem
 we can find some $x \in X $
 for which the ergodic averages along $u(t)$ for the characteristic functions of the sets $\Siegel (n)$
 converge to $\mu(\Siegel(n))$ for all integers $n\geq 1$. Combined with \eqref{DKM-nondivergence}
 this gives
 \[
  \mu\bigl(X\setminus \Siegel(n)\bigr)=\lim_{r\rightarrow\infty}\frac{1}{2r}\int_{-r}^r\chi_{X\setminus\Siegel(n)}(xu(t))\operatorname{d}\!t\ll n^{-\alpha}
 \]
 as required.
\end{proof}

\section{Proof of \eqref{temp}: uniform spectral gap implies $L^p$ coefficients.}
This section is devoted to a discussion of how to extract \eqref{temp}
from statements in the literature; we thank Erez Lapid
for his help with the details.   We apologize to the reader
for the piecemeal nature of this proof. 

Essentially we make use of a simple ``quantization'' phenomenon:
any point in the unitary dual of a semisimple Lie group, at which the behavior of irreducible representations ``changes,'' is restricted to sets where certain parameters are restricted to be integral.  In particular, away from the identity representation,
such ``changes in behavior'' can only occur at certain discrete points; in combination with the Howe-Moore theorem, this will establish the result. 

See \cite[Theorem 2.4.2]{Cowling} for groups of real rank $\geq 2$
and \cite[Theorem 2.5.2]{Cowling} for the real rank one groups with property $T$.  We shall therefore discuss carefully the rank one groups that fail to have property $T$, namely, 
 $\mathrm{SO}(n,1)$
or $\mathrm{SU}(n,1)$. 

We use Theorem 6 of \cite{KS}, the Howe-Moore theorem,  and
asymptotics for matrix coefficients.   More precisely:

Non-tempered unitary representations may be expressed as Langlands quotients $J(\sigma, z)$, where $\sigma$ is a (finite-dimensional) representation of the 
Levi of the $\R$-parabolic subgroup, and $z \in \R$.
 We normalize
matters so that $\sigma$ trivial, $z=1$ corresponds to the trivial representation.

Suppose we are given a -- not necessary irreducible -- unitary representation $V$
of $G$ that possesses a spectral gap. The definition of ``spectral gap'' easily implies that there exists $\delta  > 0$ so that any irreducible constituent
of $V$ (i.e., any irreducible representation that occurs in the support
of its unitary decomposition) is one of the following types:
\begin{enumerate}
\item Tempered;
\item $J(\sigma, z)$ where $\sigma$ is not the trivial representation;
\item $J(\sigma, z)$ where $\sigma$ is the trivial representation,
and $|z| <  1-\delta$.
\end{enumerate}

By the asymptotic expansion of matrix coefficients (see \cite[Theorem 8.32]{Knapp-red}) we deduce the following: For any $\delta > 0$,
there exists $p$ so that any representation $J(\sigma, z)$ with $0 \leq z \leq 1-\delta$ is $\frac{1}{p}$-tempered. It remains,
then, to show that there exists a uniform $p$ so that:
\begin{equation} \label{desid2}
\mbox{Any unitary $J(\sigma, z)$, where $\sigma$ is nontrivial, is $1/p$-tempered. }
\end{equation}

Fix $\sigma$ nontrivial. 
Set $z_0$ to be the supremum
of those $z \geq 0$ for which $J(\sigma, z)$ is unitary.
Proposition 45 and Theorem 6 of \cite{KS}, together, imply that 
$z_0 = z_c$, the ``critical abscissa'' of \cite{KS} defined
prior to (9.1), 
and moreover $J(\sigma, z_0)$ is itself unitary (i.e, unitarizable). 
We shall prove in a moment that $z_0 < 1$. Assuming this,
the definition of critical abscissa shows
 $z_0$ belongs to a discrete subset of $[0,1)$, independent of $\sigma$.
The asymptotic expansion of matrix coefficients, again,
shows \eqref{desid2}. 

It remains to show that $J(\sigma, z)$ can only be unitary for $z < 1$.
But that is, again, a consequence of the asymptotic expansion
of matrix coefficients -- \cite[Theorem 8.32]{Knapp-red}
-- in combination with the Howe-Moore theorem
that asserts that matrix coefficients decay at $\infty$.  
(One needs to know in addition that the leading exponent
survives in the Langlands quotient, which is \cite[Proposition 8.61]{Knapp-red}).



\bibliographystyle{plain}

\printindex

\end{document}